\numberwithin{equation}{section}
\theoremstyle{plain}
\newtheorem{theorem}{Theorem}[section]
\newtheorem{lemma}[theorem]{Lemma}
\newtheorem{corollary}[theorem]{Corollary}
\newtheorem{prop}[theorem]{Proposition}
\theoremstyle{remark}
\newtheorem{remark}[theorem]{Remark}
\newcommand{\N}{\mathbb{N}} 
\newcommand{\R}{\mathbb{R}} 
\newcommand{\ew}{\mbox{I\negthinspace E}} 
\newcommand{\lebesgue}{\ensuremath{\lambda\!\!\!\;\!\lambda}} 
\begin{document}

\begin{frontmatter}
\title{Detectability of nonparametric signals: higher criticism versus likelihood ratio}
\runtitle{Detectability of nonparametric signals}

\begin{aug}
\author{\fnms{Marc} \snm{Ditzhaus}\thanksref{t2}\ead[label=e1]{marc.ditzhaus@hhu.de}}
\address{Institute of Statistic, Ulm University\\
	 Helmholtzstra{\ss}e 20,  89081 Ulm, Germany\\
	\printead{e1}
}

\and
\author{\fnms{Arnold} \snm{Janssen}\thanksref{t2}\ead[label=e2]{janssena@uni-duesseldorf.de}}
\address{Mathematical Institute, Heinrich-Heine University D\"usseldorf\\
	Universit\"atststra{\ss}e 1, 40225 D\"usseldorf, Germany\\
	\printead{e2}
}

\thankstext{t2}{Supported by DFG Grant no. 618886}
\runauthor{M. Ditzhaus and A. Janssen}

\end{aug}

\begin{abstract}
	We study the signal detection problem in high dimensional noise data (possibly) containing rare and weak signals. Log-likelihood ratio (LLR) tests depend on unknown parameters, but they are needed to judge the quality of detection tests since they determine the detection regions.  The popular Tukey's higher criticism (HC) test was shown to achieve the same completely detectable region as the LLR test does for different (mainly) parametric models. We present a novel technique to prove this result for very general signal models, including even nonparametric $p$-value models. Moreover, we address the following questions which are still pending since the initial paper of Donoho and Jin: \textit{What happens on the border of the completely detectable region, the so-called detection boundary? Does HC keep its optimality there?} In particular, we give a complete answer for the heteroscedastic normal mixture model. As a byproduct, we give some new insights about the LLR test's behaviour on the detection boundary by discussing, among others, Pitmans's asymptotic efficiency as an application of Le Cam's theory. 
\end{abstract}

\begin{keyword}[class=MSC]
\kwd[Primary ]{62G10}
\kwd{62G20}
\kwd[; secondary ]{62G32}
\end{keyword}

\begin{keyword}
\kwd{nonparametric sparse signals}
\kwd{infinitely divisible distribution}
\kwd{detection boundary and regions}
\kwd{Tukey's higher criticism}
\kwd{Le Cam's local asymptotic normality}
\end{keyword}

\end{frontmatter}
\section{Introduction}\label{sec:intro}
Signal detection in huge data sets becomes more and more important in current research. The number of relevant information is often a quite small part of the data set and hidden there. In genomics, for example, the assumption is often used that the major part of the genes in patients affected by some common diseases like cancer behaves like white noise and a minor part is differentially expressed but only slightly, see \cite{DaiETAL2012,Goldstein2009,IyengarElston2007}. Consequently, the number of signals as well as the signal strength is small. This circumstance makes it difficult to decide whether there are any signals. Other application fields are disease surveillance, see \cite{KulldorffETAL2005,NeillLingwall2007}, local anomaly detection, see \cite{SaligramaZhao2012}, cosmology and astronomy, see \cite{CayonETAL2004,JinETAL2005Cosmo}. In the last decade \textit{Tukey's higher criticism} (HC) test, see \cite{TukeyCoursenotes,TukeyInternalPaper,TukeyCollected}, modified by  \citet{DonohoJin2004} became quite popular for these kind of problems. The reason for HC's popularity is that the area of complete detection coincide for the HC test and the \textit{log-likelihood ratio} (LLR) test under different specific model assumption, see \cite{AriasWang2015,AriasWang2017,CaiJengJin2011,Cai_Wu_2014,DonohoJin2004,Jin2004}. This was also done for  sparse linear regression models and binary regression models, see \cite{AriasCandesPlan2015,IngsterETAL2010,MukherjeePillaiLin2015}. To overcome the problem of an unknown noise distribution \cite{DelaigleETAL2011} used a bootstrap version of HC. A lot of related literature to the possibilities of HC, even beyond signal detection, can be found in the survey paper of \cite{DonohoJin2015}. For instance, \cite{HallETAL2008} applied HC for classification. \\
There are (only) a few results concerning the asymptotic power behaviour of the LLR test on the detection boundary, which separates the area of complete detection and the area of no possible detection, see e.g. \cite{CaiJengJin2011,Ingster1997}  for the heteroscedastic and heterogeneous normal mixture models. Since \cite{DonohoJin2004} the questions is pending: \textit{How does HC perform on the detection boundary? Does it keep its optimality?} \cite{DonohoJin2004} specially pointed out this question: "\textit{Just at the critical point where r = ρ ∗ (1 + o(1)), our result says nothing; this would be an interesting (but very challenging) area for future work.}" \\
Our paper's purpose is twofold. First, we want to fill the theoretical gap concerning the tests' power behaviour on the detection boundary and give an answer to the question mentioned before. We quantify the asymptotic power of the LLR test by giving the LLR statistic's limit distribution. On the detection boundary the LLR test has nontrivial asymptotic power, whereas the HC test does not. Consequently, HC is not overall powerful. However, our message is not to scrap the idea of HC. Its power behaviour is still optimal beyond the detection boundary for a long list of models. The second purpose of our paper is to add a $p$-value model with signals coming from a nonparametric alternative to this list of models. \\
The paper is organized as follows. In \Cref{sec:model} we introduce the general model and the detection testing problem. For the readers' convenience the context and the main results are briefly illustrated  for a (specific) nonparametric model in \Cref{sec:intro_h}. 
The asymptotic results about the LLR test appear  \Cref{sec:LLRT}. \Cref{sec:HCT} is devoted to the HC statistic and introduce an "HC complete detection" as well as a "trivial HC power" Theorem. \Cref{sec:applications} contains the application of our theory. There we discuss a generalisations of the illustrative results from \Cref{sec:intro_h} and the heteroscedastic normal mixture model. Although the latter was already studied in great detail we can give some new insights for it. All proofs are relegated to \Cref{appendix:proofs}

\subsection{The model}\label{sec:model}
Let $\{k_n:n\in\N\}\subset \N$, where $k_n\to\infty$ represents the number of observations. Throughout this paper, if not stated otherwise all limits are meant as $n\to\infty$. Let the following three mutually independent triangular arrays consisting of rowwise independent random variables are given, where values in different spaces are allowed: 
\begin{itemize}
\item $(Z_{n,i})_{i\leq k_n}$ representing the noisy background, where the distribution $P_{n,i}$ of $Z_{n,i}$ is assumed to be known. In the applications we often assume that $P_{n,i}=P_0$ depends neither on $i$ nor on $n$, and $P_0$ may stand for a distribution of $p$-values under the null.

\item $(X_{n,i})_{i\leq k_n}$ representing the signals, where the signal distribution $\mu_{n,i}$ of $X_{n,i}$ is typically unknown.

\item $(B_{n,i})_{i\leq k_n}$ representing the appearance of a signal, where $B_{n,i}$ is Bernoulli distributed with typically unknown success probability $0\leq \varepsilon_{n,i}\leq 1$. 
\end{itemize}
Instead of these random variables we observe 
\begin{align*}
Y_{n,i}\;=\; \begin{cases}
X_{n,i} & \;\textrm{ if }B_{n,i}=1 \\
Z_{n,i} & \;\textrm{ if }B_{n,i}=0 \\
\end{cases}
\end{align*}
for all $1\leq i \leq k_n$. The vector $(Y_{n,1},\ldots,Y_{n,k_n})$ represents the noise data containing a random amount $\sum_{i=1}^{k_n} B_{n,i}$ of signals. It is easy to check that the distribution $Q_{n,i}$, say, of $Y_{n,i}$ is given by
\begin{align}\label{eqn:def_qni}
Q_{n,i}= (1-\varepsilon_{n,i})P_{n,i}+\varepsilon_{n,i}\mu_{n,i}=P_{n,i}+\varepsilon_{n,i}(\mu_{n,i}-P_{n,i}).
\end{align}
We are interested whether there are any signals in the noise data, i.e. whether $B_{n,i}=1$ for at least one $i=1,\ldots,k_n$. To be more specific, we study the testing problem 
\begin{align}\label{eqn:testing_problem}
{\mathcal H }_{0,n}:\; \varepsilon_{n,i}=0\text{ for all i}\qquad\textrm{versus}\qquad {\mathcal H }_{1,n}:\;\varepsilon_{n,i}>0 \text{ for at least one }i,
\end{align}
where we observe pure noise $(Y_{n,1},\ldots,Y_{n,k_n})=(X_{n,1},\ldots,X_{n,k_n})$ under the null.
We are especially interested in the case of rare signals in the sense that
\begin{align}\label{eqn:maxeps_to_0}
\max_{1\leq i \leq k_n} \varepsilon_{n,i} \to 0.
\end{align} 
Another typical assumption in the signal detection literature is
\begin{align}\label{eqn:absolute_conti}
\mu_{n,i}\ll P_{n,i}\textrm{ for all }1\leq i \leq k_n,
\end{align}
which we also suppose throughout this paper. In \Cref{sec:violation} we discuss what happens if the assumption of absolute continuity is violated. Following the ideas of \citet{Cai_Wu_2014} we explain that every model can be reduced to a model such that \eqref{eqn:absolute_conti} is fulfilled.\\
\textbf{Convention and Notation:} Observe that 
\begin{align*}
\frac{ \,\mathrm{ d } Q_{n,i}}{\,\mathrm{ d } P_{n,i}} = 1 + \varepsilon_{n,i}\Bigl( \frac{ \,\mathrm{ d } \mu_{n,i}}{\,\mathrm{ d } P_{n,i}} -1\Bigr).
\end{align*}
The distributions $P_{n,i},\mu_{n,i},Q_{n,i}$ and the densities $\frac{\mathrm{ d } Q_{n,i}}{\,\mathrm{ d }P_{n,i}}\circ pr_i$ shall lie on the same product space, where the projections $pr_i$ on the \textit{i}th coordinate are suppressed throughout the paper to improve  the readability. Moreover, we introduce the product measures
\begin{align*}
Q_{(n)}=\bigotimes_{i=1}^{k_n} Q_{n,i}\textrm{ and }P_{(n)}=\bigotimes_{i=1}^{k_n} P_{n,i}.
\end{align*} 

\subsection{Illustration of the results and the main contents}\label{sec:intro_h}
Here, our results are briefly presented for a special nonparametric $p$-values model. For simplicity let $k_n=n$. Since we are dealing with $p$-values the null (noise) distribution is the uniform distribution on $(0,1)$, i.e. $P_{n,i}=P_0=\lebesgue_{|(0,1)}$ for all $1\leq i \leq n.$ Note that as long as the noise distribution is continuous this is not a restriction having a quantile transformation $P_{n,i}((Y_{n,i},\infty))$ or $P_{n,i}((-\infty,Y_{n,i}])$ in mind. Typically, small p-values indicates that the alternative is true, or in our case that signals are present. Respecting this we suggest signal distributions $\mu_{n,i}$ with a shrinking support $[0,\kappa_n]$, where
\begin{align}\label{eqn:intro_h_tau+eps}
\kappa_n=n^{-r}\textrm{ and }\varepsilon_{n,i}=\varepsilon_n=n^{-\beta}
\end{align}
for some $\beta\in(1/2,1)$ and $r>0$. In order to obtain such a distribution $\mu_{n,i}$ the interval $(0,\kappa_n)$ is blown up to $(0,1)$ and a nonparametric shape function $h$ is used. Let $h:(0,1)\to (0,\infty)$ be a Lebesgue probability density, i.e. $\int_0^1 h \,\mathrm{ d } \lebesgue =1$, with $\int_0^1 h^2 \,\mathrm{ d }\lebesgue\in(0,\infty)$ and define the signal distribution $\mu_{n,i}=\mu_{n}$ by its rescaled Lebesgue density 
\begin{align}\label{eqn:ill_density}
\frac{\mathrm{ d } \mu_{n}}{\,\mathrm{ d }\lebesgue_{|(0,1)}}(x) =  \frac{1}{\kappa_{n}} h\Bigl( \frac{x}{\kappa_{n}} \Bigr) \mathbf{1}\{x\leq \kappa_{n}\},\;x\in (0,1).
\end{align}
Since it could be to restrictive in practice to consider only measures with a shrinking support,  in \Cref{sec:h-model} we add a "small" perturbation to the densities. To sum up, we have a nonparametric testing problem which can be expressed heuristically as
\begin{align*}
\mathcal H_{0,n}: \varepsilon_n=0\text{ versus }\mathcal H_{1,n}: \varepsilon>0, h\in L^2(P_0)\text{ with }h\geq 0,\,\int h \mathrm dP_0=1.
\end{align*}
In the following sections we give answers to the seven problems \ref{enu:ill_exam_determ_detect_bound}-\ref{enu:intro_h_HC_bound} for the general model introduced in \Cref{sec:model} and present here the corresponding results for the illustrative nonparametric $p$-value model.

\begin{figure}[tb] 
\begin{center}	
	\includegraphics[trim = 22mm 150mm 80mm 10mm, clip, width=0.45\textwidth]{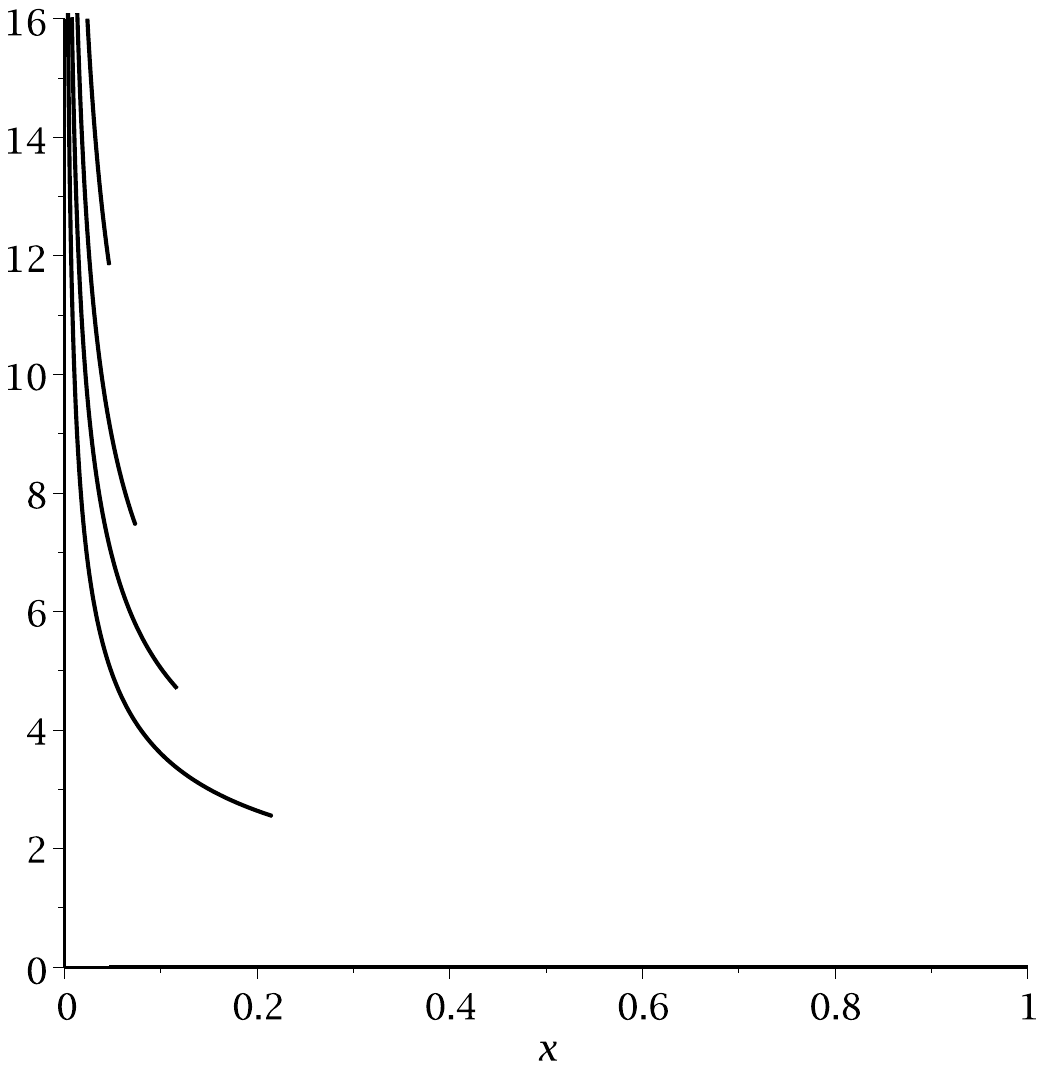}\quad
	\includegraphics[trim = 22mm 150mm 80mm 10mm, clip, width=0.45\textwidth]{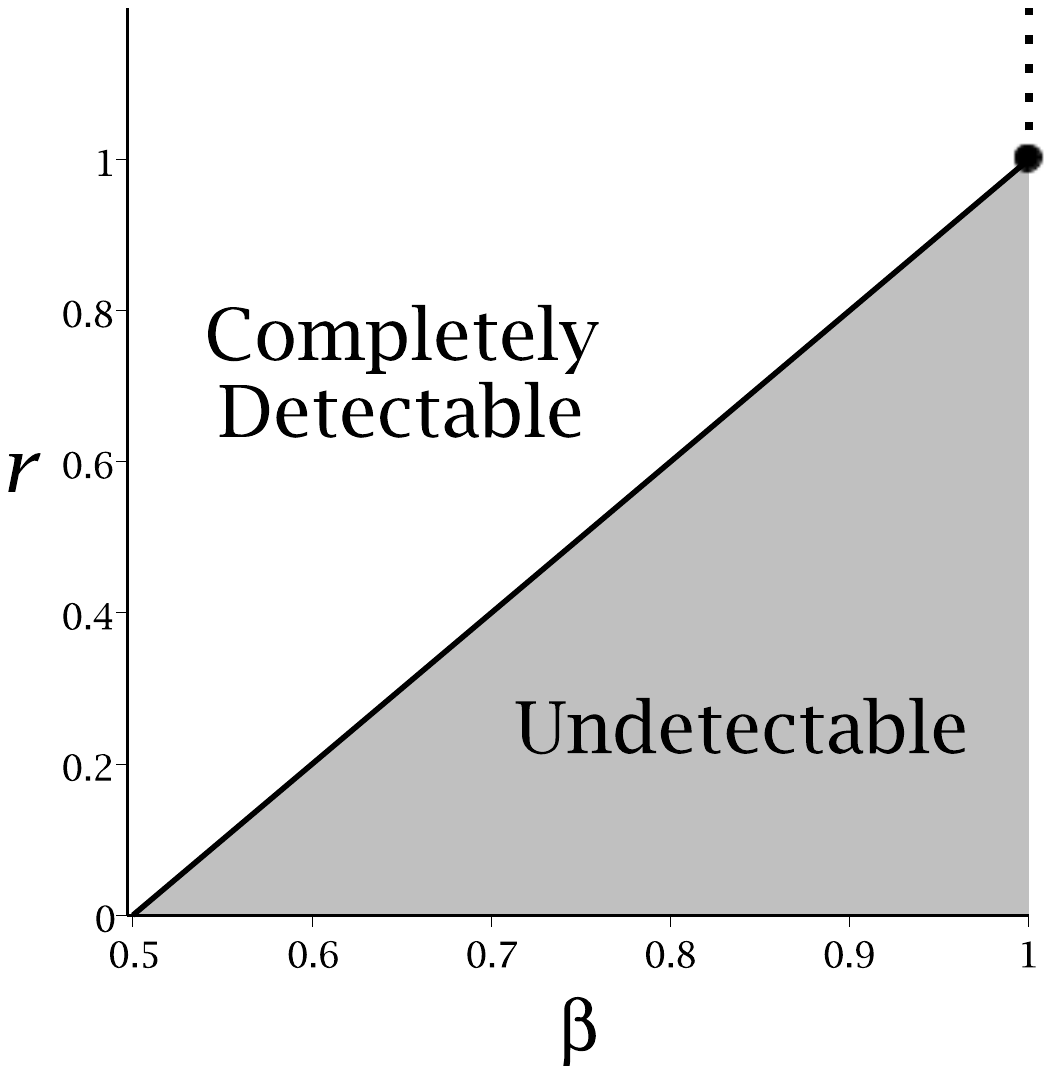}
\end{center}
\caption[Detection boundaries for the sparse and the dense heteroscedastic normal mixture model]{Left: Plot of $x\mapsto \mathrm{ d } \mu_{n}/\mathrm{ d }P_{0}(x)$ for $h(x)=(1-a)x^{-a}$, $a=9/20$, $r=2/3$ and $n\in\{10,25,50,100\}$, see \eqref{eqn:ill_density}. Right: The nonparametric detection boundary is plotted. Above the boundary is the completely detectable area and underneath is the undetectable area. The limits of the LLR statistic are Gaussian on the solid line  under the null as well as under the alternative, and they are real-valued but non-Gaussian on the end of the line (solid circle). The limit under the alternative is equal to $\infty$ with a positive probability.} \label{fig:hmodel}
\end{figure}

\begin{enumerate}[I.]
\item\label{enu:ill_exam_determ_detect_bound} \textit{Determination of the detection boundary}: Since the paper of \citet{DonohoJin2004} the term \textit{detection boundary} is of great interest for the detection problem. This boundary splits the $r$-$\beta$ parametrisation plane into the \textit{completely detectable} and the \textit{undetectable} area. For each pair $(r,\beta)$ from the \textit{completely detectable} area the  LLR test, the optimal test, can completely separate the null and the alternative asymptotically. This means that there is a sequence $(\varphi_n)_{n\in\N}$ of LLR tests with nominal levels $E_{P_{(n)}}(\varphi_n)=\alpha_n$ such that $\alpha_n\to 0$ and the power $E_{Q_{(n)}}(\varphi_n)$ under the alternative tends to $1$. For each $(r,\beta)$ from the undetectable area  the null ${\mathcal H }_{0,n}$ and the alternative ${\mathcal H }_{1,n}$ are asymptotically indistinguishable, i.e. the sum of error probabilities tends to $1$ for each possible sequence of tests. Hence, no test yields asymptotically better results than a constant test $\varphi\equiv \alpha\in(0,1)$. For the illustrative model we have a \textit{nonparametric} detection boundary which is independent of the shape function $h$ and given by
\begin{align}\label{eqn:hmodel_detect_bound}
\rho(\beta)=2\beta - 1\textrm{ for }\beta\in\Bigl( \frac{1}{2},1 \Bigr].
\end{align}
The area where $r>\rho(\beta)$ ($r<\rho(\beta)$, resp.) corresponds to the completely detectable area (undetectable area, respectively), see \Cref{fig:hmodel}.

\item\label{enu:intro_h_LLRT_on_db} \textit{Gaussian limits on the detection boundary?} For some parametric models the limit distribution of the log-likelihood ratio test statistic $T_n$, see below, was determined, e.g. for the heteroscedastic and heterogeneous normal mixture model, see \citet{CaiJengJin2011} and \citet{Ingster1997}. For our model with $1/2<\beta<1$ and $r=\rho(\beta)$ we have
\begin{align*}
T_n=\log \frac{\mathrm{ d } Q_{(n)}}{\,\mathrm{ d }P_{(n)}} \overset{\mathrm d}{\longrightarrow} \left\{\begin{array}{ll}
\xi_1\sim N( -\frac{\sigma^2(h)}{2},\sigma^2(h))&\textrm{ under } {\mathcal H }_{0,n},\\
\xi_2\sim N( \phantom{-}\frac{\sigma^2(h)}{2},\sigma^2(h))&\textrm{ under } {\mathcal H }_{1,n},
\end{array}\right. 
\end{align*}
where $\sigma^2(h)=\int_0^1 h^2 \,\mathrm{ d }\lebesgue.$ Observe that the limits only depend on the second moment of $h$ and not on its specific structure. 

\item \label{enu:intro_hmod_what_wrong_h_beta} \textit{What happens if we choose the wrong $h$ or $\beta$ for the LLR 
	statistic on the boundary?} Let $(h_1,\beta_1)$ and $(h_2,\beta_2)$ represent two specific models of the illustrative example on the detection boundary, i.e. $\beta_i\in(1/2,1)$ and $r_i	=\rho(\beta_i)$ for $i=1,2$. Using Le Cam's LAN theory we can determine the asymptotic power of the LLR test $\varphi_{n,\beta_2,h_2,\alpha}$ of the model $(h_2,\beta_2)$  of nominal level $\alpha\in(0,1)$ if $(h_1,\beta_1)$ is the true, underlying model:
\begin{align*}
&E_{\mathcal H_{1,n}( h_1, \beta_1)}(\varphi_{n,\beta_2,h_2,\alpha}) \to   
\Phi\Bigl( u_{\alpha} + \sqrt{\sigma^2(h_1)\mathrm{ARE}}\Bigr),\\
&\text{where }\mathrm{ARE}=\frac{(\int_0^1 h_1h_2\,\mathrm{ d }\lebesgue )^2}{\sigma^2(h_1)\sigma^2(h_2)}\mathbf{1}\{\beta_1=\beta_2\}
\end{align*}
is Pitman's asymptotic relative efficiency, see \cite{HajekSidakSen}, $\Phi$ denotes the distribution function of a standard normal distribution and $u_\alpha$ is the corresponding $\alpha$-quantile, i.e. $\Phi(u_\alpha)=\alpha$. This formula quantifies the loss of power by choosing the wrong $\beta$ or $h$. In particular, the LLR test $\varphi_{n,\beta_2,h_2,\alpha}$ cannot separate the null and the alternative asymptotically, i.e ARE$=0$, if the supports of $h_1$ and  $h_2$ are disjunct, or if $\beta_1$ and $\beta_2$ are unequal. 

\item\label{enu:intro_h_nonreal_xi2} \textit{Beyond Gaussian limits on the detection boundary.} Non-Gaussian limits of  $T_n$ may occur, see \cite{CaiJengJin2011,Ingster1997}. Here, such limits can be observed for $\beta=1$,  $r=\rho(1)=1$. The limits are infinitely divisible distributed with nontrivial L\'{e}vy measure. These L\'{e}vy measures depend  heavily on the special structure  of $h$. For further results with infinitely divisible non-Gaussian $\xi_1$, $\xi_2$ confer \Cref{lem:h=xrho}, where we investigate in shape functions $h$ with $\int_0^1 h^2\,\mathrm{ d }\lebesgue=\infty$. Beside all this we also observe a new class of limit. To be more specific, the limit of $T_n$ equals $\infty$ with positive probability under the alternative, whereas the limit under the null is always real-valued (except, of course, in the completely detectable case). For $\beta=1$ and $r>1$
\begin{align*}
T_n \overset{\mathrm d}{\longrightarrow} \left\{\begin{array}{ll}
\xi_1\equiv -1 &\textrm{ under } {\mathcal H }_{0,n},\\
\xi_2\sim e^{-1}\epsilon_{-1}+(1-e^{-1})\epsilon_{-\infty}&\textrm{ under } {\mathcal H }_{1,n},
\end{array}\right.  
\end{align*}
where $\epsilon_{a}$ denotes the Dirac measure centered in $a\in[-\infty,\infty]$, i.e. $\epsilon_{a}(A)=\mathbf{1}\{x\in A\}$. 
As far as we know such limits were not observed for the detection issue until now. All statements about $\beta=1$ even hold if $\int_0^1h^2\,\mathrm{ d }\lebesgue=\infty$. 

\item\label{enu:ill_exam_extens_detec_bound}\textit{Extension of the detection boundary:} 
As stated in \eqref{enu:intro_h_nonreal_xi2} our discussion includes $\beta=1$, whereas a lot of former research was focused (only) on $\beta<1$. The case $\beta\geq 1$ was of minor interest reason since the probability that at least one signal is present equals $1-(1- \varepsilon_n)^{n}$, which tends to $1-e^{-1}$ and $0$ if $\beta=1$ and $\beta>1$, respectively. In particular, the pair $(\beta,r)$ with $\beta>1$ and $r>0$ always belongs to the undetectable area. Hence, $\beta>1$ do not need to be studied further. But $\beta=1$ should be taken into account, at least, when nontrivial limits are of the researcher's interest. To sum up, the detection boundary can be extended by the case $\beta=1$, see \Cref{fig:hmodel}. 

\item\label{enu:intro_h_HC_opt}\textit{Optimality of HC.} As already known for different mainly parametric models, we can show also for the illustrative nonparametric $p$-values model that the completely detectable regions of the LLR and the HC test coincide. By this we give a further reason why HC is a good candidate for the signal detection problem.

\item\label{enu:intro_h_HC_bound}\textit{No power of HC on the boundary.} We show that on the detection boundary, i.e. $\beta\in(1/2,1)$ and $r=\rho(\beta)$, the HC test cannot distinguish between the null and the alternative alternative, whereas the LLR test has nontrivial power, compare to \ref{enu:intro_h_LLRT_on_db}. 
\end{enumerate}
Among others, we apply our results to the model \eqref{eqn:ill_density} in a more general form, e.g. $h_{n,i}$, $\kappa_{n,i}$ and $\varepsilon_{n,i}$ may depend on $i$ and $n$.  We want to point out that these kind of alternatives were already studied in the context of goodness-of-fit testing by \citet{Khmaladze1998}. He used the name \textit{spike chimeric alternatives}. Finally, we want to mention that our general model and the upcoming results also include
\begin{itemize}
\item discrete models (only for the LLR test), as the Poisson model of \citet{AriasWang2015}.
\item the \textit{sparse}  ($\sum_{i=1}^{k_n} \varepsilon_{n,i}^2\to 0$), the \textit{classical}  ($\lim_{n\to\infty}\sum_{i=1}^{k_n}\varepsilon_{n,i}^2\in(0,\infty)$) and the \textit{dense}/\textit{moderately sparse} case ($\sum_{i=1}^{k_n} \varepsilon_{n,i}^2\to \infty$). We used the word "classical" for the second case corresponding heuristically to $\varepsilon_{n,i}\approx 1/\sqrt{k_n}$ which is the convergence rate typically used in the context of contiguous alternatives.  
\end{itemize}


\section{Asymptotic power behaviour of LLR tests}\label{sec:LLRT}
In this section we discuss the asymptotic power behaviour of LLR tests. These tests depend on the unknown signals and, hence, they are not applicable. But they serve as an import benchmark and all new suggested tests should be compare with the optimal LLR tests. \\
It is well known that at least for a subsequence $T_n$ converges in distribution to a random variable with values on the extended real line $[-\infty,\infty]$ under the null as well as under the alternative, see Lemma 60.6 of \citet{Strasser1985}.  That is why we can assume without loss of generality that
\begin{align}\label{eqn:def_Tn}
T_n=\sum_{i=1}^{k_n}\log \frac{\mathrm{ d } Q_{n,i}}{\,\mathrm{ d }P_{n,i}}(Y_{n,i}) {\overset{\mathrm d}{\longrightarrow}}\left\{ \begin{array}{ll}
\xi_1\textrm{ under }P_{(n)}\text{ (null)},\\
\xi_2\textrm{ under }Q_{(n)}\text{ (alternative)},
\end{array}\right. 
\end{align}
where $\xi_1$ and $\xi_2$ are random variables on $[-\infty,\infty]$. Regarding the phase diagram on the right side in \Cref{fig:hmodel} we are interested in the following three different regions/cases:
\begin{enumerate}[(i)]
\item (Completely detectable) The LLR test $\varphi_n=\mathbf{1}\{T_n>c_n\}$ with appropriate critical values $c_n\in\R$ can completely separate the null and the alternative asymptotically, i.e. the sum of error probabilities $E_{\mathcal H_{0,n}}(\varphi_n)+ E_{\mathcal H_{1,n}}(1-\varphi_n)$ tends to $0$. We will see that this corresponds to $\xi_1\equiv -\infty$ and $\xi_2\equiv \infty$. 

\item (Undetectable) No test sequence $(\psi_n)_{n\in\N}$ can distinguish between the null and the alternative asymptotically, i.e we always have $E_{\mathcal H_{0,n}}(\varphi_n)+ E_{\mathcal H_{1,n}}(1-\varphi_n)\to 1$. This case corresponds to $\xi_1\equiv 0\equiv \xi_2$. 

\item (Detectable) The LLR test $\varphi_n=\mathbf{1}\{T_n>c_n\}$ with appropriate critical values $c_n\in\R$ can separate the null and the alternative asymptotically but not completely, i.e. $E_{\mathcal H_{0,n}}(\varphi_n)+ E_{\mathcal H_{1,n}}(1-\varphi_n)\to c\in (0,1)$.
\end{enumerate}
In the following we denote the completely detectable and the undetectable case as the trivial cases since the limits of $T_n$ are degenerated. We start by discussing these and we present a useful tool to verify these trivial cases/limits of $T_n$. After that we will see that the same tools can be used to determine the nontrivial limits in the detectable case. In the last two subsections we consider the asymptotic relative efficiency, compare to \eqref{enu:intro_hmod_what_wrong_h_beta} from \Cref{sec:intro_h}, and explain what to do when the condition \eqref{eqn:absolute_conti} is violated.

\subsection{Trivial limits}\label{sec:trivial_limits}
In the proofs we work with different distances for probability measure, among others the Hellinger distance and the variational distance. Using theses distances we can classify the different detection regions. We refer the reader to the \Cref{appendix:proofs}, for further details. Here, we only present our new tool. Let us introduce for all $x>0$ the following two sums 
\begin{align}
&I_{n,1,x}=\sum_{i=1}^{k_n} \varepsilon_{n,i}\mu_{n,i}\Bigl( \varepsilon_{n,i}\frac{\mathrm{ d } \mu_{n,i}}{\,\mathrm{ d }P_{n,i}} > x  \Bigr)\label{eqn:I1}
\\
\textrm{and }&I_{n,2,x}=\sum_{i=1}^{k_n} \varepsilon_{n,i}^2E_{P_{n,i}} \Bigl( \Bigl( \frac{\mathrm{ d } \mu_{n,i}}{\,\mathrm{ d }P_{n,i}} \Bigr)^2 \mathbf{1}\Bigl\{ \varepsilon_{n,i}\frac{\mathrm{ d } \mu_{n,i}}{\,\mathrm{ d }P_{n,i}} \leq x \Bigr\} - 1  \Bigr).	\label{eqn:I2}			
\end{align}
\begin{theorem}\label{theo:trivial_limits}
Let $\tau>0$ be fixed.
\begin{enumerate}[(a)]
	\item\label{enu:lem_trivial_limits_comp_det} The completely detectable case is present  if and only if $I_{n,1,\tau}$ or $I_{n,2,\tau}$ tends to $\infty$.
	
	\item\label{enu:lem_trivial_limits_undet} We are in the undetectable case if and only if $I_{n,1,\tau}$ as well as $I_{n,2,\tau}$ tends to $0$.
\end{enumerate}
\end{theorem}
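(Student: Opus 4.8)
The plan is to reduce both equivalences to the asymptotics of the total squared Hellinger distance $h_n^2:=\sum_{i=1}^{k_n}H^2(P_{n,i},Q_{n,i})$, and then to sandwich $h_n^2$ between the two truncated functionals $I_{n,1,\tau}$ and $I_{n,2,\tau}$. \emph{Step 1 (reduction).} By the classical Le Cam theory for products of probability measures (see, e.g., \citet{Strasser1985}) the minimal sum of error probabilities for the testing problem \eqref{eqn:testing_problem} equals the overlap $\|P_{(n)}\wedge Q_{(n)}\|=1-\|P_{(n)}-Q_{(n)}\|_{\mathrm{TV}}$, so the completely detectable case amounts to $\|P_{(n)}\wedge Q_{(n)}\|\to0$ and the undetectable case to $\|P_{(n)}\wedge Q_{(n)}\|\to1$, i.e. $\|P_{(n)}-Q_{(n)}\|_{\mathrm{TV}}\to0$. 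With $\rho_{n,i}:=1-H^2(P_{n,i},Q_{n,i})$ the Hellinger affinity, $\rho(P_{(n)},Q_{(n)})=\prod_{i=1}^{k_n}\rho_{n,i}$ controls both quantities via $\tfrac12\rho(P_{(n)},Q_{(n)})^2\le\|P_{(n)}\wedge Q_{(n)}\|\le\rho(P_{(n)},Q_{(n)})$ and $1-\rho(P_{(n)},Q_{(n)})\le\|P_{(n)}-Q_{(n)}\|_{\mathrm{TV}}\le\sqrt{2(1-\rho(P_{(n)},Q_{(n)}))}$. By \eqref{eqn:def_qni} one has $\|P_{n,i}-Q_{n,i}\|_{\mathrm{TV}}=\varepsilon_{n,i}\|\mu_{n,i}-P_{n,i}\|_{\mathrm{TV}}\le\max_j\varepsilon_{n,j}\to0$ by \eqref{eqn:maxeps_to_0}, hence $H^2(P_{n,i},Q_{n,i})\le\max_j\varepsilon_{n,j}$ uniformly in $i$; therefore $\prod_i\rho_{n,i}\to1\iff h_n^2\to0$ and $\prod_i\rho_{n,i}\to0\iff h_n^2\to\infty$. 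Thus it remains to compare $h_n^2$ with $I_{n,1,\tau}$ and $I_{n,2,\tau}$.

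\emph{Step 2 (the sandwich).} Fix $\tau>0$ and write, for $i\le k_n$, $g:=\mathrm{d}\mu_{n,i}/\mathrm{d}P_{n,i}$ (using \eqref{eqn:absolute_conti}), $\varepsilon:=\varepsilon_{n,i}$, $w:=1+\varepsilon(g-1)=\mathrm{d}Q_{n,i}/\mathrm{d}P_{n,i}$, and split along $A_i:=\{\varepsilon g>\tau\}$; by \eqref{eqn:maxeps_to_0} we may assume $\max_j\varepsilon_{n,j}<\min(1,\tau)$. On $A_i^c$ the ratio $w$ lies in the fixed compact set $[\tfrac12,1+\tau]\subset(0,\infty)$, hence $(\sqrt w-1)^2=(w-1)^2/(\sqrt w+1)^2$ is comparable, with constants depending only on $\tau$, to $(w-1)^2=\varepsilon^2(g-1)^2$; expanding $\int_{A_i^c}(g-1)^2\,\mathrm{d}P_{n,i}$ using $\int_{A_i^c}g\,\mathrm{d}P_{n,i}=\mu_{n,i}(A_i^c)$ and the Markov bound $P_{n,i}(A_i)\le(\varepsilon/\tau)\mu_{n,i}(A_i)$ gives $\sum_i\varepsilon_{n,i}^2\int_{A_i^c}(g_{n,i}-1)^2\,\mathrm{d}P_{n,i}=I_{n,2,\tau}+\delta_n$ with $0\le\delta_n\le2(\max_j\varepsilon_{n,j})I_{n,1,\tau}$. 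On $A_i$ one has $w\ge1+\tau/2$, so $(\sqrt w-1)^2\asymp_\tau w$, and $\int_{A_i}w\,\mathrm{d}P_{n,i}=(1-\varepsilon)P_{n,i}(A_i)+\varepsilon\mu_{n,i}(A_i)\asymp_\tau\varepsilon\mu_{n,i}(A_i)$ again by the Markov bound, whence $\sum_i\int_{A_i}(\sqrt w-1)^2\,\mathrm{d}P_{n,i}\asymp_\tau I_{n,1,\tau}$. Collecting the two pieces of $2h_n^2=\sum_i\int(\sqrt w-1)^2\,\mathrm{d}P_{n,i}$ yields constants $0<c_\tau\le C_\tau<\infty$ and an index $n_0$ with
\begin{align*}
c_\tau\bigl(I_{n,1,\tau}+I_{n,2,\tau}+\delta_n\bigr)\le 2h_n^2\le C_\tau\bigl(I_{n,1,\tau}+I_{n,2,\tau}+\delta_n\bigr)\qquad(n\ge n_0),
\end{align*}
where $I_{n,1,\tau}\ge0$, $I_{n,2,\tau}+\delta_n=\sum_i\varepsilon_{n,i}^2\int_{A_i^c}(g_{n,i}-1)^2\,\mathrm{d}P_{n,i}\ge0$, and $0\le\delta_n\le2(\max_j\varepsilon_{n,j})I_{n,1,\tau}$.

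\emph{Step 3 (reading off).} If $I_{n,1,\tau}\to0$ and $I_{n,2,\tau}\to0$ then $\delta_n\to0$, the right inequality forces $h_n^2\to0$, and we are in the undetectable case; conversely $h_n^2\to0$ makes both nonnegative blocks on the left vanish, so $I_{n,1,\tau}\to0$, and then $I_{n,2,\tau}\to0$ since $0\le\delta_n\le2(\max_j\varepsilon_{n,j})I_{n,1,\tau}\to0$. If $I_{n,1,\tau}\to\infty$ or $I_{n,2,\tau}\to\infty$ then one of the nonnegative blocks diverges and the left inequality gives $h_n^2\to\infty$, i.e. the completely detectable case; conversely $h_n^2\to\infty$ makes $I_{n,1,\tau}+I_{n,2,\tau}+\delta_n\to\infty$, and because $\delta_n\le2(\max_j\varepsilon_{n,j})I_{n,1,\tau}$ the negative part of $I_{n,2,\tau}$ is negligible relative to $I_{n,1,\tau}$, whence $I_{n,1,\tau}$ or $I_{n,2,\tau}$ tends to $\infty$. (Since the left-hand notions are $\tau$-free, the equivalences in particular show the right-hand conditions do not depend on the chosen $\tau$.)

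I expect Step 2 to be the real work. The delicate points are that $I_{n,2,\tau}$ need not be nonnegative, so its negative part has to be absorbed into $I_{n,1,\tau}$ through the Markov inequality $P_{n,i}(\varepsilon_{n,i}\,\mathrm{d}\mu_{n,i}/\mathrm{d}P_{n,i}>\tau)\le(\varepsilon_{n,i}/\tau)\,\mu_{n,i}(\varepsilon_{n,i}\,\mathrm{d}\mu_{n,i}/\mathrm{d}P_{n,i}>\tau)$ together with $\max_j\varepsilon_{n,j}\to0$; and that every comparison constant must stay independent of $n$ and $i$ (it may depend on $\tau$), which is exactly what makes the resulting dichotomies uniform in the truncation level.
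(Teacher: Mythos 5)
Your argument is correct and follows essentially the same route as the paper: you reduce both regimes to the asymptotics of the summed squared Hellinger distances (the paper's Lemma \ref{lem:D_detection}) and then sandwich that sum between $I_{n,1,\tau}$ and $I_{n,2,\tau}$ by splitting at $A_{n,i,\tau}=\{\varepsilon_{n,i}\,\mathrm{d}\mu_{n,i}/\mathrm{d}P_{n,i}>\tau\}$ and using Markov's inequality together with $\max_{i}\varepsilon_{n,i}\to 0$, which is exactly the content of the paper's Lemma \ref{lem:trivial_limits}. The differences are cosmetic only (a symmetric $\asymp_\tau$ comparison of the two pieces in place of the paper's specific bounds \eqref{eqn:boundaries_Dn_upper} and \eqref{eqn:boundaries_Dn_lower}).
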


\subsection{Nontrivial limits}

It turns out that only a special class of distributions $\nu_1$ and $\nu_2$, say, of $\xi_1$ and $\xi_2$ may occur. The results fit in the more general framework of statistical experiments: all nontrivial weak accumulation points with respect to the weak topology of statistical experiments are infinitely divisible statistical experiments in the sense of \citet{LeCam1986}, see \cite{LeCamYang2000} and \cite{Janssen1990}. In the following we explain what this means in our situation. Classical infinitely divisible distributions on $(\R,{\mathcal B })$ play a key role for our setting. That is why we want to recall that the characteristic function $\varphi$ of an infinitely divisible distribution on $(\R,{\mathcal B })$ is given by the L\'{e}vy-Khintchine formula 
\begin{align*}
\varphi(t)\; = \; \exp\Bigl[ \text{i} \gamma t - \frac{\sigma^2 t^2}{2}+\int_{\R\setminus\{0\}}\, \Bigl( \exp(\text{i} t x) - 1 - \frac{\text{i} t x}{1+x^2} \Bigr) \,\,\mathrm{ d } \eta(x) \Bigr],\, t\in\R,
\end{align*}
where $\gamma\in\R$, $\sigma^2\in[0,\infty)$ and $\eta$ is a L\'{e}vy measure, i.e. $\eta$ is a measure on $\R\setminus\{0\}$ with $\int \min(x^2,1) \,\mathrm{ d } \eta <\infty$. The triple $(\gamma,\sigma^2,\eta)$ is called the L\'{e}vy-Khintchine triple and is unique. See \citet{gndedenKolmogorov} for more details about infinitely divisible distributions.
The following theorem gives us a characterisation of all possible limits  of $T_n$.
\begin{theorem}\label{theo:xi_real_oder_xi_fullinfo}
\begin{enumerate}[(a)]		\item\label{enu:theo:xi_real_oder_fullinfo} Either $\xi_1$ is real-valued or $\xi_1\equiv -\infty$ with probability one. In case of the latter $\xi_2\equiv \infty$ with probability one. 
	
	\item\label{enu:theo:xi_real_oder_xi_fullinfo_infinit_divi} Suppose $\xi_1$ is real-valued. Then $a=P(\xi_2\in\R)>0$ and we can rewrite $\nu_2=a\rho+(1-a)\epsilon_{\infty}$, where $\rho(A)=a^{-1}\nu_2(A\cup \R)$ for all $A\in\mathcal B([-\infty,\infty])$. Moreover, $\nu_1$ and $\rho=a^{-1}\nu_{2|\R}$ are infinitely divisible distributions on $(\R,\mathcal B)$. Let $(\gamma_1,\sigma_1^2,\eta_1)$ and $(\gamma_2,\sigma_2^2,\eta_2)$ be the L\'{e}vy-Khintchine triplets of $\nu_1$ and $\rho=a^{-1}\nu_{2|\R}$. Then we have:
	\begin{enumerate}[(i)]
		\item\label{eqn:theo:connect_Levymeasure} The L\'{e}vy measures $\eta_1$ and $\eta_2$ are concentrated on $(0,\infty)$, i.e. $\eta_j(-\infty,0)=0$.  and $\int_{(0,\infty)} e^x \,\mathrm{ d }\eta_1(x)<\infty$. Moreover, $\frac{\mathrm{ d } 			\eta_2}{\,\mathrm{ d }\eta_1}(x)=e^x\textrm{ for all }x>0.$
		
		\item\label{eqn:theo:connect_sigma} The variances of the Gaussian parts of $\xi_1$ and $\xi_2$ coincide, i.e. $\sigma_1^2=\sigma_2^2$.
		
		\item\label{eqn:theo:connect_gamma} The drift parameters $\gamma_1$ and $\gamma_2$ fulfill the formulas:
		\begin{alignat}{2}
		&\log(a)&&=\gamma_1 +\frac{\sigma^2_1}{2}-\int_{(0,\infty)} \Bigl(1-e^x  + \frac{x}{1+x^2} \Bigr) \:\,\mathrm{ d }\eta_1(x),\label{eqn:gamma1}\\
		&\gamma_2&& = \gamma_1 + \sigma^2_1 +  \int_{(0,\infty)} ( e^x-1)\frac{x}{1+x^2} \,\mathrm{ d }\eta_1(x). \label{eqn:gamma_2_general} 	
		\end{alignat}
	\end{enumerate}
\end{enumerate}
\end{theorem}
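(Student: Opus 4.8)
The plan is to view the pair $(P_{(n)},Q_{(n)})$ as a product of the small binary experiments $(P_{n,i},Q_{n,i})$, which by \eqref{eqn:maxeps_to_0} form an infinitesimal system, and to combine the classical Gnedenko--Kolmogorov limit theory for triangular arrays with the exponential (Esscher) structure that a log-likelihood ratio forces on the pair $(\nu_1,\nu_2)$; the ambient framework is that of infinitely divisible statistical experiments, cf.\ \citet{LeCam1986}, \citet{LeCamYang2000}, \citet{Janssen1990}. Passing to a subsequence I assume $T_n$ converges under $P_{(n)}$ and under $Q_{(n)}$ as in \eqref{eqn:def_Tn}. Since eventually $\max_i\varepsilon_{n,i}<1$, by \eqref{eqn:absolute_conti} the $P_{n,i}$ and $Q_{n,i}$ are mutually absolutely continuous, the summands $W_{n,i}:=\log\frac{\mathrm d Q_{n,i}}{\mathrm d P_{n,i}}(Y_{n,i})$ satisfy $W_{n,i}\ge\log(1-\max_j\varepsilon_{n,j})\to0$ --- their negative parts vanish uniformly --- and $(W_{n,i})_{i\le k_n}$ is infinitesimal under $P_{(n)}$ (Markov's inequality plus $E_{P_{n,i}}\frac{\mathrm d\mu_{n,i}}{\mathrm d P_{n,i}}=1$). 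From $E_{P_{(n)}}e^{T_n}=E_{P_{(n)}}\frac{\mathrm d Q_{(n)}}{\mathrm d P_{(n)}}=1$ and Markov we get $P_{(n)}(T_n>t)\le e^{-t}$, so $P(\xi_1=\infty)=0$ and, by Fatou, $E e^{\xi_1}\le1$; symmetrically $E_{Q_{(n)}}e^{-T_n}\le1$ gives $P(\xi_2=-\infty)=0$. Hence $\xi_1\in[-\infty,\infty)$ and $\xi_2\in(-\infty,\infty]$ almost surely.

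The next step is the identity $\nu_{2|\R}(\mathrm d x)=e^x\,\nu_1(\mathrm d x)$ on $\R$, which needs no information yet on whether $\xi_1$ is proper. Since $Q_{(n)}=e^{T_n}P_{(n)}$, for bounded continuous $g\ge0$ on $[-\infty,\infty]$ one has $E_{Q_{(n)}}g(T_n)=E_{P_{(n)}}g(T_n)e^{T_n}\ge E_{P_{(n)}}g(T_n)e^{T_n}\mathbf 1\{T_n\le M\}$; letting $n\to\infty$ (the integrand is bounded and $\nu_1$-a.s.\ continuous for $M$ off the at most countably many atoms of $\nu_1$) and then $M\to\infty$ (monotone convergence, using $\xi_1<\infty$ a.s.) gives $\liminf_n E_{Q_{(n)}}g(T_n)\ge\int_{\R}g(x)e^x\,\nu_1(\mathrm d x)$; as the left side converges to $\int g\,\mathrm d\nu_2$, taking $g$ that vanishes at $\pm\infty$ yields $\nu_{2|\R}\ge e^x\nu_1$. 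The same argument with $P_{(n)}$ and $Q_{(n)}$ swapped (using $P_{(n)}=e^{-T_n}Q_{(n)}$ and $T_n\to\xi_2$ under $Q_{(n)}$) gives $\nu_{1|\R}\ge e^{-x}\nu_{2|\R}$, and the two bounds force equality. Therefore $a:=P(\xi_2\in\R)=\int_{\R}e^x\,\mathrm d\nu_1=E e^{\xi_1}\le1$ and $\nu_2=a\rho+(1-a)\epsilon_{\infty}$ with $\rho(\mathrm d x)=a^{-1}e^x\nu_1(\mathrm d x)$. If $\xi_1\equiv-\infty$ then $\nu_{1|\R}=0$, so $\nu_{2|\R}=0$ and, since $\nu_2(\{-\infty\})=0$, $\xi_2\equiv\infty$; while as soon as $\xi_1$ puts positive mass on $\R$ we get $a>0$.

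It remains to exclude that $\xi_1$ has mass on $\R$ and at $-\infty$ simultaneously, and to identify $\nu_1$ (hence $\rho$) as infinitely divisible. By \Cref{theo:trivial_limits} the completely detectable case --- equivalently $\xi_1\equiv-\infty$, $\xi_2\equiv\infty$ --- is precisely $I_{n,1,\tau}\to\infty$ or $I_{n,2,\tau}\to\infty$; otherwise both sequences stay bounded along the subsequence, and these are exactly the sums that govern, in the Gnedenko--Kolmogorov conditions, the limiting L\'evy mass and the truncated variance of the recentred sum $T_n-\sum_i E_{P_{n,i}}(W_{n,i}\mathbf 1\{W_{n,i}\le\tau\})$. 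Together with the one-sided bound on the $W_{n,i}$ and the tail bound $P_{(n)}(T_n>t)\le e^{-t}$, this places the array in the scope of the Gnedenko--Kolmogorov limit theorem for infinitesimal sums and forces the limit law $\nu_1$ of $T_n$ under $P_{(n)}$ to be a proper infinitely divisible distribution on $(\R,\mathcal B)$, which settles part~(a). Since $P_{n,i}(W_{n,i}<-\delta)=0$ eventually for every $\delta>0$, the L\'evy measure $\eta_1$ of $\nu_1$ is concentrated on $(0,\infty)$, and $\int e^x\,\mathrm d\nu_1=E e^{\xi_1}<\infty$ forces $\int_{(0,\infty)}e^x\,\mathrm d\eta_1<\infty$. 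Then $\rho=a^{-1}e^x\nu_1$ is a well-defined Esscher transform of $\nu_1$, hence again infinitely divisible with L\'evy--Khintchine triple $(\gamma_2,\sigma_1^2,e^x\eta_1(\mathrm d x))$; writing the cumulant generating function of $\nu_1$, evaluating it at $1$ and at $1+\mathrm it$, and comparing with the L\'evy--Khintchine form of $\rho$ yields $\sigma_2^2=\sigma_1^2$ from the quadratic term, \eqref{eqn:gamma1} from the identity $a=E e^{\xi_1}$, and \eqref{eqn:gamma_2_general} from the linear term.

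The main obstacle is this extraction of proper infinite divisibility of $\nu_1$ from the boundedness of $I_{n,1,\tau}$ and $I_{n,2,\tau}$, i.e.\ the clean "no partial escape to $-\infty$" dichotomy. This is exactly where infinitesimality together with the one-sided bound $W_{n,i}\ge\log(1-\max_j\varepsilon_{n,j})$ --- hence the sparsity \eqref{eqn:maxeps_to_0} --- is genuinely needed: one must verify that in the merely detectable regime the centring $\sum_i E_{P_{n,i}}(W_{n,i}\mathbf 1\{W_{n,i}\le\tau\})$ remains bounded (it drifts to $-\infty$ only when completely detectable, in which case the Hellinger affinity of $P_{(n)},Q_{(n)}$ vanishes), so that the stochastic part of $T_n$ is a tight infinitely divisible fluctuation and no mass can split off to $-\infty$. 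Everything after that is the routine bookkeeping of infinitely divisible binary experiments.
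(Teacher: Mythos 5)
Your proposal is correct in substance and, for part (a), follows the same skeleton as the paper: mass of $\xi_1$ at $+\infty$ is excluded by the Markov bound $P_{(n)}(T_n>t)\le e^{-t}$, \Cref{theo:trivial_limits} gives boundedness of $I_{n,1,\tau}$ and $I_{n,2,\tau}$ off the completely detectable case, and the infinitesimal array $W_{n,i}\ge\log(1-\max_j\varepsilon_{n,j})$ is then fed into the Gnedenko--Kolmogorov theory; the step you single out as the main obstacle --- that bounded $I_{n,1,\tau},I_{n,2,\tau}$ keep the truncated centring and variance bounded, so that no mass can split off to $-\infty$ --- is precisely the content of the paper's Lemma \ref{lem:general_limit_theorem} (a Taylor expansion of $\log(1+u)$ combined with \eqref{eqn:P(A^c)<I} and \eqref{eqn:bounds_I2tilde}), so you have located the right technical lemma even though you only sketch it. Where you genuinely diverge is part (b): the paper shifts $\{\nu_1,a^{-1}\nu_{2|\R}\}$ by $-\log a$ into a binary experiment in standard form and invokes the generating-functional calculus of \citet{JanssenMilbrodtStrasser1985} (their Lemma (8.7)) to obtain infinite divisibility of $\rho$ and the triplet relations, whereas you first re-derive the standard relation \eqref{eqn:relation_nu_2_nu_1} by a truncation argument and then recognise $\rho=a^{-1}e^x\nu_1$ as a classical Esscher tilt of an infinitely divisible law with $\int_{(0,\infty)}e^x\,\mathrm{d}\eta_1<\infty$, reading off $\sigma_2^2=\sigma_1^2$, $\mathrm{d}\eta_2/\mathrm{d}\eta_1=e^x$, \eqref{eqn:gamma1} and \eqref{eqn:gamma_2_general} from the analytically continued cumulant. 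Your route is more elementary and self-contained: it needs only classical infinite-divisibility theory (including the standard equivalence between $E e^{\xi_1}<\infty$ and $\int_{\{x\ge1\}}e^x\,\mathrm{d}\eta_1<\infty$) and, for this theorem, dispenses with \Cref{theo:petrov_including_infinity}, since the structure $\nu_2=a\rho+(1-a)\epsilon_\infty$ follows from the tilting relation together with $\nu_2(\{-\infty\})=0$; the paper's route buys consistency with the experiment-theoretic framework it uses elsewhere and avoids justifying the exponential tilting and analytic continuation by hand.
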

\begin{remark}\label{rem:contiguous}
If $\xi_1$ is real-valued then by Le Cam's first Lemma the null (product) measure $P_{(n)}$ is contiguous with respect to the alternative (product) measure $Q_{(n)}$, i.e. $Q_{(n)}(A_n)\to 0$ implies $P_{(n)}(A_n)\to 0$. If additionally $\xi_2$ is real-valued then $P_{(n)}$ and $Q_{(n)}$ are mutually contiguous, i.e. $Q_{(n)}(A_n)\to 0$ if and only if $P_{(n)}(A_n)\to 0$. Observe that under mutually contiguity a random variable is asymptotically constant under the null $P_{(n)}$ if and only if this is the case under the alternative $Q_{(n)}$.
\end{remark}
According to \Cref{theo:xi_real_oder_xi_fullinfo}\eqref{enu:theo:xi_real_oder_xi_fullinfo_infinit_divi} the L\'{e}vy-Khintchine triplets of $\nu$ and $\rho=a^{-1}\nu_{2|\R}$ are closely related to each other. This was already observed in the context of statistical experiments by \citet{JanssenMilbrodtStrasser1985}. \\
Now, we know the class of all possible limits and, hence, the questions arises naturally how to determine the distribution of $\xi_1$ and $\xi_2$ for a given setting. To answer this question we first observe that by \Cref{theo:xi_real_oder_xi_fullinfo}\eqref{eqn:theo:connect_Levymeasure} the L\'{e}vy measures $\eta_1$ and $\eta_2$ are uniquely determined by their difference $M=\eta_2-\eta_1$. Combining this, \Cref{theo:xi_real_oder_xi_fullinfo}\eqref{eqn:theo:connect_sigma} and \Cref{theo:xi_real_oder_xi_fullinfo}\eqref{eqn:theo:connect_gamma} yields that $M$, $\sigma_1^2$ and $a=\nu_2(\R)$ serve to understand the distribution of $\xi_1$ and $\xi_2$ completely. We will see that these three are determined by the limits of the sums given by \eqref{eqn:I1} and \eqref{eqn:I2}. To give a first impression why this is the case we explain briefly the impact of $I_{n,1,x}$. Since the summands of $T_n$ fulfill the so-called condition of infinite smallness, i.e. a finite number of summands has no influence of the sum's convergence behaviour, well-known limit theorems to infinitely divisible distributed random variable can be applied, see, for instance, \citet{gndedenKolmogorov}. In the case of real-valued $\xi_1$ we obtain from these theorems
\begin{align}\label{eqn:sumP_conv_eta1}
\sum_{i=1}^{k_n}P_{n,i}\Bigl( \varepsilon_{n,i}\frac{\mathrm{ d } \mu_{n,i}}{\,\mathrm{ d }P_{n,i}}>e^x-1+\varepsilon_{n,i} \Bigr) \to \eta_1(x,\infty)
\end{align}
for all x from a dense subset of $(0,\infty)$. If additionally $\xi_2$ is real valued then the same holds for $\eta_2$ when we replace $P_{n,i}$ by $Q_{n,i}$. Combining these and \eqref{eqn:maxeps_to_0} shows that $I_{n,1,e^x-1}$ tends to $M(x,\infty)=(\eta_2-\eta_1)(x,\infty)$ for all x coming from a dense subset of $(0,\infty)$ if both, $\xi_1$ and $\xi_2$, are real-valued. In the case of $a=\nu_2(\R)=P(\xi_2\in\R)<1$ a similar convergence can be observed, namely $I_{n,1,e^x-1}$ tends to $(\eta_2-\eta_1)(x,\infty)+M(\infty)$, where the mass $M(\infty)$ in the point $\infty$ characterizes $a$ uniquely.

\begin{theorem}\label{theo:general_limit_theorem}
Let $I_{n,1,x}$ and $I_{n,2,x}$, $x>0$, be defined as in \eqref{eqn:I1} and \eqref{eqn:I2}. $\xi_1$ is real-valued if and only if the following \eqref{enu:theo:general_limit_theorem_levy_mass} and \eqref{enu:theo:general_limit_theorem_sigma} hold:
\begin{enumerate}[(a)]
	\item\label{enu:theo:general_limit_theorem_levy_mass} There is a dense subset ${\mathcal D }$ of $(0,\infty)$ and a 
	measure $M$ on $((0,\infty],{\mathcal B }(0,\infty])$ such that for all $x\in {\mathcal D }$
	$$\lim_{n\to\infty} I_{n,1,e^x-1} = M(x,\infty].$$
	
	\item \label{enu:theo:general_limit_theorem_sigma} For some $\sigma^2\in[0,\infty)$ we have 
	\begin{align*}
	\lim_{x\searrow 0} \,\underset{n\to\infty}{ \substack{ \limsup \\ \liminf } }\; I_{n,2,x} = \sigma^2,
	\end{align*}
	i.e. this equation holds for $\limsup_{n\to\infty}$ and $\liminf_{n\to\infty}$ simultaneously.
\end{enumerate}
If \eqref{enu:theo:general_limit_theorem_levy_mass} and \eqref{enu:theo:general_limit_theorem_sigma} hold then using the notation from \Cref{theo:xi_real_oder_xi_fullinfo}\eqref{enu:theo:xi_real_oder_xi_fullinfo_infinit_divi} we obtain $\nu_2(\R)=\exp(-M(\{\infty\})),$ $\sigma^2=\sigma_1^2=\sigma_2^2$ and $\eta_2-\eta_1=M_{|(0,\infty)}$.
\end{theorem}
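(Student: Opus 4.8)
The plan is to reduce everything to the classical Gnedenko--Kolmogorov limit theory for uniformly infinitesimal (u.i.) triangular arrays, applied to the summands $\zeta_{n,i}:=\log\frac{\mathrm d Q_{n,i}}{\mathrm d P_{n,i}}(Y_{n,i})$ of $T_n$; write $L_{n,i}=\frac{\mathrm d \mu_{n,i}}{\mathrm d P_{n,i}}$. First I would isolate the three elementary facts that carry the argument: the deterministic bound $\zeta_{n,i}\ge\log(1-\varepsilon_{n,i})$, so that with \eqref{eqn:maxeps_to_0} downward excursions are eventually impossible and any L\'evy measure that appears is concentrated on $(0,\infty)$; the likelihood identity $E_{P_{n,i}}(e^{\zeta_{n,i}})=1$; and the pointwise relations $\varepsilon_{n,i}L_{n,i}=e^{\zeta_{n,i}}-1+\varepsilon_{n,i}$ and $\varepsilon_{n,i}\mu_{n,i}=Q_{n,i}-(1-\varepsilon_{n,i})P_{n,i}$. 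Using these and $\max_i\varepsilon_{n,i}\to0$ one checks that $\{\zeta_{n,i}\}$ is u.i.\ under $P_{(n)}$ (from $P_{n,i}(\zeta_{n,i}>\delta)\le\varepsilon_{n,i}(e^{\delta}-1)^{-1}$ by Markov) and under $Q_{(n)}$ (splitting $Q_{n,i}=(1-\varepsilon_{n,i})P_{n,i}+\varepsilon_{n,i}\mu_{n,i}$), and --- crucially --- that for every $x>0$
\[
I_{n,1,e^{x}-1}=\sum_{i=1}^{k_n}\Bigl[\,Q_{n,i}\bigl(\zeta_{n,i}>\log(e^{x}-\varepsilon_{n,i})\bigr)-(1-\varepsilon_{n,i})\,P_{n,i}\bigl(\zeta_{n,i}>\log(e^{x}-\varepsilon_{n,i})\bigr)\Bigr],
\]
which is the bridge between the sums \eqref{eqn:I1}--\eqref{eqn:I2} and the usual tail sums of Gnedenko--Kolmogorov.

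For ``$\xi_1$ real-valued $\Rightarrow$ \eqref{enu:theo:general_limit_theorem_levy_mass} and \eqref{enu:theo:general_limit_theorem_sigma}'': the Gnedenko--Kolmogorov criterion applied to the u.i.\ array under $P_{(n)}$ turns $T_n\overset{\mathrm d}{\longrightarrow}\xi_1$ into a genuine infinitely divisible law (by \Cref{theo:xi_real_oder_xi_fullinfo}) and hence into \eqref{eqn:sumP_conv_eta1}, i.e.\ $\sum_iP_{n,i}(\zeta_{n,i}>x)\to\eta_1(x,\infty)$ at every continuity point of $\eta_1$; likewise, since $\{\zeta_{n,i}\}$ is u.i.\ under $Q_{(n)}$ and $T_n\overset{\mathrm d}{\longrightarrow}\xi_2$ on $[-\infty,\infty]$ --- whose real part $\rho$ is infinitely divisible with L\'evy measure $\eta_2$ and which puts mass $1-a$, $a=\nu_2(\R)$, on $+\infty$ --- one gets $\sum_iQ_{n,i}(\zeta_{n,i}>x)\to\eta_2(x,\infty)+\lambda$ at continuity points, where $\lambda$ is the mass escaping to $+\infty$; the Poisson relation $P(\mathrm{Poisson}(\lambda)=0)=a$ gives $\lambda=-\log a$. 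Feeding these into the bridge identity and cleaning up --- replacing the threshold $\log(e^{x}-\varepsilon_{n,i})$ by $x$, harmless by monotonicity of the tail sums in the threshold and a sandwich between the continuity points $x-\delta$ and $x$, and discarding the weights $1-\varepsilon_{n,i}$ since $\max_i\varepsilon_{n,i}\to0$ while $\sum_iP_{n,i}(\zeta_{n,i}>x)$ stays bounded --- yields $I_{n,1,e^{x}-1}\to\eta_2(x,\infty)+\lambda-\eta_1(x,\infty)$ on a dense set of $x$, i.e.\ \eqref{enu:theo:general_limit_theorem_levy_mass} with $M_{|(0,\infty)}=\eta_2-\eta_1$ (a non-negative measure because $\eta_2=e^{x}\eta_1\ge\eta_1$ on $(0,\infty)$ by \Cref{theo:xi_real_oder_xi_fullinfo}\eqref{eqn:theo:connect_Levymeasure}) and $M(\{\infty\})=\lambda$, whence also $\nu_2(\R)=\exp(-M(\{\infty\}))$. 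For \eqref{enu:theo:general_limit_theorem_sigma} I would use the Gnedenko--Kolmogorov variance condition --- that $V_n(\tau):=\sum_i\mathrm{Var}_{P_{n,i}}(\zeta_{n,i}\mathbf 1\{|\zeta_{n,i}|\le\tau\})$ satisfies $\lim_{\tau\searrow0}\limsup_nV_n(\tau)=\lim_{\tau\searrow0}\liminf_nV_n(\tau)=\sigma_1^{2}$ --- together with $\log(1+u)=u-u^{2}/2+O(u^{3})$: on $\{|\zeta_{n,i}|\le\tau\}=\{\varepsilon_{n,i}L_{n,i}\le e^{\tau}-1+\varepsilon_{n,i}\}$ one has $\zeta_{n,i}^{2}=\varepsilon_{n,i}^{2}(L_{n,i}-1)^{2}(1+O(\tau))$; the mean-zero identity $E_{P_{n,i}}[\varepsilon_{n,i}(L_{n,i}-1)]=0$ makes $\sum_i(E_{P_{n,i}}[\zeta_{n,i}\mathbf 1\{|\zeta_{n,i}|\le\tau\}])^{2}$ negligible; and, after expanding $(L_{n,i}-1)^{2}=L_{n,i}^{2}-2L_{n,i}+1$, the resulting summand differs from the $i$-th summand of $I_{n,2,e^{\tau}-1}$ only by terms that, summed over $i$, are $O(\max_j\varepsilon_{n,j})$ or are differences of $I_{n,1}$-type sums with thresholds that coalesce as $n\to\infty$; all of these vanish because $\sum_iP_{n,i}(\zeta_{n,i}>\tau)$ is bounded and $I_{n,1,e^{\tau}-1}$ converges. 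Hence $I_{n,2,e^{\tau}-1}$ obeys the same iterated limit, giving \eqref{enu:theo:general_limit_theorem_sigma} with $\sigma^{2}=\sigma_1^{2}=\sigma_2^{2}$ (the last equality by \Cref{theo:xi_real_oder_xi_fullinfo}\eqref{eqn:theo:connect_sigma}).

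For the converse, assume \eqref{enu:theo:general_limit_theorem_levy_mass} and \eqref{enu:theo:general_limit_theorem_sigma}. By \Cref{theo:xi_real_oder_xi_fullinfo}\eqref{enu:theo:xi_real_oder_fullinfo}, $\xi_1$ is real-valued or $\xi_1\equiv-\infty$, and I would exclude the latter via \Cref{theo:trivial_limits}\eqref{enu:lem_trivial_limits_comp_det}: since $x\mapsto I_{n,1,x}$ is non-increasing, choosing $x_0\in\mathcal D$ with $e^{x_0}-1\le\tau$ gives $\limsup_nI_{n,1,\tau}\le\limsup_nI_{n,1,e^{x_0}-1}=M(x_0,\infty]<\infty$; choosing (by \eqref{enu:theo:general_limit_theorem_sigma}) some $x_1>0$ with $\limsup_nI_{n,2,x_1}<\infty$, the elementary estimate $I_{n,2,\tau}\le I_{n,2,x_1}+\tau\,I_{n,1,x_1}$ --- valid because $L_{n,i}^{2}\le\varepsilon_{n,i}^{-1}\tau L_{n,i}$ on $\{\varepsilon_{n,i}L_{n,i}\le\tau\}$ --- then gives $\limsup_nI_{n,2,\tau}<\infty$. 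So neither $I_{n,1,\tau}$ nor $I_{n,2,\tau}$ tends to $\infty$, which by \Cref{theo:trivial_limits}\eqref{enu:lem_trivial_limits_comp_det} rules out the completely detectable case and forces $\xi_1\in\R$. The stated identities for $\nu_2(\R)$, $\sigma^{2}$ and $\eta_2-\eta_1$ then follow from the forward computation (which used only the u.i.\ property and the real-valuedness of $\xi_1$), using that the non-increasing right-continuous function $x\mapsto M(x,\infty]$ --- and hence the measure $M$ on $(0,\infty]$ --- is determined by its values on the dense set $\mathcal D$.

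The step I expect to be the real obstacle is \eqref{enu:theo:general_limit_theorem_sigma}: one must show that the discrepancy between the genuine truncated-variance sum $V_n(\tau)$ and $I_{n,2,e^{\tau}-1}$ --- arising from the cubic remainder of $\log(1+u)$, from the centering correction $\sum_i(E_{P_{n,i}}[\zeta_{n,i}\mathbf 1\{|\zeta_{n,i}|\le\tau\}])^{2}$, from the mismatch between $\{|\zeta_{n,i}|\le\tau\}$ and $\{\varepsilon_{n,i}L_{n,i}\le e^{\tau}-1\}$, and from the cross terms in $(L_{n,i}-1)^{2}=L_{n,i}^{2}-2L_{n,i}+1$ --- vanishes in the iterated limit $\lim_{\tau\searrow0}\limsup_n$; this forces one to keep strict control of the order of the limits in $\tau$ and $n$ and to chain the u.i.\ bounds with the already-established convergence of the $I_{n,1}$-sums and boundedness of the $P$-tail sums. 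A lesser nuisance is making the threshold replacement $e^{x}-1\leftrightarrow e^{x}-1+\varepsilon_{n,i}$ rigorous uniformly in $i$, which is handled by choosing $\mathcal D$ to avoid the countably many atoms of $\eta_1$ and $\eta_2$ and sandwiching between continuity points.
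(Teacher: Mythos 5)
Your converse direction ((a),(b) $\Rightarrow$ $\xi_1$ real-valued, via the monotonicity bounds on $I_{n,1,x}$, $I_{n,2,x}$ and \Cref{theo:trivial_limits}) is sound, and your Taylor-expansion bookkeeping relating the Gnedenko--Kolmogorov truncated-variance sums for $\zeta_{n,i}=\log\frac{\mathrm d Q_{n,i}}{\mathrm d P_{n,i}}$ to $I_{n,2,x}$ is essentially the computation the paper carries out in \Cref{lem:general_limit_theorem}. The genuine gap is in what you call the forward direction, at the point where you work under $Q_{(n)}$: you assert that, because $T_n\overset{\mathrm d}{\longrightarrow}\xi_2$ on $[-\infty,\infty]$ with $a=P(\xi_2\in\R)>0$, one ``gets'' $\sum_{i}Q_{n,i}(\zeta_{n,i}>x)\to\eta_2(x,\infty)-\log a$ at continuity points, together with the corresponding variance condition. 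That is a \emph{necessity} statement for a limit theorem on the extended line with mass escaping to $+\infty$. It is not part of classical Gnedenko--Kolmogorov theory (which, as in \Cref{theo:petrov_6_simplify}, concerns real-valued limits), and the paper never proves it: \Cref{theo:petrov_including_infinity} is only the sufficiency half, obtained by conditioning on $\{\max_i Y_{n,i}\le u_n\}$, and the Poisson relation $a=e^{-M(\{\infty\})}$ you invoke is exactly the content one has to establish, not a fact one may quote.

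To repair this step you would need, first, boundedness of the tails $\sum_i Q_{n,i}(\zeta_{n,i}>x)$ --- which is not automatic from convergence of $T_n$ under $Q_{(n)}$, since the summands are only bounded below by $\log(1-\varepsilon_{n,i})$ and $\sum_i\varepsilon_{n,i}$ may diverge (dense case); the paper gets the analogous bounds on $I_{n,1,\tau}$, $I_{n,2,\tau}$ from the Hellinger/variational estimates behind \Cref{lem:trivial_limits} and \Cref{theo:trivial_limits} --- and then a subsequence extraction, an application of the sufficiency result \Cref{theo:petrov_including_infinity}, and uniqueness of the representation $e^{-\lambda}\nu+(1-e^{-\lambda})\epsilon_{\infty}$ (via \Cref{theo:xi_real_oder_xi_fullinfo}) to pass from subsequential to full convergence. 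That is precisely the paper's route: it proves ``(a),(b) $\Rightarrow$ real-valued $\xi_1$ plus the identifications'' directly, and obtains the direction you treat as direct only through boundedness plus sub-subsequence and uniqueness arguments, never through a necessity version of \Cref{theo:petrov_including_infinity}. So the plan is reparable, but as written the central identity $\nu_2(\R)=\exp(-M(\{\infty\}))$ rests on an unproved assertion, and filling it in essentially forces you back onto the paper's argument.
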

\begin{remark}\label{rem:general_theor}
\begin{enumerate}[(i)]
	\item\label{enu:rem:general_theor_density} From \Cref{theo:xi_real_oder_xi_fullinfo}\eqref{eqn:theo:connect_Levymeasure} we get for all $x>0$
	\begin{align}\label{eqn:conn_eta_M}
	\frac{\mathrm{ d } \eta_1}{\,\mathrm{ d }M}(x)=\frac{1}{\exp(x) - 1}\textrm{ and }\frac{\mathrm{ d } \eta_2}{\,\mathrm{ d }M}(x)=\frac{\exp(x)}{\exp(x)-1}.
	\end{align}
	
	\item\label{enu:rem:general_theor_max} Consider the rowwise identical case with a noise distribution independent on $n$, i.e. $P_{n,i}=P_{0}$, $\mu_{n,i}=\mu_{n}$ and $\varepsilon_{n,i}=\varepsilon_{n}$. Thus, $Y_{n,1},\ldots,Y_{n,k_n}$ are identical $P_0$-distributed under the null.  By using techniques of extreme value theory it is sometimes possible to show that 
	\begin{align*}
	\max_{1\leq i \leq k_n}\Bigl\{ \varepsilon_{n}\frac{\mathrm{ d } \mu_{n}}{\,\mathrm{ d }P_{0}}(Y_{n,i})\Bigr\} \overset{\mathrm d}{\longrightarrow} \widetilde Y
	\end{align*}
	for a real-valued random variable $\widetilde Y$. Note that  $\max_{1\leq i \leq k_n}\{P_{n,i}(\varepsilon_{n,i}\frac{\mathrm{ d } \mu_{n,i}}{\,\mathrm{ d }P_{n,i}}>\tau)\}\leq \tau^{-1}\max_{1\leq i \leq k_n}\varepsilon_{n,i}\to 0$. Hence, regarding \eqref{eqn:sumP_conv_eta1} we get the following connection to the L\'{e}vy measure $\eta_1$ of $\xi_1$:
	\begin{align*}
	P(Y>e^x-1 ) = \exp(-\eta_1(x,\infty))
	\end{align*}
	for all $x$ coming from a dense subset of $(0,\infty)$.	This may be useful to get a first impression how to choose $\mu_{n}$ and $\varepsilon_{n}$ to obtain nontrivial limits.
\end{enumerate}
\end{remark}
%
%
\subsection{Asymptotic relative efficiency}\label{sec:ARE}
In the case of normal distributed limits we have
\begin{align}\label{eqn:Tn_normal}
T_n {\overset{\mathrm d}{\longrightarrow}}\left\{ \begin{array}{ll}
\xi_1\sim N(-\sigma^2/2,\sigma^2)\textrm{ under }P_{(n)}\text{ (null)},\\
\xi_2\sim N(\sigma^2/2,\sigma^2)\textrm{ under }Q_{(n)}\text{ (alternative)},
\end{array}\right. 
\end{align}
for some $\sigma\in[0,\infty)$, where $N(0,0)$ denotes the Dirac measure $\epsilon_0$ centered in $0$. In the case of $\sigma=0$ no test sequence can separate between the null and the alternative asymptotically, see \Cref{sec:trivial_limits}. Observe that both normal distributed limits depend only on one parameter, namely $\sigma^2$. In \Cref{appendix:additional}, see \Cref{theo:normal_limits}, we give many different equivalent conditions for normal distributed $\xi_1$ and $\xi_2$, even the conditions in \Cref{theo:xi_real_oder_xi_fullinfo} can be simplified in this case. Further equivalent conditions and closely related results can be found in Section A3 and A4 of \citet{Janssen1990}. 
In this section we restrict ourselves to these kind of limits, excluding the trivial case $\sigma=0$, and discuss the LLR test's power behaviour if the "wrong" signal distributions and/or the "wrong" signal probabilities  are chosen for the test statistic. To be more specific, we fix the triangular schemes of noise distributions $\{P_{n,i}:1\leq i \leq n\in\N\}$ and consider for $j=1,2$ a triangular scheme of signal distributions $\boldsymbol{\mu^{(j)}}=\{\mu_{n,i}^{(j)}:1\leq i \leq n\in\N\}$ as well as one of signal probabilities $\boldsymbol{\varepsilon^{(j)}}=\{\varepsilon_{n,i}^{(j)}:1\leq i \leq n\in\N\}$. Let $\boldsymbol{\theta_1}=(\boldsymbol{\mu^{(1)}},\boldsymbol{\varepsilon^{(1)}})$ be the true, underlying model and $\boldsymbol{\theta_2}=(\boldsymbol{\mu^{(2)}},\boldsymbol{\varepsilon^{(2)}})$ be the model pre-chosen by the statistician for the LLR test. Denote by $T_{n}(\boldsymbol{\theta_j})$ and $\varphi_{n}(\boldsymbol{\theta_j})=\mathbf{1}\{T_{n}(\boldsymbol{\theta}_j)>c_{n,j}\}$ the LLR statistic and the LLR test for the model $\boldsymbol{\theta_j}$, $j=1,2$. Using Pitman's asymptotic relative efficiency, see \citet{HajekSidakSen}, we quantify the loss in terms of the asymptotic power if $\varphi_{n}(\boldsymbol{\theta_2})$ instead of the optimal $\varphi_{n}( \boldsymbol{\theta_1})$ is used.

\begin{theorem}[LLR power under Gaussian limits]\label{theo:ARE}
Suppose that  $T_n(\boldsymbol{\theta_j})$, $j\in\{1,2\}$, converges to Gaussian limits, compare to \eqref{eqn:Tn_normal}, with $\sigma_j>0$. Moreover, assume that for $j,m\in\{1,2\}$ the limit
\begin{align}\label{eqn:theo:ARE:gamma_def} 
	\gamma(\boldsymbol{\theta_j},\boldsymbol{\theta_r})= \lim_{n\to\infty} \sum_{i=1}^{k_n} \varepsilon^{(j)}_{n,i} \varepsilon_{n,i}^{(r)} \text{Cov}_{P_{n,i}}\Bigl( \frac{\mathrm{ d } \mu_{n,i}^{(j)}}{\,\mathrm{ d }P_{n,i}},\frac{\mathrm{ d } \mu_{n,i}^{(r)}}{\,\mathrm{ d }P_{n,i}} \Bigr)
\end{align}
exists in $\R$. Suppose that $\gamma(\boldsymbol{\theta_j},\boldsymbol{\theta_j})=\sigma_j^2$. Let the critical values $c_{n,j}$ be chosen such that both tests $\varphi_n(\boldsymbol{\theta_1})$ and $\varphi_n(\boldsymbol{\theta_2})$ are asymptotically exact of a pre-chosen size $\alpha\in(0,1)$, i.e. $E_{\mathcal H_{0,n}}(\varphi_n(\boldsymbol{\theta_1}))\to \alpha$. Then the asymptotic power of the pre-chosen LLR test $\varphi_{n}(\boldsymbol{\theta_2})$ under the alternative ${\mathcal H }_{1,n}(\boldsymbol{\theta_1})$  of the true, underlying model $\boldsymbol{\theta_1}$ is given by
\begin{align*}
	 E_{{\mathcal H }_{1,n}(\boldsymbol{\theta_1})}(\varphi_{n,\boldsymbol{\theta_2}})&\to \Phi\Bigl( \frac{\gamma(\boldsymbol{\theta_1},\boldsymbol{\theta_2})}{\sqrt{\gamma(\boldsymbol{\theta_2},\boldsymbol{\theta_2})}}+u_{\alpha} \Bigr)\\
	&= \Phi\Bigl( \mathrm{sign}(\gamma(\boldsymbol{\theta_1},\boldsymbol{\theta_2}))\sqrt{\gamma(\boldsymbol{\theta_1},\boldsymbol{\theta_1})\mathrm{ARE}}+u_{\alpha} \Bigr),\\
	&\text{where }\mathrm{ARE}=\frac{\gamma(\boldsymbol{\theta_1},\boldsymbol{\theta_2})^2} {\gamma(\boldsymbol{\theta_1},\boldsymbol{\theta_1})\gamma(\boldsymbol{\theta_2},\boldsymbol{\theta_2})}\in[0,1]
\end{align*}
is Pitman's asymptotic relative efficiency, see \citet{HajekSidakSen}.
\end{theorem}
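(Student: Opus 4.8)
The plan is to reduce the assertion to a two-dimensional Gaussian-shift computation. The key observation is that under $\mathcal H_{0,n}$ every $\varepsilon^{(j)}_{n,i}$ vanishes, so both statistics $T_n(\boldsymbol{\theta_1})$ and $T_n(\boldsymbol{\theta_2})$ are evaluated on the \emph{same} experiment $P_{(n)}$, and $T_n(\boldsymbol{\theta_j})=\log(\mathrm{d}Q_{(n)}^{(j)}/\mathrm{d}P_{(n)})$. I would proceed in three steps: (i) prove joint asymptotic normality of the pair $(T_n(\boldsymbol{\theta_1}),T_n(\boldsymbol{\theta_2}))$ under $P_{(n)}$, with the limiting covariance matrix having diagonal entries $(\sigma_1^2,\sigma_2^2)$ and off-diagonal entry $\gamma(\boldsymbol{\theta_1},\boldsymbol{\theta_2})$; (ii) transfer to the alternative $\mathcal H_{1,n}(\boldsymbol{\theta_1})$ by Le Cam's third lemma; (iii) insert the size-$\alpha$ critical value and read off the power.

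For step (i) I would use the Cram\'er--Wold device. For fixed $s,t\in\R$ the random variable $s\,T_n(\boldsymbol{\theta_1})+t\,T_n(\boldsymbol{\theta_2})=\sum_{i=1}^{k_n}\bigl(s\log\frac{\mathrm{d}Q^{(1)}_{n,i}}{\mathrm{d}P_{n,i}}(Y_{n,i})+t\log\frac{\mathrm{d}Q^{(2)}_{n,i}}{\mathrm{d}P_{n,i}}(Y_{n,i})\bigr)$ is again a sum of row-wise independent, infinitesimal summands under $P_{(n)}$, so every weak accumulation point is infinitely divisible by the classical limit theory for triangular arrays (cf.\ \citet{gndedenKolmogorov} and \citet{Janssen1990}). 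Since $\max_i\varepsilon^{(j)}_{n,i}\to0$, see \eqref{eqn:maxeps_to_0}, the lower tails satisfy $\log\frac{\mathrm{d}Q^{(j)}_{n,i}}{\mathrm{d}P_{n,i}}\ge\log(1-\varepsilon^{(j)}_{n,i})>-\delta$ eventually, while the assumed Gaussian marginal limits force the L\'evy measures of both models to vanish, hence via \Cref{theo:general_limit_theorem} and \eqref{eqn:sumP_conv_eta1} the upper-tail conditions $\sum_i P_{n,i}(\log\frac{\mathrm{d}Q^{(j)}_{n,i}}{\mathrm{d}P_{n,i}}>\delta)\to0$ hold for $j=1,2$; consequently the ``no-jump'' condition carries over to the linear combination and its limit is Gaussian. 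Its parameters are identified by the usual truncated first and second moments: linearising $\log\bigl(1+\varepsilon^{(j)}_{n,i}(\frac{\mathrm{d}\mu^{(j)}_{n,i}}{\mathrm{d}P_{n,i}}-1)\bigr)$ and using $E_{P_{n,i}}(\frac{\mathrm{d}\mu^{(j)}_{n,i}}{\mathrm{d}P_{n,i}}-1)=0$, the centred leading terms have asymptotic variance $s^2\sigma_1^2+2st\,\gamma(\boldsymbol{\theta_1},\boldsymbol{\theta_2})+t^2\sigma_2^2$ (the cross term being precisely the limit \eqref{eqn:theo:ARE:gamma_def}, and $\sigma_j^2=\gamma(\boldsymbol{\theta_j},\boldsymbol{\theta_j})$ by hypothesis), while the $-u^2/2$ correction to the mean yields the drift $-s\sigma_1^2/2-t\sigma_2^2/2$, consistently with \eqref{eqn:Tn_normal}. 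Hence under $P_{(n)}$ the pair $(T_n(\boldsymbol{\theta_1}),T_n(\boldsymbol{\theta_2}))$ converges in distribution to a bivariate normal with mean vector $(-\sigma_1^2/2,-\sigma_2^2/2)$ and the covariance matrix described above.

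In step (ii) note that $\xi_1$ being real-valued gives mutual contiguity of $P_{(n)}$ and $Q_{(n)}^{(1)}$ (\Cref{rem:contiguous}), and that the limiting mean $-\sigma_1^2/2$ of $T_n(\boldsymbol{\theta_1})$ equals $-\tfrac12$ times its limiting variance, i.e.\ the Gaussian-shift form. Applying Le Cam's third lemma (see, e.g., \citet{HajekSidakSen} or \citet{LeCamYang2000}) to $(T_n(\boldsymbol{\theta_2}),T_n(\boldsymbol{\theta_1}))$ then yields $T_n(\boldsymbol{\theta_2})\overset{\mathrm d}{\longrightarrow}N\bigl(-\sigma_2^2/2+\gamma(\boldsymbol{\theta_1},\boldsymbol{\theta_2}),\,\sigma_2^2\bigr)$ under $\mathcal H_{1,n}(\boldsymbol{\theta_1})$. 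In step (iii), the asymptotic-size-$\alpha$ requirement together with the continuity of the null limit $N(-\sigma_2^2/2,\sigma_2^2)$ forces $c_{n,2}\to c:=-\sigma_2^2/2-\sigma_2 u_\alpha$, its $(1-\alpha)$-quantile, so by a Slutsky-type argument the power $E_{\mathcal H_{1,n}(\boldsymbol{\theta_1})}(\varphi_n(\boldsymbol{\theta_2}))=Q_{(n)}^{(1)}(T_n(\boldsymbol{\theta_2})>c_{n,2})$ converges to $1-\Phi\bigl((c+\sigma_2^2/2-\gamma(\boldsymbol{\theta_1},\boldsymbol{\theta_2}))/\sigma_2\bigr)=\Phi\bigl(u_\alpha+\gamma(\boldsymbol{\theta_1},\boldsymbol{\theta_2})/\sigma_2\bigr)$. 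Replacing $\sigma_2$ by $\sqrt{\gamma(\boldsymbol{\theta_2},\boldsymbol{\theta_2})}$ gives the first displayed limit; the $\mathrm{ARE}$-form follows by writing $\gamma(\boldsymbol{\theta_1},\boldsymbol{\theta_2})/\sqrt{\gamma(\boldsymbol{\theta_2},\boldsymbol{\theta_2})}=\mathrm{sign}(\gamma(\boldsymbol{\theta_1},\boldsymbol{\theta_2}))\sqrt{\gamma(\boldsymbol{\theta_1},\boldsymbol{\theta_1})\,\mathrm{ARE}}$, and $\mathrm{ARE}\in[0,1]$ because the Cauchy--Schwarz inequality for the finite-$n$ sums in \eqref{eqn:theo:ARE:gamma_def}, combined with $\gamma(\boldsymbol{\theta_j},\boldsymbol{\theta_j})=\sigma_j^2$, passes to the limit as $\gamma(\boldsymbol{\theta_1},\boldsymbol{\theta_2})^2\le\gamma(\boldsymbol{\theta_1},\boldsymbol{\theta_1})\gamma(\boldsymbol{\theta_2},\boldsymbol{\theta_2})$.

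The main obstacle is step (i): making the linearisation rigorous and, in particular, identifying the off-diagonal entry of the limiting covariance as $\gamma(\boldsymbol{\theta_1},\boldsymbol{\theta_2})$. One must truncate the two log-likelihood-ratio summands at some level $\delta>0$, verify that the mixed higher-order remainders (contributions of order $\sum_i(\varepsilon^{(1)}_{n,i})^a(\varepsilon^{(2)}_{n,i})^b$ with $a+b\ge3$ on the truncated region, as well as the gap between $E_{P_{n,i}}[\log(1+u_1)\log(1+u_2)]$ and $E_{P_{n,i}}[u_1u_2]$) are asymptotically negligible, and only then let $\delta\searrow0$. This is exactly where the full strength of the conditions equivalent to Gaussian limits enters; these are collected in \Cref{theo:normal_limits} and in Sections~A3--A4 of \citet{Janssen1990}.
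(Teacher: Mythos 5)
Your proposal is correct and follows essentially the same route as the paper's proof: joint asymptotic normality of $(T_n(\boldsymbol{\theta_1}),T_n(\boldsymbol{\theta_2}))$ under $P_{(n)}$ with means $-\gamma(\boldsymbol{\theta_j},\boldsymbol{\theta_j})/2$ and cross-covariance $\gamma(\boldsymbol{\theta_1},\boldsymbol{\theta_2})$, then Le Cam's third lemma and the same quantile/ARE computation. The only minor difference lies in how the joint Gaussian limit is obtained: the paper first verifies the expansion $T_n(\boldsymbol{\theta_j})=Z_n(\boldsymbol{\theta_j})-\gamma(\boldsymbol{\theta_j},\boldsymbol{\theta_j})/2+o_{P_{(n)}}(1)$ with $Z_n$ from \eqref{eqn:def_Zn} (the Lindeberg condition coming from $\gamma(\boldsymbol{\theta_j},\boldsymbol{\theta_j})=\sigma_j^2$ together with \Cref{theo:normal_limits}) and then applies the multivariate CLT to the linear statistics, whereas you apply the Cram\'er--Wold device and the infinitely divisible limit theory directly to the log-likelihood sums; the linearisation and moment identifications you describe are exactly the computations the paper carries out.
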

\begin{remark}\label{rem:theo:ARE}
The assumption  $\gamma(\boldsymbol{\theta_j},\boldsymbol{\theta_j})=\sigma_j^2$ is connected to the classical Lindeberg-condition. It is often but not always fulfilled if \eqref{eqn:Tn_normal} holds. For example, it is violated in the case $\beta=3/4$ and $r=\rho(\beta)$ for the heterogeneous normal mixture model, which is discussed in \Cref{sec:normal_mix}. The good news are that by a truncation argument we find for every model $\boldsymbol{\theta}=(\boldsymbol{\mu},\boldsymbol{\varepsilon})$, for which \eqref{eqn:Tn_normal} holds, another $\boldsymbol{\widetilde\theta}=(\boldsymbol{\widetilde\mu},\boldsymbol{\widetilde\varepsilon})$ such that the limit $\gamma(\boldsymbol{\widetilde\theta},\boldsymbol{\widetilde\theta})$ from \eqref{eqn:theo:ARE:gamma_def} exists and equals $\sigma^2$ from \eqref{eqn:Tn_normal}, and, moreover, the test's asymptotic behaviour is not effected by replacing $\boldsymbol{\theta}$ by $\boldsymbol{\widetilde\theta}$. The details are carried out in \Cref{appendix:additional}, see Lemma \ref{lem:LAN_var_not_to_sigma_REPLACE}.
\end{remark}
Note that \Cref{theo:ARE} gives the sharp upper bound of the asymptotic power for all tests of asymptotic size $\alpha\in(0,1)$ if \eqref{eqn:Tn_normal} holds for the underlying model. The asymptotic relative efficiency ARE is a good tool to quantify the loss of power if the wrong LLR test is used. If $\mathrm{ARE}=1$ there is no loss of power by using $\varphi_{n}(\boldsymbol{\theta_2})$ and if $\mathrm{ARE}=0$ the test  $\varphi_{n}(\boldsymbol{\theta_2})$ cannot distinguish between the null and the alternative asymptotically. Consider for a moment the rowwise identical case, i.e. $P_{n,i}=P_{n,1}$, $\mu^{(1)}_{n,i}=\mu_{n,1}^{(1)}$ etc. If $\mathrm{ARE}\in(0,1)$ then, heuristically, $(1-\mathrm{ARE})\cdot 100\%$ of the observations are wasted. To be more specific, it can be shown that $\varphi_n(\boldsymbol{\theta}_2)$ based on all $k_n$ observations $(Y_{n,1},\ldots,Y_{n,k_n})$ achieves the same power as the optimal test does when only $m=[(1-ARE)k_n]$ observations $(Y_{n,1},\ldots,Y_{n,m})$ are used, where $[x]$ is the integer part of $x\in\R$.

\subsection{Violation of \eqref{eqn:absolute_conti}}\label{sec:violation}
Here, we discuss how to handle a violation of \eqref{eqn:absolute_conti}. This issue was already discussed by \citet{Cai_Wu_2014}, see their Section III.C, in terms of the Hellinger distance to determine the detection boundary. Their idea can be used for our purpose to determine, more generally, the limits of $T_n$, even on the boundary. Instead of the original model it is sufficient to analyse a "closely related" model for which \eqref{eqn:absolute_conti} is fulfilled.\\
By Lebesgues' decomposition, see Lemma 1.1 of \citet{Strasser1985}, there exist a constant $\kappa_{n,i}\in[0,1]$, a $P_{n,i}$-null set $N_{n,i}$ as well as probability measures $\widetilde \mu_{n,i}$ and $\nu_{n,i}$ such that $\widetilde{\mu}_{n,i}\ll P_{n,i},$ $\nu_{n,i}(N_{n,i})=1$ and $\mu_{n,i}= (1-\kappa_{n,i})\widetilde{\mu}_{n,i} + \kappa_{n,i}\nu_{n,i}$. Now, let $\widetilde Q_{n,i}$, $\widetilde Q_{(n)}$ and $\widetilde T_n$ defined as $ Q_{n,i}$, $ Q_{(n)}$ and $ T_n$ replacing $\mu_{n,i}$ and $\varepsilon_{n,i}$ by $\widetilde\mu_{n,i}$ and $\widetilde\varepsilon_{n,i}$, respectively. Clearly, for this new model \eqref{eqn:absolute_conti} is fulfilled and our results can be applied to determine the limits of $\widetilde T_n$. When knowing these we can immediately give the ones of $T_n$: 
\begin{corollary}\label{cor:violation}
Suppose that \eqref{eqn:def_Tn} is fulfilled for $\widetilde T_n$, $\widetilde \xi_1$ and $\widetilde \xi_2$. Moreover, assume that $\sum_{i=1}^{k_n}\varepsilon_{n,i}\kappa_{n,i}\to c\in[0,\infty]$. Then \eqref{eqn:def_Tn} holds for $ T_n$, $\xi_1=\widetilde \xi_1-c$ and $\xi_2=\widetilde\xi_2 + X$, where $X$ is independent of $\widetilde \xi_2$ with $P(X=-c)=e^{-c}$ and $P(X=\infty)=1-e^{-c}$. In particular, $\xi_1\equiv -\infty$ and $\xi_2\equiv \infty$ if $c=\infty$, or if $\widetilde \xi_1\equiv -\infty$ and $\widetilde\xi_2\equiv \infty$. 
\end{corollary}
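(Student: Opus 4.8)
The plan is to follow the reduction of \citet{Cai_Wu_2014} and to treat the mass carried by the singular parts of the $Q_{n,i}$ as a Poissonian full-information contribution to $T_n$. Lebesgue's decomposition and \eqref{eqn:def_qni} give $Q_{n,i}=Q_{n,i}^{\mathrm{ac}}+\varepsilon_{n,i}\kappa_{n,i}\nu_{n,i}$, where $Q_{n,i}^{\mathrm{ac}}=(1-\varepsilon_{n,i})P_{n,i}+\varepsilon_{n,i}(1-\kappa_{n,i})\widetilde\mu_{n,i}\ll P_{n,i}$ has total mass $1-\varepsilon_{n,i}\kappa_{n,i}$ and $\nu_{n,i}(N_{n,i})=1$. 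With the usual convention $\log\tfrac{\mathrm dQ_{n,i}}{\mathrm dP_{n,i}}=+\infty$ on $N_{n,i}$, three facts hold irrespective of the precise definition of $\widetilde\varepsilon_{n,i}$: (i) $T_n=+\infty$ as soon as some $Y_{n,i}\in N_{n,i}$; (ii) $\tfrac{\mathrm dQ_{n,i}^{\mathrm{ac}}}{\mathrm dP_{n,i}}\le\tfrac{\mathrm d\widetilde Q_{n,i}}{\mathrm dP_{n,i}}$ pointwise, so $T_n\le\widetilde T_n$ $P_{(n)}$-a.s.; (iii) $Q_{n,i}(N_{n,i}^{\,c})=1-\varepsilon_{n,i}\kappa_{n,i}$ (because $Q_{n,i}^{\mathrm{ac}}(N_{n,i})=0$).

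I would first clear the degenerate cases. If $c=\infty$ then, using $\max_i\varepsilon_{n,i}\kappa_{n,i}\le\max_i\varepsilon_{n,i}\to0$ and (iii), $Q_{(n)}\bigl(\exists\,i:Y_{n,i}\in N_{n,i}\bigr)=1-\prod_i(1-\varepsilon_{n,i}\kappa_{n,i})\to1$, so by (i) $T_n\to\infty$ in $Q_{(n)}$-probability, i.e.\ $\xi_2\equiv\infty$; \Cref{theo:xi_real_oder_xi_fullinfo} then forces $\xi_1\equiv-\infty$. If $\widetilde\xi_1\equiv-\infty$ --- equivalently $\widetilde\xi_2\equiv\infty$, by \Cref{theo:xi_real_oder_xi_fullinfo} --- then (ii) yields $T_n\to-\infty$ in $P_{(n)}$-probability, so again $\xi_1\equiv-\infty$ and $\xi_2\equiv\infty$. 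Hence it remains to treat $c<\infty$ with $\widetilde\xi_1$ real-valued.

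In this main case I would first note that, by the definition of $\widetilde\varepsilon_{n,i}$, the renormalisation $(1-\varepsilon_{n,i}\kappa_{n,i})^{-1}Q_{n,i}^{\mathrm{ac}}$ differs from $\widetilde Q_{n,i}$ by total variation $O(\varepsilon_{n,i}\cdot\varepsilon_{n,i}\kappa_{n,i})$; since $\max_i\varepsilon_{n,i}\to0$ and $\sum_i\varepsilon_{n,i}\kappa_{n,i}\to c<\infty$, this is asymptotically negligible in product-space total variation, so I may as well assume $\tfrac{\mathrm dQ_{n,i}^{\mathrm{ac}}}{\mathrm dP_{n,i}}=(1-\varepsilon_{n,i}\kappa_{n,i})\tfrac{\mathrm d\widetilde Q_{n,i}}{\mathrm dP_{n,i}}$. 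Then $P_{(n)}$-a.s.\ $T_n=\widetilde T_n+\sum_i\log(1-\varepsilon_{n,i}\kappa_{n,i})$, and the deterministic tail sum converges to $-c$ (as $\max_i\varepsilon_{n,i}\kappa_{n,i}\to0$ and $\sum_i\varepsilon_{n,i}\kappa_{n,i}\to c<\infty$), so $T_n\overset{\mathrm{d}}{\longrightarrow}\widetilde\xi_1-c$ under $P_{(n)}$ by Slutsky. Under $Q_{(n)}$ I would condition on $\Omega_n=\bigcap_i\{Y_{n,i}\notin N_{n,i}\}$: by (iii), $Q_{(n)}(\Omega_n)=\prod_i(1-\varepsilon_{n,i}\kappa_{n,i})\to e^{-c}$, and the conditional law of $(Y_{n,i})_i$ given $\Omega_n$ is the product $\widetilde Q_{(n)}$ (again since $Q_{n,i}^{\mathrm{ac}}(N_{n,i})=0$), on which $T_n=\widetilde T_n+\sum_i\log(1-\varepsilon_{n,i}\kappa_{n,i})\overset{\mathrm{d}}{\longrightarrow}\widetilde\xi_2-c$; on $\Omega_n^{\,c}$, $T_n=+\infty$. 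Hence $Q_{(n)}(T_n\le t)\to e^{-c}P(\widetilde\xi_2-c\le t)$ for every $t\in\R$, which is exactly the distribution function on $[-\infty,\infty]$ of $\widetilde\xi_2+X$ with $X$ independent of $\widetilde\xi_2$, $P(X=-c)=e^{-c}$ and $P(X=\infty)=1-e^{-c}$. Together with the null limit this is \eqref{eqn:def_Tn} for $T_n$ with $\xi_1=\widetilde\xi_1-c$ and $\xi_2=\widetilde\xi_2+X$, and the two displayed special cases are those settled in the previous step.

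The one step needing genuine care is the reduction in the main case --- that replacing the renormalised absolutely continuous part by $\widetilde Q_{n,i}$ is free. This rests on the elementary bound $\|(1-\varepsilon_{n,i}\kappa_{n,i})^{-1}Q_{n,i}^{\mathrm{ac}}-\widetilde Q_{n,i}\|_{\mathrm{TV}}=O(\varepsilon_{n,i}\cdot\varepsilon_{n,i}\kappa_{n,i})$, on $\max_i\varepsilon_{n,i}\to0$ together with $c<\infty$, and on the standard fact that two sequences of product experiments whose alternatives are $o(1)$-close in total variation share all weak limits of the log-likelihood ratio under corresponding measures. (If the convention in force already identifies $\widetilde Q_{n,i}$ with the renormalised $Q_{n,i}^{\mathrm{ac}}$, this step is void and every identity above is exact.) The remaining ingredients --- the deterministic $\log(1-x)$ estimate behind the $-c$ shift, the Poisson limit $\prod_i(1-\varepsilon_{n,i}\kappa_{n,i})\to e^{-c}$, two applications of Slutsky's theorem, and the dichotomy of \Cref{theo:xi_real_oder_xi_fullinfo} --- are routine.
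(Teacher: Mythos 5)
Your argument is correct, and it shares the paper's central algebraic step — the exact factorisation $\frac{\mathrm dQ_{n,i}}{\mathrm dP_{n,i}}=(1-\varepsilon_{n,i}\kappa_{n,i})\frac{\mathrm dQ^{*}_{n,i}}{\mathrm dP_{n,i}}+\infty\,\mathbf 1_{N_{n,i}}$ with $Q^{*}_{n,i}$ the renormalised absolutely continuous part, and hence $T_n=T^{*}_n+\sum_i\log(1-\varepsilon_{n,i}\kappa_{n,i})$ off the $P_{(n)}$-null sets, which settles the null limit exactly as in the paper. You diverge in two places. First, to pass from $\widetilde Q_{n,i}$ to the renormalised a.c.\ part you bound the total variation of the product measures by $\max_i\varepsilon_{n,i}\sum_i\varepsilon_{n,i}\kappa_{n,i}\to0$ (for $c<\infty$), whereas the paper substitutes $\varepsilon^{*}_{n,i}=\varepsilon_{n,i}(1-\kappa_{n,i})/(1-\varepsilon_{n,i}\kappa_{n,i})$ and notes that $\widetilde\varepsilon_{n,i}=\varepsilon^{*}_{n,i}(1+a_{n,i})$ with $\max_i|a_{n,i}|\to0$ leaves the characterising sums of \Cref{theo:general_limit_theorem} unchanged; both are legitimate, though your "standard fact" deserves the one-line justification that TV-closeness gives $L^1(P_{(n)})$-closeness of the likelihood ratios (hence the same null limit) and the alternative limits then agree by the binary-experiment relation. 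Second, for the limit under $Q_{(n)}$ the paper is more abstract: it invokes the weak-convergence theory of binary experiments (\Cref{sec:binary_exp_+_dist}) to get existence of $\xi_2$ from the null limit and reads off $\nu_2=e^{-c}\widetilde\nu_2*\epsilon_{-c}+(1-e^{-c})\epsilon_\infty$ from \eqref{eqn:relation_nu_2_nu_1}, while you condition on the event that no observation falls in a singular set, whose $Q_{(n)}$-probability $\prod_i(1-\varepsilon_{n,i}\kappa_{n,i})\to e^{-c}$, and identify the conditional law with $\widetilde Q_{(n)}$. Your conditioning route is more elementary and makes the Poissonian origin of the $e^{-c}$ mixture explicit (and your separate treatment of $c=\infty$ and of $\widetilde\xi_1\equiv-\infty$, via \Cref{theo:xi_real_oder_xi_fullinfo}, is cleaner than the paper's implicit handling); the paper's route is shorter because Appendix B.1 already supplies existence of the limit under the alternative and the transfer formula in one stroke.
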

We can state the results of \Cref{cor:violation} also in terms of distributions. Denote by $\widetilde\nu_j$ the distribution of $\widetilde\xi_j$. Then $\nu_1=\widetilde\nu_1*\epsilon_{-c}$ and $\nu_2=e^{-c}\widetilde\nu_2*\epsilon_{-c}+(1-e^{-c})\epsilon_\infty$.

\section{Power of the higher criticism test}\label{sec:HCT}
In the previous section we discussed the LLR test which can be used to detect simple alternatives from the null. An adaptive and applicable test for alternatives of the whole completely detectable area is Tukey's HC test  modified by \citet{DonohoJin2004}. 
There are different versions of it. We prefer the one dealing with continuously distributed $p$-values $(p_{n,i})_{i\leq k_n}$ and consider $P_{n,i}=\lebesgue_{|(0,1)}$ having a quantile transformation $p_{n,i}=P_{n,i}((Y_{n,i},\infty))$ or $p_{n,i}=P_{n,i}((-\infty,Y_{n,i}])$ in mind. The optimality of HC in a discrete model, namely the Poisson means model, was shown by \citet{AriasWang2015}. Our results about the LLR statistic in \Cref{sec:LLRT} are valid for discrete models but in this section we only regard continuous ones. The extension to discrete models is a possible project for the future. \\
The HC statistic for outcomes $p_{n,i}\in[0,1]$ is defined by
\begin{align*}
HC_{n}=\sup_{t\in(0,1)}\Bigl | \sqrt{k_n}\;\frac{\mathbb{F}_n(t)-t}{\sqrt{t(1-t)}} \Bigr|,
\end{align*}
where $\mathbb{F}_n$ is the empirical distribution function of the observation vector $(p_{n,i})_{i\leq k_n}$. For every $t\in(0,1)$ we compare the empirical distribution function and the null/noise distribution function $t\mapsto F(t)=t$. This difference is normalized   in the spirit of the central limit theorem. For a fixed $t$ the resulting fraction is asymptotically standard normal distributed. The interval $(0,1)$, over which the supremum is taken, can be replaced by $(0,\alpha_0)$, $(k_n^{-1},\alpha_0)$ or $(k_n^{-1},1-k_n^{-1})$ for some tuning parameter $\alpha_0\in(0,1)$, see \citet{DonohoJin2004}. The test statistic can also be defined without taking the absolute value of the fraction. All these versions of the HC statistic would lead here to the same power results. To improve the readability of this section we give the results only for the HC version introduced above. By \citet{Jaeschke1979}, see also \citet{Eicker1979}, the limit distribution of $HC_n$ is known under the null. We have
\begin{align}\label{eqn:HC_limit_distr_null}
P_{(n)}( a_nHC_n-b_n \leq x ) \to \Lambda(x)^2=\exp(-2\exp(-x)),\;x\in\R,
\end{align}
where $\Lambda$ is the distribution function of a standard Gumbel distribution and the following normalisation constants are used 
\begin{align*}
a_n= \sqrt{ 2 \log\log(k_n) }\textrm{ and }b_n= 2 \log\log (k_n) + \frac{1}{2} \log\log\log (k_n ) - \frac{1}{2}\log (\pi).
\end{align*}
Hence, the test $\varphi_{n,HC,\alpha}=\mathbf{1}\{HC_n>c_n(\alpha)\}$ with
\begin{align*}
c_n(\alpha)=\frac{- \log ( -\log ( \alpha)/2 )+b_n}{a_n}=\sqrt{2\log\log (k_n)}(1+o(1)) 
\end{align*}
is an asymptotically exact level $\alpha\in(0,1)$ test, i.e. $E_{\mathcal H_{0,n}}(\varphi_{n,HC,\alpha})\to\alpha$. But we cannot recommend to use these critical values based on the limiting distribution since the convergence rate is really slow, see \citet{KhmaladzeShin2001}. Since the noise distribution is known, standard Monte-Carlo simulations can be used to estimate the $\alpha$-quantile of $HC_n$ for finite sample size. Alternatively, you can find finite recursion formulas for the exact finite distribution in the paper of \citet{KhmaladzeShin2001}.\\
In the following we present our tool for HC.
\begin{theorem}[Completely detectable by HC]\label{theo:HC_full_power}
Define for all $v\in(0,1/2)$ 
\begin{align}\label{eqn:HC_defi_Hn(v)}
H_n(v)= \frac{  | \sum_{i=1}^{k_n} \varepsilon_{n,i} (\mu_{n,i} (0,v] - v ) |   + 	 | \sum_{i=1}^{k_n} \varepsilon_{n,i} (\mu_{n,i} (1-v,1) - v ) | }{\sqrt{k_nv}}. 
\end{align}
Let $(v_n)_{n\in\N}$ be a sequence in the interval $(0,1/2)$ such that $a_n^{-1}H_n(v_n)\to\infty$ and $\liminf_{n\to\infty}k_nv_n>0$. Then $a_nHC_n-b_n \to \infty$ in $Q_{(n)}$-probability.
\end{theorem}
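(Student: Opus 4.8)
The plan is to lower-bound $HC_n$ by keeping only the two points $t=v_n$ and $t=1-v_n$ in the supremum, to match the $Q_{(n)}$-means of the corresponding quantities with the two summands of $H_n(v_n)$ up to a universal constant, and then to show that the stochastic fluctuations around these means are of smaller order than $H_n(v_n)$. Write $G_n(t)=\sqrt{k_n}\,(\mathbb F_n(t)-t)/\sqrt{t(1-t)}$, so that $HC_n\ge\max\{|G_n(v_n)|,|G_n(1-v_n)|\}$. In the $p$-value model of this section $P_{n,i}=\lebesgue_{|(0,1)}$, and under $Q_{(n)}$ the observation $Y_{n,i}$ has the nonatomic law $Q_{n,i}=(1-\varepsilon_{n,i})\lebesgue_{|(0,1)}+\varepsilon_{n,i}\mu_{n,i}$; hence $E_{Q_{(n)}}[\mathbb F_n(t)]=k_n^{-1}\sum_{i=1}^{k_n}Q_{n,i}((0,t])$. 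Using $Q_{n,i}((0,t])-t=\varepsilon_{n,i}(\mu_{n,i}((0,t])-t)$ and $\mu_{n,i}((0,1-v_n])-(1-v_n)=-(\mu_{n,i}((1-v_n,1))-v_n)$ one obtains
\begin{align*}
E_{Q_{(n)}}[G_n(v_n)]&=\frac{\sum_{i=1}^{k_n}\varepsilon_{n,i}(\mu_{n,i}(0,v_n]-v_n)}{\sqrt{k_nv_n(1-v_n)}},\\
E_{Q_{(n)}}[G_n(1-v_n)]&=-\,\frac{\sum_{i=1}^{k_n}\varepsilon_{n,i}(\mu_{n,i}(1-v_n,1)-v_n)}{\sqrt{k_nv_n(1-v_n)}}.
\end{align*}
Since $v_n<1/2$ gives $\sqrt{1-v_n}<1$, the elementary bound $\max\{a,b\}\ge(a+b)/2$ together with the definition of $H_n$ yields $\max\{|E_{Q_{(n)}}[G_n(v_n)]|,|E_{Q_{(n)}}[G_n(1-v_n)]|\}\ge\tfrac12 H_n(v_n)$.

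For the fluctuations I would use Chebyshev's inequality. Because $\mathbb F_n(t)$ is an average of independent indicators, $\operatorname{Var}_{Q_{(n)}}(G_n(t))=(k_nt(1-t))^{-1}\sum_i Q_{n,i}((0,t])(1-Q_{n,i}((0,t]))$. Bounding $1-Q_{n,i}((0,v_n])\le1$ for $t=v_n$, bounding $Q_{n,i}((0,1-v_n])\le1$ for $t=1-v_n$, and inserting $\sum_i Q_{n,i}((0,v_n])=k_nv_n+\sum_i\varepsilon_{n,i}(\mu_{n,i}(0,v_n]-v_n)$ and $\sum_i Q_{n,i}((1-v_n,1))=k_nv_n+\sum_i\varepsilon_{n,i}(\mu_{n,i}(1-v_n,1)-v_n)$, one gets, with $v_n<1/2$ and the definition of $H_n$,
\[
\operatorname{Var}_{Q_{(n)}}(G_n(v_n))+\operatorname{Var}_{Q_{(n)}}(G_n(1-v_n))\;\le\;4+\frac{2\,H_n(v_n)}{\sqrt{k_nv_n}}.
\]
By the hypothesis $\liminf_n k_nv_n>0$ the right-hand side is $O(H_n(v_n))$, and $H_n(v_n)\to\infty$ because $a_n\to\infty$ while $a_n^{-1}H_n(v_n)\to\infty$; so the sum of the two variances is $o(H_n(v_n)^2)$. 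Chebyshev's inequality and a union bound then give $Q_{(n)}(R_n\ge\tfrac14 H_n(v_n))\to0$, where $R_n:=\max\{|G_n(v_n)-E_{Q_{(n)}}[G_n(v_n)]|,|G_n(1-v_n)-E_{Q_{(n)}}[G_n(1-v_n)]|\}$. On the complementary event,
\[
HC_n\ge\max\{|G_n(v_n)|,|G_n(1-v_n)|\}\ge\max\{|E_{Q_{(n)}}[G_n(v_n)]|,|E_{Q_{(n)}}[G_n(1-v_n)]|\}-R_n\ge\tfrac14 H_n(v_n),
\]
so $Q_{(n)}(HC_n\ge\tfrac14 H_n(v_n))\to1$.

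To finish, on $\{HC_n\ge\tfrac14 H_n(v_n)\}$ we have $a_nHC_n-b_n\ge\tfrac14 a_nH_n(v_n)-b_n$. Since $a_n^2=2\log\log k_n$ and $b_n=a_n^2+\tfrac12\log\log\log k_n-\tfrac12\log\pi=a_n^2(1+o(1))$, while $a_n^{-1}H_n(v_n)\to\infty$, we get
\[
\tfrac14 a_nH_n(v_n)-b_n=a_n^2\Bigl(\tfrac14\,a_n^{-1}H_n(v_n)-1-o(1)\Bigr)\longrightarrow\infty.
\]
Hence for every $M>0$ eventually $\tfrac14 a_nH_n(v_n)-b_n>M$, so $Q_{(n)}(a_nHC_n-b_n>M)\ge Q_{(n)}(HC_n\ge\tfrac14 H_n(v_n))\to1$, which is precisely $a_nHC_n-b_n\to\infty$ in $Q_{(n)}$-probability.

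I expect the variance estimate to be the only genuinely delicate step. The crude bound $\operatorname{Var}_{Q_{(n)}}(G_n(v_n))\le(k_nv_n)^{-1}\sum_i Q_{n,i}((0,v_n])$ leaves an excess term $(k_nv_n)^{-1}\sum_i\varepsilon_{n,i}\mu_{n,i}(0,v_n]$ which need not be $O(1)$; the trick is to absorb this excess mass into $H_n(v_n)\sqrt{k_nv_n}$, so that the variance is only $O(H_n(v_n))$ rather than $O(H_n(v_n)^2)$ — and here the assumption $\liminf_n k_nv_n>0$ is exactly what keeps the Chebyshev ratio $\operatorname{Var}/H_n(v_n)^2$ tending to $0$. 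A smaller but easily overlooked point is that one must carry along both tails $(0,v_n]$ and $(1-v_n,1)$ throughout, since the signal mass may accumulate near either endpoint and $H_n$ records both; testing the supremum at $t=v_n$ alone would capture only the first summand of $H_n(v_n)$.
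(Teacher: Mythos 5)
Your proposal is correct and follows essentially the same route as the paper's proof: evaluate the standardized empirical process at the two points $v_n$ and $1-v_n$, identify the $Q_{(n)}$-means with the two summands of $H_n(v_n)$, control the variances by Chebyshev's inequality (where $\liminf_n k_nv_n>0$ keeps the excess term of order $H_n(v_n)$ rather than $H_n(v_n)^2$), and beat $b_n=a_n^2(1+o(1))$ using $a_n^{-1}H_n(v_n)\to\infty$. The only cosmetic difference is that you combine the two points via a max/union-bound argument, whereas the paper works with whichever of the two normalized sums dominates; both yield the same conclusion.
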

Basically, we compare the tails near to $0$ and $1$ of the signal and the noise distribution. This verification method for HC's optimality is an extension of the ones used by \cite{CaiJengJin2011,DonohoJin2004}.  Under the assumptions of \Cref{theo:HC_full_power} the sum of HC's error probabilities tends to $0$ for appropriate critical values. In other words, HC can completely separate the null and the alternative.\\
The same $H_n(v)$ can be used to show that HC has no power under the alternative, i.e. the sum of error probabilities tends to $1$ independently how the critical values are chosen. 
\begin{theorem}[Undetectable by HC]\label{theo:HC_undetect}
Suppose that $P_{n,i}=P_{n}$, $\varepsilon_{n,i}=\varepsilon_{n}$ and $\mu_{n,i}=\mu_{n}$ do not depend on $i$. Define $H_n(v)$ as in \Cref{theo:HC_full_power}. Moreover, assume that $P_{(n)}$ and $Q_{(n)}$ are mutually contiguous, compare to Remark \ref{rem:contiguous}. If
\begin{align}
&a_n\sup\{ H_n(v):v\in[r_n,s_n]\cup[t_n,u_n]\}\to 0,\textrm{ where }\label{eqn:HC_undect_condition}\\
&\frac{\log( r_n)}{\log(k_n)}\to -1, \;\frac{\log (u_n)}{\log(k_n)}\to 0,\; \textrm{ and }\;\frac{\log (s_n)}{\log(k_n)},\frac{\log (t_n)}{\log(k_n)}\to \kappa\in(0,1)\label{eqn:HC_cond_rn_un_sn_tn} 
\end{align}  
for some sequences $r_n,s_n,t_n,u_n\in(0,1)$ then
\begin{align}\label{eqn:HC_undetect}
Q_{(n)}(a_n HC_n-b_n \leq x ) \to \Lambda(x)^2=\exp(-2\exp(-x)),\;x\in\R.
\end{align} 
\end{theorem}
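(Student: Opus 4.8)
The plan is to transfer the limit \eqref{eqn:HC_limit_distr_null} from the null to $Q_{(n)}$ by a quantile coupling followed by a multiscale perturbation analysis. In the assumed rowwise identical setting put $G_n(t)=Q_n(0,t]=t+\Delta_n(t)$ with $\Delta_n(t)=\varepsilon_n(\mu_n(0,t]-t)$; since $Q_n=(1-\varepsilon_n)\lebesgue_{|(0,1)}+\varepsilon_n\mu_n$ has a Lebesgue density bounded below by $1-\varepsilon_n>0$, the distribution function $G_n\colon(0,1)\to(0,1)$ is a homeomorphism. On a common probability space let $q_{n,1},\dots,q_{n,k_n}$ be i.i.d.\ uniform on $(0,1)$ with empirical distribution function $\mathbb{G}_n$ and set $p_{n,i}=G_n^{-1}(q_{n,i})$, so that $(p_{n,i})_i\sim Q_{(n)}$ while $(q_{n,i})_i\sim P_{(n)}$. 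Substituting $s=G_n(t)$ in the supremum defining $HC_n$ for the sample $(p_{n,i})_i$ yields
\begin{align*}
HC_n\;=\;\sup_{s\in(0,1)}\;\Bigl|\,\frac{\sqrt{k_n}\,(\mathbb{G}_n(s)-s)}{\sqrt{s(1-s)}}\;-\;D_n(s)\,\Bigr|\,\bigl(1+\eta_n(s)\bigr),
\end{align*}
where $\widehat{HC}_n:=\sup_{s}\bigl|\sqrt{k_n}(\mathbb{G}_n(s)-s)\bigr|/\sqrt{s(1-s)}$ is the HC statistic of the uniform sample (so $a_n\widehat{HC}_n-b_n$ converges to $\Lambda(\cdot)^2$ by \eqref{eqn:HC_limit_distr_null}), the deterministic drift satisfies $|D_n(s)|\le C\,H_n(\min(s,1-s))$, and the variance correction obeys $|\eta_n(s)|\le C\,H_n(\min(s,1-s))/\sqrt{k_n\min(s,1-s)}$ for an absolute constant $C$ and $n$ large. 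Hence it suffices to prove $a_n\,|HC_n-\widehat{HC}_n|\to0$ in $P_{(n)}$-probability.

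I would split $(0,1)$ according to $\lambda:=k_n\min(s,1-s)$. Where $\lambda$ stays below a fixed power of $a_n$ (the deep tails) the sample carries only $O(a_n^{\mathrm{const}})$ relevant observations, so $\sup$ over this set of $\bigl|\sqrt{k_n}(\mathbb{G}_n(s)-s)\bigr|/\sqrt{s(1-s)}$ is $O_P(\sqrt{\log\log a_n})=o(a_n)$; on this set mutual contiguity forces $k_n\varepsilon_n^2\chi^2(\mu_n\|\lebesgue_{|(0,1)})=O(1)$, hence $|D_n(s)|=O(1)$ and $|\eta_n(s)|=O(1)$, so the whole contribution, after the $(a_n,b_n)$-normalisation, tends to $-\infty$ and may be dropped; the extreme bulk $\min(s,1-s)>u_n$ is treated likewise. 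The sequences $r_n,s_n,t_n,u_n$ of \eqref{eqn:HC_cond_rn_un_sn_tn} are chosen precisely so that $[r_n,s_n]\cup[t_n,u_n]$ exhausts, on the logarithmic scale, the remaining critical range of values of $\min(s,1-s)$; there \eqref{eqn:HC_undect_condition} gives $|D_n(s)|\le C\,\sup\{H_n(v):v\in[r_n,s_n]\cup[t_n,u_n]\}=o(a_n^{-1})$.

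The delicate point is the variance correction on the critical band. The crucial observation is that a value of order $a_n$ of $\sqrt{k_n}(\mathbb{G}_n(s)-s)/\sqrt{s(1-s)}$ forces $\mathbb{G}_n(s)$ to exceed $s\,(1+a_n/\sqrt{k_ns})$, which for a $\mathrm{Poisson}(k_ns)$-type count is impossible with non-negligible probability unless $k_ns$ is at least a fixed power of $a_n$. One may therefore discard from the supremum the scales with $k_ns$ of size $o(a_n^{2})$ without altering the $(a_n,b_n)$-limit; on the rest $\sqrt{k_ns}$ exceeds a fixed power of $a_n$, so $|\eta_n(s)|=O(H_n(s)/\sqrt{k_ns})=o(a_n^{-2})$, and since $\widehat{HC}_n=O_P(b_n/a_n)=O_P(a_n)$ the resulting multiplicative error is $O_P(a_n)\cdot o(a_n^{-2})=o_P(a_n^{-1})$. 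Collecting these estimates gives $a_n|HC_n-\widehat{HC}_n|\to0$ in $P_{(n)}$-probability, and \eqref{eqn:HC_undetect} follows from \eqref{eqn:HC_limit_distr_null} by Slutsky's theorem, since the left-hand side of \eqref{eqn:HC_undetect} is the distribution of $HC_n$ under $Q_{(n)}$.

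The main obstacle is exactly this localisation of the HC maximum together with the bookkeeping of the three error sources uniformly over the wide range $[r_n,u_n]$: the naive bound on the variance correction only yields $a_n|HC_n-\widehat{HC}_n|=o_P(a_n)$, which is worthless, and one genuinely has to exploit that the HC extremes cannot live at scales $k_ns$ of size $o(a_n^2)$ to recover the missing factor $a_n$. Mutual contiguity does the complementary job of taming the drift and the variance correction on the deep tails and the bulk, where the hypothesis on $H_n$ says nothing. An alternative route would be to show that $a_nHC_n-b_n$ is asymptotically independent of the log-likelihood ratio $T_n$ under $P_{(n)}$ --- a sum-versus-extremes phenomenon --- and then invoke Le Cam's third lemma together with \Cref{rem:contiguous}, trading the explicit coupling for an independence statement that is itself nontrivial for the scale-spanning functional $HC_n$.
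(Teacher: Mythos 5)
Your overall architecture --- the quantile coupling $p_{n,i}=G_n^{-1}(q_{n,i})$, the decomposition of the transformed process into a uniform part, an additive drift and a multiplicative variance correction, and the localisation of the HC maximum via the Eicker--Jaeschke theory --- is the same as the paper's, which writes the process as $\Delta_{n,1}\Delta_{n,2}\Delta_{n,3}+\Delta_{n,4}$ and quotes Jaeschke's corollaries for the localisation. Your observation about the variance correction on the critical band is a genuinely good catch: the naive bound $\sup|\eta_n|=o(a_n^{-1})$ multiplied by $\sup|\Delta_{n,1}|=O_P(a_n)$ only gives an error of order $o_P(1)$, not $o_P(a_n^{-1})$, and one does need to discard the scales $k_ns=o(a_n^2)$ (harmless for the $(a_n,b_n)$-limit) before the product becomes $O_P(a_n)\cdot o(a_n^{-2})=o_P(a_n^{-1})$. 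The paper's own proof only records $\sup_v|\Delta_{n,2}(v)\Delta_{n,3}(v)-1|\to 0$ and does not make this refinement explicit, so your extra care here is warranted.

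The genuine gap is in your treatment of the complement of the critical band. You claim that on the deep tails and on the bulk mutual contiguity forces $k_n\varepsilon_n^2\chi^2(\mu_n\Vert\lebesgue_{|(0,1)})=O(1)$ and hence $|D_n(s)|=O(1)$ and $|\eta_n(s)|=O(1)$ there. Neither step holds. Contiguity controls the Hellinger sum $D_n=k_nd^2(P_n,Q_n)$ (see \Cref{lem:D_detection}), not the $\chi^2$-divergence, which may be infinite under contiguity --- this is exactly the regime of the non-Gaussian limits in \Cref{theo:xi_real_oder_xi_fullinfo}. And even the Hellinger bound only yields $\sqrt{k_n}\,\varepsilon_n\sup_v|\mu_n(0,v]-v|=O(1)$, so the drift $\sqrt{k_n}\,\varepsilon_n|\mu_n(0,v]-v|/\sqrt{v(1-v)}$ is merely $O(1/\sqrt{v(1-v)})$, which is polynomially large at scales $v\ll k_n^{-1}$ and of order $u_n^{-1/2}\to\infty$ just above $u_n$ when $u_n\to 0$; the hypothesis \eqref{eqn:HC_undect_condition} gives no information about $H_n$ outside $[r_n,s_n]\cup[t_n,u_n]$, so no pointwise control of the drift is available on the complement. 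The role of the contiguity assumption is different and is the key to the theorem: the restricted supremum $\sup_{v\notin J}\bigl|\sqrt{k_n}(\mathbb{F}_n(v)-v)\bigr|/\sqrt{v(1-v)}$ over the complement $J^c$ is a single statistic of the observed sample; under $P_{(n)}$ it tends to $-\infty$ after $(a_n,b_n)$-normalisation by Jaeschke's results, and convergence in probability to a constant transfers from $P_{(n)}$ to $Q_{(n)}$ by contiguity. This discards the whole complement under the alternative with no drift or variance analysis at all. Replacing your pointwise bounds on the deep tails and the bulk by this transfer argument repairs the proof; as written, those estimates do not follow from the hypotheses.
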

\begin{remark}\label{rem:HC}
Suppose that $a_n^2\sum_{i=1}^{k_n}\varepsilon_{n,i}^2\to 0$, which is usually fulfilled for sparse signals. From H{\"o}lder's inequality $(a_n /\sqrt{k_n})\sum_{i=1}^{k_n}\varepsilon_{n,i}\to 0$ follows. Hence, it is easy to see that the statements of \Cref{theo:HC_full_power,theo:HC_undetect} remain true if $H_n(v)$ is replaced by
\begin{align*}
\widetilde H_n(v)= \frac{1}{\sqrt{k_nv_n}}\Bigl( \sum_{i=1}^{k_n}\varepsilon_{n,i}(\mu_{n,i}(0,v]+\mu_{n,i}(1-v,1) ) \Bigr),\;v\in\Bigl(0,\frac{1}{2}\Bigr).
\end{align*}
\end{remark}

\section{Application to practical detection models} \label{sec:applications}

\subsection{Nonparametric alternatives for $p$-values}\label{sec:h-model}
Here, we discuss a generalisation of the $p$-values model \eqref{eqn:ill_density}. In particular, we suppose $P_{n,i}=\lebesgue_{|(0,1)}$. In contrast to \Cref{sec:intro_h}, we now consider that the shape function $h_{n,i}$, the shrinking parameter $\kappa_{n,i}>0$ and the signal probability $\varepsilon_{n,i}$ may depend on $i$. The assumption that the signal distribution has a shrinking support can be too restrictive for practice. But the approach allows an extension of the model in the way that we add a perturbation $r_{n,i}$. Throughout this section we consider signal distributions $\mu_{n,i}$ given by
\begin{align}\label{eqn:pertubation}
\frac{\mathrm{ d } \widetilde \mu_{n,i}}{\,\mathrm{ d }\lebesgue_{|(0,1)}}(u) = \frac{1}{\kappa_{n,i}}h_{n,i}\Bigl( \frac{u}{\kappa_{n,i}} \Bigr) + r_{n,i}(u)\geq 0 \textrm{ with }\int_0^1 r_{n,i} \,\mathrm{ d }\lebesgue=0,
\end{align}
where $h_{n,i}$ is close to some $h\in L^1(\lebesgue_{|(0,1)})$ and the perturbation $r_{n,i}$ is "small" in the sense that
\begin{align}\label{lem:h-model_rni_condition}
\sum_{i=1}^{k_n}\varepsilon_{n,i}^2 \int_0^1 r_{n,i}^2 \,\mathrm{ d }\lebesgue \to 0.
\end{align}
Instead of \eqref{eqn:intro_h_tau+eps} we suppose that 
\begin{align*}
\max_{1\leq i \leq k_n}( \varepsilon_{n,i}+\kappa_{n,i} ) \to 0.
\end{align*}
Since we already presented the results concerning this model for the rowwise identical case $\mu_{n,i}=\mu_n$ and $\varepsilon_{n,i}=\varepsilon_n$ in \Cref{sec:intro_h}, the theorems are stated only in their general versions here.
\begin{theorem}\label{theo:h-model}
Suppose that 
\begin{align}\label{eqn:h_unif_conv}
\sum_{i=1}^{k_n} \frac{\varepsilon_{n,i}^2}{\kappa_{n,i}}\to K\in[0,\infty]\textrm{ and }
\max_{1\leq i \leq k_n} \int_0^1 (h_{n,i}-h)^2 \,\mathrm{ d }\lebesgue \to 0
\end{align} 
for some $h,h_{n,i}\in L^2(\lebesgue_{|(0,1)})$. Without loss of generality we can suppose that 
\begin{align*}
\frac{\varepsilon_{n,1}}{\kappa_{n,1}}\leq \frac{\varepsilon_{n,2}}{\kappa_{n,2}}\leq \ldots\leq \frac{\varepsilon_{n,k_n}}{\kappa_{n,k_n}}.
\end{align*} 
\begin{enumerate}[(a)]
	\item\label{enu:theo:h-model_undetect} (Undetectable case) If $K=0$ then the undetectable case is present.
	
	\item\label{enu:theo:h-model_comp_detect} (Completely detectable case) If $K=\infty$, 
	\begin{align}\label{eqn:h-model_BED_rn}	
	\sum_{i=r_n}^{k_n} \varepsilon_{n,i} \to \infty \textrm{ and }\sum_{i=1}^{r_n} \frac{\varepsilon_{n,i}^2}{\kappa_{n,i}}\to\infty.
	\end{align}
	for some $r_n\in\{1,\ldots,k_n\}$ then we are in the completely detectable case.
	
	\item\label{enu:theo:h-model_nontrivial} If $\sup_{n\in\N}\sum_{i=1}^{k_n}\varepsilon_{n,i}<\infty$ or  $K<\infty$ then every accumulation point $\xi_1$ (in the sense of convergence in distribution) of $T_n$,  compare to \eqref{eqn:def_Tn}, is real-valued under the null. In particular, if $K\in(0,\infty)$  and 
	\begin{align}\label{eqn:h-model_max_condition}
	\max_{1\leq i \leq k_n} \frac{ \varepsilon_{n,i}}{ \kappa_{n,i} }=\frac{ \varepsilon_{n,k_n}}{ \kappa_{n,k_n}} \to 0
	\end{align}
	then the limits of $T_n$ are Gaussian and \eqref{eqn:Tn_normal} holds for $\sigma^2=\sigma^2(h)=K \int_0^1 h^2 \,\mathrm{ d }\lebesgue$. 
	
	\item\label{enu:theo:h-model_nonpara_LAN} In the spirit of \Cref{sec:ARE}, let $\boldsymbol{\theta_j}=\{( h_{n,i}^{(j)},\kappa_{n,i}^{(j)},\varepsilon_{n,i}^{(j)})_{i\leq k_n}:n\in\N\}$ denote a model for $j=1,2$ such that \eqref{eqn:h_unif_conv} and \eqref{eqn:h-model_max_condition} hold for some $ K^{(j)}\in(0,\infty)$ and $ h^{(j)}\in L^2(\lebesgue_{|(0,1)})$. Then all assumptions of \Cref{theo:ARE} are satisfied with
	\begin{align*}
	\gamma(\boldsymbol{\theta_1},\boldsymbol{\theta_2})=\lim_{n\to\infty}\sum_{i=1}^{k_n} \frac{ \varepsilon_{n,i}^{(1)} \varepsilon_{n,i}^{(2)}}{\kappa_{n,i}^{(1)}\kappa_{n,i}^{(2)}} \int_0^{\min\{\kappa_{n,i}^{(1)},\kappa_{n,i}^{(2)}\} } h_{n,i}^{(1)}( x/\kappa_{n,i}^{(1)} ) h_{n,i}^{(2)}( x/\kappa_{n,i}^{(2)} )\,\mathrm{ d }x
	\end{align*}
	if this limit exists.
\end{enumerate} 
\end{theorem}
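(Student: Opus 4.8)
The strategy is to read off all four assertions from the general results \Cref{theo:trivial_limits,theo:xi_real_oder_xi_fullinfo,theo:general_limit_theorem,theo:ARE} once the two sums $I_{n,1,\tau},I_{n,2,\tau}$ of \eqref{eqn:I1}--\eqref{eqn:I2} have been computed for the present model. Writing $g_{n,i}(u)=\kappa_{n,i}^{-1}h_{n,i}(u/\kappa_{n,i})$, so that $\mathrm d\mu_{n,i}/\mathrm dP_{n,i}=g_{n,i}+r_{n,i}$ with $P_{n,i}=\lebesgue_{|(0,1)}$, the one computation behind everything is the substitution $v=u/\kappa_{n,i}$: for $\kappa_{n,i}\le1$ it sends every truncated moment of $g_{n,i}$ to an integral of $h_{n,i}$ or $h_{n,i}^2$ over a level set of $h_{n,i}$, in particular $E_{P_{n,i}}(g_{n,i})=1$ and $E_{P_{n,i}}(g_{n,i}^2)=\kappa_{n,i}^{-1}\int_0^1 h_{n,i}^2\,\mathrm d\lebesgue$, which is why $\sum_i\varepsilon_{n,i}^2/\kappa_{n,i}$, hence $K$, is the governing quantity and $\sigma^2(h)=K\int_0^1 h^2\,\mathrm d\lebesgue$ the candidate variance. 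The perturbation is absorbed through the $I$-sums: since $E_{P_{n,i}}(r_{n,i}^2)=\int_0^1 r_{n,i}^2\,\mathrm d\lebesgue$, a Cauchy--Schwarz argument on the index sum shows that every cross term $\sum_i\varepsilon_{n,i}^2 E_{P_{n,i}}(g_{n,i}r_{n,i})$ and every pure-$r$ term is $o(1)$, and the events $\{\varepsilon_{n,i}(g_{n,i}+r_{n,i})>x\}$ are handled via the sandwich with $\{\varepsilon_{n,i}g_{n,i}>x/2\}$ and $\{\varepsilon_{n,i}|r_{n,i}|>x/2\}$, the latter contributing $O(x^{-1}\sum_i\varepsilon_{n,i}^2\int_0^1 r_{n,i}^2\,\mathrm d\lebesgue)+o(1)$ by \eqref{lem:h-model_rni_condition}. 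I also use throughout that $\max_i\kappa_{n,i}\to0$ (from $\max_i(\varepsilon_{n,i}+\kappa_{n,i})\to0$) yields $\sum_i\varepsilon_{n,i}^2\to0$ whenever $K<\infty$, and that, each $\widetilde\mu_{n,i}$ being a probability measure, $\int_0^1 h_{n,i}\,\mathrm d\lebesgue=1$ eventually, so $\int_0^1 h\,\mathrm d\lebesgue=1$ and $\int_0^1 h^2\,\mathrm d\lebesgue\ge1$.

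For \eqref{enu:theo:h-model_undetect} and the real-valuedness claim in \eqref{enu:theo:h-model_nontrivial} it suffices to bound $I_{n,1,\tau},I_{n,2,\tau}$ for one fixed $\tau$. Markov's inequality under $\mu_{n,i}$ gives $I_{n,1,\tau}\le\tau^{-1}\sum_i\varepsilon_{n,i}^2 E_{P_{n,i}}((\mathrm d\mu_{n,i}/\mathrm dP_{n,i})^2)$, and on $\{\varepsilon_{n,i}\,\mathrm d\mu_{n,i}/\mathrm dP_{n,i}\le\tau\}$ one has $(\mathrm d\mu_{n,i}/\mathrm dP_{n,i})^2\le\tau\varepsilon_{n,i}^{-1}\,\mathrm d\mu_{n,i}/\mathrm dP_{n,i}$, so $-\sum_i\varepsilon_{n,i}^2\le I_{n,2,\tau}\le\tau\sum_i\varepsilon_{n,i}$. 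Hence if $\sup_n\sum_i\varepsilon_{n,i}<\infty$ both sums are globally bounded; if $K<\infty$ then $\sum_i\varepsilon_{n,i}^2 E_{P_{n,i}}((\mathrm d\mu_{n,i}/\mathrm dP_{n,i})^2)\to K\int_0^1 h^2\,\mathrm d\lebesgue$ by the uniform $L^2$-convergence in \eqref{eqn:h_unif_conv} (the $g$-part converges, the rest is $o(1)$), so again both are bounded; in either case no subsequence is completely detectable, and \Cref{theo:xi_real_oder_xi_fullinfo}\eqref{enu:theo:xi_real_oder_fullinfo} (together with \Cref{theo:trivial_limits}) forces every accumulation point $\xi_1$ to be real-valued. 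For \eqref{enu:theo:h-model_undetect}, $K=0$ additionally drives $\sum_i\varepsilon_{n,i}^2 E_{P_{n,i}}((\mathrm d\mu_{n,i}/\mathrm dP_{n,i})^2)\to0$ and $\sum_i\varepsilon_{n,i}^2\to0$, hence $I_{n,1,\tau}\to0$ and $I_{n,2,\tau}\to0$, and \Cref{theo:trivial_limits}\eqref{enu:lem_trivial_limits_undet} gives the undetectable case.

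For the Gaussian part of \eqref{enu:theo:h-model_nontrivial} I verify the hypotheses of \Cref{theo:general_limit_theorem} with $M\equiv0$ and $\sigma^2=\sigma^2(h)$. The decisive input is that the uniform $L^2$-convergence in \eqref{eqn:h_unif_conv} makes $\{h_{n,i}^2\}$ uniformly integrable, so $\omega(T):=\sup_{n,i}\int_{\{h_{n,i}>T\}}h_{n,i}^2\,\mathrm d\lebesgue\to0$ as $T\to\infty$; combined with $\delta_n:=\varepsilon_{n,k_n}/\kappa_{n,k_n}\to0$ from \eqref{eqn:h-model_max_condition}, the truncation level $x\kappa_{n,i}/\varepsilon_{n,i}$ of $g_{n,i}$ is $\ge x/\delta_n\to\infty$ uniformly in $i$, and the substitution estimate bounds $I_{n,1,e^x-1}$ and $\sum_i\varepsilon_{n,i}^2 E_{P_{n,i}}((\mathrm d\mu_{n,i}/\mathrm dP_{n,i})^2\mathbf 1\{\varepsilon_{n,i}\,\mathrm d\mu_{n,i}/\mathrm dP_{n,i}>x\})$ by a constant multiple of $\omega(x/\delta_n)\sum_i\varepsilon_{n,i}^2/\kappa_{n,i}$, which tends to $0$ for every fixed $x>0$ since $K<\infty$. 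Hence $M(x,\infty]=0$ for all $x$ and $I_{n,2,x}\to\sigma^2$ for each $x$, so the $\limsup$/$\liminf$ condition of \Cref{theo:general_limit_theorem} holds with value $\sigma^2$; that theorem then delivers real-valued $\xi_1$, $\nu_2(\R)=1$, $\sigma_1^2=\sigma_2^2=\sigma^2$ and $\eta_1=\eta_2=0$, and \eqref{eqn:gamma1}--\eqref{eqn:gamma_2_general} force $\gamma_1=-\sigma^2/2$, $\gamma_2=\sigma^2/2$, i.e. \eqref{eqn:Tn_normal}. For \eqref{enu:theo:h-model_nonpara_LAN} I invoke \Cref{theo:ARE}: the Gaussian convergence with $\sigma_j^2=K^{(j)}\int_0^1(h^{(j)})^2\,\mathrm d\lebesgue>0$ comes from \eqref{enu:theo:h-model_nontrivial}; expanding $\mathrm{Cov}_{P_{n,i}}(\mathrm d\mu_{n,i}^{(1)}/\mathrm dP_{n,i},\mathrm d\mu_{n,i}^{(2)}/\mathrm dP_{n,i})=E_{P_{n,i}}(\cdot\,\cdot)-1$ and applying the substitution to $g_{n,i}^{(1)}g_{n,i}^{(2)}$ (supported on $u<\min\{\kappa_{n,i}^{(1)},\kappa_{n,i}^{(2)}\}$) gives exactly the asserted expression for $\gamma(\boldsymbol{\theta_1},\boldsymbol{\theta_2})$, the $-1$-terms and all $r$-terms being $o(1)$ as above; taking the two models equal gives $\gamma(\boldsymbol{\theta_j},\boldsymbol{\theta_j})=\sigma_j^2$, so all hypotheses of \Cref{theo:ARE} are met and its conclusion transfers verbatim.

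The remaining part \eqref{enu:theo:h-model_comp_detect} is the one I expect to cost real effort. By \Cref{theo:trivial_limits}\eqref{enu:lem_trivial_limits_comp_det} I must produce a $\tau>0$ with $I_{n,1,\tau}\to\infty$ or $I_{n,2,\tau}\to\infty$, and I would do so by splitting the coordinates along the ordered ratios $\varepsilon_{n,i}/\kappa_{n,i}$. On the small-ratio block the first divergence in \eqref{eqn:h-model_BED_rn}, together with $\int_0^1 h_{n,i}^2\,\mathrm d\lebesgue\ge\tfrac12\int_0^1 h^2\,\mathrm d\lebesgue$ eventually and the fact (again via uniform integrability of $\{h_{n,i}^2\}$) that a small ratio makes the level-$\tau$ truncation delete only a vanishing fraction of the second moment, forces this block's contribution to $I_{n,2,\tau}$ to diverge, while the opposite block costs at most $-\sum_i\varepsilon_{n,i}^2$; on the large-ratio block the set $\{\varepsilon_{n,i}g_{n,i}>\tau\}$ has $\mu_{n,i}$-mass bounded away from $0$, so the second divergence in \eqref{eqn:h-model_BED_rn} forces $I_{n,1,\tau}\to\infty$. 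Making this dichotomy rigorous --- choosing the split point and a single $\tau$ so that \emph{one} of the two conclusions always holds (possibly after passing to subsequences), and controlling the perturbation with only the $L^2$-hypothesis \eqref{lem:h-model_rni_condition} and no smoothness or boundedness assumption on the $h_{n,i}$ --- is the step I expect to be the main obstacle; it may be cleanest here to argue directly through $\sum_i H^2(P_{n,i},Q_{n,i})$, of which $I_{n,1,\tau}$ and $I_{n,2,\tau}$ capture, respectively, the ``large-deviation'' and the ``central'' half.
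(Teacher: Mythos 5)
Your handling of (a), (c) and (d) is essentially the paper's own proof: after the substitution $v=u/\kappa_{n,i}$ the sums \eqref{eqn:I1}--\eqref{eqn:I2} take exactly the form used there, your bounds (Markov under $\mu_{n,i}$, the pointwise bound on the truncated square, uniform integrability of $\{h_{n,i}^2\}$ obtained from \eqref{eqn:h_unif_conv} combined with \eqref{eqn:h-model_max_condition}) reproduce the paper's estimates, and the conclusions are drawn from \Cref{theo:trivial_limits,theo:xi_real_oder_xi_fullinfo,theo:general_limit_theorem,theo:ARE} in the same way. The one stylistic difference is the perturbation: the paper removes $r_{n,i}$ once and for all by showing that the perturbed and unperturbed alternatives are asymptotically equivalent in total variation (Lemma \ref{lem:h-model_rni}, a one-line Hellinger computation), which is cleaner than carrying cross terms through every $I$-sum and, importantly, is valid uniformly in $K$, whereas your term-by-term $o(1)$ bounds become delicate in the $K=\infty$ regime of part (b), where they must be compared with main terms that diverge.

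In (b), which you yourself flag as unfinished, two concrete ingredients are missing. First, the correction $-\sum_i\varepsilon_{n,i}^2$ in your lower bound for $I_{n,2,\tau}$ is not controlled when $K=\infty$: the paper first reduces, by a sub-subsequence argument, to $\sum_i\varepsilon_{n,i}^2\to C_1<\infty$, since if this sum diverges then \eqref{eqn:totvar_helg_absch_P+mu} together with $\|P_{n,i}-\mu_{n,i}\|\geq 1-\kappa_{n,i}$ already forces $D_n\to\infty$, i.e.\ complete detectability, without any $I$-sum. Second, your mechanism on the small-ratio block (``a small ratio makes the level-$\tau$ truncation delete only a vanishing fraction of the second moment'') is not available: on $\{1,\dots,r_n\}$ the ratios are only bounded by $\varepsilon_{n,r_n}/\kappa_{n,r_n}$, which need not tend to $0$. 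The paper's dichotomy is instead on the value $C=\lim\varepsilon_{n,r_n}/\kappa_{n,r_n}$ along subsequences: if $C\geq 1$, every $i\geq r_n$ has ratio eventually $\geq 1/2$, so $\mu_{n,i}\{\varepsilon_{n,i}\,\mathrm{d}\mu_{n,i}/\mathrm{d}P_{n,i}>x\}\geq\int_0^1 h_{n,i}\mathbf{1}\{h_{n,i}>2x\}\,\mathrm{d}\lebesgue$ is uniformly bounded away from $0$ for small $x$ and the first condition in \eqref{eqn:h-model_BED_rn} gives $I_{n,1,x}\to\infty$; if $C<1$, every $i\leq r_n$ has ratio eventually $\leq 2$, and one takes $x$ \emph{large}, so that $\int_0^1 h_{n,i}^2\mathbf{1}\{2h_{n,i}\leq x\}\,\mathrm{d}\lebesgue$ is uniformly bounded below and the second condition in \eqref{eqn:h-model_BED_rn} gives $I_{n,2,x}\to\infty$. (Note also that you have the two conditions of \eqref{eqn:h-model_BED_rn} interchanged: the $\varepsilon$-sum over the tail $i\geq r_n$ feeds $I_{n,1}$ on the large-ratio block, the $\varepsilon^2/\kappa$-sum over the head feeds $I_{n,2}$.) With these two points supplied, your plan for (b) coincides with the paper's argument.
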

Using \Cref{theo:h-model}\eqref{enu:theo:h-model_nonpara_LAN} we can calculate the asymptotic relative efficiency ARE if the LLR test $\varphi_n(\boldsymbol{\theta_2})$ is used although $\boldsymbol{\theta_1}$ is the underlying model. In the following we discuss two special cases in this context.
\begin{remark}\label{rem:chimeric_gamma}
Suppose the conditions of \Cref{theo:h-model}\eqref{enu:theo:h-model_nonpara_LAN} are fulfilled.
\begin{enumerate}[(i)]
	\item\label{enu:rem:chimeric_gamma_diff_eps} (No power under different shrinking) Assume that $\kappa_{n,i}^{(1)} (\kappa_{n,i}^{(2)})^{-1}$ converges uniformly for $i\in\{1,\ldots,k_n\}$ to $0$ or to $\infty$.
	From Cauchy Schwartz's inequality we get $\gamma(\boldsymbol{\theta_1},\boldsymbol{\theta_2})=0$ and, hence, $\mathrm{ARE}=0$.
	
	\item If $\varepsilon_{n,i}^{(1)}=\varepsilon_{n,i}^{(2)}$ and $\kappa_{n,i}^{(1)}=\kappa_{n,i}^{(2)}$ in \Cref{theo:h-model}\eqref{enu:theo:h-model_nonpara_LAN} then $\gamma(\boldsymbol{\theta_1},\boldsymbol{\theta_2})$ can be expressed in terms of $K^{(1)}=K^{(2)}$, $h^{(1)}$ and $h^{(2)}$. In particular, we obtain 
	\begin{align*}
	\mathrm{ARE}=  \frac{<h^{(1)}, h^{(2)}>^2}{<h^{(1)},h^{(1)}><h^{(2)}, h^{(2)}>}, \text{ where }<f,g>=\int_0^1 fg \,\mathrm{ d }\lebesgue.
	\end{align*}
\end{enumerate}
\end{remark}
If $\varepsilon_{n,i}=\varepsilon_{n}$ and $\kappa_{n,i}=\kappa_{n}$ does not depend on $i=1,\ldots,k_n$ then  \eqref{eqn:h-model_BED_rn} is fulfilled for $r_n=[k_n/2]$ if and only if $K=\infty$ and $k_n\varepsilon_{n}\to\infty$. Combining this and \Cref{theo:h-model} yields the detection boundary presented in \ref{enu:ill_exam_determ_detect_bound} from \Cref{sec:intro_h} and the Gaussian limits introduced in \ref{enu:intro_h_LLRT_on_db} on this boundary if $\beta<1$. Next, we give the generalisation of the result stated in \ref{enu:intro_h_nonreal_xi2} from \Cref{sec:intro_h} concerning the case $\beta=1$.
\begin{theorem}[Extreme case $\beta=1$]\label{theo:h-model_beta=1}
Let $\kappa_{n,i}=k_n^{-r}$, $r>0$, and   $\varepsilon_{n,i}=k_n^{-1}$. Let ${\mathcal D }$ be a dense subset of $(0,\infty)$ and $M$ be a measure on $(0,\infty]$ with $M(\{\infty\})=0$  such that $M(x,\infty)<\infty$ for all $x\in {\mathcal D }$ and 
\begin{align}\label{eqn:theo:h-model_beta=1_condi}
\max_{1\leq i \leq n}\Bigl| \int_0^1 h_{n,i}\mathbf{1}\{h_{n,i}>e^x-1\}\,\mathrm{ d }\lebesgue -M(x,\infty) \Bigr|\to 0.
\end{align}
Then \eqref{eqn:def_Tn} holds for $\xi_1$ and $\xi_2$ given as follows:
\begin{enumerate}[(a)]
	\item (Undetectable case) If $r<1$ then $\xi_1\equiv\xi_2\equiv 0$.
	
	\item\label{enu:theo:hmodel_beta=1_nonGauss} If $r=1$ then $\xi_j$, $j\in\{1,2\}$, is infinitely divisible with L\'{e}vy-Khintchine triplet $(\gamma_j,0,\eta_j)$, where $\gamma_j$ and $\eta_j$ are given by \eqref{eqn:gamma1}, \eqref{eqn:gamma_2_general} and \eqref{eqn:conn_eta_M}.
	
	\item If $r>1$ then $\xi_1\equiv -1$ and $\xi_2\sim e^{-1}\epsilon_{-1} \;\,+ (1-e^{-1})\epsilon_\infty$.
\end{enumerate}
\end{theorem}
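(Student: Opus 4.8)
The plan is to verify the hypotheses of the general limit results (Theorems \ref{theo:trivial_limits} and \ref{theo:general_limit_theorem}) by computing the asymptotics of the sums $I_{n,1,x}$ and $I_{n,2,x}$ for the concrete model at hand. The Radon--Nikodym density here is $\mathrm{d}\mu_{n,i}/\mathrm{d}\lebesgue(u)=\kappa_{n,i}^{-1}h_{n,i}(u/\kappa_{n,i})+r_{n,i}(u)$ on $(0,1)$, with $\kappa_{n,i}=k_n^{-r}$ and $\varepsilon_{n,i}=k_n^{-1}$, so $\varepsilon_{n,i}\,\mathrm{d}\mu_{n,i}/\mathrm{d}\lebesgue = k_n^{-1-r}h_{n,i}(u/\kappa_{n,i})+k_n^{-1}r_{n,i}(u)$. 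First I would show that the perturbation terms $r_{n,i}$ are asymptotically negligible for all three regimes: by \eqref{lem:h-model_rni_condition} we have $\sum_i\varepsilon_{n,i}^2\int r_{n,i}^2\,\mathrm{d}\lebesgue\to 0$, which controls their contribution to $I_{n,2,x}$ directly and, via Cauchy--Schwarz together with $\max_i(\varepsilon_{n,i}+\kappa_{n,i})\to 0$, also to $I_{n,1,x}$; so from this point on one may argue as if $r_{n,i}\equiv 0$.

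For the reduced density, substitute $v=u/\kappa_{n,i}$ so that, for each fixed $x>0$,
\begin{align*}
\varepsilon_{n,i}\mu_{n,i}\Bigl(\varepsilon_{n,i}\tfrac{\mathrm{d}\mu_{n,i}}{\mathrm{d}\lebesgue}>e^x-1\Bigr)
&= k_n^{-1}\int_0^1 h_{n,i}(v)\,\mathbf{1}\{k_n^{-r}h_{n,i}(v)>e^x-1\}\,\mathrm{d}v ,
\end{align*}
and summing over $i\le n$ (recall $k_n=n$ in this section) gives $I_{n,1,e^x-1}=\sum_{i=1}^n k_n^{-1}\int_0^1 h_{n,i}\mathbf{1}\{h_{n,i}>k_n^{r-1}(e^x-1)\}\,\mathrm{d}\lebesgue$. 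If $r>1$ then $k_n^{r-1}(e^x-1)\to\infty$, the indicator sets shrink to emptiness uniformly in $i$ (since $h_{n,i}$ is, up to the uniform error in \eqref{eqn:theo:h-model_beta=1_condi}, essentially $h\in L^1$), hence $I_{n,1,e^x-1}\to 0$ for every $x>0$; one then checks $I_{n,2,x}\to 0$ as well using $\sum_i\varepsilon_{n,i}^2(\mathrm{d}\mu_{n,i}/\mathrm{d}\lebesgue)^2$-type bounds and Theorem \ref{theo:trivial_limits}\eqref{enu:lem_trivial_limits_undet} would wrongly give the undetectable case — so instead, for $r>1$ I must work with the original (not density-truncated) sums: there the atom at $\infty$ arises from $\sum_i\varepsilon_{n,i}\kappa_{n,i}=n\cdot n^{-1}\cdot\text{(mass outside the truncation)}$, and the cleanest route is to invoke Corollary \ref{cor:violation}: write $\mu_{n,i}$ itself with the "singular" part being the mass that the rescaled bump pushes onto the vanishing window, so that $c=\lim\sum_i\varepsilon_{n,i}\kappa_{n,i}^{\,\text{eff}}$; after normalising $h$ this yields $c=1$ and hence $\xi_1\equiv-1$, $\xi_2\sim e^{-1}\epsilon_{-1}+(1-e^{-1})\epsilon_\infty$, which is exactly part (c). (This is also the mechanism behind the $\beta=1,r>1$ display in \ref{enu:intro_h_nonreal_xi2}.)

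For $r=1$ the key sum collapses nicely: $I_{n,1,e^x-1}=\frac1n\sum_{i=1}^n\int_0^1 h_{n,i}\mathbf{1}\{h_{n,i}>e^x-1\}\,\mathrm{d}\lebesgue$, which by the uniform convergence hypothesis \eqref{eqn:theo:h-model_beta=1_condi} tends to $M(x,\infty)=M(x,\infty]$ (using $M(\{\infty\})=0$) for every $x\in\mathcal D$; so hypothesis \eqref{enu:theo:general_limit_theorem_levy_mass} of Theorem \ref{theo:general_limit_theorem} holds with the given $M$. Next I would check \eqref{enu:theo:general_limit_theorem_sigma}: here $I_{n,2,x}=\sum_i\varepsilon_{n,i}^2E_{P_{n,i}}[(\mathrm{d}\mu_{n,i}/\mathrm{d}\lebesgue)^2\mathbf{1}\{\cdot\le x\}-1]=\frac1n\sum_i(\int_0^1 h_{n,i}^2\mathbf{1}\{h_{n,i}\le k_n x\}\,\mathrm{d}\lebesgue - 1)$; since $\varepsilon_{n,i}=\kappa_{n,i}=1/n\to 0$ the truncation level $k_n x\to\infty$, and the $-1$ kills the divergent part, leaving $\lim_{x\searrow0}\limsup_n I_{n,2,x}=\lim_{x\searrow0}\liminf_n I_{n,2,x}=0$ provided the truncated second moments behave — which they do because the Lévy-measure mass $\int_{(0,\varepsilon)}y^2\,\mathrm{d}\eta_1(y)$ is finite and $\to 0$ as $\varepsilon\to 0$. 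Hence $\sigma^2=0$. Then Theorem \ref{theo:general_limit_theorem} delivers $\nu_2(\R)=\exp(-M(\{\infty\}))=1$, $\sigma_1^2=\sigma_2^2=0$, and $\eta_2-\eta_1=M_{|(0,\infty)}=M$; feeding this into Remark \ref{rem:general_theor}\eqref{enu:rem:general_theor_density} gives $\mathrm{d}\eta_1/\mathrm{d}M=(e^x-1)^{-1}$, $\mathrm{d}\eta_2/\mathrm{d}M=e^x/(e^x-1)$, and Theorem \ref{theo:xi_real_oder_xi_fullinfo}\eqref{eqn:theo:connect_gamma} pins down $\gamma_1,\gamma_2$ via \eqref{eqn:gamma1}, \eqref{eqn:gamma_2_general}; this is precisely the assertion of part (b). Finally, for $r<1$ the level $k_n^{r-1}(e^x-1)\to 0$ makes the indicator identically $1$, so $I_{n,1,e^x-1}\to\int_0^1 h\,\mathrm{d}\lebesgue=1$ for every $x$ — i.e.\ $M(x,\infty]\equiv 1$, which is \emph{not} a finite Lévy-type limit in the relevant sense; more to the point, $I_{n,1,\tau}=\frac1n\sum_i\varepsilon_{n,i}^{-1}\cdot\varepsilon_{n,i}\mu_{n,i}(\cdots)\to 0$ because the relevant threshold in $I_{n,1,\tau}$ (fixed $\tau$) forces $h_{n,i}>n^{1-r}\tau\to\infty$, emptying the set, and simultaneously $I_{n,2,\tau}\to 0$, so Theorem \ref{theo:trivial_limits}\eqref{enu:lem_trivial_limits_undet} gives the undetectable case and $\xi_1\equiv\xi_2\equiv 0$, which is part (a).

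The main obstacle I anticipate is bookkeeping the two "layers" of truncation consistently: the density-level cutoff $h_{n,i}>e^x-1$ that produces the Lévy measure in the $r=1$ case versus the fixed cutoff $\varepsilon_{n,i}\mathrm{d}\mu_{n,i}/\mathrm{d}P_{n,i}>\tau$ appearing in $I_{n,1,\tau}$ and the mass-at-infinity phenomenon for $r>1$. In particular one must be careful that for $r>1$ the \emph{truncated} sums vanish yet the experiment is not undetectable — resolving this is exactly why Corollary \ref{cor:violation} (equivalently, tracking $\sum_i\varepsilon_{n,i}\kappa_{n,i}$) is the right tool there — and that the normalisation $\int_0^1 h\,\mathrm{d}\lebesgue=1$ is what makes $c=1$, hence $e^{-1}$, appear. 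The remaining steps — the change of variables, the negligibility of $r_{n,i}$, and the passage from $(M,\sigma^2,a)$ to the Lévy--Khintchine triplets — are routine given the machinery already established.
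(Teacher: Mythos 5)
Your overall strategy (reduce to $r_{n,i}\equiv 0$, compute $I_{n,1,x}$ and $I_{n,2,x}$ from \eqref{eqn:h-model_I1}--\eqref{eqn:h-model_I2}, and feed the limits into \Cref{theo:trivial_limits,theo:general_limit_theorem}) is exactly the paper's route, but the execution has a genuine error that derails case (c). After the substitution $v=u/\kappa_{n,i}$, the event in $I_{n,1,e^x-1}$ is $\{k_n^{r-1}h_{n,i}>e^x-1\}$, i.e.\ $\{h_{n,i}>k_n^{1-r}(e^x-1)\}$. For $r>1$ this threshold tends to $0$, so $\int_0^1 h_{n,i}\mathbf{1}\{\cdot\}\,\mathrm{d}\lebesgue\to\int_0^1 h_{n,i}\,\mathrm{d}\lebesgue=1$ and hence $I_{n,1,e^x-1}\to 1$ for every $x>0$; you claim the opposite ($I_{n,1,e^x-1}\to 0$, sets emptying) and then, to rescue the atom at $\infty$, invoke \Cref{cor:violation}. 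That corollary does not apply here: in this model $\mu_{n,i}\ll\lebesgue_{|(0,1)}$ by construction, so the Lebesgue decomposition has no singular part (the $\kappa_{n,i}$ of \Cref{cor:violation} is the singular mass fraction, not the support-shrinking parameter, and with your identification $\sum_i\varepsilon_{n,i}\kappa_{n,i}=n^{-r}\to 0$, not $1$); the "effective singular mass" is never made rigorous. The correct mechanism is precisely the computation you inverted: $I_{n,1,e^x-1}\to 1$ for all $x\in\mathcal D$ means the limit measure in \Cref{theo:general_limit_theorem}\eqref{enu:theo:general_limit_theorem_levy_mass} is $\epsilon_\infty$, whence $\nu_2(\R)=e^{-1}$, $\eta_1=\eta_2=0$, $\sigma^2=0$, and \eqref{eqn:gamma1}, \eqref{eqn:gamma_2_general} give $\gamma_1=\gamma_2=-1$, which is part (c). (Symmetrically, your $r<1$ paragraph first asserts $I_{n,1,e^x-1}\to 1$ before contradicting itself with the correct statement $I_{n,1,\tau}\to 0$; the two regimes got swapped.)

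There is a second, smaller but real, gap in the $r=1$ case. The truncation in $I_{n,2,x}$ is $\{\varepsilon_{n,i}\,\mathrm{d}\mu_{n,i}/\mathrm{d}P_{n,i}\le x\}$, which after rescaling reads $\{h_{n,i}\le x\}$, not $\{h_{n,i}\le k_nx\}$, and the subtracted term is $\sum_i\varepsilon_{n,i}^2=1/n$, not $1$. Your argument that the "$-1$ kills the divergent part" and that the truncated second moments behave "because $\int_{(0,\varepsilon)}y^2\,\mathrm{d}\eta_1(y)$ is finite" is circular (it presumes the limit whose existence is being proved) and would actually fail if the truncation level really tended to $\infty$, since the theorem only assumes $h_{n,i}$ close to $h\in L^1$, not $L^2$. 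The fix is elementary and is what the paper does: on $\{h_{n,i}\le x\}$ one has $h_{n,i}^2\le x\,h_{n,i}$, so $-1/k_n\le I_{n,2,x}\le x$, giving $\lim_{x\searrow 0}\limsup_n I_{n,2,x}=\lim_{x\searrow 0}\liminf_n I_{n,2,x}=0$ and hence $\sigma^2=0$ in all three regimes; similarly your assertion $I_{n,2,\tau}\to 0$ for fixed $\tau$ in case (a) needs a short argument (split at a constant level $C$ and use \eqref{eqn:theo:h-model_beta=1_condi} with $M(y,\infty)\to 0$ as $y\to\infty$), or can be bypassed entirely by applying \Cref{theo:general_limit_theorem} with $M\equiv 0$ and $\sigma^2=0$, which already yields $\xi_1\equiv\xi_2\equiv 0$.
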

\begin{remark}\label{rem:h-model_beta=1}
Let $h\in L^1(\lebesgue_{|(0,1)})$. Suppose that $h_{n,i}=h_n$, $\int_0^1 | h_{n}-h| \,\mathrm{ d }\lebesgue\to 0$ and $\lebesgue(u\in(0,1): h(u) = x)=0$ for all $x>0$. Note that the latter is always fulfilled for strictly monotone $h$.  Then \eqref{eqn:theo:h-model_beta=1_condi} holds for $M$ given by $M(x,\infty)=\int_0^1 h\, \mathbf{1}\{h>e^x-1\} \,\mathrm{ d }\lebesgue$.
Consequently, if $r=1$ then $\eta_1= \mathcal{L}(\log(h+1)|\lebesgue_{|(0,1)})$, or in other words $\xi_1 \overset{ }{=}\log(h(U)+1)$ for some uniformly distributed $U$ in $(0,1)$.
\end{remark} 
Note that we need for the statements in \Cref{theo:h-model_beta=1} only $h\in L^1(\lebesgue_{|(0,1)})$, and not $h \in L^2(\lebesgue_{|(0,1)})$ as in \Cref{theo:h-model}. It is also possible to determine the detection boundary if $h\notin L^2(\lebesgue_{|(0,1)})$. In this case we get nontrivial L\'{e}vy measures on the whole detection boundary depending heavily on the shape of $h$ comparable to the situation in \Cref{theo:h-model_beta=1}\eqref{enu:theo:hmodel_beta=1_nonGauss}. In the following we discuss an example for $h\in  L^1(\lebesgue_{|(0,1)})\setminus  L^2(\lebesgue_{|(0,1)})$.
\begin{theorem}\label{lem:h=xrho}
Let $h_{n,i}(x)=h(x)=(1-\alpha)x^{-\alpha}$ for all $x\in(0,1)$ and some $\alpha\in[1/2,1)$. Moreover, let $k_n=n$, $\varepsilon_{n,i}=n^{-\beta}$, $\beta\in(1/2,1)$, and $\kappa_{n,i}=n^{-r}$, $r>0$. Then the detection boundary is given by
\begin{align}\label{eqn:lem:h=xrho_detec_bound}
\rho^\#(\beta,\alpha)\; =\;\min \Bigl( 0, \frac{\beta-\alpha}{1-\alpha} \Bigr). 
\end{align} 
In detail, $r<\rho^\#(\beta,\alpha)$ (resp. $r>\rho^\#(\beta,\alpha)$) leads to the undetectable case (resp. completely detectable case). If $r=\rho^\#(\beta,\alpha)$ then $T_n$ converges to infinitely divisible $\xi_j$, $j\in\{1,2\}$, with L\'{e}vy-Khintchine triplet $(\gamma_j,0,\eta_j)$ under ${\mathcal H }_{0,n}$ and ${\mathcal H }_{1,n}$, respectively. $\gamma_j$ and $\eta_j$ are uniquely determined by \eqref{eqn:gamma1}, \eqref{eqn:gamma_2_general} and
\begin{align*}
\frac{\mathrm{ d } \eta_j }{\,\mathrm{ d }\lebesgue}(x)= \frac{(1-\alpha)^{\frac{1}{\alpha}}}{\alpha}e^x (e^x-1)^{-\frac{1}{\alpha}-1},\; x>0,
\end{align*}
\end{theorem}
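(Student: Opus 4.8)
The plan is to route everything through the functionals $I_{n,1,x}$, $I_{n,2,x}$ of \eqref{eqn:I1}--\eqref{eqn:I2}: the two trivial phases come out of \Cref{theo:trivial_limits}, and the limit on the boundary out of \Cref{theo:general_limit_theorem}, read through \Cref{theo:xi_real_oder_xi_fullinfo} and \Cref{rem:general_theor}. Since $P_{n,i}=\lebesgue_{|(0,1)}$, $h(x)=(1-\alpha)x^{-\alpha}$ and $\kappa_{n,i}=n^{-r}$ do not depend on $i$, formula \eqref{eqn:ill_density} gives
\[
\frac{\mathrm d\mu_{n}}{\mathrm dP_0}(x)=(1-\alpha)\,n^{r(1-\alpha)}x^{-\alpha}\mathbf 1\{0<x\le n^{-r}\},\qquad g_n(x):=\varepsilon_n\frac{\mathrm d\mu_n}{\mathrm dP_0}(x)=(1-\alpha)\,n^{r(1-\alpha)-\beta}x^{-\alpha}\mathbf 1\{0<x\le n^{-r}\}.
\]
On its support $g_n$ is strictly decreasing with minimal value $g_n(n^{-r})=(1-\alpha)n^{r-\beta}$, so for every $x$ the sets $\{g_n>x\}$ and $\{g_n\le x\}$ are intervals and the integrals in $I_{n,1,x}$, $I_{n,2,x}$ are elementary power integrals. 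I would evaluate them for fixed $\tau>0$, keeping $\alpha=1/2$ apart since $\int y^{-2\alpha}\,\mathrm dy$ is then logarithmic.

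Writing $\rho^\#=\tfrac{\beta-\alpha}{1-\alpha}$ for the value at which the exponent below vanishes (recall $r>0$, so the undetectable phase and the boundary are non-empty only for $\alpha<\beta$, while every $r>0$ is completely detectable once $\alpha\ge\beta$), the computation should yield, for $r<\beta$ and $n$ large,
\[
I_{n,1,\tau}=\Bigl(\tfrac{1-\alpha}{\tau}\Bigr)^{\!\frac{1-\alpha}{\alpha}}n^{\,\frac{1-\alpha}{\alpha}(r-\rho^\#)}\bigl(1+o(1)\bigr),\qquad I_{n,2,\tau}=c_2(\tau)\,\ell_n\,n^{\,\frac{1-\alpha}{\alpha}(r-\rho^\#)}\bigl(1+o(1)\bigr)-n^{1-2\beta},
\]
where $c_2(\tau)>0$ and $\ell_n\equiv1$ if $\alpha\in(1/2,1)$, $\ell_n=\log n$ if $\alpha=1/2$; for $r\ge\beta$ one simply has $\mu_n(g_n>\tau)=1$, hence $I_{n,1,\tau}=n^{1-\beta}$. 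Because $\beta>1/2$ gives $n^{1-2\beta}\to0$, \Cref{theo:trivial_limits} now delivers the phases: if $r>\rho^\#$ then $I_{n,1,\tau}\to\infty$ (directly for $r\ge\beta$, through the positive exponent for $\rho^\#<r<\beta$), so \Cref{theo:trivial_limits}\eqref{enu:lem_trivial_limits_comp_det} gives complete detectability; if $r<\rho^\#$ the exponent is negative and both $I_{n,1,\tau}$, $I_{n,2,\tau}$ vanish, so \Cref{theo:trivial_limits}\eqref{enu:lem_trivial_limits_undet} gives the undetectable case.

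On the boundary $r=\rho^\#$ — where, for a bona fide infinitely divisible limit, $\alpha\in(1/2,1)$ is needed, the endpoint $\alpha=1/2$ being the logarithmic case in which $\ell_n=\log n$ forces $I_{n,2,\tau}\to\infty$ and hence complete detectability — the exponent is $0$, so $I_{n,1,e^x-1}\to M(x,\infty]:=(1-\alpha)^{\frac{1-\alpha}{\alpha}}(e^x-1)^{-\frac{1-\alpha}{\alpha}}$ for every $x>0$; this defines a measure on $((0,\infty],\mathcal B(0,\infty])$ with $M(\{\infty\})=\lim_{x\to\infty}M(x,\infty]=0$, which is \Cref{theo:general_limit_theorem}\eqref{enu:theo:general_limit_theorem_levy_mass} (with $\mathcal D=(0,\infty)$), and at the same $r$, $I_{n,2,x}$ converges for each fixed $x$ to a positive multiple of $x^{(2\alpha-1)/\alpha}$, which tends to $0$ as $x\searrow0$, so \Cref{theo:general_limit_theorem}\eqref{enu:theo:general_limit_theorem_sigma} holds with $\sigma^2=0$. \Cref{theo:general_limit_theorem} then yields $\xi_1$ real-valued, $\sigma_1^2=\sigma_2^2=0$, $\nu_2(\R)=e^{-M(\{\infty\})}=1$ (so $\xi_2$ is real-valued too), and $\eta_2-\eta_1=M_{|(0,\infty)}$; differentiating, $\tfrac{\mathrm dM}{\mathrm d\lebesgue}(x)=\tfrac{(1-\alpha)^{1/\alpha}}{\alpha}e^x(e^x-1)^{-1/\alpha}$, whence \eqref{eqn:conn_eta_M} of \Cref{rem:general_theor}\eqref{enu:rem:general_theor_density} gives $\tfrac{\mathrm d\eta_1}{\mathrm d\lebesgue}(x)=\tfrac{(1-\alpha)^{1/\alpha}}{\alpha}e^x(e^x-1)^{-1/\alpha-1}$ and $\mathrm d\eta_2=e^x\,\mathrm d\eta_1$, as claimed, and $\gamma_1,\gamma_2$ are read off from \eqref{eqn:gamma1}, \eqref{eqn:gamma_2_general} (whose defining integrals converge precisely because $\alpha>1/2$).

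The power-integral bookkeeping is routine; the genuine work is making the three cases airtight. The crucial structural fact is that the $n$-exponent of $I_{n,1,\tau}$ and that of the leading part of $I_{n,2,\tau}$ vanish at the \emph{same} $r=\rho^\#$ — a feature of the power law $h$, since both sums are governed by the tail of $\mathrm d\mu_n/\mathrm dP_0$ near $0$ — and one must additionally check that the subtracted terms, namely the upper-endpoint ($\kappa_n$) contribution inside $I_{n,2,\tau}$ (scaling like $n^{(1-2\alpha)(1-\beta)/(1-\alpha)}$ at $r=\rho^\#$) and the term $-n^{1-2\beta}$, carry strictly negative $n$-exponents there and are therefore negligible; pinning these exponents down exactly is the delicate step. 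The other delicate point is the endpoint $\alpha=1/2$: there $h\notin L^2(\lebesgue_{|(0,1)})$ only logarithmically, $I_{n,2,\tau}$ acquires the extra $\log n$, the candidate Lévy density fails $\int_0 x^2\,\mathrm d\eta<\infty$, and the borderline $r$ has to be classified through \Cref{theo:trivial_limits}\eqref{enu:lem_trivial_limits_comp_det} as completely detectable rather than as a proper limit law, so it should be separated out from the start.
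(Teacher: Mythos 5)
Your proposal follows the same route as the paper's own proof: evaluate $I_{n,1,x}$ and $I_{n,2,x}$ in closed form for the power law $h$ and feed them into \Cref{theo:trivial_limits}, \Cref{theo:general_limit_theorem} and \eqref{eqn:conn_eta_M}; your exponent $\tfrac{1-\alpha}{\alpha}(r-\rho^{\#})$ is the paper's $\tfrac1\alpha(\alpha-\beta+r(1-\alpha))$, and your $M$, $\sigma^2=0$ and L\'{e}vy densities on the boundary are exactly what the printed three-line proof extracts. Where you deviate, you are right and the printed argument is not. At $\alpha=1/2$, $r=\rho^{\#}=2\beta-1$ the paper displays $I_{n,2,x}=\tfrac12\log(2xn^{1-\beta})-n^{1-2\beta}$ and asserts it tends to $0$; since $\beta<1$ the logarithmic term diverges, so in fact $I_{n,2,x}\to\infty$ and \Cref{theo:trivial_limits}\eqref{enu:lem_trivial_limits_comp_det} puts that point in the completely detectable case --- consistent with your remark that for $\alpha=1/2$ the displayed density is not a L\'{e}vy measure, as $x^2e^x(e^x-1)^{-3}\sim x^{-1}$ near $0$. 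Hence the nontrivial infinitely divisible limit on the boundary holds only for $\alpha\in(1/2,1)$, precisely the restriction you build in; the theorem as printed overstates the endpoint $\alpha=1/2$. Likewise, your reading of the boundary as $\tfrac{\beta-\alpha}{1-\alpha}$ (effectively $\max(0,\cdot)$ within $r>0$) is the one the $I_{n,1,x}$ formula supports: with the printed $\min$ in \eqref{eqn:lem:h=xrho_detec_bound}, every $r>0$ would be declared completely detectable when $\alpha<\beta$, contradicting the undetectable regime $0<r<\tfrac{\beta-\alpha}{1-\alpha}$ that both your computation and the paper's establish.

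Two smaller points. Your exact evaluation of $I_{n,2,x}$ (limit proportional to $x^{(2\alpha-1)/\alpha}$, vanishing as $x\searrow0$, with the $\kappa_n$-endpoint contribution and $-n^{1-2\beta}$ of strictly smaller order at $r=\rho^{\#}$) is preferable to the paper's stated bound $I_{n,2,x}\le n^{1-2\beta+r}$, which does not hold for $r<\beta$, $\alpha>1/2$, although both routes give $\sigma^2=0$. And the density you derive is that of $\eta_1$, with $\mathrm d\eta_2=e^x\,\mathrm d\eta_1$ by \Cref{theo:xi_real_oder_xi_fullinfo}\eqref{eqn:theo:connect_Levymeasure}; the statement's identical formula for $j\in\{1,2\}$ cannot be meant literally, so state the relation explicitly rather than writing ``as claimed''. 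With those clarifications your proposal is complete and is, in effect, a corrected proof of the theorem restricted to $\alpha\in(1/2,1)$ on the boundary.
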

Note that the limit in \Cref{lem:h=xrho} for $r=\rho^\#(\beta,\alpha)$ does not coincide with the one for $\beta=1$ from \Cref{theo:h-model_beta=1}\eqref{enu:theo:hmodel_beta=1_nonGauss} with $h_{n,i}(x)=(1-\alpha)x^{-\alpha}$. \\
Let us now consider the HC test. Since the given model is one for $p$-values the observations do not need to be transformed. Hence, the HC test is based on $p_{n,i}=Y_{n,i}$. 
\begin{theorem}[Higher criticism]\label{theo:HC_hmodel}
Consider the model 
\begin{enumerate}[(i)]
	\item\label{enu:theo:HC_hmodel_intro} from \Cref{sec:intro_h}, where $h\in L^{2+\delta}(\lebesgue_{|(0,1)})$ for some $\delta\in(0,1)$, or
	\item\label{enu:theo:HC_hmodel_h=xrho} from \Cref{lem:h=xrho}.
\end{enumerate} 
Then the areas of complete detection of the HC and the LLR test coincide. HC cannot distinguish between the null and the alternative asymptotically if $r\leq 1$ and $r=\rho(\beta)$ or $r=\rho^\#(\beta,\alpha)$, respectively, i.e. on the detection boundary. \\
Moreover, under the model assumptions of \Cref{theo:h-model_beta=1} with $h_{n,i}=h_n$ HC cannot distinguish between the null and the alternative asymptotically if $\beta=r=1$. 
\end{theorem}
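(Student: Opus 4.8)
The plan is to derive all three assertions from the two HC criteria \Cref{theo:HC_full_power,theo:HC_undetect}, applied through the simplified quantity $\widetilde H_n$ of \Cref{rem:HC}. Passing to $\widetilde H_n$ is legitimate because in each model the signals are sparse enough: $\varepsilon_{n,i}$ is of order $k_n^{-\beta}$ with $\beta>1/2$, whence $a_n^2\sum_{i=1}^{k_n}\varepsilon_{n,i}^2=2(\log\log k_n)\,k_n^{1-2\beta}(1+o(1))\to 0$. One inclusion of the equality ``areas of complete detection coincide'' is automatic, since the LLR test is optimal: whenever HC separates the hypotheses completely, so does the LLR test. For the reverse inclusion I would, for every parameter above the LLR boundary $\rho(\beta)$ of \Cref{theo:h-model} (resp.\ $\rho^\#(\beta,\alpha)$ of \Cref{lem:h=xrho}), exhibit a sequence $v_n\in(0,1/2)$ with $a_n^{-1}\widetilde H_n(v_n)\to\infty$ and $\liminf_n k_nv_n>0$, as demanded by \Cref{theo:HC_full_power}; together with the two undetectability statements below this pins the HC complete-detection region to $\{r>\rho(\beta)\}$ (resp.\ $\{r>\rho^\#(\beta,\alpha)\}$), which is exactly the LLR one.

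The relevant quantities are explicit. The signal measures being rowwise identical and supported on $[0,\kappa_n]$,
\begin{align*}
\widetilde H_n(v)=\sqrt{k_n}\,\varepsilon_n\,\frac{\mu_n(0,v]}{\sqrt{v}},\qquad \mu_n(0,v]=\min\Bigl(H(v/\kappa_n),\,1\Bigr),\quad H(y):=\int_0^{y}h\,\mathrm{d}\lebesgue .
\end{align*}
If $k_n\kappa_n$ is bounded away from $0$ I take $v_n=\kappa_n$, so that $\mu_n(0,v_n]=1$ and $\widetilde H_n(v_n)=\sqrt{k_n}\,\varepsilon_n\,\kappa_n^{-1/2}$; for $k_n=n$, $\varepsilon_n=n^{-\beta}$, $\kappa_n=n^{-r}$ this equals $n^{(1+r-2\beta)/2}$, which beats $a_n=\sqrt{2\log\log n}$ precisely when $r>\rho(\beta)=2\beta-1$. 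If $k_n\kappa_n\to 0$ I take instead $v_n\asymp 1/k_n$, where $\mu_n(0,v_n]$ is either $1$ (then $\widetilde H_n(v_n)\asymp n^{1-\beta}\to\infty$) or, for the power law $h(x)=(1-\alpha)x^{-\alpha}$, $(v_n/\kappa_n)^{1-\alpha}$, giving $\widetilde H_n(v_n)\asymp n^{\alpha-\beta+r(1-\alpha)}$, which beats $a_n$ precisely when $r>(\beta-\alpha)/(1-\alpha)$. Matching these thresholds with $\rho$ and $\rho^\#$ (a short case distinction in $r$, in the sign of $\beta-\alpha$, and in $\alpha=1/2$ versus $\alpha>1/2$) closes the complete-detection equivalence, using that below the boundary the LLR test is undetectable and hence so is HC, while on the boundary HC has no power by the next step.

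For the second and third assertions I would invoke \Cref{theo:HC_undetect}. Its non-analytic hypotheses hold: the models are rowwise identical; and on the detection boundary $T_n$ converges to a real-valued law under the null --- Gaussian with variance $\sigma^2(h)=K\int_0^1h^2\,\mathrm{d}\lebesgue\in(0,\infty)$ in case \eqref{enu:theo:HC_hmodel_intro} by \Cref{theo:h-model}\eqref{enu:theo:h-model_nontrivial} (here $K=1$ and \eqref{eqn:h-model_max_condition} holds since $\varepsilon_n/\kappa_n=k_n^{\beta-1}\to 0$), infinitely divisible with a finite L\'evy--Khintchine triple in case \eqref{enu:theo:HC_hmodel_h=xrho} by \Cref{lem:h=xrho}, and likewise for $\beta=r=1$ by \Cref{theo:h-model_beta=1}\eqref{enu:theo:hmodel_beta=1_nonGauss} --- so by \Cref{rem:contiguous} $P_{(n)}$ and $Q_{(n)}$ are mutually contiguous. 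It then remains to verify the decay condition \eqref{eqn:HC_undect_condition}. On the boundary the prefactor $\sqrt{k_n}\,\varepsilon_n\,\kappa_n^{-1/2}$ is $O(1)$ --- equal to $1$ in case \eqref{enu:theo:HC_hmodel_intro} --- so $\widetilde H_n(v)\le C\,g(v/\kappa_n)$ with $g(y)=\min(H(y),1)/\sqrt{y}$. For $y$ large, $g(y)=y^{-1/2}$; for $y$ small, H\"older's inequality with $h\in L^{2+\delta}$ gives $H(y)\le\|h\|_{L^{2+\delta}}\,y^{(1+\delta)/(2+\delta)}$, hence $g(y)\le\|h\|_{L^{2+\delta}}\,y^{c}$ with $c=\delta/(2(2+\delta))>0$; in case \eqref{enu:theo:HC_hmodel_h=xrho} one instead has $g(y)=y^{1/2-\alpha}$, bounded for $y$ bounded below, the possible growth as $y\to 0$ for $\alpha>1/2$ being absorbed by the vanishing prefactor or, equivalently, by enlarging $r_n$. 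Taking $r_n=k_n^{-1}$ (enlarged to $k_n^{-1}(\log k_n)^{M}$ when the shape is heavy at $0$), $u_n$ a fixed small constant, and $s_n=\kappa_n(\log k_n)^{-M}$, $t_n=\kappa_n(\log k_n)^{M}$ straddling the critical scale $\kappa_n$, one gets $\sup_{[r_n,s_n]}\widetilde H_n\le C\|h\|_{L^{2+\delta}}(\log k_n)^{-Mc}$ and $\sup_{[t_n,u_n]}\widetilde H_n\le C(\log k_n)^{-M/2}$, so that $a_n\sup_{[r_n,s_n]\cup[t_n,u_n]}\widetilde H_n\to 0$ for any fixed $M>0$; and \eqref{eqn:HC_cond_rn_un_sn_tn} is met with the scale $\kappa$ equal to $\rho(\beta)$ (resp.\ $\rho^\#(\beta,\alpha)$). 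The case $\beta=r=1$ is cleaner, since $\kappa_n=\varepsilon_n=k_n^{-1}$ forces $\widetilde H_n(v)=(k_nv)^{-1/2}$ for $v\ge\kappa_n$ with no shrinking critical window, so $a_n\sup_{v\ge r_n}\widetilde H_n(v)=a_n(k_nr_n)^{-1/2}\to 0$ for $r_n=k_n^{-1}\log k_n$; the sub-case $\kappa_n\equiv 1$ of \Cref{lem:h=xrho} is handled the same way with the monotone $\widetilde H_n(v)=n^{1/2-\beta}v^{1/2-\alpha}$.

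The hard part will be the uniform control \eqref{eqn:HC_undect_condition}. Around $v\asymp\kappa_n$ the function $g$ is of constant order, which the amplification $a_n\to\infty$ would spoil, so this window must be excised --- precisely the role of the gap $(s_n,t_n)$ in \Cref{theo:HC_undetect} --- and one has to check that a window of polylogarithmic width around $\kappa_n$ suffices while still obeying the exponent constraints in \eqref{eqn:HC_cond_rn_un_sn_tn}. It is in bounding $g$ to the left of this window that the strengthened integrability $h\in L^{2+\delta}$ is essential: under $h\in L^{2}$ alone H\"older only gives $g$ bounded, not vanishing, near $0$, and $a_n$ would not be beaten; this is exactly why case \eqref{enu:theo:HC_hmodel_intro} carries the hypothesis $\delta>0$. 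A secondary nuisance is that a shape heavy at $0$ (e.g.\ $\alpha>1/2$ in \Cref{lem:h=xrho}) makes $g$ blow up as $y\to 0$, forcing $r_n$ a polylogarithmic factor above $k_n^{-1}$; one must confirm this stays compatible with $\log r_n/\log k_n\to-1$, which it does. The remaining bookkeeping --- matching the complete-detection exponents to $\rho(\beta)$ and $\rho^\#(\beta,\alpha)$ across the sub-cases $r\le 1$ versus $r>1$ and $\beta\le\alpha$ versus $\beta>\alpha$ --- is routine once the $\widetilde H_n$ computations above are in hand.
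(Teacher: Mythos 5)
Your proposal is correct and follows essentially the same route as the paper: the reduction via Remark \ref{rem:HC}, the choice $v_n=n^{-\min\{1,r\}}$ (your $\kappa_n$ resp.\ $1/k_n$) in \Cref{theo:HC_full_power} for the complete-detection region, the H\"older bound from $h\in L^{2+\delta}$ combined with \Cref{theo:HC_undetect} after excising a narrow window around the critical scale $\kappa_n$ on the boundary, and the same direct bound for $\beta=r=1$. The only differences are cosmetic (your excised windows are powers of $\log k_n$ where the paper uses powers of $a_n$, and you additionally spell out the contiguity check and the power-law case \eqref{enu:theo:HC_hmodel_h=xrho}, which the paper leaves to the reader).
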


\subsection{Heteroscedastic normal mixtures}\label{sec:normal_mix}
The heteroscedastic normal mixture model was already studied essentially in the literature, see \cite{CaiJengJin2011,DonohoJin2004,Ingster1997}. Nevertheless, we can give, as a further application of our results, some new insights about it concerning the extension of the detection boundary and the asymptotic power of the HC test on the boundary. But we first introduce the model. Let $k_n=n$, $P_{n,i}=P_0=N(0,1)$ and $\mu_{n,i}=\mu_n=N(\vartheta_n,\sigma_0^2)$, $\sigma_0>0$, where the parametrisation $\varepsilon_{n,i}=\varepsilon_{n}=n^{-\beta}$ and $\vartheta_n=\sqrt{2r\log n}$ with $\beta\in(1/2,1)$ and $r>0$ is used. The detection boundary given by
\begin{align}\label{eqn:detect_bound_normal}
&\rho(\beta,\sigma_0)= \left\{
\begin{array}{lll}
(2-\sigma_0^2)\left( \beta-\frac{1}{2} \right) &\textrm{if } \frac{1}{2}< \beta \leq 1- \frac{\sigma_0^2}{4},\,\sigma_0<\sqrt{2},&(\text{I}) \\
\left( 1-\sigma_0 \sqrt{1-\beta} \right)^2 &\textrm{if } 1-\frac{\sigma_0^2}{4} < \beta < 1,\,\sigma_0<\sqrt{2},&(\text{II})\\
0 &\textrm{if } \frac{1}{2}< \beta \leq 1-\frac{1}{\sigma_0^2},\,\sigma_0\geq \sqrt{2},& (\text{III})\\
\left( 1-\sigma_0 \sqrt{1-\beta} \right)^2 &\textrm{if } 1-\frac{1}{\sigma_0^2} < \beta < 1,\,\sigma_0\geq \sqrt{2},&(\text{IV})
\end{array}				\right.
\end{align}
and the limits of $T_n$ on it were already determined by \cite{CaiJengJin2011} and \cite{Ingster1997}. The detection boundary is plotted for different $\sigma_0$ in \Cref{fig:normal+expfam}. Moreover, it was shown that the completely detectable areas of the LLR and HC tests coincide, see \cite{CaiJengJin2011,DonohoJin2004}. All these results can be proven by using our methods, see \cite{Ditzhaus2017}. Note that the HC test is applied to the vector $(p_{n,i})_{i\leq k_n}$ of $p$-values, which we get by transforming each observations $Y_{n,i}$ to $p_{n,i}=1-\Phi(Y_{n,i})$. 
\begin{prop}[see Theorems 5 and 6 of \cite{CaiJengJin2011}]\label{theo:detectbou_norm_sparse}
\begin{enumerate}[(a)]	
	\item \label{eqn:norm_undetect} If $r<\rho(\beta,\sigma_0)$ then we are in the undetectable case, i.e. no test can distinguish between the null $\mathcal H_{0,n}$ and the alternative $\mathcal H_{n,1}$ asymptotically.
	
	\item \label{eqn:norm_comp_detect} The LLR as well as the HC test can completely separate the null and the alternative asymptotically. 
	
	\item Suppose that $r=\rho(\beta, \sigma_0)$. Moreover, add a logarithmic term in the parametrisation of $\varepsilon_n$ as follows:
	\begin{align}\label{eqn:eps_normal_quadra}
	\varepsilon_{n}=n^{-\beta}\left( \log (n) \right)^{E(\beta,\sigma_0)}\,\text{ with }E(\beta,\sigma_0)= 			\begin{cases}
	0 & \text{on (I)}. \\
	\frac{1}{2}-\frac{\sqrt{1-\beta}}{2\sigma_0} & \text{ else.}
	\end{cases}   
	\end{align}
	In the following we discuss the different parts (I), (II) and (IV) of the detection boundary.
	\begin{enumerate}[(i)]
		\item	\label{eqn:norm_hetero_LLR_linear} (Gaussian limits) Consider part (I). Then \eqref{eqn:Tn_normal} holds for 
		\begin{align*}
		\sigma^2= \Bigl( \sigma_0\sqrt{2-\sigma_0^2} \Bigr)^{-1} \Bigl( 1-\frac{1}{2}\mathbf{1}\Bigl\{\beta= 1-\frac{\sigma_0^2}{4}\Bigr\} \Bigr)
		\end{align*}
		
		\item\label{eqn:norm_hetero_LLR_quad} Consider the parts (II) and (IV). Then \eqref{eqn:def_Tn} holds for infinitely
		divisible $\xi_1$ and $\xi_2$ with L\'{e}vy-Khintchine triplets $(\gamma_1,0,\eta_1)$ and $(\gamma_2,0,\eta_2)$, respectively, where $\eta_1$, $\eta_2$ are given by
		\begin{align*}
		&\frac{\mathrm{ d }\eta_1}{\mathrm{ d }\lebesgue}(x)=\frac{1}{c_1 } \left( e^x-1 \right)^{c_2-3} \,e^x\textrm{ and }\frac{\mathrm{ d }\eta_2}{\mathrm{ d }\lebesgue}(x)= e^x\,\frac{\mathrm{ d }\eta_1}{\mathrm{ d }\lebesgue}(x),\,x>0,
		\end{align*}
		with $c_1=2\sqrt{\pi}\sigma_0^{c_3}c_4$, $c_2=c_4^{-1}(\sigma_0-2\sqrt{1-\beta})$, $c_3=c_4^{-1}\sigma_0-\sqrt{1-\beta}$ and $c_4= \sigma_0-\sqrt{1-\beta}$,  
		and $\gamma_1$ and $\gamma_2$ fulfill \eqref{eqn:gamma1} and \eqref{eqn:gamma_2_general} with $\sigma^2=0$.
	\end{enumerate}
\end{enumerate}
\end{prop}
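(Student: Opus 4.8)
The plan is to read every assertion off the general theory of \Cref{sec:LLRT,sec:HCT}, specialised to the likelihood ratio of two normals. Writing $\ell_n:=\mathrm{d}\mu_n/\mathrm{d}P_0$, one computes
\begin{align*}
\ell_n(x)=\frac{1}{\sigma_0}\exp\Bigl(\frac{(\sigma_0^2-1)x^2+2\vartheta_n x-\vartheta_n^2}{2\sigma_0^2}\Bigr),
\end{align*}
so for $\sigma_0\neq1$ the level sets $\{\varepsilon_n\ell_n>c\}$ are intervals or complements of intervals; together with the Gaussian tail asymptotics $1-\Phi(t)=t^{-1}\phi(t)(1+o(1))$ and the parametrisation $\vartheta_n^2=2r\log n$, $\varepsilon_n=n^{-\beta}(\log n)^{E(\beta,\sigma_0)}$, this turns the sums $I_{n,1,x}$, $I_{n,2,x}$ of \eqref{eqn:I1}--\eqref{eqn:I2} into explicit leading-order expressions, each a product of a power of $n$ and a power of $\log n$. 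One also records $E_{P_0}(\ell_n^2)=(\sigma_0\sqrt{2-\sigma_0^2})^{-1}\exp(\vartheta_n^2/(2-\sigma_0^2))$, finite exactly when $\sigma_0<\sqrt2$ and $+\infty$ otherwise; this is the origin of the $\sigma_0<\sqrt2$ versus $\sigma_0\geq\sqrt2$ split in \eqref{eqn:detect_bound_normal}.

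For parts (a) and (b) I would invoke \Cref{theo:trivial_limits}: evaluating $I_{n,1,\tau}$ and $I_{n,2,\tau}$ for fixed $\tau>0$ with the above estimates, both tend to $0$ exactly when $r<\rho(\beta,\sigma_0)$, and one of them tends to $\infty$ exactly when $r>\rho(\beta,\sigma_0)$; the four branches of \eqref{eqn:detect_bound_normal} arise from which of the two sums dominates (the sum $I_{n,1}$, corresponding to a single large signal, versus $I_{n,2}$, corresponding to many moderate signals) and from whether the untruncated second moment is finite. For HC's complete detection one passes to the $p$-values $p_{n,i}=1-\Phi(Y_{n,i})$, for which $\mu_n(0,v]=1-\Phi\big((\Phi^{-1}(1-v)-\vartheta_n)/\sigma_0\big)$; inserting $v=v_n=n^{-q}$ into $H_n(v)$ from \Cref{theo:HC_full_power} and maximising the resulting leading exponent of $n$ over $q$ shows $a_n^{-1}H_n(v_n)\to\infty$ for a suitable $q$ precisely when $r>\rho(\beta,\sigma_0)$, so \Cref{theo:HC_full_power} yields complete detectability by HC.

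On the boundary $r=\rho(\beta,\sigma_0)$ I would apply \Cref{theo:general_limit_theorem}. On branch (I), where $E(\beta,\sigma_0)=0$, the estimates yield $I_{n,1,e^x-1}\to0$ for all $x>0$, so the limiting measure $M$ of \Cref{theo:general_limit_theorem} vanishes (in particular $M(\{\infty\})=0$, hence $a=1$ and $\eta_1=\eta_2=0$), while $\lim_{x\searrow0}\lim_n I_{n,2,x}=\sigma^2$ equals the stated constant; the factor $1/2$ at the corner $\beta=1-\sigma_0^2/4$ is caused by exactly half of the second-moment mass escaping to asymptotically unbounded values of $\ell_n$, which is discarded as $x\searrow0$. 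Then \Cref{theo:general_limit_theorem} and \eqref{eqn:gamma1}--\eqref{eqn:gamma_2_general} give \eqref{eqn:Tn_normal}. On branches (II) and (IV) the logarithmic weight $E(\beta,\sigma_0)=\tfrac12-\sqrt{1-\beta}/(2\sigma_0)$ in \eqref{eqn:eps_normal_quadra} is calibrated so that $I_{n,1,e^x-1}$ converges to a finite positive $M(x,\infty)$ with $M(\{\infty\})=0$: the event $\varepsilon_n\ell_n(X)>e^x-1$ with $X\sim\mu_n$ reduces to a one-sided bound $X>t_n(x)$, and $\mu_n(X>t_n(x))=1-\Phi\big((t_n(x)-\vartheta_n)/\sigma_0\big)$ is again handled by the Gaussian tail; differentiating $x\mapsto M(x,\infty)$ produces $\mathrm{d}\eta_1/\mathrm{d}\lebesgue(x)=c_1^{-1}(e^x-1)^{c_2-3}e^x$, while $I_{n,2,x}\to0$ as $x\searrow0$ forces $\sigma^2=0$; \Cref{theo:xi_real_oder_xi_fullinfo}, i.e.\ \eqref{eqn:conn_eta_M}, gives $\mathrm{d}\eta_2/\mathrm{d}\eta_1(x)=e^x$, and \eqref{eqn:gamma1}--\eqref{eqn:gamma_2_general} fix $\gamma_1,\gamma_2$.

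The main obstacle is uniform throughout: pushing the asymptotic evaluation of $I_{n,1,x}$, $I_{n,2,x}$ (and of $\mu_n(0,v_n]$) to a precision sufficient to locate the four-piece boundary, to extract the exact constants $\sigma^2$ and $c_1,\dots,c_4$, and to pinpoint where the truncation at level $x$ inside $I_{n,2,x}$ alters the answer --- this being where the $\sigma_0<\sqrt2$ versus $\sigma_0\geq\sqrt2$ dichotomy and the corner factor $1/2$ enter --- all while tracking the interplay of the polynomial-in-$\log n$ weight $E(\beta,\sigma_0)$ with the Gaussian tails; the statements in Theorems~5 and 6 of \citet{CaiJengJin2011} serve as a check on the final formulas. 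Finally, the ``no power of HC on the boundary'' claim follows from \Cref{theo:HC_undetect}, whose mutual-contiguity hypothesis is guaranteed by the real-valuedness of $\xi_1$ and $\xi_2$ just established.
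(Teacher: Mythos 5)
The paper offers no proof of this proposition: it is imported from Theorems 5 and 6 of \citet{CaiJengJin2011} (together with the corrective remark about the extra factor $1/2$ in the exponent of the logarithmic term in $\varepsilon_n$), and the surrounding text merely notes that "all these results can be proven by using our methods", pointing to \cite{Ditzhaus2017}. So there is no in-paper argument to compare against line by line; your proposal follows precisely the route the paper gestures at --- \Cref{theo:trivial_limits} for (a) and the LLR half of (b), \Cref{theo:HC_full_power} with $v_n=n^{-q}$ for the HC half, and \Cref{theo:general_limit_theorem} together with \eqref{eqn:conn_eta_M} and \eqref{eqn:gamma1}--\eqref{eqn:gamma_2_general} on the boundary --- and your structural explanations (finiteness of $E_{P_0}(\ell_n^2)$ exactly for $\sigma_0<\sqrt{2}$ driving the split in \eqref{eqn:detect_bound_normal}, the corner factor $1/2$ coming from the truncation cutting the tilted Gaussian at its centre) identify the right mechanisms.

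That said, as submitted the proposal is a programme rather than a proof. Every quantitative assertion is deferred: that $I_{n,1,\tau}$ and $I_{n,2,\tau}$ both vanish below and one diverges above the boundary on each of the four branches; that on (I), including the corner $\beta=1-\sigma_0^2/4$, one has $I_{n,1,e^x-1}\to 0$ and $I_{n,2,x}$ converging to the stated $\sigma^2$; that on (II)/(IV) the reweighted $\varepsilon_n$ of \eqref{eqn:eps_normal_quadra} makes $I_{n,1,e^x-1}$ converge to a finite $M(x,\infty]$ with $M(\{\infty\})=0$ whose density, divided by $e^x-1$ as in \eqref{eqn:conn_eta_M}, produces the constants $c_1,\dots,c_4$; and that the HC exponent maximisation covers all of $r>\rho(\beta,\sigma_0)$, including branch (III) where $\rho=0$. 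These evaluations are where all of the work and all of the constants live, and invoking the Cai--Jeng--Jin statements "as a check" does not replace them, so the attempt cannot be certified as a self-contained derivation. Two smaller points: the factor $1/2$ at the corner arises for every fixed truncation level $x$ (the threshold induced by $\{\varepsilon_n\ell_n\leq x\}$ agrees with the tilted mean to first order, so half the mass is cut for each $x$), not something produced only in the limit $x\searrow 0$ as your phrasing suggests; and your closing sentence about HC having no power on the boundary concerns \Cref{theo:heterosce_normal_HCT}, which is not part of this proposition.
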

\begin{remark}
By carefully reading the proof of \cite{CaiJengJin2011}, see in particular the top of page 658, there must be an additional factor $1/2$ in the exponent of the logarithmic term in their definition of $\varepsilon_{n}$ as in our \eqref{eqn:eps_normal_quadra}. 
\end{remark}
Applying our \Cref{theo:HC_undetect} we can show, as already postulated, that HC has no asymptotic power on the boundary.
\begin{theorem}[HC on the boundary]\label{theo:heterosce_normal_HCT}
Let $r=\rho(\beta,\sigma_0)>0$, $\beta\in (1/2,1)$. Moreover, reparametrize $\varepsilon_{n}$ on the quadratic part of the boundary as we did in \eqref{eqn:eps_normal_quadra}. Then the HC test has no (asymptotic) power, whereas the LLR does so. 
\end{theorem}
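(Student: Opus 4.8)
\emph{Structure.} The statement is a conjunction, so I would prove the two halves separately: that the LLR test keeps nontrivial asymptotic power, and that every HC test has asymptotic power equal to its level.

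\emph{LLR keeps power.} This needs nothing beyond what is already recorded. Since $r=\rho(\beta,\sigma_0)>0$ we are on part (I), (II) or (IV) of the boundary (part (III) has $\rho\equiv0$ and is excluded), and there \Cref{theo:detectbou_norm_sparse} — equivalently \cite{CaiJengJin2011,Ingster1997} — gives that $T_n$ converges under $P_{(n)}$ and under $Q_{(n)}$ to nondegenerate real-valued limits $\xi_1,\xi_2$ (Gaussian with $\sigma^2>0$ on (I); infinitely divisible with nontrivial L\'evy measure on (II), (IV)). By \Cref{theo:xi_real_oder_xi_fullinfo} the pair $(\nu_1,\nu_2)$ is a nondegenerate binary experiment with monotone likelihood ratio $\mathrm d\nu_2/\mathrm d\nu_1(x)=e^x$, so $\nu_2$ strictly stochastically dominates $\nu_1$; hence, taking $c_n\to c$ with $P(\xi_1>c)=\alpha$, the level-$\alpha$ LLR test $\mathbf 1\{T_n>c_n\}$ has limiting power $P(\xi_2>c)\in(\alpha,1)$.

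\emph{HC has no power: the plan.} I would apply \Cref{theo:HC_undetect}. Its first hypothesis, mutual contiguity of $P_{(n)}$ and $Q_{(n)}$, is immediate from the real-valuedness of $\xi_1,\xi_2$ via \Cref{rem:contiguous}. For the quantitative hypothesis \eqref{eqn:HC_undect_condition}--\eqref{eqn:HC_cond_rn_un_sn_tn}, note first that $\beta>1/2$ yields $a_n^2k_n\varepsilon_n^2=2\log\log n\cdot n^{1-2\beta}(\log n)^{2E}\to0$, so by \Cref{rem:HC} one may work with $\widetilde H_n(v)=\sqrt{k_n}\,\varepsilon_n\,v^{-1/2}\bigl(\mu_n(0,v]+\mu_n(1-v,1)\bigr)$. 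In the $p$-value coordinates $p_{n,i}=1-\Phi(Y_{n,i})$ one has, with $\bar\Phi=1-\Phi$, $\mu_n(0,v]=\bar\Phi\bigl((\bar\Phi^{-1}(v)-\vartheta_n)/\sigma_0\bigr)$ and $\mu_n(1-v,1)=\bar\Phi\bigl((\bar\Phi^{-1}(v)+\vartheta_n)/\sigma_0\bigr)$, the latter of strictly smaller exponential order and hence negligible. Substituting $v=\bar\Phi(\sqrt{2q\log n})$ and the Mills-ratio expansion $\bar\Phi(z)=(\phi(z)/z)(1+o(1))$ reduces $\widetilde H_n(v)$ to $C(q)\,(\log n)^{E-1/4}\,n^{g(q)(1+o(1))}$ for $q$ bounded away from $r$, with $g(q)=\tfrac12-\beta+\tfrac q2-(\sqrt q-\sqrt r)_{+}^{2}/\sigma_0^2$; the bound $a_n\widetilde H_n(v)\to0$ then rests on $g(q)\le0$ over the relevant range, which is exactly the detection-boundary inequality and is re-derivable from \Cref{theo:trivial_limits}.

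\emph{The hard part.} The obstacle is the critical scale $v\asymp k_n^{-1}$, i.e.\ $q$ near $1$: after the choice $r=\rho(\beta,\sigma_0)$ one checks $g(1)=0$, with the maximiser of $g$ on $[r,1]$ an interior point $q^\ast\in(r,1)$ on part (I) but $q^\ast=1$ on (II)/(IV) and at the (I)--(II) corner; there the polynomial factor $n^{g}$ is trivial and one is left with $(\log n)^{E-1/4}$, which diverges on (II)/(IV) since $E>1/4$. The cure — and the reason for the exact exponent $E(\beta,\sigma_0)$ in \eqref{eqn:eps_normal_quadra} — is to take $r_n=c\,k_n^{-1}$, so that $q(r_n)=1-\tfrac{\log\log n}{2\log n}(1+o(1))$ sits just below $1$ (the point $q=1$ itself leaving $[r_n,s_n]$) and $n^{g(q(r_n))}=(\log n)^{-g'(1)/2(1+o(1))}$; since $g'(1)=\tfrac12-\sqrt{1-\beta}/\sigma_0$ one verifies $E-\tfrac14-g'(1)/2=0$, whence $\widetilde H_n(r_n)=\Theta(1)=o(a_n)$ exactly (with a second-order variant at the corner, where $g'(1)=0$, $E=1/4$; on part (I), $g(1)<0$ and $E=0$ make this step easy). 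I would then complete the choice with $s_n=t_n=k_n^{-\kappa}$, $\kappa\in(r,1)$, and $u_n$ a small constant, and check $a_n\widetilde H_n(v)\to0$ uniformly: on $[r_n,s_n]$, $C(q)$ is bounded ($q\ge\kappa>r$) and $g(q)\le0$ with at worst the sub-polynomial deficit at $v\sim r_n$ handled above; on $[t_n,u_n]$ use $g(q)\le g(\kappa)<0$ for $q>r$, and for $q\le r$ the crude bound $\widetilde H_n(v)\le 2\sqrt{k_n}\varepsilon_n v^{-1/2}\le 2C\,k_n^{g(r)}(\log n)^E\to0$ since $g(r)=\tfrac12-\beta+\tfrac r2<0$. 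With \eqref{eqn:HC_undect_condition}--\eqref{eqn:HC_cond_rn_un_sn_tn} verified, \Cref{theo:HC_undetect} gives $Q_{(n)}(a_nHC_n-b_n\le x)\to\Lambda(x)^{2}$, the same limit as under $P_{(n)}$, so every HC test has asymptotic power equal to its level. The essential labour is thus the logarithmic bookkeeping at the scale $v\asymp k_n^{-1}$ together with the one-sided ($q\le r$) and negligible-right-tail estimates; the rest is a black-box use of the theorems of \Cref{sec:LLRT,sec:HCT}.
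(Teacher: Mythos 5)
Your overall plan (mutual contiguity plus \Cref{theo:HC_undetect} and \Cref{rem:HC}, a Mills-ratio analysis of $\mu_n(0,v]/\sqrt v$, and a crude bound for $v$ above $n^{-r}$) is the paper's route, and your LLR half is fine. But the step you yourself single out as the hard part --- the critical scale $v\asymp k_n^{-1}$ on parts (II)/(IV) --- is exactly where your argument breaks. Condition \eqref{eqn:HC_undect_condition} demands $a_n\sup\{H_n(v):v\in[r_n,s_n]\cup[t_n,u_n]\}\to0$, i.e.\ $H_n=o(a_n^{-1})=o((\log\log n)^{-1/2})$ uniformly over the window; you instead verify $\widetilde H_n(r_n)=\Theta(1)$ and declare this sufficient because it is ``$o(a_n)$''. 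With your choice $r_n=c\,k_n^{-1}$ the point $v=r_n$ lies inside the window, and your own calibration $E-\tfrac14-g'(1)/2=0$ shows $\widetilde H_n(r_n)\asymp1$, hence $a_n\sup_{v\in[r_n,s_n]}\widetilde H_n(v)\ge a_n\widetilde H_n(r_n)\to\infty$ on (II)/(IV): the hypothesis of \Cref{theo:HC_undetect} fails and the theorem cannot be invoked. Indeed the reparametrisation \eqref{eqn:eps_normal_quadra} makes the signal exactly of HC-critical size at $v\asymp 1/n$, so no window containing that scale can satisfy \eqref{eqn:HC_undect_condition}.

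The repair is the opposite of your ``cure'': exploit the slack in \eqref{eqn:HC_cond_rn_un_sn_tn}, which only requires $\log(r_n)/\log(k_n)\to-1$, and push the left endpoint slightly above the critical scale so that $v\asymp1/n$ is excluded from the window and is absorbed by the Jaeschke--Eicker tail estimates inside the proof of \Cref{theo:HC_undetect}. This is what the paper does: it takes $r_n=n^{-1+\lambda_n}$ with $\lambda_n=(\log\log n)^2/\log n$ (and $s_n=t_n$, $u_n=1/2$), and on $[n^{-1+\lambda_n},1/2]$ the exponent bound $E_1(\kappa)+1/2-\beta\le-K(\beta,\sigma_0^2)\lambda_n$ with $K=1/2-\sqrt{1-\beta}/\sigma_0>0$ on (II)/(IV) yields a factor $n^{-K\lambda_n}=(\log n)^{-K\log\log n}$, which swallows every fixed power of $\log n$ as well as $a_n$. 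A secondary slip: at the corner $\beta=1-\sigma_0^2/4$ (where $K=0$) the parametrisation \eqref{eqn:eps_normal_quadra} assigns $E=0$, since the corner belongs to part (I), not $E=1/4$ as you assert; with $E=0$ the bound there is of order $a_n(\log n)^{-1/4}\to0$. Your part (I) and $q\le r$ estimates are otherwise in line with the paper's.
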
 
In \eqref{eqn:detect_bound_normal} the detection boundary is (only) defined for $\beta<1$. As we already did in the previous section, we can extend this boundary for $\beta=1$ by a infinite vertical line starting in $(r,\beta)=(1,1)$, see \Cref{fig:normal+expfam}. Again, we observe on this line unusual limits of $T_n$.
\begin{theorem}[Detection boundary extension]\label{theo:normal_ext_boundary}	
\begin{enumerate}[(i)]
	\item The pair $(\beta,r)$ with $\beta=1$ and $r<1$ belongs to the undetectable region.
	
	\item If $\beta=1$ and $r=1$ then $\xi_1\equiv -1/2$ and $\xi_2\sim e^{-1/2}\epsilon_{-1/2} + (1-e^{-1/2})\epsilon_\infty$.
	
	\item If $\beta=1$ and $r>1$ then $\xi_1\equiv -1$ and $\xi_2\sim e^{-1}\epsilon_{-1} + (1-e^{-1})\epsilon_\infty$.
\end{enumerate}
\end{theorem}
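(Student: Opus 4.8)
The plan is to push everything through the general limit theory of \Cref{sec:LLRT}: \Cref{theo:trivial_limits} handles part~(i), while parts~(ii) and~(iii) follow by combining \Cref{theo:general_limit_theorem} with \Cref{theo:xi_real_oder_xi_fullinfo}, once the likelihood ratio is made explicit. Since $\beta=1$ we have $\varepsilon_{n,i}=\varepsilon_n=1/n$ and $k_n=n$, so the auxiliary sums \eqref{eqn:I1} and \eqref{eqn:I2} collapse to $I_{n,1,x}=\mu_n(\mathrm d\mu_n/\mathrm dP_0>nx)$ and $I_{n,2,x}=\frac1n\bigl(E_{\mu_n}[(\mathrm d\mu_n/\mathrm dP_0)\mathbf 1\{\mathrm d\mu_n/\mathrm dP_0\le nx\}]-1\bigr)$. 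Writing $Y=\vartheta_n+\sigma_0W$ with $W\sim N(0,1)$ under $\mu_n$ and using $\vartheta_n^2=2r\log n$, a short computation gives
\begin{align*}
\log\frac{\mathrm d\mu_n}{\mathrm dP_0}(Y)=r\log n-\log\sigma_0+g_n(W),\qquad g_n(w)=\sigma_0\vartheta_n w+\frac{\sigma_0^2-1}{2}w^2,
\end{align*}
so that $I_{n,1,x}=P\bigl(g_n(W)>(1-r)\log n+\log(x\sigma_0)\bigr)$. Thus the whole problem reduces to understanding this Gaussian bulk/tail probability in the three regimes $r<1$, $r=1$ and $r>1$.

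Next I would analyse $I_{n,1,x}$ by completing the square in $g_n$, distinguishing $\sigma_0<1$ (concave $g_n$), $\sigma_0=1$ (linear) and $\sigma_0>1$ (convex). In each sub-case the set $\{g_n(W)>(1-r)\log n+O(1)\}$ is a (possibly unbounded) interval whose endpoints sit at scale $\sqrt{\log n}$, and one has to track the $O(1)$ additive term $\log(x\sigma_0)$ against the diverging linear coefficient $\sigma_0\vartheta_n$. The outcome is: for $r<1$ the threshold $(1-r)\log n$ eventually lies above the relevant part of $g_n(W)$, so $I_{n,1,x}\to0$ for every $x>0$; for $r=1$ the threshold is $O(1)$ while the dominant linear part of $g_n$ changes sign, so the exceptional interval has its finite endpoint converging to $0$ and $I_{n,1,x}\to P(W>0)=1/2$ for every $x>0$; for $r>1$ the threshold tends to $-\infty$ at rate $\log n$, which dominates every fluctuation of $g_n(W)$, so $I_{n,1,x}\to1$ for every $x>0$. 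For the truncated second moment the crude two-sided bound $-1/n\le I_{n,2,x}\le x$ (the upper bound because $E_{\mu_n}[(\mathrm d\mu_n/\mathrm dP_0)\mathbf 1\{\mathrm d\mu_n/\mathrm dP_0\le nx\}]\le nx$) already yields $\lim_{x\searrow0}\limsup_n I_{n,2,x}=\lim_{x\searrow0}\liminf_n I_{n,2,x}=0$; for the fixed-$\tau$ statement needed in~(i) I would additionally split the truncated second moment at level $\delta n$ and use $I_{n,1,\delta}\to0$ to get $I_{n,2,\tau}\to0$.

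The three cases then assemble routinely. For~(i): $I_{n,1,\tau}\to0$ and $I_{n,2,\tau}\to0$, so \Cref{theo:trivial_limits}\eqref{enu:lem_trivial_limits_undet} gives the undetectable case, i.e.\ $\xi_1\equiv\xi_2\equiv0$. For~(ii) and~(iii) put $c=1/2$ and $c=1$, respectively. Then condition~\eqref{enu:theo:general_limit_theorem_levy_mass} of \Cref{theo:general_limit_theorem} holds with ${\mathcal D}=(0,\infty)$ and $M=c\,\epsilon_\infty$ (since $I_{n,1,e^x-1}\to c=M(x,\infty]$ for all $x>0$), and condition~\eqref{enu:theo:general_limit_theorem_sigma} holds with $\sigma^2=0$; hence $\xi_1$ is real-valued, $\nu_2(\R)=e^{-c}$ and $\eta_2-\eta_1=M_{|(0,\infty)}=0$. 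Now \Cref{theo:xi_real_oder_xi_fullinfo}\eqref{enu:theo:xi_real_oder_xi_fullinfo_infinit_divi} applies: by \eqref{eqn:theo:connect_Levymeasure} the identity $\mathrm d\eta_2/\mathrm d\eta_1(x)=e^x$ together with $\eta_1=\eta_2$ forces $\eta_1=\eta_2=0$, by \eqref{eqn:theo:connect_sigma} $\sigma_1^2=\sigma_2^2=0$, and then \eqref{eqn:gamma1}--\eqref{eqn:gamma_2_general} give $\gamma_1=\log\nu_2(\R)=-c$ and $\gamma_2=\gamma_1=-c$. Therefore $\nu_1=\epsilon_{-c}$ and $\nu_2=e^{-c}\epsilon_{-c}+(1-e^{-c})\epsilon_\infty$, which is exactly the asserted description of $\xi_1$ and $\xi_2$ for $c=1/2$ (case~(ii)) and $c=1$ (case~(iii)); note that~(iii) reproduces the $r>1$ case of \Cref{theo:h-model_beta=1}. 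Since conditions~\eqref{enu:theo:general_limit_theorem_levy_mass}--\eqref{enu:theo:general_limit_theorem_sigma} are statements about the full sequence, the identified limit is forced along every convergent subsequence, and hence $T_n$ converges along the whole sequence.

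The main obstacle is the precise behaviour of the Gaussian probability $I_{n,1,x}$ at the critical rate $r=1$, specifically establishing that the atom mass is $1/2$ rather than $1$: this hinges on the $O(1)$ shift $\log(x\sigma_0)$ competing with the $O(\sqrt{\log n})$ linear coefficient, and it is here that the three sign regimes of $g_n$ really must be treated separately --- in particular for $\sigma_0>1$, where $g_n(w)\to+\infty$ at both ends, one must check that the exceptional interval nonetheless lies on the negative half-line with right endpoint tending to $0$. Everything else is bookkeeping inside the framework of \Cref{sec:trivial_limits} and the nontrivial-limit machinery.
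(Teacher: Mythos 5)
Your proposal is correct and follows essentially the same route as the paper: it reduces everything to the explicit Gaussian likelihood ratio, shows $I_{n,1,x}\to 0,\ \tfrac12,\ 1$ in the regimes $r<1$, $r=1$, $r>1$ (you via completing the square and tracking the interval endpoints at scale $\sqrt{\log n}$, the paper via pointwise convergence of the indicator of $C_{n,\tau}$ plus dominated convergence --- the same computation), bounds $I_{n,2,x}\le x$, and then invokes \Cref{theo:trivial_limits}, \Cref{theo:general_limit_theorem} and \Cref{theo:xi_real_oder_xi_fullinfo} exactly as the paper does; your extra splitting argument giving $I_{n,2,\tau}\to0$ for fixed $\tau$ in part (i) is a small, correct addition the paper leaves implicit. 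Note also that your value $\tfrac12$ for $r=1$ is the right one (the displayed limit in the paper's proof reads $-\tfrac12\mathbf{1}\{r=1\}$, an apparent sign typo).
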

\begin{figure}[tb] 
\begin{center}	
	\includegraphics[trim = 22mm 150mm 80mm 10mm, clip, width=0.45\textwidth]{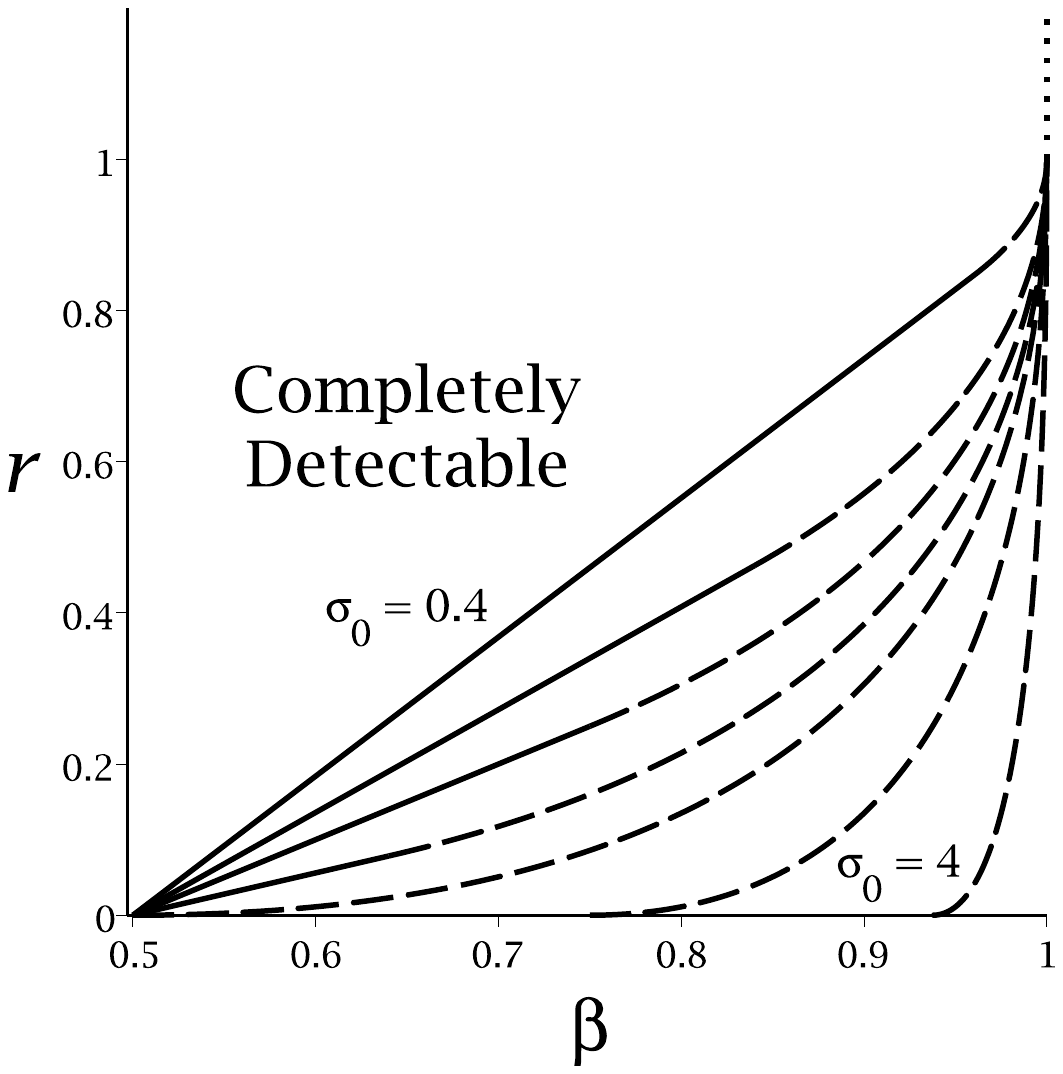}\quad
	\includegraphics[trim = 22mm 150mm 80mm 10mm, clip, width=0.45\textwidth]{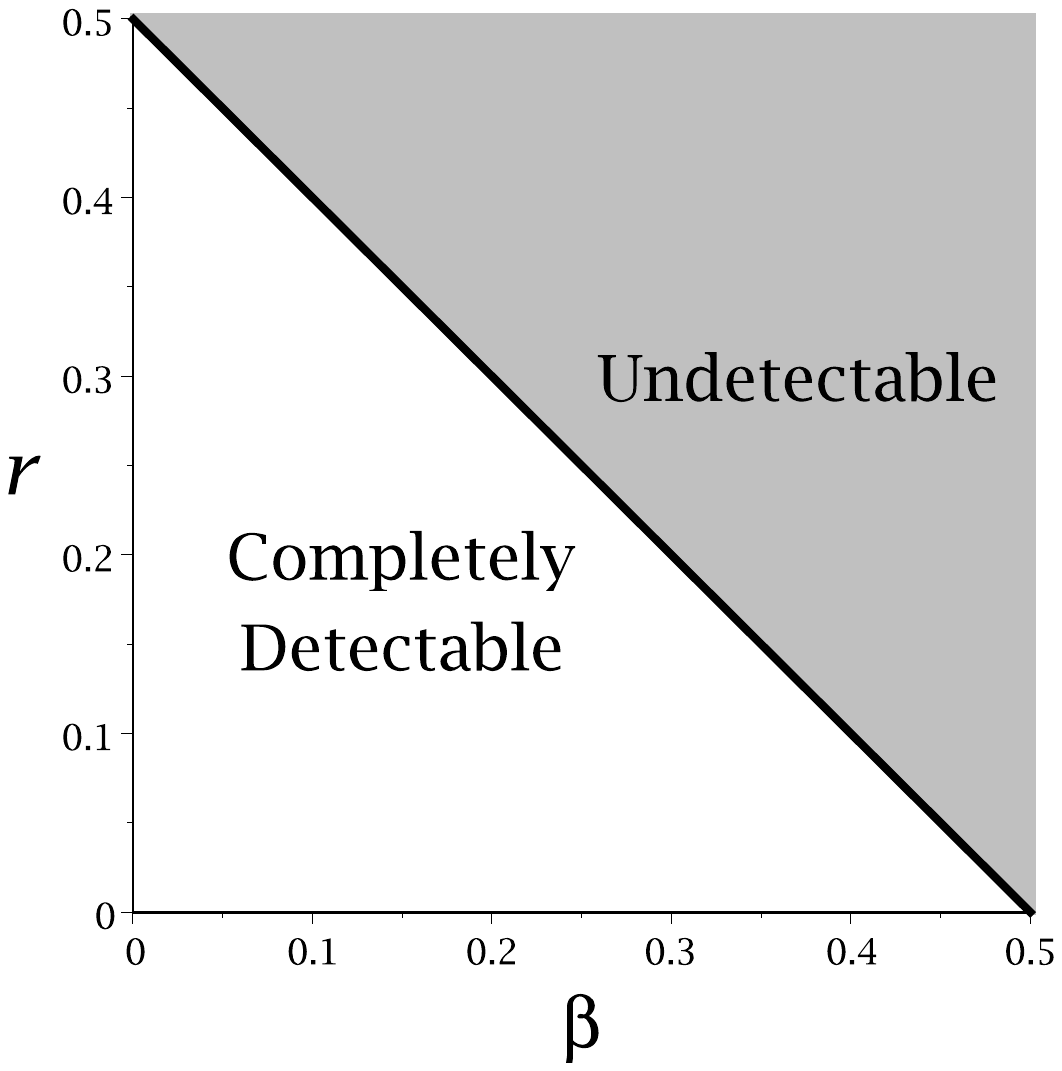}
\end{center}
\caption[Detection boundaries of the sparse and dense heteroscedastic normal mixture model]{Detection boundaries for the heteroscedastic normal mixture model. 
	Left: (Sparse case for $\sigma_0\in\{0.4,0.8,1,1.2,\sqrt{2},2,4\}$) Above the boundary is the completely detectable area and underneath is the undetectable area for both tests (LLR and HC). The limits $\xi_1$ and $\xi_2$ are Gaussian on the linear part (solid) and non-Gaussian on the quadratic part (dashed). In both cases the HC test has no asymptotic power. On the vertical dotted line $P(\xi_2\in\R)\in(0,1)$. Right: (dense case for $\sigma_0^2=1$) Above the boundary is the undetectable area and underneath is the completely detectable area for both tests. On the boundary the limits $\xi_1$ and $\xi_2$ are Gaussian and the HC test has no power.} \label{fig:normal+expfam}
\end{figure}
The results concerning ARE can also be applied for the heteroscedastic models. Fix the variance parameter $\sigma_0>0$. Let $\boldsymbol{\theta_1}=(\beta_1,r_1)$ and $\boldsymbol{\theta_2}=(\beta_2, r_2)$ represent two models from the linear part (I) of the detection boundary leading to Gaussian limits of $T_n$. Suppose that the models are different, i.e. $\beta_1\neq \beta_2$.  By applying \Cref{theo:ARE} and simple calculations, which are omitted to the reader, $\mathrm{ARE}=0$ can be shown. That means that the LLR test $\varphi_{n}(\boldsymbol{\theta_2})$ can not distinguish between the null and the alternative asymptotically when $\boldsymbol{\theta_1}$ is the true, underlying model. As already mentioned $\gamma(\boldsymbol{\theta_j},\boldsymbol{\theta_j})=\sigma^2_j$ does not hold  if $\beta_j= 1- \sigma_0^2/4$. In this case make use of the truncation Lemma \ref{lem:LAN_var_not_to_sigma_REPLACE}.\\
\citet{CaiJengJin2011} already considered the dense case $\beta<1/2$. In this case $\sigma_0^2\neq 1$ always leads to the completely detectable case independently of how the signal strength $\vartheta_n$ is chosen. Thus, only the heterogeneous case $\sigma_0^2=1$ is of real interest. In this case the parametrisation $\vartheta_n=n^{r}$ is used for $r>0$. The corresponding detection boundary is given by $\rho(\beta)=1/2-\beta$ and is plotted in \Cref{fig:normal+expfam}. The HC test achieves the same region of complete detection, see \cite{CaiJengJin2011}. Our results concerning the tests' power behaviour on the detection boundary can also be applied. In short, on the detection boundary \eqref{eqn:Tn_normal} holds for some $\sigma>0$ and the HC test has no asymptotic power there. This is even possible to a general class of one-parametric exponential families including the dense heterogeneous normal mixtures. To not overload this paper, we omit further details concerning the dense case and refer the reader to the thesis of \citet{Ditzhaus2017}. 

\appendix
\section{Gaussian limits}\label{appendix:additional}

Gaussian limits $\xi_1$ and $\xi_2$, compare to \eqref{eqn:Tn_normal}, are of special interest, for example regarding \Cref{theo:ARE}. Recall that the degenerate case is included as $\sigma=0$. In the following we give several equivalent conditions for Gaussian limits.
\begin{theorem}[Gaussian limits]\label{theo:normal_limits}
The conditions \eqref{enu:theo:normal_limits_both_gauss}-\eqref{enu:theo_normal_limits_my_cond} are equivalent:
\begin{enumerate}[(a)]
	\item\label{enu:theo:normal_limits_both_gauss} $\xi_1$ and $\xi_2$ are Gaussian or $\xi_1=\xi_2\equiv 0$ with probability one.
	
	\item\label{enu:theo:normal_limits_xi1} $\xi_1\sim N( -\frac{\sigma^2}{2},\sigma^2 )$ for some $\sigma^2\in[0,\infty)$.
	
	\item\label{enu:theo:normal_limits_xi2} $\xi_2\sim N( \frac{\sigma^2}{2},\sigma^2 )$ for some $\sigma^2\in[0,\infty)$.
	
	\item \label{enu:theo:normal_limits_xi2_real_xi1_normal} $\xi_2$ is real-valued and $\xi_1\sim N(a,\sigma^2)$ for some $a\in\R$, $\sigma^2\in[0,\infty)$.
	
	\item\label{enu:theo:normal_limits_xi2_normal} $\xi_2\sim N(a,\sigma^2)$ for some $a\in\R$, $\sigma^2\in[0,\infty)$.
	
	\item\label{enu:theo:normal_limits_Zn} $Z_n$ given by \eqref{eqn:def_Zn} converges in distribution under $P_{(n)}$ to some normal distributed $Z\sim N(0,\sigma^2)$ for some $\sigma^2\in[0,\infty)$:
	\begin{align}\label{eqn:def_Zn}
	Z_n= \sum_{i=1}^{k_n} \varepsilon_{n,i} \Bigl( \frac{\mathrm{ d } \mu_{n,i}}{\,\mathrm{ d }P_{n,i}} - 1\Bigr) \overset{\mathrm d}{\longrightarrow}Z.
	\end{align}
	
	\item\label{enu:theo:normal_limits_xi2_maxq} $\xi_2$ is real-valued and 
	$\max_{1\leq i \leq k_n}  \frac{\mathrm{ d } Q_{n,i}}{\,\mathrm{ d }P_{n,i}}  \to 1 \textrm{ in }P_{(n)}\textrm{-probability}$.
	
	\item\label{enu:theo:normal_limits_xi2_maxmu} $\xi_2$ is real-valued and $\max_{1\leq i \leq k_n} \varepsilon_{n,i}  \frac{\mathrm{ d } \mu_{n,i}}{\,\mathrm{ d }P_{n,i}}  \to 0 \textrm{ in }P_{(n)}\textrm{-probability}$.
	
	\item \label{enu:theo_normal_limits_my_cond} For some $\tau\in(0,\infty)$ and all $x>0$ we have $I_{n,1,x}\to 0$ and $I_{n,2,\tau}\to\sigma^2\in[0,\infty)$.
\end{enumerate}
If one of the conditions \eqref{enu:theo:normal_limits_xi1}--\eqref{enu:theo:normal_limits_Zn} or \eqref{enu:theo_normal_limits_my_cond} is fulfilled for some $\sigma^2\in[0,\infty)$ then the others do so for the same $\sigma^2$.
\end{theorem}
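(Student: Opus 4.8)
\emph{Proof outline.} The plan is to read this off from the complete description of the possible limits established in \Cref{theo:xi_real_oder_xi_fullinfo} and \Cref{theo:general_limit_theorem}, so that only elementary algebra and one routine probabilistic lemma remain. The single idea carrying the weight is that an infinitely divisible law is Gaussian exactly when its Lévy measure vanishes; by the relation \eqref{eqn:conn_eta_M} this makes $\eta_1=0$ equivalent to the measure $M$ of \Cref{theo:general_limit_theorem}\eqref{enu:theo:general_limit_theorem_levy_mass} being the zero measure on $(0,\infty]$, whereupon $\nu_2(\R)=\exp(-M(\{\infty\}))=1$ (so $\xi_2$ is real-valued as well), $\eta_2-\eta_1=M_{|(0,\infty)}=0$ forces $\eta_2=0$ too, and the common variance $\sigma^2=\sigma_1^2=\sigma_2^2$ is handed over by $I_{n,2,\tau}$. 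Concretely I would run a cycle through \eqref{enu:theo:normal_limits_both_gauss}--\eqref{enu:theo:normal_limits_xi2_normal} and \eqref{enu:theo_normal_limits_my_cond} and then attach \eqref{enu:theo:normal_limits_Zn}, \eqref{enu:theo:normal_limits_xi2_maxq}, \eqref{enu:theo:normal_limits_xi2_maxmu} as side branches.

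For the equivalences among \eqref{enu:theo:normal_limits_both_gauss}--\eqref{enu:theo:normal_limits_xi2_normal} I would first note that each of these conditions makes $\xi_1$ real-valued (a degenerate Gaussian still counts, and if $\xi_2$ is real-valued then so is $\xi_1$ by \Cref{theo:xi_real_oder_xi_fullinfo}\eqref{enu:theo:xi_real_oder_fullinfo}), so \Cref{theo:xi_real_oder_xi_fullinfo}\eqref{enu:theo:xi_real_oder_xi_fullinfo_infinit_divi} applies. Wherever a Gaussian hypothesis sits, the corresponding Lévy measure vanishes, hence by \Cref{theo:xi_real_oder_xi_fullinfo}\eqref{eqn:theo:connect_Levymeasure} both $\eta_1$ and $\eta_2$ vanish and by \eqref{eqn:theo:connect_sigma} $\sigma_1^2=\sigma_2^2=:\sigma^2$; equation \eqref{eqn:gamma1} then reads $\log\nu_2(\R)=\gamma_1+\sigma^2/2$, and since in each case either $\gamma_1=-\sigma^2/2$ is already named by the hypothesis or $\nu_2(\R)=1$ is (through a real-valued $\xi_2$), the other follows, after which \eqref{eqn:gamma_2_general} gives $\gamma_2=\sigma^2/2$. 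Thus each of these conditions already pins down $\xi_1\sim N(-\sigma^2/2,\sigma^2)$ and $\xi_2\sim N(\sigma^2/2,\sigma^2)$ with one and the same $\sigma^2$, which is essentially the content of the mutual implications and also yields the closing "same $\sigma^2$" assertion.

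For the bridge $\eqref{enu:theo:normal_limits_xi1}\Leftrightarrow\eqref{enu:theo_normal_limits_my_cond}$, the direction $\Leftarrow$ goes: $I_{n,1,x}\to 0$ for all $x>0$ is verbatim condition \eqref{enu:theo:general_limit_theorem_levy_mass} of \Cref{theo:general_limit_theorem} with $M=0$, and $I_{n,2,\tau}\to\sigma^2$ upgrades to condition \eqref{enu:theo:general_limit_theorem_sigma} via the crude bound $0\le I_{n,2,\tau}-I_{n,2,x}\le\tau\,I_{n,1,x}$ for $x<\tau$ (obtained by $\varepsilon_{n,i}^2(\mathrm d\mu_{n,i}/\mathrm dP_{n,i})^2\le\tau\,\varepsilon_{n,i}\mathrm d\mu_{n,i}/\mathrm dP_{n,i}$ on the truncation set), so the truncation level is immaterial once $I_{n,1,x}\to 0$; \Cref{theo:general_limit_theorem} then delivers $\nu_2(\R)=1$, $\eta_2=\eta_1$, hence $\eta_1=0$ by the density $e^x$, and $\sigma^2=\sigma_1^2=\sigma_2^2$, i.e.\ \eqref{enu:theo:normal_limits_xi1}. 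The direction $\Rightarrow$ runs the same machinery backwards: real-valued $\xi_1$ activates \Cref{theo:general_limit_theorem}, $\eta_1=0$ with $\nu_2(\R)=1$ forces $M\equiv 0$ so $I_{n,1,e^x-1}\to 0$ on a dense set, hence $I_{n,1,x}\to 0$ for all $x$ by monotonicity of $x\mapsto I_{n,1,x}$, and $I_{n,2,\tau}\to\sigma^2$ again by the same level comparison.

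Finally the side branches: $\eqref{enu:theo:normal_limits_xi2_maxq}\Leftrightarrow\eqref{enu:theo:normal_limits_xi2_maxmu}$ is immediate from $\mathrm dQ_{n,i}/\mathrm dP_{n,i}-1=\varepsilon_{n,i}(\mathrm d\mu_{n,i}/\mathrm dP_{n,i}-1)$ together with $\max_i\varepsilon_{n,i}\to 0$; with $\xi_2$ real-valued imposed, $\max_i\varepsilon_{n,i}\,\mathrm d\mu_{n,i}/\mathrm dP_{n,i}\to 0$ in $P_{(n)}$-probability is equivalent, for the infinitesimal array of independent events involved, to $\sum_iP_{n,i}(\varepsilon_{n,i}\,\mathrm d\mu_{n,i}/\mathrm dP_{n,i}>x)\to 0$ for every $x>0$, which by \eqref{eqn:sumP_conv_eta1} is $\eta_1\equiv 0$, i.e.\ the Gaussian case; and for \eqref{enu:theo:normal_limits_Zn} I would use that, under $P_{(n)}$, $Z_n$ in \eqref{eqn:def_Zn} is a sum of independent, $P_{n,i}$-centred (using $\mu_{n,i}\ll P_{n,i}$) and asymptotically negligible summands, so the triangular-array central limit theorem makes $Z_n\Rightarrow N(0,\sigma^2)$ equivalent to a Lindeberg negligibility plus truncated-variance convergence, which after re-expressing the truncation through $\varepsilon_{n,i}\,\mathrm d\mu_{n,i}/\mathrm dP_{n,i}$ is exactly \eqref{enu:theo_normal_limits_my_cond}; alternatively one invokes the second-order expansion $T_n=Z_n-\tfrac12\sum_i(\mathrm dQ_{n,i}/\mathrm dP_{n,i}-1)^2+o_{P_{(n)}}(1)$ already underlying \Cref{sec:ARE}. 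The hard part will be none of the individual implications but the glue: matching the one-level formulation of \eqref{enu:theo_normal_limits_my_cond} to the $\lim_{x\searrow0}$ formulation of \Cref{theo:general_limit_theorem}, aligning the Lindeberg truncation level with the level inside $I_{n,1}$ and $I_{n,2}$, handling the possible atoms of the law of $\varepsilon_{n,i}\,\mathrm d\mu_{n,i}/\mathrm dP_{n,i}$ (the reason only a dense set of $x$ is available), and the routine but not-quite-trivial lemma that for an infinitesimal independent array a maximum tends to $0$ in probability if and only if the sum of the tail probabilities does — the point being that real-valuedness of $\xi_2$ is precisely what prevents mass escaping to $+\infty$, so that $\eta_1\equiv 0$ yields the full Gaussian conclusion and not merely Gaussianity of $\xi_1$.
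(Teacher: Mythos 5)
Your proposal is correct and follows essentially the same route as the paper: everything is funneled through \Cref{theo:xi_real_oder_xi_fullinfo} and \Cref{theo:general_limit_theorem} (Gaussian $\Leftrightarrow$ trivial L\'evy measure, with real-valuedness of $\xi_2$ giving $\nu_2(\R)=1$ and hence the drifts $\mp\sigma^2/2$ via \eqref{eqn:gamma1}--\eqref{eqn:gamma_2_general}), together with the same level-comparison bound $0\le I_{n,2,\tau}-I_{n,2,x}\le\tau I_{n,1,x}$ that the paper uses to reconcile the one-level and $x\searrow 0$ formulations. The only local deviations are minor and sound: for \eqref{enu:theo:normal_limits_xi2_maxq}/\eqref{enu:theo:normal_limits_xi2_maxmu} the paper passes to $Q_{(n)}$ by Le Cam's first lemma and computes $1-\prod_i(1-Q_{n,i}(A_{n,i,x}))$, whereas you stay under $P_{(n)}$ with the tail-sum criterion and \eqref{eqn:sumP_conv_eta1}, and for \eqref{enu:theo:normal_limits_Zn} the paper compares the triplets of $T_n$ and $Z_n$ through the map $x\mapsto e^x-1$ and a Taylor expansion instead of re-deriving and matching the triangular-array CLT conditions for $Z_n$ to \eqref{enu:theo_normal_limits_my_cond}.
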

\begin{remark}\label{rem:LAN_distances}
\Cref{theo:normal_limits}\eqref{enu:theo_normal_limits_my_cond} holds for some $\tau>0$ if and only if it does for all.
\end{remark}
To apply \Cref{theo:ARE} $\gamma(\boldsymbol{\theta},\boldsymbol{\theta})=\sigma^2$ is needed, where $\sigma^2$ comes from the previous section and $\boldsymbol{\theta}$ denotes the underlying model, compare to the notation in \Cref{sec:ARE}. As already mentioned there are examples, for which this equation fails although $\xi_1$ and $\xi_2$ are normal distributed. But by truncation we can always ensure the equality without changing the asymptotic results.
\begin{lemma}[Truncation]\label{lem:LAN_var_not_to_sigma_REPLACE}
Let the assumptions of \Cref{theo:normal_limits} and one of its equivalent conditions \eqref{enu:theo:normal_limits_both_gauss}-\eqref{enu:theo_normal_limits_my_cond} be fulfilled. In order to use a truncation argument define 
\begin{align*}
\widetilde\varepsilon_{n,i}= \varepsilon_{n,i}\mu_{n,i}\Bigl( \varepsilon_{n,i}\frac{\mathrm{ d } \mu_{n,i}}{\,\mathrm{ d }P_{n,i}}\leq \tau \Bigr) \text{ for some }\tau>0
\end{align*}
and  let $\widetilde{\mu}_{n,i}$ be given as follows: if $\widetilde \varepsilon_{n,i}=0$ then $\frac{\mathrm{ d } \widetilde \mu_{n,i}}{\,\mathrm{ d }P_{n,i}}=1$, and otherwise 
\begin{align*}
\frac{\mathrm{ d } \widetilde\mu_{n,i}}{\,\mathrm{ d }P_{n,i}}=\frac{\mathrm{ d } \mu_{n,i}}{\,\mathrm{ d }P_{n,i}}\mathbf{1}\Bigl\{ \varepsilon_{n,i}\frac{\mathrm{ d } \mu_{n,i}}{\,\mathrm{ d }P_{n,i}} \leq\tau \Bigr\}\Bigl[ \mu_{n,i}\Bigl(\varepsilon_{n,i} \frac{\mathrm{ d } \mu_{n,i}}{\,\mathrm{ d }P_{n,i}}\leq \tau \Bigr) \Bigr]^{-1}.
\end{align*}
All our asymptotic results in this paper remain the same if we replace $\mu_{n,i}$ and $\varepsilon_{n,i}$ by $\widetilde \mu_{n,i}$ and $\widetilde \varepsilon_{n,i}$.  
\end{lemma}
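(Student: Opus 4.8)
The plan is to show that the truncated scheme reproduces, in the limit, exactly the functionals \eqref{eqn:I1}--\eqref{eqn:I2} of the original one, and that the two alternatives are asymptotically indistinguishable in variation. The starting point is the algebraic fact that truncation acts on the \emph{combined} density in the simplest possible way: writing $L_{n,i}=\varepsilon_{n,i}\,\mathrm{d}\mu_{n,i}/\mathrm{d}P_{n,i}$ and $\widetilde L_{n,i}=\widetilde\varepsilon_{n,i}\,\mathrm{d}\widetilde\mu_{n,i}/\mathrm{d}P_{n,i}$, the definitions of $\widetilde\varepsilon_{n,i}$ and $\widetilde\mu_{n,i}$ give
\[
\widetilde L_{n,i}=L_{n,i}\,\mathbf{1}\{L_{n,i}\le\tau\}\qquad P_{n,i}\text{-almost surely},
\]
also on $\{\widetilde\varepsilon_{n,i}=0\}$ (there $\mu_{n,i}(L_{n,i}\le\tau)=0$, so $\mathrm{d}\mu_{n,i}/\mathrm{d}P_{n,i}\cdot\mathbf{1}\{L_{n,i}\le\tau\}=0$ $P_{n,i}$-a.s.). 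In particular $\widetilde L_{n,i}\le\tau$, every $\widetilde\mu_{n,i}$ is a probability measure with $\widetilde\mu_{n,i}\ll P_{n,i}$ and $E_{P_{n,i}}[\mathrm{d}\widetilde\mu_{n,i}/\mathrm{d}P_{n,i}]=1$, so the truncated scheme is a legitimate model satisfying \eqref{eqn:absolute_conti}.

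Next I would recompute \eqref{eqn:I1}--\eqref{eqn:I2} for the truncated model. Using $\varepsilon_{n,i}\mu_{n,i}(A)=E_{P_{n,i}}[L_{n,i}\mathbf{1}_A]$ one gets, for $0<x<\tau$,
\[
\widetilde I_{n,1,x}=\sum_{i=1}^{k_n}E_{P_{n,i}}\bigl[L_{n,i}\mathbf{1}\{x<L_{n,i}\le\tau\}\bigr]=I_{n,1,x}-I_{n,1,\tau},
\]
and $\widetilde I_{n,1,x}=0$ for $x\ge\tau$; and, since $\widetilde L_{n,i}\le\tau$,
\[
\widetilde I_{n,2,\tau}=\sum_{i=1}^{k_n}\Bigl(E_{P_{n,i}}[\widetilde L_{n,i}^2]-\widetilde\varepsilon_{n,i}^2\Bigr)=I_{n,2,\tau}+\sum_{i=1}^{k_n}(\varepsilon_{n,i}^2-\widetilde\varepsilon_{n,i}^2).
\]
The error term is harmless because $\sum_i(\varepsilon_{n,i}-\widetilde\varepsilon_{n,i})=\sum_i\varepsilon_{n,i}\mu_{n,i}(L_{n,i}>\tau)=I_{n,1,\tau}$ and $0\le\widetilde\varepsilon_{n,i}\le\varepsilon_{n,i}\le1$, so $\sum_i(\varepsilon_{n,i}^2-\widetilde\varepsilon_{n,i}^2)\le 2I_{n,1,\tau}$. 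By the standing hypothesis, \Cref{theo:normal_limits}\eqref{enu:theo_normal_limits_my_cond} gives $I_{n,1,x}\to0$ for every $x>0$ and $I_{n,2,\tau}\to\sigma^2$; hence $\widetilde I_{n,1,x}\to0$ for all $x>0$ and $\widetilde I_{n,2,\tau}\to\sigma^2$. Feeding this back into \Cref{theo:normal_limits}\eqref{enu:theo_normal_limits_my_cond} for the truncated model shows that $\widetilde T_n$ has the same Gaussian limits as $T_n$, i.e. $\widetilde\xi_j=\xi_j$; and because $\widetilde L_{n,i}$ is bounded with $E_{P_{n,i}}[\mathrm{d}\widetilde\mu_{n,i}/\mathrm{d}P_{n,i}]=1$, the pre-limit sum in \eqref{eqn:theo:ARE:gamma_def} for $\boldsymbol{\widetilde\theta}$ coincides with $\widetilde I_{n,2,\tau}$, so $\gamma(\boldsymbol{\widetilde\theta},\boldsymbol{\widetilde\theta})=\sigma^2$, exactly the property demanded in \Cref{rem:theo:ARE}.

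It remains to transfer all remaining assertions. The null $P_{(n)}$ is untouched, and for the alternatives the identity $\mathrm{d}Q_{n,i}/\mathrm{d}P_{n,i}-\mathrm{d}\widetilde Q_{n,i}/\mathrm{d}P_{n,i}=L_{n,i}\mathbf{1}\{L_{n,i}>\tau\}-(\varepsilon_{n,i}-\widetilde\varepsilon_{n,i})$ gives the variational bound $\|Q_{n,i}-\widetilde Q_{n,i}\|\le\varepsilon_{n,i}-\widetilde\varepsilon_{n,i}$, hence $\|Q_{(n)}-\widetilde Q_{(n)}\|\le\sum_i(\varepsilon_{n,i}-\widetilde\varepsilon_{n,i})=I_{n,1,\tau}\to0$. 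Thus the experiments $(P_{(n)},Q_{(n)})$ and $(P_{(n)},\widetilde Q_{(n)})$ are asymptotically equivalent, so any test sequence has the same asymptotic level and power under both; this propagates the detection-region classification, the limit laws of $T_n$, the conclusions of \Cref{theo:ARE} with $\boldsymbol{\theta_1}$ replaced by $\boldsymbol{\widetilde\theta_1}$, and the HC statements of \Cref{theo:HC_full_power,theo:HC_undetect} verbatim. The computations are routine; the only point needing care is to make the phrase ``all our asymptotic results'' precise, and I would do this by reducing each individual claim to the two facts just established — $\widetilde I_{n,1,x}\to0$ together with $\widetilde I_{n,2,\tau}\to\sigma^2$, and $\|Q_{(n)}-\widetilde Q_{(n)}\|\to0$.
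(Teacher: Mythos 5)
Your proposal is correct and follows essentially the same route as the paper: the paper reduces the lemma to showing that $\{Q_{(n)},\widetilde Q_{(n)}\}$ converges weakly to the uninformative experiment by verifying $\sum_{i=1}^{k_n}\|Q_{n,i}-\widetilde Q_{n,i}\|\to 0$ (left to the reader there), which is exactly your bound $\|Q_{n,i}-\widetilde Q_{n,i}\|\le\varepsilon_{n,i}-\widetilde\varepsilon_{n,i}$ summing to $I_{n,1,\tau}\to 0$. Your additional computation of $\widetilde I_{n,1,x}$ and $\widetilde I_{n,2,\tau}$, giving $\gamma(\boldsymbol{\widetilde\theta},\boldsymbol{\widetilde\theta})=\sigma^2$, is a correct and welcome supplement that the paper only asserts in Remark \ref{rem:theo:ARE}.
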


\section{Proofs}\label{appendix:proofs}
	In the following we give all the proofs. These are not given in the order of their appearance since we apply, for example, \Cref{theo:general_limit_theorem} to verify \Cref{theo:xi_real_oder_xi_fullinfo}. Before giving the proofs we introduce some useful properties of binary experiments and generalise limit theorems of \citet{gndedenKolmogorov} to infinitely divisible distributions. 

\subsection{Binary experiments and distances for probability measures}\label{sec:binary_exp_+_dist}
Binary experiments classify different types of signal detectability. This gives us a first rough insight in the different detection regions for our signal detection problem. This standard approach is recalled for a sequence of binary experiments $\{ \widetilde P_{(n)} , {\widetilde Q }_{(n)}\}$, $n\in\N\cup\{0\}$, where the underlying measurable spaces $(\Omega_n,{\mathcal A }_n)$ may change with $n$. Recall the equivalence of the weak convergences in \eqref{eqn:bin_conv_def_nu1} and \eqref{eqn:bin_conv_def_nu2} on $[-\infty,\infty]$:
\begin{alignat}{2}
&{\mathcal L }\Bigl( \log \frac{\mathrm{ d } \widetilde Q_{(n)}}{\,\mathrm{ d }\widetilde P_{(n)}} \Bigr | \widetilde P_{(n)} \Bigr) \;&&\overset{\mathrm w}{\longrightarrow}\;{\mathcal L }\Bigl( \log \frac{\mathrm{ d } \widetilde Q_{(0)}}{\,\mathrm{ d }\widetilde P_{(0)}} \Bigr |\widetilde P_{(0)}  \Bigr)=\nu_1 \textrm{ (say)},\label{eqn:bin_conv_def_nu1}\\
&{\mathcal L }\Bigl( \log \frac{\mathrm{ d } \widetilde Q_{(n)}}{\,\mathrm{ d }\widetilde P_{(n)}} \Bigr | \widetilde Q_{(n)} \Bigr) \;&&\overset{\mathrm w}{\longrightarrow}\;{\mathcal L }\Bigl( \log \frac{\mathrm{ d } \widetilde Q_{(0)}}{\,\mathrm{ d }\widetilde P_{(0)}} \Bigr | \widetilde Q_{(0)} \Bigr)=\nu_2 \textrm{ (say)}.\label{eqn:bin_conv_def_nu2}
\end{alignat} 
Following Le Cam we say that $\{ \widetilde P_{(n)} , \widetilde{ Q }_{(n)}\}$ converges weakly to $\{\nu_1,\nu_2\}$  ($\{ \widetilde P_{(0)} , \widetilde{ Q }_{(0)}\}$, respectively) if and only if \eqref{eqn:bin_conv_def_nu1} or \eqref{eqn:bin_conv_def_nu2} is fulfilled. Note that every sequence of binary experiments has at least one accumulation point in the sense of weak convergence, see Lemma 60.6 of \citet{Strasser1985}. In general $\nu_1$ is a measure on $\R\cup\{-\infty\}$ and $\nu_2$ is one on $\R\cup\{\infty\}$ connected by
\begin{align}\label{eqn:relation_nu_2_nu_1}
\frac{\,\mathrm{ d }\nu_{2|\R}}{\,\mathrm{ d }\nu_{1|\R}}(x)=e^x \textrm{ and }\nu_2(\{-\infty\})= 1- \int e^x \,\mathrm{ d }\nu_1(x).
\end{align}
Using the terminology of weak convergence of binary experiments we can express the different types of (asymptotic) detectability as follows:
\begin{itemize}
	\item \textit{completely detectable:} $\{P_{(n)},Q_{(n)}\}$ converges weakly to the so called full informative experiment $\{\nu_1,\nu_2\}=\{\epsilon_{-\infty},\epsilon_{\infty}\}$.
	
	\item \textit{undetectable:} $\{P_{(n)},Q_{(n)}\}$ converges weakly to the so called uninformative experiment $\{\nu_1,\nu_2\}=\{\epsilon_0,\epsilon_0\}$.
	
	\item \textit{detectable:} None (weak) accumulation point of $\{P_{(n)},Q_{(n)}\}$ is the uninformative experiment $\{\nu_1,\nu_2\}=\{\epsilon_0,\epsilon_0\}$.
\end{itemize} 
The variational distance of probability measures $\widetilde P$ and $\widetilde Q$ on a common measure space $(\widetilde\Omega,\widetilde {\mathcal A })$ is given by
\begin{align}\label{eqn:defi_tot_var}
|| \widetilde {P }-\widetilde Q || = \sup\{  E_{\widetilde P }(\varphi) - E_{ \widetilde Q }(\varphi) : \textrm{ measurable }\varphi:\widetilde \Omega\to [0,1]\},
\end{align}
see Lemma 2.3 of \citet{Strasser1985}. It is easy to show that weak convergence of $\{\widetilde P_{(n)},\widetilde Q_{(n)}\}$ to $\{\widetilde P_{(0)},\widetilde Q_{(0)}\}$ implies convergence of the variational distance $||\widetilde P_{(n)}-\widetilde Q_{(n)}||\to ||\widetilde P_{(0)}-\widetilde Q_{(0)}||$. Our three cases can be reformulated to:
\begin{itemize}
	\item \textit{completely detectable:} $||P_{(n)}-Q_{(n)}||$ tends to $1$.
	
	\item \textit{undetectable:} $||P_{(n)}-Q_{(n)}||$ tends to $0$.
	
	\item \textit{detectable:} We have $\liminf_{n\to\infty}||P_{(n)}-Q_{(n)}||>0$.
\end{itemize} 
For product measures the Hellinger distance $d$ is useful:
\begin{align}\label{eqn:def_hellinger}
d^2(\widetilde P,\widetilde Q)=  \frac{1}{2} \int \Bigl( \Bigl(\frac{\,\mathrm{ d }\widetilde  P}{\,\mathrm{ d } \nu}\Bigr)^{\frac{1}{2}} - \Bigl({\frac{\,\mathrm{ d }\widetilde  Q}{\,\mathrm{ d } \nu} }\Bigr)^{\frac{1}{2}} \Bigr)^2 \,\mathrm{ d } \nu= 1-  \int  \Bigl(\frac{\,\mathrm{ d } \widetilde P}{\,\mathrm{ d } \nu}\frac{\,\mathrm{ d }\widetilde  Q}{\,\mathrm{ d } \nu} \Bigr)^{\frac{1}{2}} \,\mathrm{ d } \nu,
\end{align}
where $\widetilde P,\widetilde Q\ll \nu$. Since $d^2({\widetilde  P},{ \widetilde Q})\leq ||{\widetilde  P}-{\widetilde  Q}|| \leq \sqrt{2}\, d({\widetilde  P},{\widetilde  Q})$, see  Lemma 2.15 of \cite{Strasser1985}, we obtain from \eqref{eqn:def_qni} and \eqref{eqn:maxeps_to_0} that
\begin{align}\label{eqn:max_d2_to_0}
\max_{i=1,\ldots,k_n} d^2(P_{n,i},Q_{n,i}) \leq \max_{1\leq i \leq k_n} ||P_{n,i}-Q_{n,i}|| \leq \max_{1\leq i \leq k_n}\varepsilon_{n,i} \to 0.
\end{align}
Consequently, $d^2(P_{(n)}, Q_{(n)} ) = 1 - \prod_{i=1}^{k_n}(1- d^2(P_{n,i},Q_{n,i}))$ tends to $b\in[0,1]$ if and only if $-\log(1-b)$ is the limit of
\begin{align}\label{eqn:DN}
D_n=\sum_{i=1}^{k_n} d^2(P_{n,i},Q_{n,i}).
\end{align}
To sum up, we get the following characterisation of the trivial detection regions.
\begin{lemma}\label{lem:D_detection}
	\begin{enumerate}[(a)]
		
		\item We are in the undetectable case if and only if $D_n\to 0$. 
		
		\item\label{enu:lem:D_detection_comp} We are in completely detectable case if and only if $D_n\to \infty$.
		
	\end{enumerate}
\end{lemma}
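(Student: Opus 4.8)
The plan is to reduce the statement to the Hellinger distance and use the comparison inequalities already collected in \Cref{sec:binary_exp_+_dist}. First I would note that, by the product formula $1-d^2(P_{(n)},Q_{(n)})=\prod_{i=1}^{k_n}(1-d^2(P_{n,i},Q_{n,i}))$ recalled before \eqref{eqn:DN}, the elementary estimates $t\le -\log(1-t)\le t(1+t)$ (valid for $0\le t\le 1/2$) together with $\max_{1\le i\le k_n}d^2(P_{n,i},Q_{n,i})\to 0$ from \eqref{eqn:max_d2_to_0} give, for all large $n$, $D_n\le -\log\bigl(1-d^2(P_{(n)},Q_{(n)})\bigr)\le D_n\bigl(1+\max_{1\le i\le k_n}d^2(P_{n,i},Q_{n,i})\bigr)$. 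Hence $D_n\to L\in[0,\infty]$ if and only if $d^2(P_{(n)},Q_{(n)})\to 1-e^{-L}$; in particular $D_n\to 0\Leftrightarrow d^2(P_{(n)},Q_{(n)})\to 0$ and $D_n\to\infty\Leftrightarrow d^2(P_{(n)},Q_{(n)})\to 1$.

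It then remains to match these two limiting regimes for $d^2(P_{(n)},Q_{(n)})$ with the variational-distance reformulations of the undetectable and completely detectable cases listed after \eqref{eqn:defi_tot_var}. For the undetectable case the two-sided bound $d^2(P_{(n)},Q_{(n)})\le\|P_{(n)}-Q_{(n)}\|\le\sqrt 2\,d(P_{(n)},Q_{(n)})$ does it directly, since then $\|P_{(n)}-Q_{(n)}\|\to 0$ is equivalent to $d^2(P_{(n)},Q_{(n)})\to 0$. For the completely detectable case one half is again immediate from $d^2(P_{(n)},Q_{(n)})\le\|P_{(n)}-Q_{(n)}\|\le 1$: if $d^2(P_{(n)},Q_{(n)})\to 1$ then $\|P_{(n)}-Q_{(n)}\|\to 1$. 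For the converse I would invoke the sharper bound $\|P-Q\|^2\le d^2(P,Q)\bigl(2-d^2(P,Q)\bigr)$, which follows from the Cauchy--Schwarz inequality applied to $\|P-Q\|=\frac{1}{2}\int|\sqrt p-\sqrt q|\cdot|\sqrt p+\sqrt q|\,\mathrm d\nu$ (a refinement of the bound used after \eqref{eqn:def_hellinger}); since $x\mapsto x(2-x)$ is increasing on $[0,1]$ with value $1$ at $x=1$, $\|P_{(n)}-Q_{(n)}\|\to 1$ forces $d^2(P_{(n)},Q_{(n)})\to 1$. Combining with the displayed equivalences for $D_n$ then yields both (a) and (b).

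I do not expect a genuine obstacle here; the only subtlety is that the crude upper bound $\|\cdot\|\le\sqrt 2\,d$ quoted in the text is not strong enough to recover $d^2(P_{(n)},Q_{(n)})\to 1$ from complete detectability, so the proof must pass through the quadratic Hellinger--total-variation inequality above. Everything else is routine bookkeeping with the product formula for the Hellinger distance and the two-sided comparison of the Hellinger and variational distances.
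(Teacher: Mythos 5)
Your proof is correct and follows essentially the same route as the paper: identify the undetectable and completely detectable cases with $\|P_{(n)}-Q_{(n)}\|\to 0$ and $\|P_{(n)}-Q_{(n)}\|\to 1$, transfer these to the Hellinger distance of the product measures, and convert to $D_n$ via the product formula stated before \eqref{eqn:DN}. The one step you add explicitly --- the sharper comparison $\|P-Q\|^2\le d^2(P,Q)\bigl(2-d^2(P,Q)\bigr)$, needed to deduce $d^2(P_{(n)},Q_{(n)})\to 1$ from complete detectability --- is precisely the point the paper leaves implicit, since the quoted bound $\|\cdot\|\le\sqrt{2}\,d$ alone would only give $d^2\ge 1/2$ in the limit; your observation and Cauchy--Schwarz fix are the standard way to close that gap.
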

Note that from the connection between the variational distance and the Hellinger distance we obtain 
\begin{align}\label{eqn:totvar_helg_absch_P+mu}
\frac{1}{2}\sum_{i=1}^{k_n}\varepsilon_{n,i}^2\:|| P_{n,i}- \mu_{n,i}||^2 \leq D_n\leq  \sum_{i=1}^{k_n} \varepsilon_{n,i}\:|| P_{n,i}- \mu_{n,i}||.
\end{align} 

\subsection{Limit theorems}
For the readers' convenience let us recall well known convergence results of \citet{gndedenKolmogorov} which we use rapidly. Let  $(Y_{n,i})_{1\leq i \leq k_n}$ be a triangular array of row-wise independent, infinitesimal, real-valued random variables on some probability space $(\Omega,{\mathcal A}, P)$. In our case we have 
\begin{align}\label{eqn:petrov_6_simp_cond_emptyset}
\sum_{i=1}^{k_n} P( Y_{n,i} \leq x) = 0
\end{align}
for all fixed $x<0$ if $n\geq N_x$ is sufficiently large. Combining this with (9) of Chap. 3.18, Theorem 4.25.4 and the subsequent remark of \cite{gndedenKolmogorov} yields:
\begin{theorem}\label{theo:petrov_6_simplify}
	We have distributional convergence 
	\begin{align*}
	\sum_{i=1}^{k_n} Y_{k_n,i}\overset{\mathrm d}{\longrightarrow}  Y 
	\end{align*}
	to some real-valued $Y$ on $\left(\Omega, \mathcal{A}, P\right)$ if and only if the following conditions \eqref{enu:petrov_6_simp_levy_mass}-\eqref{enu:petrov_6_simp_levy_gamma} hold. 
	\begin{enumerate}[(i)]
		\item\label{enu:petrov_6_simp_levy_mass} There is a L\'{e}vy measure $\eta$ on $\R\setminus\{0\}$ such that $\eta (-\infty,0) = 0$ and
		\begin{align*}
		\sum_{i=1}^{k_n} P\left(  Y_{k_n,i} > x \right)  \to  \eta (x,\infty)\in\R \textrm{ as }n\to\infty 
		\end{align*}
		for all $x \in C_+(\eta)$, i.e. for all continuity points of $t\mapsto \eta(t,\infty)$, $t>0$.
		
		\item\label{enu:petrov_6_simp_levy_sigma} There exists some constant $\sigma^2\in[0,\infty)$ such that
		\begin{align*}
		\sigma^2=\lim_{\varepsilon\searrow 0}\; \underset{n\to\infty}{ \substack{ \limsup \\ \liminf } }\; \sum_{i=1}^{k_n} \int_{\left\{\left| Y_{k_n,i} \right|<\varepsilon\right\}} Y_{k_n,i}^2\,\mathrm{ d }  P-\sum_{i=1}^{k_n} \left( \int_{\left\{\left| Y_{k_n,i} \right|<\varepsilon\right\}} Y_{k_n,i}  \,\mathrm{ d } { P} \right)^2.
		\end{align*}  
		
		\item \label{enu:petrov_6_simp_levy_gamma} 	There is some constant $\gamma\in\R$ and $\tau_0\in C_+(\eta)$ such that
		\begin{align*}
		&\lim_{n\to\infty} \sum_{i=1}^{k_n} \int  Y_{k_n,i}\mathbf{1}{\left\{\left| Y_{k_n,i} \right|<\tau_0\right\}}\,\mathrm{ d } P \\ &=\gamma + \int_{(-\tau_0,\tau_0)\setminus\{0\}} \frac{x^3}{1+x^2}\,\,\mathrm{ d } \eta(x)  - \int_{\R\setminus[-\tau_0,\tau_0]} \frac{x}{1+x^2}\,\,\mathrm{ d } \eta(x). \nonumber  
		\end{align*}
	\end{enumerate}
	Under \eqref{enu:petrov_6_simp_levy_mass}-\eqref{enu:petrov_6_simp_levy_gamma}  $Y$ is infinitely divisible with L\'{e}vy-Khintchine triplet $(\gamma,\sigma^2,\eta)$.
\end{theorem}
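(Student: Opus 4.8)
The plan is to obtain Theorem~\ref{theo:petrov_6_simplify} as a specialization of the classical limit theorem for sums of row-wise independent infinitesimal random variables of Gnedenko and Kolmogorov, cited here as (9) of Chap.~3.18 together with Theorem~4.25.4 and the remark following it in \cite{gndedenKolmogorov}. In its general two-sided form that theorem asserts that $\sum_{i=1}^{k_n}Y_{k_n,i}$ converges in distribution to an infinitely divisible law with L\'evy--Khintchine triplet $(\gamma,\sigma^2,\eta)$, where $\eta$ is a L\'evy measure on all of $\R\setminus\{0\}$, if and only if: (a) $\sum_i P(Y_{k_n,i}\le x)\to\eta(-\infty,x)$ and $\sum_i P(Y_{k_n,i}>x)\to\eta(x,\infty)$ at the respective continuity points $x<0$ and $x>0$; (b) a two-sided truncated-variance condition holds; and (c) a two-sided centering condition holds (relative to a fixed truncation level $\tau_0$). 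So the task is simply to show that, under our extra hypothesis \eqref{eqn:petrov_6_simp_cond_emptyset}, the conditions (a)--(c) collapse to the one-sided statements \eqref{enu:petrov_6_simp_levy_mass}--\eqref{enu:petrov_6_simp_levy_gamma}.

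First I would feed in \eqref{eqn:petrov_6_simp_cond_emptyset}: for every fixed $x<0$ the quantity $\sum_i P(Y_{k_n,i}\le x)$ equals $0$ for all sufficiently large $n$. Hence the negative part of condition (a) reads $\eta(-\infty,x)=0$ for all $x<0$, i.e. $\eta(-\infty,0)=0$, so the limiting L\'evy measure is concentrated on $(0,\infty)$ and what remains of (a) is precisely \eqref{enu:petrov_6_simp_levy_mass}. (That $\eta$ is a genuine L\'evy measure, $\int\min(x^2,1)\,\mathrm{d}\eta<\infty$, comes out of the variance condition and the general machinery, as usual.)

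Next I would simplify (b) and (c). Because the negative tails are asymptotically empty, every contribution over $\{Y_{k_n,i}<-\varepsilon\}$ in the truncated-variance functional, and every contribution over the negative range $\R\setminus[-\tau_0,\tau_0]$ on the left in the centering functional, vanishes for $n$ large; combined with infinitesimality this lets the two-sided expressions in (b) and (c) be replaced by their one-sided counterparts, giving \eqref{enu:petrov_6_simp_levy_sigma} and the left-hand side of \eqref{enu:petrov_6_simp_levy_gamma}. For the right-hand side of \eqref{enu:petrov_6_simp_levy_gamma} I would additionally convert the centering of \cite{gndedenKolmogorov} (truncation at the fixed level $\tau_0$) into the L\'evy--Khintchine canonical truncation function $x/(1+x^2)$, using the standard change-of-centering identity
$$\gamma=\gamma_{\tau_0}+\int_{(-\tau_0,\tau_0)\setminus\{0\}}\frac{x^3}{1+x^2}\,\mathrm{d}\eta(x)-\int_{\R\setminus[-\tau_0,\tau_0]}\frac{x}{1+x^2}\,\mathrm{d}\eta(x),$$
which is legitimate since $\eta$ integrates $\min(x^2,1)$; with $\eta$ supported on $(0,\infty)$ this is exactly the displayed formula in \eqref{enu:petrov_6_simp_levy_gamma}.

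The main obstacle I anticipate is not analytic but bookkeeping: one must carefully line up the precise truncation systems and accompanying-law formulations used in \cite{gndedenKolmogorov} with the clean one-sided conditions (i)--(iii), and verify that the infinitesimality of $(Y_{n,i})$ together with \eqref{eqn:petrov_6_simp_cond_emptyset} really does annihilate all negative-side remainder terms uniformly in $n$, so that no spurious contribution survives the passage to the limit. Beyond these identifications no new estimate is required.
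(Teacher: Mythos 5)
Your proposal is correct and follows essentially the same route as the paper, which likewise obtains the theorem by combining the vanishing negative-tail condition \eqref{eqn:petrov_6_simp_cond_emptyset} with (9) of Chap.~3.18, Theorem~4.25.4 and the subsequent remark of \citet{gndedenKolmogorov}; your write-up merely spells out the bookkeeping (concentration of $\eta$ on $(0,\infty)$ and the change from truncation-based centering to the canonical $x/(1+x^2)$ centering) that the paper leaves implicit.
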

As stated in \Cref{theo:general_limit_theorem}, we have to deal also with positive weights in $\infty$ for the limits since $\nu_2=\rho+(1-a)\epsilon_{-\infty}$, where $a<1$ may occur.
\begin{theorem}\label{theo:petrov_including_infinity}
	Suppose that the conditions \eqref{enu:petrov_6_simp_levy_sigma} and \eqref{enu:petrov_6_simp_levy_gamma} of \Cref{theo:petrov_6_simplify} hold for some $\tau_0\in C_+(M_0)$. Assume that the following \eqref{enu:theo_petrov_infty_levy_meas} and \eqref{enu:theo_petrov_infty_sigma_bounded} hold.
	\begin{enumerate}[(a)]
		\item\label{enu:theo_petrov_infty_levy_meas} There is a dense subset $\mathcal D$ of $(0,\infty)$ and a measure $M_0$ on $(0,\infty]$ 
		with 
		\begin{align*}
		\sum_{i=1}^{k_n} P\left(  Y_{k_n,i} > x \right)\;  \to \;\, M_0 (x,\infty]\in\R \text{ for all } x\in {\mathcal D }. 
		\end{align*}
		
		\item \label{enu:theo_petrov_infty_sigma_bounded} There exists some $\tau_1>0$ such that
		\begin{align*}
		\limsup_{n\to\infty}\sum_{i=1}^{k_n} \int_{\left\{\left| Y_{k_n,i} \right|<\tau_1\right\}} Y_{k_n,i}^2\,\mathrm{ d }  P< \infty.
		\end{align*}
	\end{enumerate}
	Then,
	\begin{align*}
	{\mathcal L }\Bigl( \sum_{i=1}^{k_n}Y_{n,i} \Bigr) \overset{\mathrm w}{\longrightarrow} e^{-M_0(\{\infty\})}\nu +(1-e^{-M_0(\{\infty\})})\epsilon_{\infty},
	\end{align*}
	where $\nu$ is a infinitely divisible measure on $\R$ with L\'{e}vy-Khintchine triplet $(\gamma,\sigma^2,\eta)$ and L\'{e}vy measure $\eta=M_{0|(0,\infty)}$.
\end{theorem}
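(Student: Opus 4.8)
The plan is to truncate the summands at a large level $L$ and reduce everything to \Cref{theo:petrov_6_simplify}. Fix $L$ in the dense set ${\mathcal D }$ which in addition is a continuity point of $x\mapsto M_0(x,\infty]$ and satisfies $L>\tau_0$, and put $Y_{n,i}^{(L)}=Y_{n,i}\mathbf{1}\{Y_{n,i}\leq L\}$, $S_n^{(L)}=\sum_{i=1}^{k_n}Y_{n,i}^{(L)}$, and $N_n^{(L)}=\#\{i\leq k_n:Y_{n,i}>L\}$. The guiding picture is this: on $\{N_n^{(L)}=0\}$ the sum $S_n:=\sum_{i=1}^{k_n}Y_{n,i}$ coincides with $S_n^{(L)}$, whereas on $\{N_n^{(L)}\geq1\}$ one has $S_n>S_n^{(L)}+L$; so $S_n^{(L)}$ converges to a genuine infinitely divisible law $\nu_L$, the event ``some summand exceeds $L$'' carries asymptotic probability $1-e^{-M_0(L,\infty]}$, and on that event $S_n$ is pushed arbitrarily far to the right. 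Sending $L\to\infty$ should produce the claimed limit with its atom at $+\infty$.

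For the truncated array I would first note that condition (b) together with a standard Fatou-type argument gives $\int_{(0,1)}x^2\,\mathrm{ d }M_0<\infty$, while condition (a) gives $M_0(\tau_0,\infty)<\infty$; hence $\eta:=M_{0|(0,\infty)}$ is a genuine L\'{e}vy measure and $\eta_L:=M_{0|(0,L]}$ increases to $\eta$. Then I would verify that $(Y_{n,i}^{(L)})_{i\leq k_n}$ again satisfies the hypotheses of \Cref{theo:petrov_6_simplify}, now with L\'{e}vy measure $\eta_L$, the same $\sigma^2$, and drift $\gamma_L=\gamma-\int_{(L,\infty)}\frac{x}{1+x^2}\,\mathrm{ d }M_0$: the $\sigma^2$-condition is literally the original one, because for $\varepsilon<L$ the relevant integrands coincide with those of $(Y_{n,i})$ on $\{|Y_{n,i}|<\varepsilon\}$ and vanish off it; the drift condition is likewise unchanged for the fixed $\tau_0<L$, and matching its right-hand side identifies $\gamma_L$; and the L\'{e}vy-mass condition follows from (a) via the standard fact that pointwise convergence of monotone functions on a dense set forces convergence at every continuity point of the limit (for $x\geq L$ both sides vanish, $M_0$ having no atom at $L$). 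This gives $S_n^{(L)}\overset{\mathrm d}{\longrightarrow}\nu_L$, infinitely divisible with triplet $(\gamma_L,\sigma^2,\eta_L)$; since $\gamma_L\to\gamma$, $\eta_L\uparrow\eta$ and $\sigma^2$ is fixed, continuity of the L\'{e}vy-Khintchine correspondence yields $\nu_L\overset{\mathrm w}{\longrightarrow}\nu$ as $L\to\infty$, and the family $\{\nu_L\}_L$ is tight.

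Next I would split off the large summands. The variables $N_n^{(L)}$ are sums of independent Bernoulli random variables with $\sum_i P(Y_{n,i}>L)\to M_0(L,\infty]$ and $\max_i P(Y_{n,i}>L)\to0$, so $P(N_n^{(L)}=0)=\prod_{i=1}^{k_n}(1-P(Y_{n,i}>L))\to e^{-M_0(L,\infty]}$. Using that the rows are independent and that $Y_{n,i}^{(L)}=0$ on $\{Y_{n,i}>L\}$, one obtains, writing $\phi_{n,i}$ for the characteristic function of $Y_{n,i}^{(L)}$ and $\widehat{\nu}_L$ for that of $\nu_L$,
\begin{align*}
E\bigl[e^{\text{i}tS_n^{(L)}}\mathbf{1}\{N_n^{(L)}=0\}\bigr]=\prod_{i=1}^{k_n}\bigl(\phi_{n,i}(t)-P(Y_{n,i}>L)\bigr)\;\longrightarrow\;e^{-M_0(L,\infty]}\,\widehat{\nu}_L(t),
\end{align*}
the limit following from $\max_i|\phi_{n,i}(t)-1|\to0$ (infinitesimality) and $\sum_i P(Y_{n,i}>L)\to M_0(L,\infty]$. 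Since $S_n=S_n^{(L)}$ on $\{N_n^{(L)}=0\}$, this says the sub-probability measure ${\mathcal L }(S_n;\{N_n^{(L)}=0\})$ converges weakly to $e^{-M_0(L,\infty]}\nu_L$, so $P(S_n\leq x,\,N_n^{(L)}=0)\to e^{-M_0(L,\infty]}\nu_L(-\infty,x]$ at continuity points $x$ of $\nu_L$. For the complementary event, given $\delta>0$ I would use tightness of $\{\nu_L\}$ to choose $M$ with $\sup_L\nu_L(-\infty,-M]<\delta$; then for $L>x+M$, on $\{N_n^{(L)}\geq1\}\cap\{S_n^{(L)}\geq-M\}$ one has $S_n>L-M>x$, whence $\{S_n\leq x,\,N_n^{(L)}\geq1\}\subseteq\{S_n^{(L)}<-M\}$, which has limiting probability at most $\delta$.

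Finally I would combine the two contributions: for every $x$ that is a continuity point of $\nu$ and of all the $\nu_L$ along a fixed sequence $L\to\infty$ -- hence in a dense set -- and for $L$ large enough,
\begin{align*}
e^{-M_0(L,\infty]}\nu_L(-\infty,x]\;\leq\;\liminf_{n\to\infty}P(S_n\leq x)\;\leq\;\limsup_{n\to\infty}P(S_n\leq x)\;\leq\;e^{-M_0(L,\infty]}\nu_L(-\infty,x]+\delta.
\end{align*}
Letting $L\to\infty$ (so $M_0(L,\infty]\downarrow M_0(\{\infty\})$ and $\nu_L(-\infty,x]\to\nu(-\infty,x]$) and then $\delta\downarrow0$ gives $P(S_n\leq x)\to e^{-M_0(\{\infty\})}\nu(-\infty,x]$, which extends to all continuity points of the limit by monotonicity. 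Since this tends to $e^{-M_0(\{\infty\})}$ as $x\to\infty$, the deficit $1-e^{-M_0(\{\infty\})}$ of mass escapes to $+\infty$, which is exactly the asserted weak convergence on $\R\cup\{\infty\}$ to $e^{-M_0(\{\infty\})}\nu+(1-e^{-M_0(\{\infty\})})\epsilon_\infty$. I expect the main obstacle to be the asymptotic-independence computation in the third step, together with the careful bookkeeping of the iterated limits $n\to\infty$ and $L\to\infty$ and the choices of continuity points; verifying that the hypotheses transfer to the truncated array in the second step is routine but must be carried out attentively.
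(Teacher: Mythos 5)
Your argument is correct and shares the paper's skeleton --- truncate, apply \Cref{theo:petrov_6_simplify} to the truncated variables to obtain an infinitely divisible limit with triplet $(\gamma_L,\sigma^2,M_{0|(0,L]})$, let the truncation level grow, and read off the atom at $+\infty$ from the exceedance probabilities --- but your treatment of the exceedance event is genuinely different. The paper picks a single slowly growing threshold $u_n$ with $\sum_i P(Y_{n,i}>u_n)\to M_0(\{\infty\})$, conditions on $C_n=\{\max_i Y_{n,i}\leq u_n\}$, and reapplies the classical theorem to the conditioned (still independent, infinitesimal) array; this collapses everything into a single limit in $n$ and makes the defective limit appear directly through $\lim_n P(C_n)=e^{-M_0(\{\infty\})}$. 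You instead keep the level $L$ fixed, replace conditioning by the identity $E[e^{\mathrm{i}tS_n^{(L)}}\mathbf{1}\{N_n^{(L)}=0\}]=\prod_i(\phi_{n,i}(t)-P(Y_{n,i}>L))$ together with the continuity theorem for sub-probability measures, push the complementary event beyond any fixed $x$ via tightness of $\{\nu_L\}$, and then run the iterated limit $n\to\infty$, $L\to\infty$, $\delta\downarrow 0$. The moving threshold buys the paper the absence of this double limit and of the sub-probability bookkeeping; your version avoids re-verifying the hypotheses of \Cref{theo:petrov_6_simplify} for a conditioned array and constructing the sequence $u_n$, and it exhibits very explicitly how the missing mass escapes to $+\infty$. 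Two small repairs: in the factorization write $\phi_{n,i}(t)-P(Y_{n,i}>L)=\phi_{n,i}(t)\bigl(1-P(Y_{n,i}>L)/\phi_{n,i}(t)\bigr)$ and use that $\sum_i P(Y_{n,i}>L)$ stays bounded while $\max_i|\phi_{n,i}(t)-1|\to 0$, so the second factor tends to $e^{-M_0(L,\infty]}$ and the first to $\widehat\nu_L(t)$ by your second step --- infinitesimality and $\sum_i P(Y_{n,i}>L)\to M_0(L,\infty]$ alone, as stated, do not quite justify the limit; and you can drop the requirement that $L$ be a continuity point of $M_0$ (which could conflict with $L\in{\mathcal D}$ if every point of ${\mathcal D}$ happened to be an atom): an atom of $M_0$ at $L$ only makes $L$ a discontinuity point of $t\mapsto M_{0|(0,L]}(t,\infty)$, where \Cref{theo:petrov_6_simplify} demands nothing, so any $L\in{\mathcal D}$ with $L>\tau_0$ works.
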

\begin{proof}
	Put $\eta=M_{0|(0,\infty)}$. Let the sequence $(M_n)_{n\in\N}$ consists of measures on $(0,\infty]$  given by $M_n(x,\infty]= \sum_{i=1}^{k_n}P(Y_{n,i}>x)$, $x>0$. Clearly, $M_{n|(0,\infty)}\overset{\mathrm w}{\longrightarrow}\eta$ and $\limsup_{n\to\infty} \int_{(0,\tau_1)} t^2 \,\mathrm{ d }M_n(t)<\infty$. Thus, we obtain $\int \min(t^2,1) \,\mathrm{ d }\eta(t)<\infty$, which proves that $\eta$ is a L\'{e}vy measure. Define $Z_{n,u}=\sum_{i=1}^{k_n}Y_{n,i} \mathbf{1}\{Y_{n,i}\leq u\}$ for all $u\in{\mathcal D }$, $u>\tau_0$. 
	By \Cref{theo:petrov_6_simplify} $Z_{n,u}$ converges in distribution to $X_u$, where $X_u$ is infinitely divisible with L\'{e}vy-Khintchine triplet $(\gamma_u,\sigma^2,\eta_u)$, L\'{e}vy measure $\eta_u=\eta_{(0,u]}$ and shift term
	\begin{align*}
	\gamma_u=\gamma - \int_{(u,\infty)}\frac{x}{1+x^2}\,\mathrm{ d }\eta(x).
	\end{align*}
	Since $\eta$ is L\'{e}vy measure it is easy to verify $\gamma_u\to \gamma$ as ${\mathcal D }\ni u \to\infty$. By this and  Theorem 3.19.2 of \cite{gndedenKolmogorov} $X_u$ converges in distribution to $X$ as ${\mathcal D }\ni u \to\infty$, where $X\sim \nu$.
	Now, let $(u_n)_{n\in\N}$ be a sequence in ${\mathcal D }$ which tends to $\infty$ slowly enough such that
	$\sum_{i=1}^{k_n} P\left(  Y_{k_n,i} > u_n \right)\;  \to \;\, M_0(\{\infty\}).$ 
	Standard arguments, see Theorem 3.2 of \citet{billingsley1999}, imply that $Z_{n,u_n}$ converges in distribution to $X$ since
	for all $\delta>0$ 
	\begin{align*}
	\limsup_{n\to\infty} P\Bigl( \Bigl | Z_{n,u}-Z_{n,u_n} \Bigr|\geq \delta \Bigl)\leq M_0(u,\infty)  \to 0 \text{ as }{\mathcal D }\ni u \to \infty.
	\end{align*}
	The basic idea to determine the limit distribution of $\sum_{i=1}^{k_n}Y_{n,i}$ is to condition on $C_n=\{\max_{1\leq i \leq k_n}Y_{n,i}\leq u_n\}$. Note that for all $t\in\R$
	\begin{align*}
	P\Bigl( \sum_{i=1}^{k_n}Y_{n,i}\leq t \Bigr)=P( Z_{n,u_n}\leq t | C_n )P(C_n) + P\Bigl ( \sum_{i=1}^{k_n}Y_{n,i}\leq t , \max_{1\leq i \leq k_n}Y_{n,i}> u_n \Bigr),
	\end{align*}
	where the latter summand tends to $0$. Moreover, observe that
	\begin{align*}
	1-P(C_n) = \prod_{i=1}^{k_n}\Bigl( 1-P(Y_{n,i}> u_n) \Bigr) \to e^{-M_0(\{\infty\})}.
	\end{align*}
	It is remains to show that $Z_{n,u_n}$ tends to $X$ conditioned on $C_n$. Conditioned on $C_n$ we have $Z_{n,u_n}=\sum_{i=1}^{k_n} Y_{n,i}\mathbf{1}\{Y_{n,i}\leq u_n\}$  and $(Y_{n,i}\mathbf{1}\{Y_{n,i}\leq u_n\})_{i\leq k_n}$ is a rowwise independent and infinitesimal triangular array. Hence, we can apply \Cref{theo:petrov_6_simplify} to $Z_{n,u_n}$ conditioned on $C_n$. Finally, by basic calculations   \Cref{theo:petrov_6_simplify}\eqref{enu:petrov_6_simp_levy_mass}-\eqref{enu:petrov_6_simp_levy_gamma} are fulfilled for the same $\eta$, $\sigma^2$ and $\gamma$ given by the L\'{e}vy-Khintchine triplet of the limit X of $Z_{n,u_n}$, e.g. we have for all $x\in {\mathcal D }$
	\begin{align*}
	\sum_{i=1}^{k_n}P\Bigl(Y_{n,i}\mathbf{1}\{Y_{n,i}\leq u_n\}> x |C_n\Bigr) = \sum_{i=1}^{k_n} \frac{P(Y_{n,i}>x)-P(Y_{n,i}>u_n)}{P(Y_{n,i}\leq u_n)} \to \eta(x,\infty)
	\end{align*}
	since $\min_{1\leq i \leq k_n} P(Y_{n,i}\leq u_n)\geq 1-\max_{1\leq i \leq k_n}P(Y_{n,i}\geq 1)\to 1$. 
\end{proof}

\subsection{Proofs of \Cref{sec:LLRT} and \Cref{appendix:additional}}
\subsubsection{Proof of \Cref{theo:trivial_limits}}
The statement of \Cref{theo:trivial_limits} follows immediately from the following lemma.

\begin{lemma}\label{lem:trivial_limits}
	Let  $I_{n,1,x}$ and $I_{n,2,x}$, $x>0$, be defined as in in \eqref{eqn:I1} and \eqref{eqn:I2}, respectively.  Let $D_n$ be defined as in \eqref{eqn:DN}. Then we have for all $\tau>0$:
	\begin{enumerate}[(a)]
		\item\label{enu:lem_trivial_limits_abschaet} There exists a constant $C_\tau>0$ such that
		\begin{align}	
		&D_n	\leq \Bigl( \frac{1}{2}+\max_{1\leq i \leq k_n}\varepsilon_{n,i} \Bigr) I_{n,1,\tau} + I_{n,2,\tau} \label{eqn:boundaries_Dn_upper}	\\
		&D_n \geq 		C_\tau\max\Bigl\{I_{n,1,\tau},I_{n,2,\tau}-\frac{2}{\tau}I_{n,1,\tau}\max_{1\leq i \leq k_n}\varepsilon_{n,i}\Bigr\}. \label{eqn:boundaries_Dn_lower}	
		\end{align}
	\end{enumerate}
\end{lemma}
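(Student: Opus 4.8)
\textbf{Proof plan for \Cref{lem:trivial_limits}.}
Write $g_{n,i}=\frac{\mathrm d\mu_{n,i}}{\mathrm dP_{n,i}}$ and $f_{n,i}=\frac{\mathrm dQ_{n,i}}{\mathrm dP_{n,i}}=1+\varepsilon_{n,i}(g_{n,i}-1)$, and note that $E_{P_{n,i}}(g_{n,i})=1$ because $\mu_{n,i}\ll P_{n,i}$ is a probability measure, hence $E_{P_{n,i}}(f_{n,i})=1$. The starting point is the elementary identity $d^2(P_{n,i},Q_{n,i})=\tfrac12 E_{P_{n,i}}\bigl((\sqrt{f_{n,i}}-1)^2\bigr)$, obtained from \eqref{eqn:def_hellinger} by choosing $\nu=P_{n,i}$ and expanding the square with $E_{P_{n,i}}(f_{n,i})=1$. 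I would then split each summand of $D_n$ along $A_{n,i}=\{\varepsilon_{n,i}g_{n,i}\le\tau\}$ and $B_{n,i}=A_{n,i}^c$, since these are precisely the sets defining $I_{n,2,\tau}$ and $I_{n,1,\tau}$. Throughout I may assume $n$ is large enough that $\max_i\varepsilon_{n,i}<\tau$, which is legitimate by \eqref{eqn:maxeps_to_0}; the finitely many remaining $n$ are absorbed into $C_\tau$ and are irrelevant for \Cref{theo:trivial_limits}.

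On $B_{n,i}$ one has $f_{n,i}>1$ there (because $\varepsilon_{n,i}g_{n,i}>\tau>\varepsilon_{n,i}$), so $(\sqrt{f_{n,i}}-1)^2\le (\sqrt{f_{n,i}}-1)(\sqrt{f_{n,i}}+1)=f_{n,i}-1=\varepsilon_{n,i}(g_{n,i}-1)\le\varepsilon_{n,i}g_{n,i}$; integrating and summing bounds the $B$-part of $D_n$ above by $\tfrac12 I_{n,1,\tau}$. For the matching lower bound I would write $(\sqrt{f_{n,i}}-1)^2=(f_{n,i}-1)^2/(\sqrt{f_{n,i}}+1)^2$ and use $\varepsilon_{n,i}g_{n,i}>\tau$ together with $f_{n,i}\le 1+\varepsilon_{n,i}g_{n,i}$ to get $(\sqrt{f_{n,i}}-1)^2\ge c_\tau\,\varepsilon_{n,i}g_{n,i}$ for some $c_\tau>0$, so that the $B$-part of $D_n$ is at least $\tfrac{c_\tau}{2}I_{n,1,\tau}$. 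On $A_{n,i}$ the density $f_{n,i}$ lies in the compact interval $[1-\max_i\varepsilon_{n,i},\,1+\tau]$, so $(\sqrt{f_{n,i}}+1)^2$ is bounded away from $0$ and $\infty$ by constants $K_\tau^{-1},K_\tau$, and hence $(\sqrt{f_{n,i}}-1)^2$ equals $(f_{n,i}-1)^2=\varepsilon_{n,i}^2(g_{n,i}-1)^2$ up to these constants. The crucial bookkeeping identity is then
$E_{P_{n,i}}\bigl((g_{n,i}-1)^2\mathbf{1}_{A_{n,i}}\bigr)=\bigl(E_{P_{n,i}}(g_{n,i}^2\mathbf{1}_{A_{n,i}})-1\bigr)+2\mu_{n,i}(B_{n,i})-P_{n,i}(B_{n,i})$,
which follows from $E_{P_{n,i}}(g_{n,i})=1$; it expresses the truncated second moment as the $i$-th summand of $I_{n,2,\tau}$ (divided by $\varepsilon_{n,i}^2$) plus a leakage term.

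Combining the two halves: the $A$-contributions produce $\tfrac12 I_{n,2,\tau}$ (which is $\le I_{n,2,\tau}$ as in \eqref{eqn:boundaries_Dn_upper} when $I_{n,2,\tau}\ge0$, while the leakage identity also yields the unconditional bound $I_{n,2,\tau}\ge -2\max_i\varepsilon_{n,i}\cdot I_{n,1,\tau}$), together with an error $\sum_i\varepsilon_{n,i}^2\mu_{n,i}(B_{n,i})\le\max_i\varepsilon_{n,i}\cdot I_{n,1,\tau}$ and, for the lower bound, an error $\sum_i\varepsilon_{n,i}^2 P_{n,i}(B_{n,i})$ which Markov's inequality $P_{n,i}(B_{n,i})\le\frac{\varepsilon_{n,i}}{\tau}\mu_{n,i}(B_{n,i})$ converts into $\tfrac{2}{\tau}\max_i\varepsilon_{n,i}\cdot I_{n,1,\tau}$. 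Adding the $A$- and $B$-estimates and taking $C_\tau$ to be the minimum of the constants produced gives \eqref{eqn:boundaries_Dn_upper} and \eqref{eqn:boundaries_Dn_lower}. Finally \Cref{theo:trivial_limits} follows: by \Cref{lem:D_detection} the undetectable (resp. completely detectable) case is $D_n\to0$ (resp. $D_n\to\infty$), and the two-sided bounds, together with $I_{n,2,\tau}\ge -2\max_i\varepsilon_{n,i}\, I_{n,1,\tau}$, show that $D_n\to0$ iff $I_{n,1,\tau}\to0$ and $I_{n,2,\tau}\to0$, and that $D_n\to\infty$ iff $I_{n,1,\tau}\to\infty$ or $I_{n,2,\tau}\to\infty$.

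I expect the main obstacle to be the truncation bookkeeping rather than any conceptual difficulty: since $E_{P_{n,i}}\bigl((g_{n,i}-1)^2\bigr)$ may be infinite, a global second-order expansion of $u\mapsto 1-\sqrt{1+u}$ is not available, and one must localize to $A_{n,i}$ and then show that every term arising from replacing the full second moment by its truncation---chiefly $\mu_{n,i}(B_{n,i})$ and $P_{n,i}(B_{n,i})$---is dominated by $I_{n,1,\tau}$, up to the harmless factor $\max_i\varepsilon_{n,i}$. Obtaining constants $C_\tau$ uniform in $n$ also forces one to invoke \eqref{eqn:maxeps_to_0} exactly at the point where $f_{n,i}>1$ on $B_{n,i}$ is used.
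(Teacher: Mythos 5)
Your plan is correct and follows essentially the same route as the paper's proof: the same split along $\{\varepsilon_{n,i}\,\mathrm{d}\mu_{n,i}/\mathrm{d}P_{n,i}>\tau\}$, the same truncated second moment $\widetilde I_{n,2,\tau}$ with the Markov-type leakage bounds relating it to $I_{n,2,\tau}$ (the paper's \eqref{eqn:bounds_I2tilde}), and the same two-part lower bound producing $I_{n,1,\tau}$ on the large-density set and $I_{n,2,\tau}-\tfrac{2}{\tau}\max_{1\leq i \leq k_n}\varepsilon_{n,i}\,I_{n,1,\tau}$ on its complement. The differences are cosmetic — you estimate $(\sqrt{f_{n,i}}-1)^2$ pointwise by cases, whereas the paper drops the large part of the density inside the Hellinger affinity and applies $1-\sqrt{1+t}\le -t/2+t^2$ — and your large-$n$ assumption and constant bookkeeping are at the same level of precision as the paper's own argument.
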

\begin{remark}\label{rem:}
	The idea and the proof of the upper bound of $D_n$ in \eqref{eqn:boundaries_Dn_upper} is based on the argumentation of \citet{CaiJengJin2011} on pp. 21f.
\end{remark}
\begin{proof}[Lemma \ref{lem:trivial_limits}]
	To shorten the notation, we define 
	\begin{align}\label{eqn:def_A} 
	A_{n,i,x}=\Bigl\{\varepsilon_{n,i}\frac{\mathrm{ d } \mu_{n,i}}{\,\mathrm{ d }P_{n,i}} > x\Bigr\}\textrm{ for all }x>0.
	\end{align}
	We can deduce from \eqref{eqn:def_hellinger} that
	\begin{align}\label{eqn:cor_uniform_suff_B2_B3}
	D_{n} \,\leq\; \sum_{i=1}^{k_n}\ew_{P_{n,i}} \left( 1-\:\sqrt{1-\varepsilon_{n,i} + \varepsilon_{n,i} \frac{\mathrm{ d } \mu_{n,i}}{\,\mathrm{ d }P_{n,i}}\mathbf{1}(A_{n,i,\tau}^c)}\: \right).
	\end{align}
	Note that $1-\sqrt{1+t}\leq -t/2+t^2$ for all $t\geq -1$. Applying this (pointwisely) to the integrand in \eqref{eqn:cor_uniform_suff_B2_B3} with $t=\varepsilon_{n,i}(\frac{\mathrm{ d } \mu_{n,i}}{\,\mathrm{ d }P_{n,i}}\mathbf{1}(A_{n,i,\tau}^c)-1)$ yields \eqref{eqn:boundaries_Dn_upper}. \\
	We split the proof of \eqref{eqn:boundaries_Dn_lower} into two steps. First, define for all $x>0$
	\begin{align}\label{eqn:def_I2tilde}
	\widetilde{I}_{n,2,x}&=\sum_{i=1}^{k_n}\int_{A_{n,i,x}^c } \varepsilon_{n,i}^2\Bigl( \frac{\mathrm{ d } \mu_{n,i}}{\,\mathrm{ d }P_{n,i}}-1 \Bigr)^2\,\mathrm{ d }P_{n,i}.
	\end{align}
	For $\varepsilon_{n}^{\text{max}}=\max_{1\leq i \leq k_n}\varepsilon_{n,i}$ we can deduce from $\varepsilon_{n}^{\text{max}}\geq(\varepsilon_{n}^{\text{max}})^2$ and
	\begin{align}
	&\sum_{i=1}^{k_n} P_{n,i}(Y_{n,i}>x) \leq \sum_{i=1}^{k_n} P_{n,i}(A_{n,i,e^x-1}) \leq \frac{1}{e^x-1} I_{n,1,e^x-1}\label{eqn:P(A^c)<I}\\
	&\text{that }	-\frac{2\varepsilon_{n}^{\text{max}}}{x}I_{n,1,x}\leq\widetilde{I}_{n,2,x}-I_{n,2,x}\leq 2\varepsilon_{n}^{\text{max}} I_{n,1,x}\label{eqn:bounds_I2tilde}
	\end{align} 
	for all $x>0$. Since $\mathrm{ d } Q_{n,i}/\mathrm{ d }P_{n,i}$ is bounded from above by $1+\tau$  on $A_{n,i,\tau}^c$ we obtain
	\begin{align*}
	2 D_n \;\geq\; \sum_{i=1}^{k_n} \int  \Bigl(1-\frac{\mathrm{ d } Q_{n,i}}{\,\mathrm{ d }P_{n,i}}\Bigr)^2\Bigl(1+\Bigl(\frac{\mathrm{ d } Q_{n,i}}{\,\mathrm{ d }P_{n,i}}\Bigr)^{1/2}\Bigr)^{-2} \mathbf{1}(A_{n,i,\tau}^c) \,\mathrm{ d }P_{n,i} \geq \frac{\widetilde I_{n,2,\tau}}{(1+\sqrt{{1+\tau}})^2} .
	\end{align*}
	Combining this and \eqref{eqn:bounds_I2tilde} gives us the first bound in \eqref{eqn:def_I2tilde} for appropriate $C_\tau$. Second, set $C= 1/(\sqrt{\tau/2+1} + 1 )<1/2$. Note that on $A_{n,i,\tau}$ 
	\begin{align*}
	\Bigl( \frac{\mathrm{ d } Q_{n,i}}{\,\mathrm{ d }P_{n,i}} \Bigr)^{1/2} - 1    = \Bigl( \frac{\mathrm{ d } Q_{n,i} }{\,\mathrm{ d }P_{n,i}} - 1 \Bigr)\Bigl( \Bigl( \frac{\mathrm{ d } Q_{n,i}}{\,\mathrm{ d }P_{n,i}} \Bigr)^{1/2} + 1 \Bigr)^{-1} \leq C\left( \frac{\mathrm{ d } Q_{n,i}}{\,\mathrm{ d }P_{n,i}} - 1 \right).
	\end{align*}
	Consequently,
	\begin{align*}
	2\:D_n & \geq \sum_{i=1}^{k_n} E_{P_{n,i}}\Bigl( \Bigl( \frac{\mathrm{ d } Q_{n,i}}{\,\mathrm{ d }P_{n,i}}- 1 - 2	\Bigr( \Bigl(\frac{\mathrm{ d } Q_{n,i}}{\,\mathrm{ d }P_{n,i}}\Bigr)^{1/2} - 1 \Bigr)\Bigr)\mathbf{1}(A_{n,i,\tau})\Bigr)\nonumber \\
	& \geq ( 1 - 2C) \left( 1- \frac{\max_{1\leq i \leq k_n}\varepsilon_{n,i}}{\tau} \right) \sum_{i=1}^{k_n} \varepsilon_{n,i} \mu_{n,i}\left( A_{n,i,\tau} \right).
	\end{align*}
	Finally, \eqref{enu:lem_trivial_limits_abschaet} is shown and combining it with Lemma \ref{lem:D_detection} yields \eqref{enu:lem_trivial_limits_comp_det} and \eqref{enu:lem_trivial_limits_undet}.
	
\end{proof}

\subsubsection{Proof of \Cref{theo:xi_real_oder_xi_fullinfo}\eqref{enu:theo:xi_real_oder_xi_fullinfo_infinit_divi}}
The statements follows from Remark (8.6) and Lemma (8.7) of \citet{JanssenMilbrodtStrasser1985} as we explain in the following. Let $C^2_{lok}(\R)$ be set of all bounded functions $f:\R\to\R$ that are twice differentiable with continuous derivatives in some neighbourhood of $0$. Denote by $f^{(k)}(0)$ the \textit{k}th derivative of $f$ at $0$. The L\'{e}vy-Khintchine triplet of a infinitely divisible measure $\nu$ is equal to  $(\gamma,\sigma^2,\eta)$ if and only if the \textit{generating functional} $A:C^2_{lok}(\R)\to \R$ admits the L\'{e}vy-Khintchine representation
\begin{align*}
A(f) = f^{(1)}(0)\gamma +\sigma^2 f^{(2)}(0) + \int_{\R\setminus\{0\}} \Bigl( f(x)-f(0)- \frac{f^{(1)}(0)x}{1+x^2} \Bigr)\,\mathrm{ d }\eta(x)
\end{align*}
for all $f\in C^2_{lok}(\R)$. For the actual definition of $A$ and more details about it we refer the reader to \citet{JanssenMilbrodtStrasser1985}, in particular to (8.1)-(8.4).
\begin{lemma}\label{lem:xi1+2_real_levy_chara}
	Let $\{\widetilde \nu_1,\widetilde{\nu}_2\}$ be some binary experiment in its standard form, compare to \eqref{eqn:bin_conv_def_nu1} and \eqref{eqn:bin_conv_def_nu2}, such that $\widetilde{ \nu_1}(\R)=\widetilde\nu_2(\R)=1$ and $\widetilde\nu_1$ is infinitely divisible with L\'{e}vy-Khintchine triplet $(\gamma,\sigma^2,\eta)$. Then $\widetilde\nu_2$ is also infinitely divisible with L\'{e}vy-Khintchine triplet $(\gamma_2,\sigma^2_2,\eta_2)$, where  $\sigma_1^2=\sigma^2_2$,  $\eta_2\ll \eta_1$ with Radon-Nikodym derivative $x\mapsto\mathrm{ d } \eta_2/\mathrm{ d }\eta_1(x)=e^x$ and 
	\begin{align}
	&\gamma_1 +\frac{\sigma^2_1}{2} - \int \Bigl( 1-e^x + \frac{x}{x^2+1} \Bigr) \,\mathrm{ d } \eta_1(x)=0,\label{eqn:gamma1_appendix}\\
	&\gamma_2=\gamma_1 + \sigma_1^2 + \int ( e^x-1)\frac{x}{1+x^2} \,\mathrm{ d }\eta_1(x).\label{eqn:gamma2_appendix}
	\end{align}
\end{lemma}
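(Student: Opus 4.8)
The plan is to deduce everything from the single structural fact linking the two members of the experiment -- the exponential (Esscher) tilt relation -- and then to match Lévy--Khintchine exponents. First I would record the starting point: since $\widetilde\nu_1(\R)=\widetilde\nu_2(\R)=1$, the second identity in \eqref{eqn:relation_nu_2_nu_1} yields $\int e^{x}\,\mathrm d\widetilde\nu_1(x)=1$, and the first identity yields $\mathrm d\widetilde\nu_2/\mathrm d\widetilde\nu_1(x)=e^{x}$. Thus $\widetilde\nu_2$ is the exponential tilt of $\widetilde\nu_1$ at tilting parameter $1$, and the normalisation $\int e^{x}\,\mathrm d\widetilde\nu_1=1$ is the only constraint relating the two triplets. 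Set $\eta_2:=e^{x}\eta_1$; since $e^{x}$ is bounded near $0$ and $\int_{x<-1}\mathrm d\eta_1<\infty$ automatically, $\eta_2$ is a Lévy measure precisely as soon as $\int_{x>1}e^{x}\,\mathrm d\eta_1(x)<\infty$, and this finiteness is forced by $\int e^{x}\,\mathrm d\widetilde\nu_1<\infty$ (an infinitely divisible law with triplet $(\gamma_1,\sigma_1^2,\eta_1)$ has finite exponential moment of order $1$ if and only if $\int_{x>1}e^{x}\,\mathrm d\eta_1<\infty$).

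Next I would pass to characteristic functions. Writing $\Psi_1(z)=\mathrm i\gamma_1 z-\tfrac{\sigma_1^2 z^2}{2}+\int_{\R\setminus\{0\}}\bigl(e^{\mathrm i zx}-1-\tfrac{\mathrm i zx}{1+x^2}\bigr)\,\mathrm d\eta_1(x)$ for the Lévy--Khintchine exponent of $\widetilde\nu_1$, the integral defining $\Psi_1$ converges for every $z$ in the closed strip $\{-1\le\operatorname{Im}z\le 0\}$ (the integrand is $O(|zx|^2)$ near $0$, bounded on $(-\infty,-1)$, and dominated by a constant multiple of $e^{x}$ on $(1,\infty)$), so $\Psi_1$ extends holomorphically to that strip and $\widehat{\widetilde\nu_1}=\exp\Psi_1$ there. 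Consequently $\widehat{\widetilde\nu_2}(t)=\int e^{\mathrm i tx}e^{x}\,\mathrm d\widetilde\nu_1(x)=\widehat{\widetilde\nu_1}(t-\mathrm i)=\exp\Psi_1(t-\mathrm i)$, and I would expand $\Psi_1(t-\mathrm i)$ and collect powers of $t$. The terms independent of $t$ add up to $\gamma_1+\tfrac{\sigma_1^2}{2}+\int(e^{x}-1-\tfrac{x}{1+x^2})\,\mathrm d\eta_1(x)$, which vanishes -- this is exactly the normalisation identity \eqref{eqn:gamma1_appendix}. The coefficient of $t^2$ is $-\sigma_1^2/2$, so $\sigma_2^2=\sigma_1^2$. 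The coefficient of $\mathrm i t$ is $\gamma_1+\sigma_1^2+\int(e^{x}-1)\tfrac{x}{1+x^2}\,\mathrm d\eta_1(x)=:\gamma_2$, which is \eqref{eqn:gamma2_appendix}. What remains of the integral term is $\int(e^{\mathrm i tx}-1-\tfrac{\mathrm i tx}{1+x^2})e^{x}\,\mathrm d\eta_1(x)=\int(e^{\mathrm i tx}-1-\tfrac{\mathrm i tx}{1+x^2})\,\mathrm d\eta_2(x)$. Hence $\widehat{\widetilde\nu_2}$ is of Lévy--Khintchine form with triplet $(\gamma_2,\sigma_1^2,\eta_2)$; by uniqueness of the triplet, $\widetilde\nu_2$ is infinitely divisible with the asserted parameters and $\mathrm d\eta_2/\mathrm d\eta_1(x)=e^{x}$. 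Equivalently, one may argue at the level of the generating functional: $A$ is an invariant of the pair $\{\widetilde\nu_1,\widetilde\nu_2\}$, it admits the displayed Lévy--Khintchine representation with triplet $(\gamma_1,\sigma_1^2,\eta_1)$ because $\widetilde\nu_1$ is infinitely divisible, and Remark (8.6) together with Lemma (8.7) of \citet{JanssenMilbrodtStrasser1985} read off the triplet of $\widetilde\nu_2$ from $A$ -- which is the $e^{x}$-tilt just computed.

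The main obstacle here is bookkeeping rather than depth. The two points needing care are: (i) that $\eta_2=e^{x}\eta_1$ really is a Lévy measure -- this is exactly where the hypothesis $\widetilde\nu_2(\R)=1$, i.e.\ the finiteness of the order-$1$ exponential moment of $\widetilde\nu_1$, genuinely enters; and (ii) that the formal substitution $z\mapsto t-\mathrm i$ in the Lévy--Khintchine formula is legitimate, which amounts to the holomorphic continuation of $\widehat{\widetilde\nu_1}$ and of its exponent to the line $\operatorname{Im}z=-1$ sketched above. Once these are settled, the three formulas and the identity $\sigma_1^2=\sigma_2^2$ fall out just by matching the coefficients of $1$, $t$ and $t^2$ in $\Psi_1(t-\mathrm i)$.
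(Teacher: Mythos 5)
Your proposal is correct, but it follows a genuinely different route from the paper. The paper stays inside the Janssen--Milbrodt--Strasser (1985) framework: it takes the generating functional $A$ of $\widetilde\nu_1$, invokes their Lemma (8.7)(b),(c) to get $A(\exp)=0$ and that $f\mapsto A(f\exp)$ is the generating functional of $\widetilde\nu_2$ (hence $\widetilde\nu_2$ is infinitely divisible), and then reads \eqref{eqn:gamma1_appendix}, \eqref{eqn:gamma2_appendix}, $\sigma_2^2=\sigma_1^2$ and $\mathrm{d}\eta_2/\mathrm{d}\eta_1(x)=e^x$ directly off the L\'evy--Khintchine representation of $A(f\exp)$; the finiteness $\int e^x\mathbf 1\{|x|\geq 1\}\,\mathrm{d}\eta_1<\infty$ is taken from Lemma (8.7)(a), cf.\ Remark \ref{rem:xi1+2_int_finite}. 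You instead run the classical Esscher-tilt argument: from $\mathrm{d}\widetilde\nu_2/\mathrm{d}\widetilde\nu_1(x)=e^x$ and $\int e^x\,\mathrm{d}\widetilde\nu_1=1$ you get, via the standard exponential-moment criterion for infinitely divisible laws (e.g.\ Sato, Thm.\ 25.17), that $\int_{x>1}e^x\,\mathrm{d}\eta_1<\infty$, so $e^x\eta_1$ is a L\'evy measure, and then you substitute $z\mapsto t-\mathrm i$ into the analytically continued exponent and match the coefficients of $1$, $t$ and $t^2$; I checked this bookkeeping and it reproduces exactly \eqref{eqn:gamma1_appendix}, \eqref{eqn:gamma2_appendix}, $\sigma_2^2=\sigma_1^2$ and $\eta_2=e^x\eta_1$. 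The only facts you use without proof --- the moment criterion and the validity of the L\'evy--Khintchine formula on the strip $\{-1\leq \operatorname{Im}z\leq 0\}$ --- are both contained in the classical theorem you cite, so there is no gap. Your route buys independence from the statistical-experiments machinery, at the price of the analytic-continuation care you correctly flag; the paper's route buys a very short proof that reuses the JMS85 apparatus already employed for \Cref{theo:xi_real_oder_xi_fullinfo}, and you yourself note this generating-functional argument as an equivalent alternative at the end.
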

\begin{remark}\label{rem:xi1+2_int_finite}
	Since $\int x^2\mathbf{1}(|x|\leq 1) \,\mathrm{ d }\eta_1(x), \int e^x \mathbf{1}(|x|\geq 1) \,\mathrm{ d }\eta_1(x)<\infty$, see Lemma (8.7)(a) of \cite{JanssenMilbrodtStrasser1985}, the integrals in \eqref{eqn:gamma1_appendix} and \eqref{eqn:gamma2_appendix} are finite.
\end{remark}
\begin{proof}[Lemma \ref{lem:xi1+2_real_levy_chara}]
	Let $A$ be the generating functional of $\widetilde\nu_1$. Combining $\int \exp \,\mathrm{ d }\widetilde\nu_1=\widetilde\nu_2(\R)=1$ and Lemma (8.7)(b) and (c) from \cite{JanssenMilbrodtStrasser1985} we deduce that $A(\exp)=0$ and $C^2_{lok}(\R)\ni f\mapsto A(\exp f)$ is the generating functional of $\widetilde\nu_2$ and, in particular, $\widetilde \nu_2$ is infinitely divisible. Using the L\'{e}vy-Khintchine representation of $A$ immediately yields that $A(\exp)$ is equal to the left side of \eqref{eqn:gamma1_appendix}, which proves \eqref{eqn:gamma1_appendix}. From $f(0)A(\exp)=0$ we get  for all $f\in C^2_{lok}(\R)$
	\begin{align*}
	A(f\exp)&=	f^{(1)}(0) \Bigl( \gamma_1 + \sigma^2_1 +  \int ( e^x-1)\frac{x}{1+x^2} \,\mathrm{ d }\eta_1(x)  \Bigr) \\
	&\phantom{=}+ f^{(2)}(0)\frac{\sigma^2_1}{2} + \int \Bigl( f(x) - f(0) - \frac{f^{(1)}(0)x}{1+x^2} \Bigr) e^x \,\mathrm{ d } \eta_1(x).	
	\end{align*}
	Consequently, the statements about $(\gamma_2,\sigma_2^2,\eta_2)$ follow.
\end{proof}
Now, we prove \Cref{theo:xi_real_oder_xi_fullinfo}\eqref{enu:theo:xi_real_oder_xi_fullinfo_infinit_divi}. Since $\frac{\mathrm{ d } Q_{n,i}}{\,\mathrm{ d }P_{n,i}} \geq 1-\max_{1\leq i \leq k_n}\varepsilon_{n,i} \to 1$ 
\eqref{eqn:petrov_6_simp_cond_emptyset} is fulfilled and by \Cref{theo:petrov_6_simplify} $\eta_1$ is concentrated on $(0,\infty)$. Now, consider $\{\widetilde{\nu}_1,\widetilde{\nu}_2\}=\{\nu_1*\epsilon_{-\log(a)},(a^{-1}\nu_{2|\R})*\epsilon_{-\log(a)}\}$. This binary experiment is in its standard from since
\begin{align*}
\frac{\,\mathrm{ d }\widetilde{ \nu}_2}{\,\mathrm{ d }\widetilde{ \nu}_1}(x)= a^{-1} \frac{ \,\mathrm{ d }\nu_2 }{ \,\mathrm{ d }\nu_1 }(x+\log(a))=\exp(x),\,x\in\R.
\end{align*}
Clearly, $\widetilde \nu_1$ is infinitely divisible with L\'{e}vy characteristic $(\gamma_1-\log(a), \sigma_1^2,\eta_1)$ and $\widetilde{\nu}_1(\R)=\widetilde{ \nu}_2(\R)=1$. Applying Lemma \ref{lem:xi1+2_real_levy_chara} proves that $\widetilde \nu_2$ is infinitely divisible and so is $\rho=a^{-1}\nu_{2|\R}$. Moreover, is easy to check that we obtain all statements about the L\'{e}vy-Khintchine triplets.

\subsubsection{Proof of \Cref{theo:general_limit_theorem}}
We carried out two different proofs for \Cref{theo:general_limit_theorem}. The first one relies on infinitely divisible statistical experiments and accompanying Poisson experiments, and arguments from Chap. 4, 5, 9, 10 of \citet{JanssenMilbrodtStrasser1985} are used. The second one is based on traditional limit theorems for real-valued random variables. Since, probably, the second one is easier to follow for  the readers who are not experts in the field of statistical experiments we decided to present only the second proof. \\
At the end of the proof we will verify the following lemma.
\begin{lemma}\label{lem:general_limit_theorem}
	Suppose that \eqref{enu:theo:general_limit_theorem_levy_mass} and \eqref{enu:theo:general_limit_theorem_sigma} hold. Then the sums in \Cref{theo:petrov_6_simplify} \eqref{enu:petrov_6_simp_levy_sigma} and \eqref{enu:petrov_6_simp_levy_gamma} and in \Cref{theo:petrov_including_infinity}\eqref{enu:theo_petrov_infty_levy_meas} and \eqref{enu:theo_petrov_infty_sigma_bounded} for $Y_{n,i}$ defined by 
	\begin{align}\label{eqn:def_Y_general_limit}
	Y_{n,i}=\log\frac{\mathrm{ d } Q_{n,i}}{\,\mathrm{ d }P_{n,i}}
	\end{align}
	are upper bounded for every $x>0$ and all sufficiently small $\tau_0,\tau_1\in {\mathcal D }$, respectively, under $P_{(n)}$ as well as under $Q_{(n)}$. In particular,  \Cref{theo:petrov_6_simplify}\eqref{enu:petrov_6_simp_levy_sigma} is fulfilled for $\sigma^2$ under $P_{(n)}$. 
\end{lemma}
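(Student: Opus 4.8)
Throughout, the plan is to rewrite $Y_{n,i}=\log\frac{\mathrm d Q_{n,i}}{\mathrm d P_{n,i}}=\log(1+W_{n,i})$ with $W_{n,i}=\varepsilon_{n,i}\bigl(\tfrac{\mathrm d\mu_{n,i}}{\mathrm d P_{n,i}}-1\bigr)$, so that $W_{n,i}\ge-\varepsilon_n^{\max}$ where $\varepsilon_n^{\max}:=\max_{1\le i\le k_n}\varepsilon_{n,i}\to0$, and $\{Y_{n,i}>x\}=A_{n,i,e^x-1+\varepsilon_{n,i}}\subseteq A_{n,i,e^x-1}$ for every $x>0$, with $A_{n,i,x}$ as in \eqref{eqn:def_A}. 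Two elementary facts are used over and over: first, \eqref{eqn:petrov_6_simp_cond_emptyset}, i.e.\ $\sum_i P_{n,i}(Y_{n,i}\le x)=0=\sum_i Q_{n,i}(Y_{n,i}\le x)$ for each fixed $x<0$ once $n$ is large, since $Y_{n,i}\ge\log(1-\varepsilon_n^{\max})$; second, $|\log(1+t)-t|\le t^2$ on a neighbourhood of $0$ and $\bigl|\tfrac{\log(1+t)}{t}-1\bigr|\le\delta(\varepsilon)$ uniformly on $t\in[e^{-\varepsilon}-1,e^{\varepsilon}-1]$ with $\delta(\varepsilon)\to0$ as $\varepsilon\searrow0$. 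I would also note at the outset that \eqref{enu:theo:general_limit_theorem_levy_mass} makes $I_{n,1,x}$ bounded in $n$ for \emph{every} $x>0$ (it is nonincreasing in $x$ and converges along a dense set of $x$-values), while \eqref{enu:theo:general_limit_theorem_sigma} makes $I_{n,2,\tau}$ bounded in $n$ for all sufficiently small $\tau>0$; combining these with \eqref{eqn:bounds_I2tilde} gives the same bound for $\widetilde I_{n,2,\tau}$ from \eqref{eqn:def_I2tilde}, and via \eqref{eqn:boundaries_Dn_upper} that $D_n$ is bounded, so that $P_{(n)}$ is contiguous with respect to $Q_{(n)}$ (this does not by itself transfer the \emph{deterministic} sums below from $P_{(n)}$ to $Q_{(n)}$, which has to be done directly).

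For the L\'evy--mass sums of \Cref{theo:petrov_including_infinity}\eqref{enu:theo_petrov_infty_levy_meas} I would apply \eqref{eqn:P(A^c)<I}: $\sum_i P_{n,i}(Y_{n,i}>x)\le\sum_i P_{n,i}(A_{n,i,e^x-1})\le(e^x-1)^{-1}I_{n,1,e^x-1}$, which is bounded (for $x$ outside the convergence set one passes to a smaller point of it by monotonicity), and under the alternative $\sum_i Q_{n,i}(Y_{n,i}>x)\le\sum_i P_{n,i}(Y_{n,i}>x)+\sum_i\varepsilon_{n,i}\mu_{n,i}(A_{n,i,e^x-1})\le(e^x-1)^{-1}I_{n,1,e^x-1}+I_{n,1,e^x-1}$, again bounded. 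Incidentally $\max_i P_{n,i}(Y_{n,i}>\delta)\le\varepsilon_n^{\max}/(e^\delta-1)\to0$ and the analogous bound under $Q_{(n)}$ supply the infinitesimality needed to invoke the Petrov theorems afterwards.

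For the truncated second moment \Cref{theo:petrov_including_infinity}\eqref{enu:theo_petrov_infty_sigma_bounded} I would fix a small $\tau$ in the convergence set with $e^{\tau_1}-1<\tau$; for large $n$ the event $\{|Y_{n,i}|<\tau_1\}$ lies in $A_{n,i,\tau}^c$, so $\sum_i E_{P_{n,i}}\!\bigl(Y_{n,i}^2\mathbf{1}\{|Y_{n,i}|<\tau_1\}\bigr)\le C_{\tau_1}\sum_i E_{P_{n,i}}\!\bigl(W_{n,i}^2\mathbf{1}\{|Y_{n,i}|<\tau_1\}\bigr)\le C_{\tau_1}\widetilde I_{n,2,\tau}$, which is bounded. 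The drift sum of \Cref{theo:petrov_6_simplify}\eqref{enu:petrov_6_simp_levy_gamma} I would split as $\int Y_{n,i}\mathbf{1}\{|Y_{n,i}|<\tau_0\}\,\mathrm d P_{n,i}=\int W_{n,i}\mathbf{1}\{\cdot\}\,\mathrm d P_{n,i}+\int(Y_{n,i}-W_{n,i})\mathbf{1}\{\cdot\}\,\mathrm d P_{n,i}$; the second term is at most $\int W_{n,i}^2\,\mathrm d P_{n,i}$ in modulus, and the first, since $\int W_{n,i}\,\mathrm d P_{n,i}=0$ and the lower tail is empty by \eqref{eqn:petrov_6_simp_cond_emptyset}, equals $-\int_{\{Y_{n,i}\ge\tau_0\}}W_{n,i}\,\mathrm d P_{n,i}$, of modulus at most $\varepsilon_{n,i}\mu_{n,i}(A_{n,i,e^{\tau_0}-1})$; summing, both pieces are controlled by $\widetilde I_{n,2,\tau}$ and $I_{n,1,e^{\tau_0}-1}$. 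For all $Q_{(n)}$-versions I would write $\int\cdot\,\mathrm d Q_{n,i}=(1-\varepsilon_{n,i})\int\cdot\,\mathrm d P_{n,i}+\int\cdot\,(W_{n,i}+\varepsilon_{n,i})\,\mathrm d P_{n,i}$ and use that on the truncation sets $W_{n,i}+\varepsilon_{n,i}=\varepsilon_{n,i}\tfrac{\mathrm d\mu_{n,i}}{\mathrm d P_{n,i}}$ is bounded; the additional $\mu$-pieces then reduce, up to the bounded factor and a remainder $O\bigl(\varepsilon_n^{\max}\sum_i\int W_{n,i}^2\,\mathrm d P_{n,i}\bigr)=o(1)$, to the already-bounded $P_{(n)}$-quantities.

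It remains to identify the iterated limit in \Cref{theo:petrov_6_simplify}\eqref{enu:petrov_6_simp_levy_sigma} under $P_{(n)}$. First I would show the centering term vanishes: for fixed $\varepsilon$, using $\int W_{n,i}\,\mathrm d P_{n,i}=0$, \eqref{eqn:petrov_6_simp_cond_emptyset} and $|Y_{n,i}-W_{n,i}|\le W_{n,i}^2$, each $\bigl|\int_{\{|Y_{n,i}|<\varepsilon\}}Y_{n,i}\,\mathrm d P_{n,i}\bigr|$ is at most $\varepsilon_{n,i}\mu_{n,i}(A_{n,i,e^\varepsilon-1})+\varepsilon_{n,i}P_{n,i}(A_{n,i,e^\varepsilon-1})+\int_{\{|Y_{n,i}|<\varepsilon\}}W_{n,i}^2\,\mathrm d P_{n,i}$, and squaring and summing each term tends to $0$ (the first two by $\mu(A)^2\le\mu(A)$, $P(A)\le(e^\varepsilon-1)^{-1}I_{n,1,e^\varepsilon-1}$ and $\varepsilon_{n,i}\le\varepsilon_n^{\max}$; the third because $\max_i\int_{\{|Y_{n,i}|<\varepsilon\}}W_{n,i}^2\,\mathrm d P_{n,i}\to0$ while the sum stays bounded). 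Next, $Y_{n,i}^2=(1+O(\delta(\varepsilon)))W_{n,i}^2$ uniformly on $\{|Y_{n,i}|<\varepsilon\}$. Finally, choosing $\tau'<e^\varepsilon-1<\tau$, for large $n$ the lower tail $\{W_{n,i}\le e^{-\varepsilon}-1\}$ is empty, whence $A_{n,i,\tau'}^c\subseteq\{|Y_{n,i}|<\varepsilon\}\subseteq A_{n,i,\tau}^c$ and therefore $\widetilde I_{n,2,\tau'}\le\sum_i\int_{\{|Y_{n,i}|<\varepsilon\}}W_{n,i}^2\,\mathrm d P_{n,i}\le\widetilde I_{n,2,\tau}$; letting $n\to\infty$ and then $\varepsilon\searrow0$ (so $\tau',\tau\searrow0$ and $\delta(\varepsilon)\to0$), the squeeze together with \eqref{eqn:bounds_I2tilde} and \eqref{enu:theo:general_limit_theorem_sigma} forces both the iterated $\limsup$ and $\liminf$ to equal $\sigma^2$. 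I expect the bookkeeping in this last step to be the main obstacle: one must simultaneously absorb the nonlinearity $Y_{n,i}\neq W_{n,i}$ on the truncation set, the mismatch between $\{|Y_{n,i}|<\varepsilon\}$ and the sets $A_{n,i,\cdot}^c$ (which forces the two-sided squeeze with two thresholds plus the discarding of the lower tail), and the passage from $\widetilde I_{n,2}$ to $I_{n,2}$, all while keeping the order of limits ($n$ first, then $\varepsilon$) honest so that \eqref{enu:theo:general_limit_theorem_sigma} may be invoked only at the very end.
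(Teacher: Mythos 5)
Your proposal is correct and follows essentially the same route as the paper's proof: you expand $Y_{n,i}=\log(1+W_{n,i})$ with $W_{n,i}=\varepsilon_{n,i}(\mathrm{d}\mu_{n,i}/\mathrm{d}P_{n,i}-1)$ on the truncation sets, identify these sets (up to the harmless shift $\varepsilon_{n,i}$) with the events $A_{n,i,\cdot}^c$, bound all sums by $I_{n,1,\cdot}$, $I_{n,2,\cdot}$ and $\widetilde I_{n,2,\cdot}$ via \eqref{eqn:P(A^c)<I} and \eqref{eqn:bounds_I2tilde}, pass to $Q_{(n)}$ through the bounded density $\mathrm{d}Q_{n,i}/\mathrm{d}P_{n,i}$ on the truncation set, and obtain \Cref{theo:petrov_6_simplify}\eqref{enu:petrov_6_simp_levy_sigma} with $\sigma^2$ by the same two-sided squeeze together with vanishing centering terms. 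The remaining differences are cosmetic (elementary logarithm inequalities in place of the paper's explicit Taylor remainder $R_{n,i,\tau}$, and a sandwich with two fixed thresholds $\tau'<\tau$ instead of the $i$-dependent threshold $t_{n,i}(\tau)$).
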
 
Let us first assume that \eqref{enu:theo:general_limit_theorem_levy_mass} and \eqref{enu:theo:general_limit_theorem_sigma} are fulfilled. Define $Y_{n,i}$ as in \eqref{eqn:def_Y_general_limit}. Regarding Lemma \ref{lem:general_limit_theorem} and using typical sub-subsequence arguments we can assume without loss of generality that \Cref{theo:petrov_6_simplify}\eqref{enu:petrov_6_simp_levy_mass} and \eqref{enu:petrov_6_simp_levy_sigma} as well as \Cref{theo:petrov_including_infinity}\eqref{enu:theo_petrov_infty_levy_meas} and \eqref{enu:theo_petrov_infty_sigma_bounded} hold for a measure $M_1$ (resp. $M_2$), $\sigma_1\geq 0$ ($\sigma_2\geq 0$, resp.) and $\gamma_1\in\R$ ($\gamma_2\in\R$, resp.) under $P_{(n)}$ ($Q_{(n)}$, resp.). In particular, by Lemma \ref{lem:general_limit_theorem} $\sigma_1^2=\sigma^2$. Note that $\eta_j=M_{j|(0,\infty)}$ is a L\'{e}vy measure. From \eqref{eqn:P(A^c)<I}
we obtain $M_1(\{\infty\})=0$ and so $\xi_1$, the limit of $T_n$ under $P_{(n)}$, is real-valued. Moreover, since $\max_{1\leq i \leq k_n}\varepsilon_{n,i}\to 0$ and $\varepsilon_{n,i}\mu_{n,i}( A_{n,i,e^x-1+\varepsilon_{n,i}}) =  Q_{n,i}(Y_{n,i}>x) -  (1-\varepsilon_{n,i})P_{n,i}(Y_{n,i}>x)$
we can deduce that $M_{|(0,\infty)}=\eta_2-\eta_1$ and $M_2(\{\infty\})= M(\{\infty\})$.
Finally, the proof for the first assertion is completed by \Cref{theo:xi_real_oder_xi_fullinfo}\eqref{enu:theo:xi_real_oder_xi_fullinfo_infinit_divi}.\\
Now, let $\xi_1$ be not equal to $-\infty$ with probability one. By \Cref{theo:trivial_limits}\eqref{enu:lem_trivial_limits_comp_det} we have  $\sup_{n\in\N} I_{n,1,\tau}+I_{n,2,\tau}<\infty$ for all $\tau>0$. Hence, for each subsequence there is a subsequence such that \eqref{enu:theo:general_limit_theorem_levy_mass} for some measure $M$ and \eqref{enu:theo:general_limit_theorem_sigma} for some $\sigma^2$ are fulfilled. From \Cref{theo:xi_real_oder_xi_fullinfo}\eqref{enu:theo:xi_real_oder_xi_fullinfo_infinit_divi}  and the first assertion proved above we obtain: $\xi_1$ is real-valued, and $M$ and $\sigma^2$ are uniquely determined by the distribution of $\xi_1$ and so do not depend on the special choice of the subsequence, which proves the second assertion (and  \Cref{theo:xi_real_oder_xi_fullinfo}\eqref{enu:theo:xi_real_oder_fullinfo}).

\begin{proof}[Lemma \ref{lem:general_limit_theorem}]
	First, observe that by \eqref{eqn:P(A^c)<I} the sum in \Cref{theo:petrov_including_infinity}\eqref{enu:theo_petrov_infty_levy_meas} is upper bounded under $P_{(n)}$ as well as under $Q_{(n)}$ for all $\tau>0$.  By \eqref{eqn:maxeps_to_0} 
	\begin{align}\label{eqn:def_B}
	B_{n,i,\tau}=\left\{ \left| Y_{n,i} \right|\leq \tau \right\} = A_{n,i,t_{n,i}(\tau)}^c
	\end{align}
	if $n\geq N_\tau$ is sufficiently large, where $t_{n,i}(\tau)=e^\tau-1+\varepsilon_{n,i}\in[e^\tau-1, e^\tau]$. Define $\widetilde I_{n,2,x}$ as in \eqref{eqn:def_I2tilde}.
	By Taylor's formula there exists some random variable $R_{n,i,\tau}$ with $R_{n,i,\tau}=0$ on $B^c_{n,i,\tau}$ such that we have on $B_{n,i,\tau}$
	\begin{align}\label{eqn:repre_log}
	Y_{n,i}= \varepsilon_{n,i}\Bigl( \frac{\mathrm{ d } \mu_{n,i}}{\,\mathrm{ d }P_{n,i}}-1 \Bigr) - \varepsilon_{n,i}^2\Bigl( \frac{\mathrm{ d } \mu_{n,i}}{\,\mathrm{ d }P_{n,i}}-1 \Bigr)^2 \Bigl( \frac{1}{2} + R_{n,i,\tau} \Bigr) 
	\end{align}
	and $\max_{1\leq i \leq k_n}| R_{n,i,\tau}|\leq C_\tau$ for some constant $C_\tau\in(0,\infty)$ with $C_\tau\to 0$ as $\tau \searrow 0$.  Combining this and \eqref{eqn:P(A^c)<I} yields 
	\begin{align*}
	\Bigl | \sum_{i=1}^{k_n}\int_{B_{n,i,\tau} }Y_{n,i}\,\mathrm{ d }P_{n,i} \Bigr| \leq \Bigl( 1+ \frac{1}{e^\tau-1}\Bigr) I_{n,1,e^\tau-1} + \Bigl(\frac{1}{2}+C_\tau\Bigr)\widetilde I_{n,2,e^\tau},
	\end{align*}
	where by \eqref{eqn:bounds_I2tilde} the upper bound is bounded itself for all sufficiently small $\tau>0$. Since $Q_{n,i}=(1-\varepsilon_{n,i})P_{n,i}+\varepsilon_{n,i}\mu_{n,i}$ and $\frac{\mathrm{ d } \mu_{n,i}}{\,\mathrm{ d }P_{n,i}}\leq e^\tau$ on $B_{n,i,\tau}$ we obtain similarly the following upper bound of $| \sum_{i=1}^{k_n}\int_{B_{n,i,\tau} }Y_{n,i}\,\mathrm{ d }Q_{n,i} |$:
	\begin{align*}
	\Bigl | \sum_{i=1}^{k_n}\int_{B_{n,i,\tau} }Y_{n,i}\,\mathrm{ d }P_{n,i} \Bigr| + I_{n,1,e^\tau-1}
	+ \Bigl(1+\Bigl(\frac{1}{2}+C_\tau\Bigr)e^\tau\Bigr)\widetilde I_{n,2,e^\tau},
	\end{align*}
	which itself is bounded for all small $\tau>0$, see also \eqref{eqn:bounds_I2tilde}. In the last step we discuss the sum in \Cref{theo:petrov_6_simplify}\eqref{enu:petrov_6_simp_levy_sigma}. On $B_{n,i,\tau}$ we obtain the following inequalities from \eqref{eqn:repre_log} for all sufficiently small $\tau>0$ such that $C_\tau\leq \frac{1}{2}$:
	\begin{align*}
	\varepsilon_{n,i}\Bigl| \frac{\mathrm{ d } \mu_{n,i}}{\,\mathrm{ d }P_{n,i}}-1 \Bigr| \left( 2- e^\tau - 2 \varepsilon_{n,i} \right)\leq 	|Y_{n,i} | \leq \varepsilon_{n,i}\Bigl| \frac{\mathrm{ d } \mu_{n,i}}{\,\mathrm{ d }P_{n,i}}-1 \Bigr| \left( e^\tau + 2 \varepsilon_{n,i} \right).
	\end{align*}
	From this, \eqref{eqn:bounds_I2tilde} and $\frac{\mathrm{ d } Q_{n,i}}{\,\mathrm{ d }P_{n,i}}\leq e^\tau+\max_{1\leq i \leq k_n}\varepsilon_{n,i}$ on $B_{n,i,\tau}$ we conclude
	\begin{align*}
	\lim_{\tau\searrow 0}\; \underset{n\to\infty}{ \substack{ \limsup \\ \liminf } }\sum_{i=1}^{k_n}\int_{B_{n,i,\tau}} Y_{n,i}^2 \,\mathrm{ d }Q_{n,i}\leq\lim_{\tau\searrow 0}\; \underset{n\to\infty}{ \substack{ \limsup \\ \liminf } }\sum_{i=1}^{k_n}\int_{B_{n,i,\tau}} Y_{n,i}^2 \,\mathrm{ d }P_{n,i}=\sigma^2.
	\end{align*}
	Since $(a+b)^2\leq 4a^2+4b^2$ we have for all sufficiently small $\tau>0$ that
	\begin{align*}
	&\frac{1}{4}\sum_{i=1}^{k_n} \Bigl( \int_{B_{n,i,\tau} }Y_{n,i}  \,\mathrm{ d } P_{n,i}  \Bigr)^2\\
	&\leq  \sum_{i=1}^{k_n} \Bigl( \varepsilon_{n,i}\int_{B_{n,i,\tau}^c}  1-\frac{\mathrm{ d } \mu_{n,i}}{\,\mathrm{ d }P_{n,i}}   \,\mathrm{ d } P_{n,i}\Bigr)^2 +\Bigl( \int_{B_{n,i,\tau}} \varepsilon_{n,i}^2 \Bigl( \frac{\mathrm{ d } \mu_{n,i}}{\mathrm{ d }P_{n,i}}-1 \Bigr)^2  \,\mathrm{ d }P_{n,i}	\Bigr)^2\nonumber \\
	&\leq  \Bigl( \max_{1\leq i \leq k_n} \varepsilon_{n,i} \Bigr)(1+\tau ) I_{n,1,e^{\tau}-1} +\widetilde I_{n,2,e^\tau} (e^\tau - 1 + \max_{1\leq i \leq k_n}\varepsilon_{n,i})^2.\nonumber 
	\end{align*}
	Hence, by \eqref{eqn:bounds_I2tilde} $\lim_{\tau\searrow 0} \limsup_{n\to\infty}(\int_{B_{n,i,\tau}} Y_{n,i} \,\mathrm{ d }P_{n,i})^2=0$ and, consequently, \Cref{theo:petrov_6_simplify}\eqref{enu:petrov_6_simp_levy_sigma} is fulfilled for $\sigma^2$ under $P_{(n)}$.
\end{proof}

\subsubsection{Proof of \Cref{theo:xi_real_oder_xi_fullinfo}\eqref{enu:theo:xi_real_oder_fullinfo}}
We verified  \Cref{theo:xi_real_oder_xi_fullinfo}\eqref{enu:theo:xi_real_oder_fullinfo} while proving the second assertion of \Cref{theo:general_limit_theorem}.
´\subsubsection{Proof of \Cref{theo:normal_limits}}
The equivalence of \eqref{enu:theo:normal_limits_both_gauss}-\eqref{enu:theo:normal_limits_xi2_normal} follows from \eqref{eqn:relation_nu_2_nu_1} and is standard for binary experiments, see \citet{Strasser1985}. The equivalence of  \eqref{enu:theo:normal_limits_xi2_maxq} and \eqref{enu:theo:normal_limits_xi2_maxmu} follows from \eqref{eqn:def_qni} and \eqref{eqn:maxeps_to_0}. Define $A_{n,i,x}$  as in \eqref{eqn:def_A}.

\noindent \underline{Equivalence of \eqref{enu:theo:normal_limits_xi1} and \eqref{enu:theo_normal_limits_my_cond}:} By \Cref{theo:general_limit_theorem} $I_{n,1,x}\to 0$  for all $x>0$ holds also under \eqref{enu:theo:normal_limits_xi1}. Hence, we can suppose that this convergence is fulfilled subsequently. Fix $\tau>0$. Then
\begin{align*} 
&0 \leq  E_{P_{n,i}}\left(  \varepsilon_{n,i}^2 \Bigl( \frac{\mathrm{ d } \mu_{n,i}}{\,\mathrm{ d }P_{n,i}} \Bigr)^2 \mathbf{1}\Bigl\{ \frac{\mathrm{ d } \mu_{n,i}}{\,\mathrm{ d }P_{n,i}}\in(x,\tau] \Bigr\}  \right)  \leq  \tau\:  \varepsilon_{n,i} \mu_{n,i}( A_{n,i,x}  ) 
\end{align*}
holds for all $x\in(0,\tau]$  and so $I_{n,2,x} - I_{n,2,\tau}\to 0$ does.
Consequently, \eqref{enu:theo_normal_limits_my_cond} holds if and only if \Cref{theo:general_limit_theorem}\eqref{enu:theo:general_limit_theorem_levy_mass} and \eqref{enu:theo:general_limit_theorem_sigma} do so for the same $\sigma^2\in[0,\infty)$ and $M\equiv 0$. Hence,  the equivalence of \eqref{enu:theo:normal_limits_xi1} and \eqref{enu:theo_normal_limits_my_cond} follows from \Cref{theo:general_limit_theorem}.

\noindent \underline{Equivalence of \eqref{enu:theo:normal_limits_Zn} and \eqref{enu:theo_normal_limits_my_cond}:}
Define $Y_{n,i}$  as in \eqref{eqn:def_Y_general_limit} and set $\widetilde Y_{n,i}= f(Y_{n,i})$ for $f(x)=\exp(x)-1$, $x\in\R$. Note that $f(0)=0$ and $f'(0)=f''(0)=1$. From this, a Taylor expansion, compare to \eqref{eqn:repre_log}, and \Cref{theo:petrov_6_simplify} we obtain  that $\sum_{i=1}^{k_n}Y_{n,i}$ converges in distribution to $X$ with L\'{e}vy-Khintchine triplet $(0,\sigma^2,0)$ if and only if $\sum_{i=1}^{k_n}\widetilde Y_{n,i}$ does so to $\widetilde X$ with L\'{e}vy-Khintchine triplet $(-\sigma^2/2,\sigma^2,0)$.

\noindent\underline{Equivalence of \eqref{enu:theo:normal_limits_xi2_real_xi1_normal} and \eqref{enu:theo:normal_limits_xi2_maxmu}:} Throughout this proof step we can assume that $\xi_2$ is real-valued and so is $\xi_1$, see \Cref{theo:xi_real_oder_xi_fullinfo}\eqref{enu:theo:xi_real_oder_fullinfo}. By the first Lemma of Le Cam $P_{(n)}$ and $Q_{(n)}$ are mutually contiguous, see also Remark \ref{rem:contiguous}. Hence, \eqref{enu:theo:normal_limits_xi2_maxmu} is true if and only if for all $x>0$
\begin{align*}
0 \leftarrow Q_{(n)}\Bigl( \max_{1\leq i \leq k_n}\varepsilon_{n,i}\frac{\mathrm{ d } \mu_{n,i}}{\,\mathrm{ d }P_{n,i}}>x \Bigr)= 1-\prod_{i=1}^{k_n} \Bigl( 1-Q_{n,i}( A_{n,i,x} ) \Bigr).
\end{align*}
Combining this and \eqref{eqn:P(A^c)<I} yields that \eqref{enu:theo:normal_limits_xi2_maxmu} is fulfilled if and only if $I_{n,1,x}\to 0$ for all $x>0$. Finally, note that  $\xi_1$ is normal distributed if and only if it has trivial L\'{e}vy measure $\eta_1\equiv 0$, which by \Cref{theo:general_limit_theorem} is true if and only if $I_{n,1,x}\to 0$ for all $x>0$.

\subsubsection{Proof of Lemma \ref{lem:LAN_var_not_to_sigma_REPLACE}}
Let $\widetilde Q_{n,i}$ and $\widetilde Q_{(n)}$ be defined as $Q_{n,i}$ and $Q_{(n)}$ replacing $\mu_{n,i}$ and $\varepsilon_{n,i}$ by $\widetilde \mu_{n,i}$ and $\widetilde \varepsilon_{n,i}$. For the statement in Lemma \ref{lem:LAN_var_not_to_sigma_REPLACE} it is sufficient to show that $\{Q_{(n)},\widetilde Q_{n,i}\}$ tend weakly to the uninformative experiment $\{\epsilon_0,\epsilon_0\}$. The main task for this purpose is to verify $\sum_{i=1}^{k_n} || Q_{n,i}(\boldsymbol{\theta})-Q_{n,i}(\boldsymbol{\widetilde\theta})|| \to 0$, which is left to the reader.

\subsubsection{Proof of \Cref{theo:ARE}}
Denote by $Z_n(\boldsymbol{\theta_1})$ and $Z_n(\boldsymbol{\theta_2})$ the statistic introduced in \eqref{eqn:def_Zn} for the model $\boldsymbol{\theta_1}$ and $\boldsymbol{\theta_2}$, respectively. Since these statistics are linear, the multivariate central limit theorem implies distributional convergence $(Z_n(\boldsymbol{\theta_1}),Z_n(\boldsymbol{\theta_2}))\overset{\mathrm d}{\longrightarrow} \widetilde Z \sim N ( (0,0), (\gamma(\boldsymbol{\theta_i},\boldsymbol{\theta_j}))_{1\leq i,j\leq 2} )$ under $P_{(n)}$. In the next step we verify for $j=1,2$
\begin{align}\label{eqn:LAN}
T_n(\boldsymbol{\theta_j})  = Z_n(\boldsymbol{\theta_j})- \frac{\gamma(\boldsymbol{\theta_j},\boldsymbol{\theta_j})}{2} + R_{n,j}, 
\end{align}
where $R_{n,j}$ converges in $P_{(n)}$-probability to $0$. Let $j\in\{1,2\}$ be fixed. Define $Y_{n,i}^{(j)}=\varepsilon_{n,i}^{(j)}(\mathrm{ d } \mu_{n,i}^{(j)}/\,\mathrm{ d }P_{n,i}^{(j)}-1)$.  Note that by Taylor's Theorem $\log(1+x)=x-\frac{x^2}{2}+(2/3)x^3(1+y_x)^{-3}$ for $|y_x|\leq x$. Since $\max_{1\leq i \leq k_n} Y_{n,i}^{(j)}\to 0$ in $P_{(n)}$-probability, see \Cref{theo:normal_limits}, it remains to shown that $\sum_{i=1}^{k_n} ( Y_{n,i}^{(j)})^2 \to \gamma(\boldsymbol{\theta_j},\boldsymbol{\theta_j})$ in $P_{(n)}$-probability. It is well know that this follows immediately if the Lindeberg condition is fulfilled for the triangular array $(Y_{n,i}^{(j)})_{i\leq k_n}$ under $P_{(n)}$.  Observe that combining \Cref{theo:normal_limits}\eqref{enu:theo:normal_limits_Zn} and the assumption $\gamma(\boldsymbol{\theta_j},\boldsymbol{\theta_j})=\sigma_j^2$ yields the desired Lindeberg condition and, finally, \eqref{eqn:LAN}.\\
From \eqref{eqn:LAN} and the asymptotic normality of the vector $(Z_n(\boldsymbol{\theta_1}),Z_n(\boldsymbol{\theta_2}))$ we obtain $(T_n(\boldsymbol{\theta_1}),T_n(\boldsymbol{\theta_2}))\overset{\mathrm d}{\longrightarrow} \widehat Z\sim  N ( (- \gamma(\boldsymbol{\theta_j},\boldsymbol{\theta_j})/2)_{j=1,2}, (\gamma(\boldsymbol{\theta_i},\boldsymbol{\theta_j}))_{1\leq i,j\leq 2} )$. Consequently, by the third lemma of Le Cam we get under $Q_{(n)}(\boldsymbol{\theta_1})$
\begin{align*}
(T_n(\boldsymbol{\theta_1}),T_n(\boldsymbol{\theta_2}))\overset{\mathrm d}{\longrightarrow} \widehat Z\sim  N \Bigl( \Bigl(\frac{- \gamma(\boldsymbol{\theta_j},\boldsymbol{\theta_j})}{2}+\gamma(\boldsymbol{\theta_1},\boldsymbol{\theta_j})\Bigr)_{j=1,2}, (\gamma(\boldsymbol{\theta_i},\boldsymbol{\theta_j}))_{1\leq i,j\leq 2} \Bigr).
\end{align*}
Finally, the desired statement can be concluded.

\subsubsection{Proof of \Cref{cor:violation}}
Define 
\begin{align*}
\varepsilon^*_{n,i}=\frac{\varepsilon_{n,i}(1-\kappa_{n,i})}{1-\varepsilon_{n,i}\kappa_{n,i}}.
\end{align*}
Now, let $ Q_{n,i}^*$, $Q_{(n)}^*$ and $ T_n^*$ defined as $\widetilde Q_{n,i}$, $ \widetilde Q_{(n)}$ and $\widetilde T_n$ replacing  $\widetilde\varepsilon_{n,i}$ by $\varepsilon_{n,i}^*$. 
Since $\widetilde \varepsilon_{n,i}= \varepsilon_{n,i}^*(1+a_{n,i})$ with $\max_{1\leq i \leq k_n}|a_{n,i}|\to 0$ it can easily be seen by \Cref{theo:xi_real_oder_xi_fullinfo,theo:general_limit_theorem}  that \eqref{eqn:def_Tn} also holds for $T_n^*$ with the same limits $\widetilde \xi_1$ and $\widetilde \xi_2$. Note that
\begin{alignat*}{2}
&\frac{\mathrm{ d } Q_{n,i}}{\,\mathrm{ d }P_{n,i}}
= (1-\varepsilon_{n,i}\kappa_{n,i}) \frac{\mathrm{ d } Q_{n,i}^*}{\,\mathrm{ d }P_{n,i}} + \infty\: \mathbf{1}_{N_{n,i}}\mathbf{1}\{\kappa_{n,i}>0\}.
\end{alignat*}
Since $\bigotimes_{i=1}^{k_n} N_{n,i}$ is a $P_{(n)}$-null set we obtain that $P_{(n)}$-almost surely
\begin{align*}
\log \Bigl( \frac{\mathrm{ d } Q_{(n)}}{\,\mathrm{ d }P_{(n)}} \Bigr) &=  \log \Bigl( \frac{\mathrm{ d } Q_{(n)}^*}{\,\mathrm{ d }P_{(n)}}  \Bigr) +  \sum_{i=1}^{k_n}\log(1-\kappa_{n,i}\varepsilon_{n,i}).
\end{align*}
Combining this and $\sum_{i=1}^{k_n}\log(1-\kappa_{n,i}\varepsilon_{n,i})\to -c$ yields that $T_n\overset{\mathrm d}{\longrightarrow} \widetilde\xi_1-c= \xi_1$ under $P_{(n)}$. By \Cref{sec:binary_exp_+_dist} we obtain that $T_n$ converges in distribution to some $\xi_2$  under $Q_{(n)}$ and by \eqref{eqn:relation_nu_2_nu_1} we get the desired representation $\nu_2=e^{-c}\widetilde\nu_2*\epsilon_{-c}+(1-e^{-c})\epsilon_\infty$ of $\xi_2$'s distribution
.
\subsection{Proofs of \Cref{sec:HCT}}
To shorten the notation we define
\begin{align*}
Z_n(t)= \sqrt{n}\;\frac{\mathbb{F}_n(t)-t}{\sqrt{t(1-t)}}, \,t\in(0,1).
\end{align*}
Then,
\begin{align*}
HC_n= \sup_{t\in(0,1)}|Z_n(t)|.
\end{align*}
\subsubsection{Proof of \Cref{theo:HC_full_power}}
First, note that
\begin{align*}
a_{n} HC_{n} - b_{n} = \sqrt{2}\, \log \log\left(  k_n \right)\,  \left( \frac{ HC_n }{ \sqrt{\log \log( k_n)}} - \sqrt{2} + o(1) \right).
\end{align*} 
That is why it sufficient to show that for some $\gamma>0$ 
\begin{align}
&Q_{(n)}\Bigl( \frac{ | Z_n(v_n) |}{\sqrt{\log\log k_n}} \leq \sqrt{2}+\gamma  \Bigr)\to 0 \label{eqn:HC_alt_vollt_proof_suff_v}\\
\textrm{ or }&Q_{(n)}\Bigl( \frac{ | Z_n(1-v_n) |}{\sqrt{\log\log k_n}} \leq \sqrt{2}+\gamma  \Bigr)\to 0 .\label{eqn:HC_alt_vollt_proof_suff_1-v}
\end{align}
To verify this we apply Chebyshev's inequality. Note that for every real-valued random variable $Z$ on some probability space $(\Omega,{\mathcal A },P)$ with finite expectation we have
\begin{align}\label{eqn:HC_proof_Cheby}
P\left(  \left| Z \right| \leq \frac{|E(Z)|}{2}\right) = P\left(  \left| Z - E(Z) \right| \geq \frac{\left| E(Z) \right|}{2}   \right)\leq 4\frac{\text{Var}_P(Z)}{E_P(Z)^2}.
\end{align}
Consequently, we need to determine first the expectation and variance for $Z_n(v)$ for $v\in\{v_n,1-v_n\}$:
\begin{align*}
E_{Q_{(n)}}( Z_n(v) )  &= \sqrt{k_n}\; \frac{ k_n^{-1} \sum_{i=1}^{k_n} Q_{n,i} (0,v]  -v }{ \sqrt{v(1-v) }} =  \frac{ \sum_{i=1}^{k_n} \varepsilon_{n,i} \left( \mu_{n,i} \left(0,v\right]  - v \right)}{\sqrt{ k_n\,v (1-v)}}\;,
\\[0.2em]
\text{Var}_{Q_{(n)}}( Z_{n}( v ) ) &=\; \frac{1}{k_n}\; \frac{ \sum_{i=1}^{k_n} Q_{n,i} (0,v] \left( 1- Q_{n,i} (0,v] \right)   }{ v(1-v) } \nonumber \\
&\leq   \min \Bigl\{ \frac{ \sum_{i=1}^{k_n} Q_{n,i}(0,v]}{ k_n\,v(1-v) }\;,\; \frac{ \sum_{i=1}^{k_n} (1-Q_{n,i}(0,v])}{ k_n\,v(1-v) }\Bigr\} \\
&= \min \Bigl\{ \frac{1}{1-v} + \frac{E_{Q_{(n)}}\left[ Z_n\left( v \right) \right]}{\sqrt{ k_n\,v(1-v) }}\;,\;\frac{1}{v}-\frac{E_{Q_{(n)}}\left[ Z_n\left( v \right) \right]}{\sqrt{ k_n\,v(1-v) }} \Bigr\} .
\end{align*}
By assumption we have
\begin{align}
&\frac{  | \sum_{i=1}^{k_n} \varepsilon_{n,i} (\mu_{n,i} (0,v_n] - v_n ) | }{\sqrt{k_nv_n\log\log(k_n)}}\to \infty  \label{eqn:HC_fullp_proof_Hn(vn)_to_infty_1}\\
\textrm{ or } \,\,&  \frac{ | \sum_{i=1}^{k_n} \varepsilon_{n,i} (\mu_{n,i} (1-v_n,1) - v_n ) |}{\sqrt{k_nv_n\log\log k_n}} \to \infty.\label{eqn:HC_fullp_proof_Hn(vn)_to_infty_2}
\end{align}
Suppose that \eqref{eqn:HC_fullp_proof_Hn(vn)_to_infty_1} holds. Then 
\begin{align*}
\left | \frac{E_{Q_{(n)}}( Z_n(v_n) ) }{\sqrt{\log\log(k_n)}} \right|\to\infty \textrm{ and }\frac{\text{Var}_{Q_{(n)}}( Z_n(v_n)) }{E_{Q_{(n)}}( Z_n(v_n) )^2 }\to 0.
\end{align*}
Combining this and \eqref{eqn:HC_proof_Cheby} yields that \eqref{eqn:HC_alt_vollt_proof_suff_v} is fulfilled for all $\gamma>0$. Analogously, if \eqref{eqn:HC_fullp_proof_Hn(vn)_to_infty_2} is true then \eqref{eqn:HC_alt_vollt_proof_suff_1-v} holds for all $\gamma>0$.

\subsubsection{Proof of \Cref{theo:HC_undetect}}
Let $G_{n}$ be the distribution function of $Q_{n,1}$, i.e. $G_{n}(v)=Q_{n,1}([0,v])$, $v\in(0,1)$. 
Let $U_1,U_2,\ldots$ be a sequence of independent, uniformly on $(0,1)$ distributed random variables on the same probability space $(\Omega,{\mathcal A }, P)$. Note $(U_1,\ldots,U_{k_n})\sim P_{(n)}$ and $(G_{n}^{-1}(U_1),\ldots,G_{n}^{-1}(U_{k_n}))\sim Q_{(n)}$, where $G_{n}^{-1}$ denotes the left continuous quantile function of $Q_{n,1}$. Moreover, denote the interval $(r_n,s_n)\cup(t_n,u_n)$ by $J_{n,1}$ and $[1-u_n,1-t_n]\cup [1-s_n,1-r_n]$ by $J_{n,2}$. By \eqref{eqn:HC_undect_condition} it is easy to see that we can replace $r_n$ by any $r_n'\geq r_n$ such that $\log(r_n')=( -1+o(1))\log(n)$. In particular, we can assume without loss of generality that $k_nr_n\geq 1$ and, analogously, $u_n<1/2$.
From Corollaries 2 and 3 as well as (1) and (2) of Theorem of \citet{Jaeschke1979}, which also hold for the statistics $W_n,\widehat V_n, \widehat W_n$ introduced at the beginning of subsection 2 therein, we can deduce that 
\begin{align}
& a_n \sup_{ v\in (0,1)\setminus (J_{n,1}\cup J_{n,2})}\Bigl\{ \;\Bigl | \frac{ \sum_{i=1}^{k_n}(\mathbf{1}\{U_i\leq v\}-v)}{\sqrt{k_nv(1-v)}} \Bigr| \Bigr\}   - b_n \overset{\mathrm P}{\longrightarrow} -\infty \label{eqn:HC_undetect_HC_to_-infty}\\
\textrm{ and }\,& a_n \sup_{ v\in (0,1)}\Bigl\{ \;\Bigl | \frac{ \sum_{i=1}^{k_n}(\mathbf{1}\{U_i\leq v\}-v)}{\sqrt{k_nv(1-v)}} \Bigr| \Bigr\}   - b_n  \overset{\mathrm d}{\longrightarrow} Y,\label{eqn:HC_undetect_HC_to_Y}
\end{align}
where the distribution function of $Y$ equals $\Lambda^2$, see \eqref{eqn:HC_limit_distr_null}. By \eqref{eqn:HC_undetect_HC_to_-infty}, the mutually contiguity of $P_{(n)}$ and $Q_{(n)}$ and the equivalence "$G_{n}(v)\geq u \Leftrightarrow v \geq G_{n}^{-1}(u)$" it is sufficient for  \eqref{eqn:HC_undetect} to verify
\begin{align}\label{eqn:HC_undetect_suff}
a_n \sup_{ v\in J_{n,1}\cup J_{n,2}}\Bigl\{ \;\frac{ \sum_{i=1}^{k_n}(\mathbf{1}\{U_i\leq G_n(v)\}-v)}{\sqrt{k_nv(1-v)}} \Bigr\}   - b_n  \overset{\mathrm d}{\longrightarrow} Y.
\end{align}
For this purpose we define 
\begin{alignat*}{2} 
&\Delta_{n,1}(v)= \frac{  \sum_{i=1}^{k_n}\left( \mathbf{1}\{U_i\leq {G}_{n}(v)\} - {G}_{n}(v) \right) }{ \sqrt{n\,{G}_{n}(v)(1-{G}_{n}(v))}},\\
&\Delta_{n,2}(v)=  \sqrt{ \frac{ {G}_{n}(v) }{ v } },\; 
\Delta_{n,3}(v)= \sqrt{  \frac{ 1-{G}_{n}(v) }{ (1-v) } },\; 
\Delta_{n,4}(v)= \sqrt{k_n}\, \frac{ {G}_{n}(v) -v  }{ \sqrt{\,v(1-v)}}.  
\end{alignat*} 
Clearly, 
\begin{align*}
\frac{  \sum_{i=1}^{k_n}\left( \mathbf{1}\{U_i\leq {G}_{n}(v )\} - v \right) }{ \sqrt{k_n\,v(1-v)} }\;=\; \Delta_{n,1}(v ) \Delta_{n,2}(v) \Delta_{n,3}(v)+ \Delta_{n,4}(v).
\end{align*}
Hence, the proof of \eqref{eqn:HC_undetect_suff} falls naturally into the following steps:	
\begin{align}
&\sup_{v \in J_{n,1}\cup J_{n,2}}  \left| \Delta_{n,j}(v) - 1 \right| \to 0 \textrm{ for }j\in\{2,3\},  \label{eqn:HC_alt_nichttr_Delta23_zuzeigen} \\
&  a_{n}\sup_{v\in J_{n,1}\cup J_{n,2}}  \left| \Delta_{n,4}(v) \right|  \to 0, \label{eqn:HC_alt_nichttr_Delta4_zuzeigen}\\
& \  a_{n} \sup_{v\in J_{n,1}\cup J_{n,2}}\{    | \Delta_{n,1}(v) | \} - b_{n} \overset{\mathrm d}{\longrightarrow} Y.\label{eqn:HC_alt_nichttr_Delta1_zuzeigen_j=1}
\end{align} 
First, observe that $(1-\varepsilon_{n})v \leq G_n(v) \leq v + \varepsilon_{n}(1-v)$ for all $v\in(0,1)$.
Hence, we have for all $v_1\in(0,1/2]$ and $v_2\in[1/2,1)$ that 
\begin{align*}
\frac{1-G_n(v_1)}{1-v_1},\, \frac{G_n(v_2)}{v_2}\in(1-\varepsilon_{n},  1+\varepsilon_{n}).
\end{align*}
Moreover, we have for all $v_1\in J_{n,1}$ and all $v_2\in J_{n,2}$ that
\begin{align}\label{eqn:HC_undetect_bound_2}
\left| \frac{ {G}_{n}\left(v_1\right)  }{v_1} -1 \right|  
&=  \frac{\varepsilon_{n} | \mu_{n} (0,v_1]  - v_1  | }{v_1}   \leq  \frac{H_n(v_1)}{  \sqrt{ k_n r_{n} }} \leq a_nH_n(v_1),\\
\label{eqn:HC_undetect_bound_3}
\left| \frac{ 1-{G}_{n}\left(v_2\right) }{1-v_2}-1  \right|   &
=   \frac{\varepsilon_{n} | \mu_{n} (v_2,1)  - (1-v_2)  |}{1-v_2} \,   \leq a_n H_n(1-v_2).
\end{align}
Consequently, \eqref{eqn:HC_alt_nichttr_Delta23_zuzeigen} follows. Similarly to the above, we obtain 
\begin{align*}
|\Delta_{n,4}(v_1) |\leq \frac{H_n(v_1)}{\sqrt{1-u_n}} \leq \frac{1}{\sqrt{2}} H_n(v_1) \textrm{ and }		|\Delta_{n,4}(v_2)|\leq \frac{1}{\sqrt{2}} H_n(1-v_2).
\end{align*}
for all $v_1\in J_{n,1}$ and $v_2\in J_{n,2}$. 
From this we obtain \eqref{eqn:HC_alt_nichttr_Delta4_zuzeigen}. Clearly,
\begin{align*}
\sup_{v \in J_{n,1}\cup J_{n,2}} |\Delta_{n,1}(v)| = \sup_{v \in \widetilde J_{n,1}\cup \widetilde J_{n,2}} \Bigl | \frac{ \sum_{i=1}^{k_n}(\mathbf{1}\{U_i\leq v\}-v)}{\sqrt{k_nv(1-v)}} \Bigr|,
\end{align*}
where $\widetilde J_{n,1}=[\widetilde r_n,\widetilde s_n]\cup[\widetilde t_n,\widetilde u_n]$ by  $\widehat J_{n,2}=(1-\widehat u_n,1-\widehat t_n)\cup (1-\widehat s_n,1-\widehat r_n)$ with $\widetilde{r}_n = G_n(r_n)$, $\widetilde{s}_n = G_n(s_n)$, $\widetilde{t}_n = G_n(t_n)$, $\widetilde{u}_n = G_n(u_n)$, $\widehat{r}_n = 1-G_n(1-r_n)$, $\widehat{s}_n = 1-G_n(1-s_n)$, $\widehat{t}_n = 1-G_n(1-t_n)$ and $\widehat{u}_n = 1-G_n(1-u_n)$. From \eqref{eqn:HC_undetect_bound_2}, \eqref{eqn:HC_undetect_bound_3} and \eqref{eqn:HC_undect_condition}  we deduce that ($\widetilde r_n$,$\widetilde s_n$,$\widetilde t_n$,$\widetilde u_n$) and ($\widehat r_n$,$\widehat s_n$,$\widehat t_n$,$\widehat u_n$) fulfil \eqref{eqn:HC_cond_rn_un_sn_tn}. Finally, \eqref{eqn:HC_alt_nichttr_Delta1_zuzeigen_j=1}  follows from  \eqref{eqn:HC_undetect_HC_to_-infty} and \eqref{eqn:HC_undetect_HC_to_Y} (with the new parameters).

\subsection{Proofs of \Cref{sec:h-model}}
Before we prove the theorems stated in \Cref{sec:h-model} we want to point out the following: We can always assume that there is no perturbation, i.e. $r_{n,i}=0$ for all $i,n$, see Lemma \ref{lem:h-model_rni}. Note that we will assume this in all upcoming proofs concerning \Cref{sec:h-model} without recalling it every time.
\begin{lemma}[Perturbation]\label{lem:h-model_rni}
	Let us consider the situation in \Cref{sec:h-model}. Let $\mu_{n,i}^*$, $Q_{n,i}^*$ and $Q_{(n)}^*$ be defined as $\mu_{n,i}$, $Q_{n,i}$ and $Q_{(n)}$ setting $r_{n,i}=0$ for all $i,n$. Then \eqref{lem:h-model_rni_condition} is a sufficient that $\{Q_{(n)},Q_{(n)}^*\}$ converges weakly to the uninformative experiment $\{\epsilon_0,\epsilon_0\}$. In other words, if \eqref{lem:h-model_rni_condition} is fulfilled then the perturbation by $(r_{n,i})_{i\leq k_n}$ does not affect the asymptotic results.
\end{lemma}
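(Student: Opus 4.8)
The plan is to reduce the assertion to a one-coordinate Hellinger estimate and then use the product structure, exactly in the spirit of the (sketched) proof of \Cref{lem:LAN_var_not_to_sigma_REPLACE}. By the discussion in \Cref{sec:binary_exp_+_dist}, a sequence of binary experiments converges weakly to the uninformative experiment $\{\epsilon_0,\epsilon_0\}$ as soon as the variational distance between its two measures tends to $0$. Since $\|\widetilde P-\widetilde Q\|\le\sqrt2\,d(\widetilde P,\widetilde Q)$ and, for product measures,
\[
d^2\bigl(Q_{(n)},Q_{(n)}^*\bigr)=1-\prod_{i=1}^{k_n}\bigl(1-d^2(Q_{n,i},Q_{n,i}^*)\bigr)\le\sum_{i=1}^{k_n}d^2(Q_{n,i},Q_{n,i}^*),
\]
it will be enough to show $\sum_{i=1}^{k_n}d^2(Q_{n,i},Q_{n,i}^*)\to0$.

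For the per-coordinate bound I would write $q_{n,i}=\mathrm{d}Q_{n,i}/\mathrm{d}\lebesgue$ and $q_{n,i}^*=\mathrm{d}Q_{n,i}^*/\mathrm{d}\lebesgue$, both of which exist on $(0,1)$. By \eqref{eqn:pertubation} one has $q_{n,i}-q_{n,i}^*=\varepsilon_{n,i}r_{n,i}$, and both densities are bounded below by $1-\varepsilon_{n,i}$ (for $q_{n,i}$ this is the nonnegativity requirement built into \eqref{eqn:pertubation}, for $q_{n,i}^*$ it uses $h_{n,i}\ge0$). Then the elementary inequality $(\sqrt a-\sqrt b)^2=(a-b)^2(\sqrt a+\sqrt b)^{-2}\le(a-b)^2/\max(a,b)$ yields
\begin{align*}
d^2(Q_{n,i},Q_{n,i}^*)&=\frac12\int_0^1\bigl(\sqrt{q_{n,i}}-\sqrt{q_{n,i}^*}\bigr)^2\,\mathrm{d}\lebesgue\\
&\le\frac{1}{2(1-\varepsilon_{n,i})}\int_0^1(q_{n,i}-q_{n,i}^*)^2\,\mathrm{d}\lebesgue=\frac{\varepsilon_{n,i}^2}{2(1-\varepsilon_{n,i})}\int_0^1 r_{n,i}^2\,\mathrm{d}\lebesgue.
\end{align*}
Since $\max_{1\le i\le k_n}\varepsilon_{n,i}\to0$ by \eqref{eqn:maxeps_to_0}, one gets $d^2(Q_{n,i},Q_{n,i}^*)\le\varepsilon_{n,i}^2\int_0^1 r_{n,i}^2\,\mathrm{d}\lebesgue$ for all $i$ once $n$ is large.

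Summing over $i$ and invoking the hypothesis \eqref{lem:h-model_rni_condition} would then give $\sum_{i=1}^{k_n}d^2(Q_{n,i},Q_{n,i}^*)\le\sum_{i=1}^{k_n}\varepsilon_{n,i}^2\int_0^1 r_{n,i}^2\,\mathrm{d}\lebesgue\to0$, which combined with the first paragraph finishes the proof. I do not expect a genuine obstacle here; the one thing to be careful about is that one must use the Hellinger (equivalently $\chi^2$-type) estimate rather than the cruder triangle inequality $\|Q_{(n)}-Q_{(n)}^*\|\le\sum_i\varepsilon_{n,i}\|\mu_{n,i}-\mu_{n,i}^*\|$. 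The latter only controls $\sum_i\varepsilon_{n,i}\|r_{n,i}\|_{L^1(\lebesgue)}$, which need \emph{not} tend to $0$ under the $L^2$-type assumption \eqref{lem:h-model_rni_condition}; it is precisely the quadratic dependence on $\varepsilon_{n,i}$ coming from the Hellinger bound that makes the stated $L^2$-condition the natural one.
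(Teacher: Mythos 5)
Your argument is correct and is essentially the paper's own proof: both reduce the claim, via the Hellinger--variational distance bounds and the product formula, to showing $\sum_{i=1}^{k_n}d^2(Q_{n,i},Q_{n,i}^*)\to 0$, and both obtain the identical coordinatewise estimate $d^2(Q_{n,i},Q_{n,i}^*)\le \tfrac{\varepsilon_{n,i}^2}{2(1-\varepsilon_{n,i})}\int_0^1 r_{n,i}^2\,\mathrm{d}\lebesgue$ from $(\sqrt a-\sqrt b)^2=(a-b)^2(\sqrt a+\sqrt b)^{-2}$ together with the lower bound $1-\varepsilon_{n,i}$ on the densities, after which \eqref{lem:h-model_rni_condition} finishes the proof.
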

\begin{proof}
	It is sufficient to show that $\sum_{i=1}^{k_n}d^2(Q_{n,i},Q_{n,i}(\boldsymbol{\widetilde\theta})) \to 0$ under \eqref{lem:h-model_rni_condition}. This convergence follows immediately from the first representation of the Hellinger distance in \eqref{eqn:def_hellinger} and the third binomial formula:
	\begin{align*}
	&2d^2(Q_{n,i},Q_{n,i}(\boldsymbol{\widetilde\theta}))\\
	&= \int_0^1 \frac{ (\varepsilon_{n,i}r_{n,i})^2}{ (\sqrt{(1-\varepsilon_{n,i})+\varepsilon_{n,i}h_{n,i}}+\sqrt{(1-\varepsilon_{n,i})+\varepsilon_{n,i}h_{n,i}+\varepsilon_{n,i}r_{n,i}})^2} \,\mathrm{ d }\lebesgue \\
	&\leq \frac{ \varepsilon_{n,i}^2}{1-\varepsilon_{n,i}} \int_0^1r_{n,i}^2\,\mathrm{ d }\lebesgue.
	\end{align*}
\end{proof}

\subsubsection{Proof of \Cref{theo:h-model}}
First, observe that
\begin{align}
& I_{n,1,x} = \sum_{i=1}^{k_n} \varepsilon_{n,i} \int_0^1 h_{n,i}\, \mathbf{1}\Bigr\{ \frac{ \varepsilon_{n,i} }{ \kappa_{n,i}} h_{n,i} > x \Bigl\} \,\mathrm{ d } \lebesgue,	\label{eqn:h-model_I1}\\
& I_{n,2,x} = \sum_{i=1}^{k_n} \Bigl( \frac{\varepsilon_{n,i}^2}{\kappa_{n,i}} \int_0^1 h_{n,i}^2\, \mathbf{1}\Bigr\{ \frac{ \varepsilon_{n,i} }{ \kappa_{n,i}} h_{n,i} \leq  x \Bigl\} \,\mathrm{ d } \lebesgue  \Bigr)- \sum_{i=1}^{k_n}\varepsilon_{n,i}^2	\label{eqn:h-model_I2}.
\end{align}
Moreover, note that
\begin{align}
I_{n,1,x} &\leq \frac{1}{x}\sum_{i=1}^{k_n} \frac{\varepsilon_{n,i}^2}{\kappa_{n,i}} \int h_{n,i}^2\, \mathbf{1}\Bigr\{ \frac{ \varepsilon_{n,i} }{ \kappa_{n,i}} h_{n,i} >  x \Bigl\} \,\mathrm{ d } \lebesgue, \label{eqn:h-model_I1_bound} \\
I_{n,1,x}+I_{n,2,x}&\leq \max\{1,x^{-1}\} \Bigl( \max_{1\leq i \leq k_n} \int_0^1 h_{n,i}^2 \,\mathrm{ d }\lebesgue \Bigr) \sum_{i=1}^{k_n}\frac{\varepsilon_{n,i}^2}{\kappa_{n,i}}\nonumber 
\end{align}
By these and \Cref{theo:trivial_limits} $K=0$ corresponds to the undetectable case and no accumulation point of $\{P_{(n)},Q_{(n)}\}$ is full informative if $K\in(0,\infty)$. By Lemma \ref{lem:D_detection}\eqref{enu:lem:D_detection_comp} and \eqref{eqn:totvar_helg_absch_P+mu} the latter is also valid if $\limsup_{n\to\infty}\sum_{i=1}^{k_n}\varepsilon_{n,i}<\infty$. Consequently, \eqref{enu:theo:h-model_undetect} and the first statement in \eqref{enu:theo:h-model_nontrivial} are verified. Now, let us suppose that $K\in(0,\infty)$ and \eqref{eqn:h-model_max_condition} holds. Clearly, $\sum_{i=1}^{k_n}\varepsilon_{n,i}^2\to 0$. By \eqref{eqn:h-model_I2} and \eqref{eqn:h-model_I1_bound} $I_{n,1,x}\to 0$ and $ I_{n,2,x}\to K \int_0^1 h^2 \,\mathrm{ d }\lebesgue=\sigma^2$ for all $x>0$. Hence, applying \Cref{theo:general_limit_theorem} completes the proof of \eqref{enu:theo:h-model_nontrivial}.\\
Now, let the assumptions of \eqref{enu:theo:h-model_comp_detect} hold. Without loss of generality we can assume that $\sum_{i=1}^{k_n}\varepsilon_{n,i}^2\to C_1<\infty$ and $\varepsilon_{n,r_n} / \kappa_{n,r_n} \to C\in[0,\infty]$ since otherwise we use standard sub-subsequence arguments and make use of \eqref{eqn:totvar_helg_absch_P+mu}. If $C\geq 1$ then for all sufficiently large $n\in\N$
\begin{align*}
I_{n,1,x} &\geq  \sum_{i=r_n}^{k_n} \varepsilon_{n,i}  \int_0^1 h_{n,i}\, \mathbf{1}\Bigr\{ \frac{ \varepsilon_{n,r_n} }{ \kappa_{n,r_n}} h_{n,i} > x \Bigl\} \,\mathrm{ d } \lebesgue \\
& \geq \Bigl( \sum_{i=r_n}^{k_n} \varepsilon_{n,i}  \Bigr) \min_{1\leq i \leq k_n} \int_0^1 h_{n,i}\, \mathbf{1}\Bigr\{ \frac{1}{2} h_{n,i} > x \Bigl\} \,\mathrm{ d } \lebesgue
\end{align*}
and so by \eqref{eqn:h_unif_conv} $I_{n,1,x}\to \infty$ for all sufficiently small $x>0$. If $C< 1$ then
\begin{align*}
I_{n,2,x} &\geq  \Bigl( \sum_{i=1}^{r_n}  \frac{\varepsilon_{n,i}^2}{\kappa_{n,i}} \Bigr) \min_{1\leq i \leq k_n}\int_0^1 h_{n,i}^2\, \mathbf{1}\Bigr\{ 2 h_{n,i} \leq  x \Bigl\} \,\mathrm{ d } \lebesgue  - C_1
\end{align*}
and so by \eqref{eqn:h_unif_conv} $I_{n,2,x}\to \infty$ for all sufficiently large $x>0$. Hence, applying \Cref{theo:trivial_limits} verifies \eqref{enu:theo:h-model_comp_detect}. Finally, note that $K<\infty$ implies $\sum_{i=1}^{k_n}\varepsilon_{n,i}^2\to 0$. Keeping this in mind the proof of  \eqref{enu:theo:h-model_nonpara_LAN} is trivial (and omitted to the reader).
\subsubsection{Proof of \Cref{theo:h-model_beta=1}}
By \eqref{eqn:h-model_I2} 
\begin{align*}
& -\frac{1}{k_n}\leq I_{n,2,x} \leq \frac{x}{k_n} \sum_{i=1}^{k_n} \int_0^1 h_{n,i} \mathbf{1}\{ k_n^{r-1} h_{n,i}\leq x \} \,\mathrm{ d }\lebesgue \leq x	\\
&\textrm{and so }
\lim_{ x \searrow 0}\underset{n\to\infty}{ \substack{ \limsup \\ \liminf } } \,\,I_{n,2,x} = 0.
\end{align*}
Combining \eqref{eqn:h-model_I1} and \eqref{eqn:theo:h-model_beta=1_condi} yields for all $x\in {\mathcal D }$ that $I_{n,1,e^x-1}$ equals
\begin{align*}
& \frac{1}{k_n}\sum_{i=1}^{k_n} \int_0^1 h_{n,i}\, \mathbf{1}\Bigr\{ k_n^{r-1} h_{n,i} > e^x-1 \Bigl\} \,\mathrm{ d } \lebesgue\to \mathbf{1}\{r>1\} + M(x,\infty)\mathbf{1}\{r=1\}.	
\end{align*}
Consequently, applying \Cref{theo:general_limit_theorem} and \Cref{theo:trivial_limits} completes the proof.

\subsubsection{Proof of \Cref{lem:h=xrho}}
It is easy to verify that by \eqref{eqn:h-model_I1} and \eqref{eqn:h-model_I2}
\begin{align*}
&I_{n,1,x} = \min\Bigl\{ n^{1-\beta},n^{\frac{1}{\alpha}(\alpha-\beta + r(1-\alpha))} \Bigl( \frac{x}{1-\alpha} \Bigr)^{1-\frac{1}{\alpha}}\Bigr\},\;\, I_{n,2,x} \leq n^{1-2\beta+r}.
\end{align*}
Note that $1-2\beta+r<0$ if $r<\rho^*(\beta,\alpha)$, or if $r=\rho^*(\beta,\alpha)$ and $\alpha>1/2$. Moreover,   in the case of $\alpha=1/2$, $r=\rho^*(\beta,\alpha)= 2\beta-1$ we have 
\begin{align*}
I_{n,2,x} = \frac{1}{2}\log (2xn^{1-\beta}) - n^{1-2\beta} \to 0.
\end{align*}
Combining these, \Cref{theo:general_limit_theorem}, \Cref{theo:trivial_limits} and \eqref{eqn:conn_eta_M} completes the proof.

\subsubsection{Proof of \Cref{theo:HC_hmodel}}
To shorten the notation, set $\mu_n=\mu_{n,1}$, $\kappa_n=\kappa_{n,i}$ and $\varepsilon_{n}=\varepsilon_{n,i}$. Since the support of $\mu_n$ is $(0,\kappa_n)$ with $\kappa_n\to 0$ and, clearly, $a_nk_n\varepsilon_{n}^2=a_nk_n^{1-2\beta}\to 0$ we deduce from Remark \ref{rem:HC} that we can replace $H_n(v)$ in \Cref{theo:HC_full_power,theo:HC_undetect} by
\begin{align*}
\widehat H_n(v) = k_n^{\frac{1}{2}-\beta} v^{-\frac{1}{2}} \mu_n(0,v) = k_n^{\frac{1}{2}-\beta} v^{-\frac{1}{2}} \int_0^{\min\{vk_n^r,1\}} h \,\mathrm{ d }\lebesgue.
\end{align*}
We give the proof for the model  \eqref{enu:theo:HC_hmodel_intro} and the one from \Cref{theo:h-model_beta=1} in the case of $r=1$. The  model \eqref{enu:theo:HC_hmodel_h=xrho} is much simpler and left to the reader. 
\\
First, consider $\beta=r=1$. Let $r_n=k_n^{-1}a_n^3$, $s_n=t_n$ and $u_n=(\log k_n)^{-1}$. Clearly, \eqref{eqn:HC_cond_rn_un_sn_tn} holds. Moreover,
\begin{align*}
a_n\sup\{ \widehat H_n(v): v \in [r_n,u_n]\} \leq a_n k_n^{\frac{1}{2}-\beta} r_n^{-\frac{1}{2}}\to 0.
\end{align*}
Hence, by \Cref{theo:HC_undetect} the HC test has no power asymptotically.
\\
Now, consider the model from \Cref{sec:intro_h}  with $h\in L^{2+\delta}(\lebesgue_{|(0,1)})$ for some $\delta\in(0,1)$. In particular, we have $k_n=n$. First, let $r>\rho(\beta)=1-2\beta$ and $\beta<1$. Set $v_n=n^{-\min\{1,r\}}$. Clearly, $n^rv_n\geq 1$ and
\begin{align*}
a_n^{-1}\widehat H_n(v_n) = a_n^{-1} n^{1/2-\beta+\min\{1,r\}/2}  \to \infty.
\end{align*}
By this, \Cref{theo:HC_full_power,theo:h-model} the areas of complete detection ($r>\rho(\beta)$) coincide for the HC and the LLR test. 
It remains to discuss $r=\rho(\beta)=2\beta-1$ and $\beta<1$. Set $r_n=n^{-1}$, $s_n=n^{-r}a_n^{-4(1+2/\delta)}$,  $t_n=n^{-r}a_n^{4}$ and $u_n=(\log n)^{-1}.$	
Clearly, \eqref{eqn:HC_cond_rn_un_sn_tn} holds. By H\"older's inequality there is some $c_0>0$ such that 
\begin{align*}
\mu_n(0,v] \leq \left( \int_0^1 h^{2+\delta} \,\mathrm{ d } \lebesgue \right)^{1/(2+\delta)} \,\left( \int_0^{vn^r} \,\mathrm{ d } \lebesgue\right)^{1-1/(2+\delta)} \leq c_0\, (vn^r)^{1 -1/(2+\delta)}
\end{align*} 
for all $v\in(0,1)$. Hence, we obtain
\begin{align*}
a_n\sqrt{n}\varepsilon_{n} \sup_{v\in[r_n,s_n ]}\left\{ \frac{\mu_n (0,v]}{\sqrt{v}} \right\}
\leq  a_n\,n^{1/2-\beta} c_0   s_n^{1/2-1/(2+\delta)}\, n^{r -r/(2+\delta) } \leq c_0\,a_n^{-1}  \to 0.
\end{align*}
Moreover,
\begin{align*}
a_n\sqrt{n}\varepsilon_n \sup_{v\in[t_n,u_n]} \left\{ \frac{\mu_n (0,v]}{\sqrt{v}} \right\}\; \leq \; a_n n^{1/2-\beta}\,t_n^{-1/2}\; =\;  a_n^{-1}\to 0.
\end{align*}
Finally, by \Cref{theo:HC_undetect} the HC test has no power asymptotically.

\subsection{Proofs for \Cref{sec:normal_mix}}

\subsubsection{Proof of \Cref{theo:heterosce_normal_HCT}}
First, remind that we apply the HC statistic to $p_{n,i}=1-\Phi(Y_{n,i})$. Hence, without loss of generality we can write $\mu_n = N(\vartheta_n,\sigma_0^2)^{1-\Phi}$. Note that
\begin{align}\label{eqn:HC_normal_mu(0,v)}
\mu_n(0,v] = 1 - \Phi\Bigl( -\frac{ \Phi^{-1}(v)+\vartheta_n }{ \sigma_0 } \Bigr), \,v\in(0,1).
\end{align}
Moreover, we have for all $v\in(0,1/2)$
\begin{align}\label{eqn:HC_normal_mu(1-v,1)}
\mu_n(1-v,1] = 1- \Phi \Bigl( \frac{-\Phi^{-1}(v)+\vartheta_n}{\sigma_0} \Bigr) \leq \mu_n(0,v].
\end{align}
Observe that by Remark \ref{rem:contiguous} and \Cref{theo:detectbou_norm_sparse} $P_{(n)}$ and $Q_{(n)}$ are mutually contiguous. Clearly, this is not affected by the transformation to $p$-values.
Consequently, by \eqref{eqn:HC_normal_mu(1-v,1)}, \Cref{theo:HC_undetect} and Remark \ref{rem:HC} it is sufficient to show that
\begin{align*}
&a_n \sqrt{n}\varepsilon_{n} \sup_{v\in (n^{-1+\lambda_n},1/2]}\frac{\mu_n(0,v]}{\sqrt{v} }\to 0 \text{ with }\lambda_n=\frac{(\log\log(n))^2}{\log(n)},
\end{align*}
i.e. $r_n=n^{-1+\lambda_n}$, $s_n=t_n$ and $u_n=1/2$. Let $\delta>0$ be sufficiently small that $2\delta< 1- r\text{ and }2\delta\leq \beta-1/2-r/2$, where $2\beta -1-r$ is positive. Then
\begin{align*}
&a_n \sqrt{n}\varepsilon_{n}\sup_{v\in (n^{-r-2\delta},1/2]} \Bigl\{ \frac{\mu_n(0,v]}{\sqrt{v}}\Bigr\} \leq a_n (\log(n))^{E(\beta,\sigma_0)} n^{1/2-\beta+r/2+\delta} \to 0.
\end{align*}
Consequently, by \Cref{theo:HC_undetect} it remains to show that
\begin{align*}
a_n n^{1/2-\beta} (\log(n))^{E(\beta,\sigma_0)} \sup_{\kappa\in [r+2\delta,1-\lambda_n]} n^{\kappa/2}\mu_n(0,n^{-\kappa}] \to 0.
\end{align*}
For this purpose, a fine analysis of the tail behaviour of $\Phi$ is required. 
\begin{lemma}\label{lem:abschae_survivalfkt_standardnormal}
	We have
	\begin{align}\label{eqn:schachtelung_surv_durch_dichte_von_N(0,1)}
	\frac{x}{\sqrt{2\pi}(1+x^2) }\exp\Bigl(-\frac{1}{2}x^2\Bigr) \leq 1-\Phi(x)   \leq \frac{1}{\sqrt{2\pi}x}\exp\Bigl(-\frac{1}{2}x^2\Bigr)\: 
	\end{align}
	for all $x>0$. Moreover, there is some $U>0$ such that for all $u\in(0,U)$
	\begin{align}
	-\Phi^{-1}(u)= \Phi^{-1}(1-u) \geq \sqrt{2\log(u^{-1})} \Bigl( 1- \frac{7+\log\log(u^{-1})}{4\log(u^{-1})} \Bigr)\label{eqn:quantil_N(0,1)_PSI}.
	\end{align}
\end{lemma}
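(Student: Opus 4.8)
The plan is to establish the two displays of the lemma in turn: the sandwich \eqref{eqn:schachtelung_surv_durch_dichte_von_N(0,1)} is the classical Mills-ratio estimate for the standard normal survival function, and the quantile bound \eqref{eqn:quantil_N(0,1)_PSI} is then read off from it by inversion. Write $\phi(x)=(2\pi)^{-1/2}\exp(-x^2/2)$ for the standard normal density; since these Mills-ratio bounds are not quoted earlier in the paper, I include the short classical argument.

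For the upper bound in \eqref{eqn:schachtelung_surv_durch_dichte_von_N(0,1)} I would use $1-\Phi(x)=\int_x^\infty\phi(t)\,\mathrm{d}t\le\int_x^\infty(t/x)\phi(t)\,\mathrm{d}t=\phi(x)/x$. For the lower bound, set $g(x)=\frac{x}{1+x^2}\phi(x)-(1-\Phi(x))$; using $\phi'(t)=-t\phi(t)$ and simplifying, the derivative becomes $g'(x)=\frac{2}{(1+x^2)^2}\phi(x)>0$ (the polynomial in the numerator collapsing to the constant $2$), and since $g(x)\to0$ as $x\to\infty$, monotonicity gives $g(x)<0$, i.e.\ $1-\Phi(x)>\frac{x}{1+x^2}\phi(x)$, for every $x>0$.

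For \eqref{eqn:quantil_N(0,1)_PSI}, by symmetry of $N(0,1)$ we have $\Phi^{-1}(u)=-\Phi^{-1}(1-u)$, so it suffices to bound $y:=\Phi^{-1}(1-u)$ from below. Restrict to $u<1-\Phi(1)$, so $y\ge1$, and abbreviate $L=\log(u^{-1})$, $w=y^2/2$. Plugging $u=1-\Phi(y)$ into the upper bound of \eqref{eqn:schachtelung_surv_durch_dichte_von_N(0,1)} gives $L\ge w+\log y+\tfrac12\log(2\pi)\ge w$, so $w\le L$ and hence $\log w\le\log L$. Plugging it into the lower bound and using $1+y^2\le2y^2$ together with $\log y=\tfrac12\log(2w)$ gives $L\le w+\tfrac12\log w+C_0$ with $C_0=\tfrac12\log(16\pi)$, whence $w\ge L-\tfrac12\log L-C_0$ and therefore
\begin{align*}
y^{2}=2w\ \ge\ 2L-\log L-2C_0\ =\ 2L\Bigl(1-\tfrac{\log L+2C_0}{2L}\Bigr).
\end{align*}
Taking square roots and using $\sqrt{1-t}\ge1-\tfrac t2-t^2$ for $t\in[0,1]$ (with $t=\tfrac{\log L+2C_0}{2L}$) yields
\begin{align*}
y\ \ge\ \sqrt{2L}\,\Bigl(1-\tfrac{\log L+2C_0}{4L}-\tfrac{(\log L+2C_0)^2}{4L^2}\Bigr).
\end{align*}

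Since $2C_0=\log(16\pi)<7$ and $\tfrac{(\log L+2C_0)^2}{L}\to0$ as $u\to0$, the bracket on the right exceeds $1-\tfrac{7+\log L}{4L}$ for all $u$ below a suitable threshold $U$, which is exactly the assertion. The only delicate step is this last one: one has to verify that the (deliberately generous) constant $7$ absorbs $2C_0$ together with the residual square-root term uniformly over small $u$ — pure constant chasing, but it is where the precise numbers enter, and the factor $\tfrac14$ instead of $\tfrac12$ in \eqref{eqn:quantil_N(0,1)_PSI} is precisely the halving produced by the square root, which must be carried along carefully.
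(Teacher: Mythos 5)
Your proposal is correct and follows essentially the same route as the paper: prove the Mills-ratio sandwich, then invert it by feeding $y=\Phi^{-1}(1-u)$ into both bounds, using the crude consequence $y\le\sqrt{2\log(u^{-1})}$ of the upper bound inside the lower bound, expanding the square root via $\sqrt{1-t}\ge 1-t/2-t^2$, and absorbing the constants into the $7$ by taking $U$ small (which works since $\log(16\pi)<7$ and the quadratic remainder is $o(1)$). The only cosmetic difference is how the sandwich itself is obtained — the paper integrates by parts once and bounds the remainder integral by $x^{-2}(1-\Phi(x))$, whereas you use the $t/x$ comparison for the upper bound and a monotonicity argument for $g(x)=\frac{x}{1+x^2}\phi(x)-(1-\Phi(x))$ (whose derivative computation is correct) for the lower bound.
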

\begin{proof}
	From integration by parts we obtain for all $x>0$
	\begin{align*}
	1-\Phi(x) = \int_x^\infty \frac{1}{\sqrt{2\pi}} \frac{1}{t}\; t e^{-t^2/2}\,\mathrm{ d }t= \frac{1}{x\sqrt{2\pi}} e^{-x^2/2} - \int_x^\infty \frac{1}{t^2\sqrt{2\pi}}e^{-t^2/2} \,\mathrm{ d }t.
	\end{align*}
	Hence, the upper bound in \eqref{eqn:schachtelung_surv_durch_dichte_von_N(0,1)} follows. Since the integral on the right-hand side is smaller than $x^{-2}(1-\Phi(x))$ also the lower bound follows. Clearly, $\Phi^{-1}$ is increasing and $\Phi^{-1}(1-u)\to\infty$ as $u\searrow 0$. Let $U>0$ such that $\Phi^{-1}(1-U) >1$. By applying \eqref{eqn:schachtelung_surv_durch_dichte_von_N(0,1)} for $x=\Phi^{-1}(1-u)$ with $u\in(0,U)$ 
	\begin{align}
	\Phi^{-1}(1-u)  \leq \sqrt{ -2 \log ( u\, \sqrt{2\pi}\Phi^{-1}(1-u) ) } \leq \sqrt{-2\log(u)}.	\label{eqn:quantil_N(0,1)_absc_bew_leq_grob}
	\end{align}
	Obviously, by \eqref{eqn:schachtelung_surv_durch_dichte_von_N(0,1)} we have $(1/6x)\exp(-x^2/2) \leq 1-\Phi(x)$ for all $x>1$. By setting again $x=\Phi^{-1}(1-u)$ for $u\in(0,U)$ we obtain from this, \eqref{eqn:quantil_N(0,1)_absc_bew_leq_grob} and $\sqrt{1-y}\geq 1-y/2-y^2$ for all $y\in(0,1)$ that
	\begin{align*}
	\Phi^{-1}(1-u) 
	&\geq \sqrt{2\log(u^{-1})}\sqrt{1-\frac{\log(6)+\log(2)/2+\log\log(u^{-1})/2}{\log(u^{-1})}}\\
	&\geq \sqrt{2\log(u^{-1})} \Bigl( 1- \frac{ 3+ \log\log (u^{-1})/2 }{ 2\log(u^{-1})}-\Bigl( \frac{ 3+ \log\log (u^{-1})/2 }{ \log(u^{-1})} \Bigr)^2 \Bigr).
	\end{align*}
	Finally, by choosing $U>0$ sufficiently small we get \eqref{eqn:quantil_N(0,1)_PSI}.
\end{proof}
From now on, let $n\in\N$ be sufficiently large such that $n^{-1+\lambda_n}< U$ and so \eqref{eqn:quantil_N(0,1)_PSI} holds for all $u=n^{-\kappa}$, $\kappa\leq 1-\lambda_n$. We obtain for all $\kappa\in[r+2\delta,1-\lambda_n]$
\begin{align*}
- \Phi^{-1}(n^{-\kappa})-\vartheta_n \geq \sqrt{ 2 \log(n) } \Bigl( \sqrt{\kappa}-\sqrt{r} - \frac{\log(\kappa)+\log\log (n)+7}{4\sqrt{\kappa}\log(n)} \Bigr)=:w_n(\kappa).
\end{align*}
Hence, by \eqref{eqn:HC_normal_mu(0,v)} and \eqref{eqn:schachtelung_surv_durch_dichte_von_N(0,1)} there is  $c>0$ such that for all $\kappa\in[r+2\delta,1-\lambda_n]$  
\begin{align*}
n^{\frac{1}{2}\kappa}\mu_n(0,n^{-\kappa}] &\leq n^{\frac{1}{2}\kappa} \Bigl(1-\Phi\Bigl( \frac{w_n(\kappa)}{\sigma_0} \Bigr) \Bigr) \leq n^{\frac{1}{2}\kappa}\frac{ \sigma_0}{ w_n(\kappa)} \exp\Bigl(-\frac{1}{2\sigma_0^2}w_n(\kappa)^2\Bigr)\\
& \leq c n^{E_1(\kappa)} (\log(n))^{E_2(\kappa)} \text{ with }E_2(\kappa)=-\frac{1}{2}+\frac{1}{2}\frac{\sqrt{\kappa}-\sqrt{r}}{\sigma_0^2\sqrt{\kappa}}\\
&\text{and }E_1(\kappa)=\frac{1}{2}\kappa + \sigma_0^{-2}(2\sqrt{\kappa r}-\kappa - r).
\end{align*} 
Since we are interested in the supremum of all $\kappa\in[r+2\delta,1-\lambda_n]$ we need to find the (uniquely) point $\kappa_n^*\in[r+2\delta,1-\lambda_n]$ attaining the maximum of $[r+2\delta,1-\lambda_n]\ni \kappa \to E_1(\kappa)$. For this purpose we need to discuss two cases.

First, let $\sigma_0<\sqrt{2}$ and $r< (2-\sigma_0^2)^2/4$ (or equivalently $\beta<1-\sigma_0^2/4$). Then $E(\beta,\sigma_0)=0$, $\varepsilon_{n}=n^{-\beta}$ and $r=(2-\sigma_0^2)( \beta-1/2 )$. Without loss of generality we  assume that $r+2\delta < 4r  (2-\sigma_0^2)^{-2} < (1-\delta)^2 \textrm{ and }\delta (2-\sigma_0^2)/(4\sigma_0^2)<1/8.$
Then it is easy to verify that	${\kappa_n^*}=\kappa^*=4r  /(2-\sigma_0^2)^2 $ and $E_{1}(\kappa_n^*) =  r/(2-\sigma_0^2)$. Since $E_2$ is increasing we have for all sufficiently large $n\in\N$ that 
\begin{align*}
a_n\sqrt{n} \varepsilon_{n}\sup_{\kappa\in [r+2\delta,1-\lambda_n]} n^{\kappa/2}\mu_n(0,n^{-\kappa}] &=a_n\sup_{\kappa\in [r+2\delta,\kappa^*(1-\delta)^{-2}]} n^{\kappa/2+1/2-\beta}\mu_n(0,n^{-\kappa}] \\
&\leq  a_nc\, n^{E_1(\kappa^*)+1/2-\beta} (\log(n))^{E_2(\kappa^*(1-\delta)^{-2})}\\
&\leq a_n c\; (\log(n))^{-1/8} \to 0.
\end{align*}

Second, let $(\beta,\sigma_0)\in( 1-1/\sigma_0^2,1)\times(\sqrt{2},\infty)$ or $(\beta,\sigma_0)\in[ 1-\sigma_0^2/4,1)\times(0,\sqrt{2})$. 
Clearly, $E_1$ and $E_2$ are increasing in $[r+2\delta,1]$. Hence, $\kappa_n^*=1-\lambda_n$. Since $r=( 1-\sigma_0 \sqrt{1-\beta})^2$, $1/2-1/{\sigma_0^2}+2\sqrt{r}/\sigma_0^2-r/\sigma_0^2=\beta-1/2$ and $\sqrt{1-\lambda_n}\leq 1-\lambda_n/2$ we obtain that 
\begin{align*}
E_1(1-\lambda_n)&= \beta-\frac{1}{2} + \lambda_n \Bigl( \frac{1}{\sigma_0^2}-\frac{1}{2} \Bigr)+ \frac{2}{\sigma_0^2}\sqrt{r} (\sqrt{1-\lambda_n}-1)\\
&\leq \beta-\frac{1}{2} - K(\beta,\sigma_0^2)\lambda_n,\text{ where } \\
&K(\beta,\sigma_0^2)= \frac{1}{2}-\frac{1}{\sigma_0}\sqrt{1-\beta} 
\begin{cases}
= 0 &\text{if }\beta=1-\frac{1}{4}\sigma_0^2,\,\sigma_0<\sqrt{2}. \\
> 0& \text{else}.
\end{cases}
\end{align*}
Moreover, $E_2(1)=-\frac{1}{4}<0$ if $\beta=1-\sigma_0^2/4$, $\sigma^2_0<\sqrt{2}$. Consequently, 
\begin{align*}
&a_n \sqrt{n}\varepsilon_{n}\sup_{\kappa\in [r+2\delta,1-\lambda_n]} n^{\kappa/2}\mu_n(0,n^{-\kappa}] \\
&\leq  a_nc\, n^{E_1(1-\lambda_n)+1/2-\beta} (\log(n))^{E_2(1)+E(\beta,\sigma_0^2)}\\
&\leq a_n c(\log(n))^{E_2(1)+E(\beta,\sigma_0^2)-K(\beta,\sigma_0^2) \log\log(n)}  \to 0.
\end{align*}

\subsubsection{Proof of \Cref{theo:normal_ext_boundary}}
By careful calculations we obtain 
\begin{align*}
\frac1n \frac{\mathrm{ d } \mu_{n}}{\,\mathrm{ d }P_0}(x+\vartheta_n) = \frac{1}{\sigma_0}\exp \Bigl( \frac{\sigma_0^2-1}{ 2 \sigma_0^2 }x^2 + x\sqrt{2r\log n} + (r-1)\log n \Bigr).
\end{align*}
Define $C_{n,\tau}=\{x\in\R: n^{-1}\frac{\mathrm{ d } \mu_{n}}{\,\mathrm{ d }P_0}(x+\vartheta_n) > \tau\}$, $\tau>0$. It is easy to see that $\mathbf{1}\{x\in C_{n,\tau}\} \to  \mathbf{1}\{ r=1 , x>0\} + \mathbf{1}\{r>1\}$  for $x\neq 0$. From this and Lebesgue's dominated convergence theorem we deduce that
\begin{align*}
& I_{n,1,\tau}
=	\int \mathbf{1}\{x\in C_{n,\tau}\} \mathrm{ d }N(0,1)(x)
\to \mathbf{1}\{r>1\}-\frac{1}{2}\mathbf{1}\{r=1\}.
\end{align*}
Moreover, 
\begin{align*}
I_{n,2,\tau} &\leq \tau \int \frac{\mathrm{ d } \mu_{n}}{\,\mathrm{ d }P_0} \mathbf{1}\Bigl\{ \frac{1}{n} \frac{\mathrm{ d } \mu_{n}}{\,\mathrm{ d }P_0}\leq \tau \Bigr\}\,\mathrm{ d }P_0\leq \tau. 
\end{align*}
Finally, combining \Cref{theo:general_limit_theorem} and \Cref{theo:trivial_limits} yields the statement.

\section*{Acknowledgments}
The authors thanks the \textit{Deutsche Forschungsgemeinschaft} (DFG) for financial support (Grant no. 618886).


\begin{thebibliography}{100}
	
	\bibitem[\protect\citeauthoryear{}{Arias-Castro et al.}{2015}]{AriasCandesPlan2015}
	\textsc{Arias-Castro, E. and Cand\`es, E. J. and Plan, Y.} (2015).
	{Global testing under sparse alternatives: {ANOVA}, multiple
		comparisons and the higher criticism}.	\textit{Ann. Statist.} \textbf{39}, no.5, 2533--2556. \MR{2906877},
	
	\bibitem[\protect\citeauthoryear{}{Arias-Castro and Wang}{2015}]{AriasWang2015}
	\textsc{Arias-Castro, E. and Wang, M.} (2015). The sparse Poisson means model. \textit{Electron. J. Stat.}\textbf{ 9}, no. 2, 2170--2201. \MR{3406276}
	
	\bibitem[\protect\citeauthoryear{}{Arias-Castro and Wang}{2017}]{AriasWang2017}
	\textsc{Arias-Castro, E. and Wang, M.} (2017). Distribution-free tests for sparse heterogeneous mixtures. \textit{TEST} \textbf{26}, no. 1, 71--94. \MR{3613606}

	\bibitem[\protect\citeauthoryear{}{Billingsley}{1999}]{billingsley1999}
	\textsc{Billingsley, P.} (1999). \textit{Convergence of
		Probability Measures}, 2nd ed.	Wiley, New York.	
	\MR{1700749}
	
	\bibitem[\protect\citeauthoryear{}{Cai et al.}{2011}]{CaiJengJin2011}
	\textsc{Cai, T., Jeng, J. and Jin, J.} (2011).
	Optimal detection of heterogeneous and heteroscedastic mixtures. \textit{J. R. Stat. Soc. Ser. B Stat. Methodol.} \textbf{73}, no. 5, 629--662. 
	\MR{2867452}
	
	\bibitem[\protect\citeauthoryear{}{Cai and Wu}{2014}]{Cai_Wu_2014}
	\textsc{Cai, T. and Wu, Y.} (2014). Optimal Detection of Sparse Mixtures Against a Given Null Distribution.
	\textit{IEEE Trans. Inform. Theory} \textbf{60}, no. 4, 2217-2232.
	\MR{3181520}
	
	\bibitem[\protect\citeauthoryear{}{Cayon et al.}{2004}]{CayonETAL2004}
	\textsc{Cayon, L., Jin, J. and Treaster, A.} (2004).
	{Higher Criticism statisitc: Detecting and identifying non-Gaussianity in the WMAP first year data}.
	\textit{Mon. Not. Roy. Astron. Soc.} \textbf{362}, 826--832.
	
	
	
	\bibitem[\protect\citeauthoryear{}{Dai et al.}{2012}]{DaiETAL2012}
	\textsc{Dai, H., Charnigo, R., Srivastava, T., Talebizadeh, Z. and Qing, S.} (2012).
	Integrating P-values for genetic and genomic data analysis. \textit{J. Biom. Biostat.}, 3--7.
	
	\bibitem[\protect\citeauthoryear{}{Delaigle et al.}{2011}]{DelaigleETAL2011}
	Delaigle, A., Hall, J. \& Jin, J. (2011).
	\newblock Robustness and accuracy of methods for high dimensional data analysis based on Student’s t statistic.
	\newblock\textit{J. R. Stat. Soc. Ser. B Stat. Methodol.} \textbf{73}, 283--301.
	 \MR{2815777}
	
	\bibitem[\protect\citeauthoryear{}{Ditzhaus}{2017}]{Ditzhaus2017}	
	\textsc{Ditzhaus, M.} (2017). \textit{The power of tests for signal detection under high-dimensional data}. PhD-thesis, Heinrich-Heine-University Duesseldorf. \url{https://docserv. uni-duesseldorf.de/servlets/DocumentServlet?id=42808}
	
	
	\bibitem[\protect\citeauthoryear{}{Donoho and Jin}{2004}]{DonohoJin2004}
	\textsc{Donoho, D. and Jin, J.} (2004). {Higher criticism for detecting sparse heterogeneous mixtures}.
	\textit{Ann. Statist.} \textbf{32}, no. 3, 962--994.
	\MR{2065195}
	
	
	\bibitem[\protect\citeauthoryear{}{Donoho and Jin}{2015}]{DonohoJin2015}
	\textsc{Donoho, D. and Jin, J.} (2015).
	{Higher Criticism for Large-Scale Inference, Especially for Rare and Weak Effects}. \textit{Statist. Sci.}\textbf{ 30 }, no. 1, 1--25.
	\MR{3317751}
	
	\bibitem[\protect\citeauthoryear{}{Eicker}{1979}]{Eicker1979}
	\textsc{Eicker, F.} (1979). The asymptotic distribution of the suprema of the standardized empirical processes. \textit{Ann. Stat.} \textbf{7}, {116--138}. \MR{0515688}

	\bibitem[\protect\citeauthoryear{}{Goldstein}{2009}]{Goldstein2009}
	\textsc{Goldstein, D.B.} (2009). Common genetic variation and human traits.
	\textit{New England J. Med.} \textbf{360}, {1696--1698}.
	
	
	\bibitem[\protect\citeauthoryear{}{Gnedenko and Kolmogorov}{1954}]{gndedenKolmogorov}
	\textsc{Gnedenko, B.V. and Kolmogorov, A.N.} (1954).
	\textit{Limit distribution for sums of independent random variables,}
	Addison--Wesley, Reading, MA. Translated and annotated by K. L. Chung.
	\MR{0062975}
	
	\bibitem[\protect\citeauthoryear{}{H\'ajek et al.}{1999}]{HajekSidakSen}
	\textsc{H\'ajek, J., \v Sid\'ak, Z. and Sen, P. K.} (1999). 
	\textit{Theory of rank tests}. Probability and Mathematical Statistics, second edition.
	Academic Press, Inc., San Diego, CA.
	\MR{1680991}
	
	\bibitem[\protect\citeauthoryear{}{Hall et al.}{2008}]{HallETAL2008}
	Hall, P., Pittelkow, Y. \& Ghosh, M. (2008).
	\newblock Theoretical measures of relative performance of classifiers for high dimensional data with small sample sizes.
	\newblock\textit{J. R. Stat. Soc. Ser. B Stat. Methodol.} \textbf{70}, 158--173.
	\MR{2412636}
	
	\bibitem[\protect\citeauthoryear{}{Ingster}{1997}]{Ingster1997}
	\textsc{Ingster, Y.} (1997). Some problems of hypothesis testing leading to infinitely divisible distributions. \textit{Math. Methods Statist.} \textbf{6}, no. 1, 47--69. 
	\MR{1456646}
	
	\bibitem[\protect\citeauthoryear{}{Ingster et al.}{2010}]{IngsterETAL2010}
	\textsc{Ingster, Y. I. and Tsybakov, A. B. and Verzelen,
		N.} (2010). {Detection boundary in sparse regression}. \textit{Electron. J. Stat.} \textbf{4}, {1476--1526}.
	\MR{2747131}
	
	\bibitem[\protect\citeauthoryear{}{Iyengar and Elston}{2007}]{IyengarElston2007}
	\textsc{Iyengar, S. K. and Elston, R.C.} (2007). The genetic basis of complex traits: Rare vvariant or "common gene, common disease"?         \textit{Methods Mol. Biol.} \textbf{376}, 71--84.
	
	\bibitem[\protect\citeauthoryear{}{Jaeschke}{1979}]{Jaeschke1979}
	\textsc{Jaeschke, D.} (1979). The asymptotic distribution of the suprema of the standardized empirical distribution function on subintervals. \textit{Ann. Stat.} \textbf{7}, no. 1, 108--115. \MR{0515687}
	
	
%
	\bibitem[\protect\citeauthoryear{}{Janssen et al.}{1985}]{JanssenMilbrodtStrasser1985}
	\textsc{Janssen, A., Milbrodt, H. and Strasser, H.} (1985). \textit{Infinitely divisible statistical experiments.} Lecture notes in Statistic \textbf{27}, Springer-Verlag, Berlin.
	\MR{0788883}
	
	\bibitem[\protect\citeauthoryear{}{Janssen}{1990}]{Janssen1990}
	\textsc{Janssen, A} (1990). Statistical experiments with non-regular densities. In: Janssen, A. and Mason, D. M., Non-Standard Rank Tests. Lecture Notes Stat. \textbf{65}, 183--240.
	\MR{1080968}
	
	\bibitem[\protect\citeauthoryear{}{Jin}{2004}]{Jin2004}
	\textsc{Jin, J.} (2004). Detecting a target in very noisy data from multiple looks. \textit{A festschrift for Herman Rubin}, 255--286, IMS Lecture Notes Monogr. Ser., 45, Inst. Math. Statist., Beachwood, OH.
	\MR{2126903}
	
	\bibitem[\protect\citeauthoryear{}{Jin et al.}{2005}]{JinETAL2005Cosmo}
	\textsc{Jin, J., Stark, J.-L., Donoho, D., Aghanim, N. and Forni, O.} (2005).
	{Cosmological non-Gaussian signature detection: Comparing performance of different statistical tests}.
	\textit{J. Appl. Signal Processing} \textbf{15}, 2470--2485.
	\MR{2210857}
	
	\bibitem[\protect\citeauthoryear{}{Khmaladze}{1998}]{Khmaladze1998}
	\textsc{Khmaladze, E.V.} (1998). {Goodness of fit tests for Chimeric alternatives}.\textit{Statist. Neerlandica} \textbf{52}, no. 1, 90--111.
	\MR{1615550}
	
	\bibitem[\protect\citeauthoryear{}{Khmaladze and Shinjikashvili}{2001}]{KhmaladzeShin2001}
	\newblock Khmaladze, E. and Shinjikashvili, E. (2001). 
	\newblock Calculation of noncrossing probabilities for {P}oisson
	processes and its corollaries. 
	\newblock \textit{Adv. in Appl. Probab.} \textbf{33}, 702--716. 
	\MR{1860097}
	
	\bibitem[\protect\citeauthoryear{}{Kulldorff et al.}{2005}]{KulldorffETAL2005}
	\textsc{Kulldorff, M., Heffernan, R., Hartman, J., Assuncao, R. and Mostashari, F.} (2005).
	A space-time permutation scan statistic for disease outbreak detection. \textit{PLoS Med}\textbf{ 2}, no. 3, e59.
	
	\bibitem[\protect\citeauthoryear{}{Le Cam}{1986}]{LeCam1986}
	\textsc{Le Cam, L. }(1986).  \textit{Asymptotic methods in statistical decision theory.} Springer Series in Statistics. Springer-Verlag, New York.
	\MR{0856411}
	
	\bibitem[\protect\citeauthoryear{}{Le Cam and Yang}{2000}]{LeCamYang2000}
	\textsc{Le Cam, L. and Yang, G. L.} (2000). \textit{Asymptotics in statistics}. Second edition. Springer Series in Statistics. Springer Verlag, New York. 
	\MR{1784901}

	\bibitem[\protect\citeauthoryear{}{Mukherjee et al.}{2015}]{MukherjeePillaiLin2015}
	\textsc{Mukherjee, R., Pillai, N. S. and Lin, X} (2015).
	Hypothesis testing for high-dimensional sparse binary
	regression. \textit{Ann. Statist.} \textbf{43}, no. 1, {352--381}. \MR{3311863}
	
	\bibitem[\protect\citeauthoryear{}{Neill and Lingwall}{2007}]{NeillLingwall2007}
	\textsc{Neill, D. and Lingwall, J.} (2007).
	{A nonparametric scan statistic for multivariate disease surveillance}.
	\textit{Advances in Disease Surveillance}\textbf{ 4}, 106--116.
	
	\bibitem[\protect\citeauthoryear{}{Saligrama and Zhao}{2012}]{SaligramaZhao2012}
	\textsc{Saligrama, V.} and \textsc{Zhao, M.} (2012).
	Local anomaly detection. 
	\textit{JMLR W\& CP}\textbf{ 22}, 969--983.
	
	
	\bibitem[\protect\citeauthoryear{}{Strasser}{1985}]{Strasser1985}
	\textsc{Strasser, H.} (1985).
	\textit{Mathematical Theory of Statistics,}
	De Gruyter, Berlin/New York.
	\MR{0812467}
	
	
	\bibitem[\protect\citeauthoryear{}{Tukey}{1994}]{TukeyCoursenotes}
	\textsc{Tukey, J. W.} (1976). \textit{T13 N: The higher Criticism.} Coures Notes. Stat 411. Princetion Univ.
	
	
	\bibitem[\protect\citeauthoryear{}{Tukey}{1994}]{TukeyInternalPaper}
	\textsc{Tukey, J. W.} (1989). \textit{Higher Criticism for individual significances in serveral tables or parts of tables.} Internal working paper, Princeton Univ.
	
	\bibitem[\protect\citeauthoryear{}{Tukey}{1994}]{TukeyCollected}
	\textsc{Tukey, J. W.} (1994). \textit{The Collected Works of John W. Tukey: Multiple Comparisons, Volume VIII}. Chapman and Hall, London.
	\MR{1263027}
	
\end{thebibliography}
\end{document}